\newtheorem{lemma}{Lemma}[section]
\newtheorem{proposition}[lemma]{Proposition}
\newtheorem{theorem}[lemma]{Theorem}
\newtheorem{corollary}[lemma]{Corollary}
\newtheorem{conjecture}[lemma]{Conjecture}
\newtheorem*{theoremA}{Theorem}
\newtheorem*{corollaryA}{Corollary}
\theoremstyle{definition}
\newtheorem{example}[lemma]{Example}
\newtheorem{definition}[lemma]{Definition}
\newtheorem{remark}[lemma]{Remark}
\newtheorem{notation}[lemma]{Notation}
\newtheorem{observation}[lemma]{Observation}
\newcommand{\mfk}[1]{\mathfrak{#1}}
\newcommand{\mbb}[1]{\mathbb{#1}}
\newcommand{\mcl}[1]{\mathcal{#1}}
\newcommand{\msc}[1]{\mathscr{#1}}
\newcommand{\mbf}[1]{\mathbf{#1}}
\DeclareMathOperator{\Hom}{Hom}
\DeclareMathOperator{\End}{End}
\DeclareMathOperator{\RHom}{RHom}
\DeclareMathOperator{\REnd}{REnd}
\DeclareMathOperator{\Ext}{Ext}
\DeclareMathOperator{\Aut}{Aut}
\DeclareMathOperator{\Ad}{Ad}
\DeclareMathOperator{\rep}{rep}
\DeclareMathOperator{\Rep}{Rep}
\DeclareMathOperator{\Spec}{Spec}
\DeclareMathOperator{\Proj}{Proj}
\DeclareMathOperator{\Enh}{Enh}
\DeclareMathOperator{\Supp}{Supp}
\DeclareMathOperator{\Fr}{Fr}
\DeclareMathOperator{\stab}{stab}
\DeclareMathOperator{\Stab}{Stab}
\DeclareMathOperator{\ord}{ord}
\DeclareMathOperator{\Sym}{Sym}
\DeclareMathOperator{\FK}{FK}
\DeclareMathOperator{\res}{res}
\DeclareMathOperator{\supp}{supp}
\DeclareMathOperator{\coh}{coh}
\DeclareMathOperator{\Coh}{Coh}
\DeclareMathOperator{\QCoh}{QCoh}
\DeclareMathOperator{\Kempf}{Kempf}
\DeclareMathOperator{\Surf}{Surf}
\newcommand{\Sp}{\mathtt{Spr}}
\newcommand{\dg}{\mathrm{dg}}
\newcommand{\de}{\operatorname{d\acute{e}}}
\newcommand{\sEnd}{\mathscr{E}\! nd}
\newcommand{\sHom}{\mathscr{H}\! om}
\newcommand{\sExt}{\mathscr{E}xt}
\newcommand{\sRHom}{\mathscr{R\!H}\!om}
\newcommand{\ot}{\otimes}
\newcommand{\tN}{\tilde{\mathcal{N}}}
\newcommand{\uqG}{u(G_q)}
\newcommand{\uqB}{u(B_q)}
\newcommand{\su}{u}
\newcommand{\dG}{\check{G}}
\newcommand{\dB}{\check{B}}
\renewcommand{\1}{\mathbf{1}}
\renewcommand{\O}{\mathscr{O}}
\renewcommand{\tilde}{\widetilde}
\definecolor{page_color}{HTML}{000000}
\definecolor{text_color}{HTML}{fffff0}
\title[]{Support theory for the small quantum group and the Springer resolution}
\date{\today}
\author{Cris Negron}
\address{Department of Mathematics, University of Southern California, Los Angeles, CA 90007}
\email{cnegron@usc.edu}
\author{Julia Pevtsova}
\address{Department of Mathematics, University of Washington, Seattle, WA 98195}
\email{julia@math.washington.edu}
\begin{document}

\maketitle

\begin{abstract}
We consider the small quantum group $u(G_q)$, for an almost-simple algebraic group $G$ over the complex numbers and a root of unity $q$ of sufficiently large order.  We show that the Balmer spectrum for the small quantum group in type $A$ admits a continuous surjection $\mbb{P}(\tN)\to \Spec(\stab u(G_q))$ from the (projectivized) Springer resolution.  This surjection is shown to be a homeomorphism over a dense open subset in the spectrum.  In type $A_1$ we calculate the Balmer spectrum precisely, where it is shown to be the projectivized nilpotent cone.  Our results extend to arbitrary Dynkin type provided certain conjectures hold for the small quantum Borel.  At the conclusion of the paper we touch on relations with geometric representation theory and logarithmic TQFTs, as represented in works of Arkhipov-Bezrukavnikov-Ginzburg and Schweigert-Woike respectively.
\end{abstract}

\tableofcontents

\section{Introduction}

Let $G$ be an almost-simple algebraic group over the complex numbers and let $q\in \mbb{C}$ be a root of unity of sufficiently large odd order.\footnote{This restriction to odd order $q$ is inessential.  See \ref{sect:even_orderq}.}  We consider representations $\Rep G_q$ of Lusztig's divided power quantum algebra, and the associated small quantum group $\uqG$ for $G$ at $q$.
\par

A primary goal of this work is to provide a rigorous analysis of support theory for the small quantum group.  Such support theoretic investigations include, but are not limited to, a calculation of the Balmer spectrum for the stable category of finite-dimensional representations
\[
\stab \uqG=D^b(\rep \uqG)/\langle\text{bounded complexes of projectives}\rangle.
\]
As introduced in \cite{balmer05}, the Balmer spectrum $\Spec(\stab \uqG)$ is the spectrum of thick, prime tensor ideals in the stable category.  Hence a calculation of the spectrum provides a global picture of the ``geometry" of $\stab\uqG$, as a monoidal category.  Such calculations represent just one point within a much broader engagement between geometry and tensor categories of representations, however, both in the classical and quantum settings.  One might think of support theory liberally as this broad engagement between these two subjects.\footnote{At least, this representes one \emph{branch} of what one might call support theory.}
\par

In the present paper, informed by principles coming from geometric representation theory, we argue that a natural setting for support theory in the quantum group context is on the Springer resolution $\tN \to \mathcal{N}$  of the nullcone $\mathcal{N}$ for $G$. We address support for $\uqG$ by introducing sheaves on the Springer resolution into an analysis of quantum group representations, at the derived level.  This echos results of Arkhipov-Bezrukavnikov-Ginzburg and Bezrukavnikov-Lachowska \cite{arkhipovbezrukavnikovginzburg04,bezrukavnikovlachowska07} which show that the principal block in the derived category of representations for $\uqG$ is actually equivalent to the derived category of (dg) sheaves on the Springer resolution.  A main difference here is that we incorporate the monoidal structure into our study. We elaborate on the proposed melding of these two topics in Section \ref{sect:intro_grt} below.
\par

At the heart of this work is a certain \emph{approach} to the quantum group.  Namely, we propose a means of studying the quantum group via a global object called the \emph{half-quantum flag variety}.  The Springer resolution is, for us, a derived descendant of this half-quantum flag variety.  
\par

In the introduction, we first discuss our explicit results for support theory in Sections \ref{sect:intro_support} and \ref{sect:more!}, then return to a discussion of the half-quantum flag variety in Section \ref{sect:231}.  We describe connections to other areas of mathematics in Sections \ref{sect:intro_grt} and \ref{sect:intro_logtqft}, where we consider geometric representation theory as well as logarithmic TQFTs.  These various portions of the introduction represent the contents of Parts 2, 1, and 3 of the text respectively.

\subsection{Results in support theory}
\label{sect:intro_support}

Calculating the Balmer spectrum, for a given braided tensor category, is quite a difficult problem in general.  The first examples came from geometry and topology, where the topic still provides an active area of interest.  For an incomplete sampling one can see \cite{thomason97,hopkinssmith98,stevenson14,hall16,hirano19,gallauer19}.  Most representation theoretic cases where this spectrum has been calculated come from finite groups and finite group schemes \cite{bensoncarlsonrickard97,balmer05,friedlanderpevtsova07}, although there are examples which lie slightly outside of this domain as well \cite{boekujawanakano17,drupieskikujawa19,bensoniyengarkrausepevtsova,friedlandernegron}.
\par

In the present text we show that the Balmer spectrum for the small quantum group is essentially resolved by the Springer resolution, at least in type $A$.

\begin{theoremA}[{\ref{thm:bspec2}/\ref{thm:bspec2.5}}]
Let $G$ be an almost-simple algebraic group in type $A$.  There is a continuous surjective map of topological spaces
\[
f:\mbb{P}(\tN)\to \Spec\big(\stab \uqG\big)
\]
which restricts to a homeomorphism $\mbb{P}(\tN)_{\rm reg}\overset{\sim}\to \Spec(\stab \uqG)_{\rm reg}$ over a dense open subset in $\Spec(\stab\uqG)$.  Furthermore, the map $f$ factors the moment map for $\tN$,
\[
\mbb{P}(\tN)\overset{f}\longrightarrow \Spec\big(\stab \uqG\big)\overset{\rho}\longrightarrow \mbb{P}(\mcl{N}).
\]
The same calculations hold in arbitrary Dynkin type, provided some specific conjectures hold for the quantum Borel.
\end{theoremA}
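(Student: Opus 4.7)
The plan is to construct the map $f$ out of a support theory for $\stab u(G_q)$ valued on the Springer resolution, built by way of the \emph{half-quantum flag variety} $\mathrm{Fl}_q$ from Part 1.  First I would establish a derived identification of the graded endomorphism ring of the unit in $D^b(\mathrm{Fl}_q)$ with the coordinate algebra of $\tilde{\mathcal{N}}$, a quantum-sheaf analogue of the Ginzburg-Kumar identification $\Spec H^{\ast}(u(G_q))=\mathcal{N}$.  With this in hand, an induction-type functor sends each $M\in \stab u(G_q)$ to a sheaf $\mathscr{M}$ on $\mathrm{Fl}_q$, and the cohomological support of $\mathscr{M}$ over $\tilde{\mathcal{N}}$ cuts out a closed subset $\supp(M)\subseteq \mathbb{P}(\tilde{\mathcal{N}})$.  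By the standard tensor-triangular machinery (Balmer's universal property of the spectrum, combined with a Benson-Iyengar-Krause-Pevtsova style stratification argument) this support theory produces the required continuous map $f:\mathbb{P}(\tilde{\mathcal{N}})\to \Spec(\stab u(G_q))$.

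Next, the factorization $\rho\circ f=\mu$ through the moment map should be a formal consequence of compatibility between the half-quantum and small quantum cohomologies.  Derived global sections $R\Gamma: D^b(\mathrm{Fl}_q)\to D^b(u(G_q))$ intertwine the graded endomorphism rings of the respective units in a manner that recovers $\mu:\tilde{\mathcal{N}}\to \mathcal{N}$, and passing to Balmer spectra then gives the desired commutative triangle involving Balmer's cohomological comparison map $\rho$.

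Surjectivity of $f$, together with the homeomorphism over the regular locus, forms the geometric heart of the theorem, and here I would reduce to the quantum Borel.  Writing $\tilde{\mathcal{N}}=G\times^B\mathfrak{n}$, every point of $\mathbb{P}(\tilde{\mathcal{N}})$ lies in the $G$-orbit of a point in $\mathbb{P}(\mathfrak{n})$.  Assuming the quantum Borel conjecture that $\Spec(\stab u(B_q))\cong\mathbb{P}(\mathfrak{n})$, each such point is realized as the support of an explicit $u(B_q)$-module; induction along $u(B_q)\hookrightarrow u(G_q)$ together with the $G$-equivariance of the half-quantum flag variety then propagates that support across the full $G$-orbit in $\mathbb{P}(\tilde{\mathcal{N}})$, giving surjectivity.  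For the homeomorphism statement, the moment map is an isomorphism over the regular nilpotent orbit $\mathcal{N}^{\rm reg}$; since $\rho\circ f=\mu$, the map $f$ is forced to be injective on the preimage $\mathbb{P}(\tilde{\mathcal{N}})_{\rm reg}$, and combined with surjectivity and continuity this upgrades $f$ to a homeomorphism over this dense open set.

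The principal obstacles I foresee are the cohomological identification $\Spec\, \mrm{End}^{\ast}(\mathcal{O}_{\mathrm{Fl}_q})=\tilde{\mathcal{N}}$, which demands delicate derived manipulations on the half-quantum flag variety, and the Borel-to-group reduction for support, which requires nontrivial compatibility between induction and the cohomological support theory.  The type $A$ restriction enters only through the verification of the quantum Borel conjectures: in type $A$ the root system combinatorics are tractable enough to produce an explicit computation of $\Spec(\stab u(B_q))$, whereas in other types this last step remains conjectural.
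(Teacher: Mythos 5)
Your overall architecture matches the paper's: support valued in $\mbb{P}(\tN)$ extracted from the endomorphisms of the unit over the half-quantum flag variety (the identification $H^\ast(\sRHom_{\FK{G}_q}(\1,\1))=p_\ast\O_{\tN}$ is exactly Theorem \ref{thm:enh_GK}), a factorization $\rho\circ f=\mbb{P}(\kappa)$ obtained by comparing the sheafy and global endomorphism rings, and injectivity over the regular locus forced by that factorization. But the two steps that carry most of the weight are missing or misdirected. To obtain $f$ from Balmer's universal property you must first know that $\supp_{\tN}$ is a support \emph{data}, i.e.\ satisfies $\supp_{\tN}(M\ot N)=\supp_{\tN}(M)\cap\supp_{\tN}(N)$; this is not ``standard tensor-triangular machinery.'' The paper gets it (Proposition \ref{prop:2588}) from the tensor product property for cohomological support over the small quantum Borels in type $A$ \cite{negronpevtsova3}, transported through the naturality/reconstruction theorem (Theorem \ref{thm:natrl}): the derived fiber of $\sRHom_{\FK{G}_q}(M,M)$ at a geometric point $\lambda:\Spec(K)\to\dG/\dB$ computes $\RHom_{\msc{B}_\lambda}(\res_\lambda M,\res_\lambda M)$, so that $\mbb{P}(i_\lambda)^{-1}\supp_{\tN}(M)=\supp^{\operatorname{chom}}_{\mfk{n}_\lambda}(\res_\lambda M)$. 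That fiber computation (Theorem \ref{thm:fiber_l}, plus a formality input from Arkhipov--Bezrukavnikov--Ginzburg) is the real content; and note that the Borel input actually used is the tensor product property together with a ``lavish'' extension of support to the big stable category, not the identification $\Spec(\stab\uqB)\cong\mbb{P}(\mfk{n})$ you assume --- knowing the Borel's Balmer spectrum does not by itself yield either property.

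Your surjectivity argument also proves the wrong statement: realizing every point of $\mbb{P}(\tN)$ inside the support of some induced module shows the theory is non-degenerate, but surjectivity of $f$ means every prime thick tensor ideal of $\stab\uqG$ is of the form $\msc{P}_x=\{M:\,x\notin\supp_{\tN}(M)\}$ for some $x$, and that does not follow from realization of points. In the paper surjectivity is deduced from an injective classification map from thick tensor ideals to specialization-closed subsets of $\mbb{P}(\tN)$, admitting a set-theoretic section (Lemma \ref{lem:2632}), after which the argument of Balmer's Theorem 5.2 applies. That classification in turn requires extending $\supp_{\tN}$ to infinite-dimensional objects via Rickard idempotents (lavishness, Lemma \ref{lem:2596}, resting on hypersurface support for each $\msc{B}_\lambda$) and the projectivity test of Theorem \ref{thm:projectivity}: $M$ is a bounded complex of projectives iff $\res_\lambda M$ is, for \emph{every} geometric point $\lambda$ of the flag variety --- including points over field extensions, which is why the paper builds the whole $\dG/\dB$-family of Borels rather than working with the single standard Borel and its $G(k)$-translates as you propose. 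Your induction-along-$\uqB$ step does not substitute for any of this, so as written both the existence of $f$ as a map to the Balmer spectrum and its surjectivity are unsupported.
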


Here $\mbb{P}(\tN)$ and $\mbb{P}(\mcl{N})$ denote the projectivizations of the conical varieties $\tN$ and $\mcl{N}$, respectively.  Also, by factoring the moment map we mean that the composite $\rho\circ f$ is equal to the projectivization of the moment map $\kappa:\tN\to \mcl{N}$.
\par

For the quantum Borel outside of type $A$, one simply wants to know that cohomological support for $\uqB$ classifies thick ideals in the associated stable category.  More directly, these ``specific conjectures" claim that cohomological support for the small quantum Borel is a lavish support theory, in the language of \cite{negronpevtsova3}.  Such a result was proved for the quantum Borel in type $A$ in \cite{negronpevtsova3}, and is expected to hold in general (see Section \ref{sect:the_others}).
\par

We note that Theorem \ref{thm:bspec2} specializes in type $A_1$ to provide a complete calculation of the Balmer spectrum.

\begin{corollaryA}[\ref{cor:bspecA1}]
For small quantum $\operatorname{SL}(2)$, the Balmer spectrum is precisely the projectivized nilpotent cone
\[
\Spec\big(\stab u(\operatorname{SL}(2)_q)\big)\overset{\cong}\to \mbb{P}(\mcl{N}).
\]
\end{corollaryA}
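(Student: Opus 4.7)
The plan is to deduce the corollary directly from the main theorem via one short geometric observation specific to rank one, followed by a general topological cancellation.

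First I would spell out the Springer geometry for $G=\operatorname{SL}(2)$. Here $\tN=T^{*}\mathbb{P}^{1}$ is the minimal resolution of the quadric cone $\mcl{N}\subset \mathfrak{sl}_{2}$, and the moment map $\kappa:\tN\to \mcl{N}$ collapses the zero section $\mathbb{P}^{1}$ to the origin while restricting to an isomorphism on the complement of the zero section. As $\kappa$ is equivariant for the natural $\mathbb{C}^{\times}$-scaling actions (on cotangent fibers versus on $\mathfrak{sl}_{2}$), its projectivization
\[
\mathbb{P}(\kappa):\mathbb{P}(\tN)\overset{\sim}{\longrightarrow}\mathbb{P}(\mcl{N})
\]
is an isomorphism of varieties, with both sides naturally identified with $\mathbb{P}^{1}$. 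This reduces the corollary to the claim that the map $\rho$ appearing in the main theorem is a homeomorphism.

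Next I would exploit the factorization $\rho\circ f=\mathbb{P}(\kappa)$ recorded in the main theorem. The previous step says that the composite $\rho\circ f$ is a homeomorphism, while $f$ is a continuous surjection. From these two facts the map $\rho$ is forced to be a homeomorphism by a three-line point-set argument: injectivity of $\rho\circ f$ together with surjectivity of $f$ forces injectivity of $\rho$; surjectivity of $\rho$ is inherited from surjectivity of $\rho\circ f$; and $\rho$ is open because for every open $U\subseteq \Spec(\stab u(\operatorname{SL}(2)_{q}))$ one has $\rho(U)=(\rho\circ f)(f^{-1}(U))$, which is open in $\mathbb{P}(\mcl{N})$ since $\rho\circ f$ is a homeomorphism and $f$ is continuous and surjective.

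There is no significant obstacle once the main theorem is in hand; the whole content of the corollary is simply that in rank one the only collapse performed by the Springer map occurs over the origin of $\mcl{N}$, which disappears upon projectivization. Equivalently, the ``generic'' homeomorphism $\mathbb{P}(\tN)_{\mathrm{reg}}\overset{\sim}\to \Spec(\stab u(\operatorname{SL}(2)_{q}))_{\mathrm{reg}}$ promoted by the main theorem becomes a global homeomorphism in type $A_{1}$. The only point that demands mild care is matching the paper's projectivization convention with the projectivization of $\kappa$, which amounts to checking that the relevant $\mathbb{C}^{\times}$-actions agree and is routine.
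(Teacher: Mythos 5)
Your proposal is correct and follows essentially the same route as the paper: in type $A_1$ the moment map collapses only the zero section, so $\mbb{P}(\kappa)$ is a homeomorphism and the generic homeomorphism of Theorem \ref{thm:bspec2} becomes global (the paper phrases this as $\mbb{P}(\tN)_{\rm reg}=\mbb{P}(\tN)$ and $\mbb{P}(\mcl{N})_{\rm reg}=\mbb{P}(\mcl{N})$). Your explicit point-set cancellation forcing $\rho$ to be a homeomorphism is exactly the argument already carried out, over the regular loci, in the paper's proof of Theorem \ref{thm:bspec2}, so nothing new is needed.
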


We do expect that, in general, the Balmer spectrum for $\uqG$ is just the projectivized nilpotent cone.  This is a point of significant interest, and one \emph{can} provide precise conditions under which $\tN$-support for the quantum group collapses to identify the Balmer spectrum with $\mbb{P}(\mcl{N})$.  The interested reader can see Section \ref{sect:3286} for a detailed discussion.
\par

In regards to the proofs of Theorem \ref{thm:bspec2} and Corollary \ref{cor:bspecA1}, there are two important constructions which we employ.  Both of these constructions are as functional in type $A$ as they are in arbitrary Dynkin type.  So one needn't concern themselves with Dynkin types for the remainder of the introduction.

\subsection{More support theory!}
\label{sect:more!}
As we have just stated, there are two main constructions which we employ in the proofs of the above theorems.  First, we produce a natural support theory for the quantum group which takes values in the Springer resolution
\[
\supp_{\tN}:\big\{\text{finite-dimensional $\uqG$-representations}\big\}\to \big\{\text{closed subsets in }\mbb{P}(\tN)\big\}.
\]
This $\tN$-support is defined via the supports of certain $\mbb{G}_m$-equivariant cohomology sheaves over the Springer resolution, which one associates to $\uqG$-representations (see Section \ref{sect:supp_tN}).  The construction of this sheaf-valued cohomology follows from considerations of the half-quantum flag variety, as described in Section \ref{sect:231} below.
\par

Our second construction is that of a complete family of small quantum Borels for the small quantum group.  These quantum Borels appear as a family of $\Rep \uqG$-central tensor categories
\[
\res_\lambda:\Rep\uqG\to \msc{B}_\lambda
\]
which are parametrized by points on the flag variety $G/B$.
\par

At the identity $\lambda=1$, the category $\msc{B}_1$ recovers the standard small quantum Borel $\Rep \uqB$ as a $\Rep \uqG$-central tensor category.  Furthermore, the collection of quantum Borels $\{\msc{B}_\lambda:\text{points $\lambda$ for }G/B\}$ admits a natural transitive $G$-action by tensor equivalences under which each $\msc{B}_\lambda$ is stabilized precisely by its corresponding Borel subgroup $B^{\lambda}$ in $G$.   In particular, each $\msc{B}_\lambda$ is isomorphic to the standard positive quantum Borel as a $k$-linear tensor category, and hence is a finite tensor category of (Frobenius-Perron) dimension $\sim l^{\operatorname{rank}(\mfk{g})+|\Phi^+|}$, where $l=\operatorname{ord}(q)$.  The categories $\msc{B}_\lambda$ provide the necessary quantum replacements for the family of Borels $\mbb{B}_{\lambda,(1)}\subset \mbb{G}_{(1)}$ in the corresponding first Frobenius kernel in finite characteristic (see Section \ref{sect:borels}).

\begin{remark}
Many of the $G$'s and $B$'s above should really be dual groups $\dG$ and $\dB$.  See Section \ref{sect:Gq}.  We ignore this subtlety for the introduction.
\end{remark}

These two constructions, $\tN$-support and our $G/B$-family of Borels for the small quantum group, intertwine in interesting ways.  For example, $\tN$-support satisfies a naturality property with respect to the restriction functors $\res_\lambda:\Rep\uqG\to \msc{B}_\lambda$.  In the following statement $i_\lambda:\mfk{n}_\lambda\to \tN$ denotes the nilpotent subalgebra over a given point $\lambda:\Spec(K)\to G/B$, and $\supp^{\operatorname{chom}}_{\mfk{n}_\lambda}$ denotes cohomological support for $\msc{B}_\lambda$.

\begin{theoremA}[{\ref{thm:natrl}}]
Let $G$ be an almost-simple algebraic group (of arbitrary Dynkin type), and let $V$ be any finite-dimensional $\uqG$-representation.  Pulling back $\tN$-support along the map $i_\lambda$ recovers cohomological support for $\msc{B}_\lambda$,
\[
\mbb{P}(i_\lambda)^{-1}\big(\supp_{\tN}(V)\big)=\supp^{\operatorname{chom}}_{\mfk{n}_\lambda}(\res_\lambda V).
\]
\end{theoremA}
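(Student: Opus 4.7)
The plan is to realize both sides of the desired equation as the support of one $\mbb{G}_m$-equivariant coherent module, linked by a base change along the closed embedding $i_\lambda : \mfk{n}_\lambda \hookrightarrow \tN$. In outline: (i) recall the sheaf underlying $\supp_{\tN}$, (ii) base change this sheaf along $i_\lambda$ to identify it with cohomological Ext in $\msc{B}_\lambda$, and (iii) invoke the standard compatibility of support with pullback along a closed embedding.

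For step (i), by construction the $\tN$-support of $V$ is the support in $\mbb{P}(\tN)$ of a $\mbb{G}_m$-equivariant coherent sheaf of self-Ext $\sExt^*(V,V)$ on $\tN$, built from the half-quantum flag variety and viewed as a graded module over the sheaf of graded cohomology algebras $\sExt^*(\1,\1)$. For step (ii), the central task is to produce a natural isomorphism
\[
i_\lambda^* \sExt^*(V,V) \;\cong\; \Ext^*_{\msc{B}_\lambda}(\res_\lambda V,\res_\lambda V),
\]
as graded modules over $i_\lambda^* \sExt^*(\1,\1) \cong H^*(\msc{B}_\lambda,\1) \cong \O(\mfk{n}_\lambda)$. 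The last identification is the Ginzburg-Kumar calculation for the standard quantum Borel at $\lambda=1$, transported to general $\lambda$ via the transitive $G$-action on the $G/B$-family of categories $\{\msc{B}_\lambda\}$ (which stabilizes each $\msc{B}_\lambda$ by its corresponding Borel, so that $\msc{B}_\lambda$ and $\msc{B}_1$ are tensor equivalent). The first identification is structural: the half-quantum flag variety is, by design, the $G/B$-indexed family of $\Rep\uqG$-central categories with fiber $\msc{B}_\lambda$ at $\lambda$, and sheaf-valued Ext on this family restricts to Ext in the categorical fiber.

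For step (iii), I would use the standard fact that for a finitely generated $\mbb{G}_m$-equivariant coherent sheaf $\sF$ on $\tN$ and a $\mbb{G}_m$-stable closed embedding $i : Z \hookrightarrow \tN$, one has $\mbb{P}(i)^{-1}\big(\supp \sF\big) = \supp\big(i^* \sF\big)$. Taking $\sF = \sExt^*(V,V)$ and $Z = \mfk{n}_\lambda$, and combining with the identification of step (ii), yields
\[
\mbb{P}(i_\lambda)^{-1}\big(\supp_{\tN} V\big) \;=\; \supp\Ext^*_{\msc{B}_\lambda}(\res_\lambda V,\res_\lambda V) \;=\; \supp^{\operatorname{chom}}_{\mfk{n}_\lambda}(\res_\lambda V).
\]

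The main obstacle is the base change identification in step (ii). It requires a precise understanding of how $\sExt^*(-,-)$ on $\tN$ was built from the half-quantum flag variety, together with a derived base-change / Beck-Chevalley compatibility between restriction to the fiber at $\lambda \in G/B$ and the Ext computation. Equivariance under the $G$-action on the family $\{\msc{B}_\lambda\}_{\lambda \in G/B}$ should reduce matters to $\lambda = 1$, where the construction collapses to the standard quantum Borel $\uqB$ and to the classical Ginzburg-Kumar isomorphism $H^*(\uqB,\1) \cong \O(\mfk{n})$; the remaining content is then the identification of the module structure, which should be transparent from the construction.
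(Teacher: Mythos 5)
Your overall skeleton (realize $\supp_{\tN}(V)$ as the support of a coherent $\mbb{G}_m$-equivariant sheaf, pull back along $i_\lambda$, identify the result with Ext over $\msc{B}_\lambda$, and use that support commutes with pullback along a closed embedding) matches the shape of the paper's argument, but step (ii) as you state it contains a genuine gap. You assert a natural isomorphism $i_\lambda^\ast\sExt_{\FK{G}_q}(V,V)\cong\Ext_{\msc{B}_\lambda}(\res_\lambda V,\res_\lambda V)$ of graded $\O(\mfk{n}_\lambda)$-modules, i.e.\ that the \emph{underived} fiber of the cohomology sheaf computes Ext in the fiber category. This is not expected to hold for general coherent $V$, and the paper explicitly disclaims it: what base change actually gives (Theorem \ref{thm:fiber_l}) is an identification at the \emph{derived} level, $\operatorname{L}\lambda^\ast\sRHom_{\FK{G}_q}(V,V)\simeq\RHom_{\msc{B}_\lambda}(\res_\lambda V,\res_\lambda V)$, and the two $\O(\mfk{n}_\lambda)$-modules $\lambda^\ast H^\ast(\sRHom_{\FK{G}_q}(V,V))$ and $H^\ast(\operatorname{L}\lambda^\ast\sRHom_{\FK{G}_q}(V,V))$ agree as modules only when $V$ is an equivariant vector bundle $E_W$ (Lemma \ref{lem:2289}); in general only their \emph{supports} coincide. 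Proving that equality of supports is the real content of the theorem (Proposition \ref{prop:lambda_supp}), and it is not formal: the paper realizes $\sRHom_{\dG/\su}(V,V)$ as a dg module over a sheaf of dg algebras on $\dG$, invokes the Arkhipov--Bezrukavnikov--Ginzburg formality $\O_{\dG}\ot_k\Sym(\mfk{n}^\ast)\overset{\sim}\to\O_{\dG}\ot_k\End_{\su}(P)$, uses dg commutative algebra (Carlson--Iyengar) to detect support of cohomology via derived fibers, and then descends through the covering $t:\dB_\lambda\times\mfk{n}\to\mfk{n}_\lambda$. None of this is recoverable from a Beck--Chevalley statement alone, precisely because formality of $\msc{R}(G_q)$ itself is not known.

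Two smaller points. First, your proposed reduction to $\lambda=1$ via the $G$-action does not work for a fixed $V$: the translation equivalence $x:\msc{B}_1\to\msc{B}_\lambda$ intertwines $\res_\lambda$ with $\res_1$ only up to the auto-equivalence $x$ of $\FK{G}_q$ (diagram \eqref{eq:502}), which moves $V$, so equivariance identifies the family of categories but not the support data of a given object; the paper instead works directly at an arbitrary geometric point (reducing to a $k$-point by base change). Second, you rightly flag that the identification $\O(\mfk{n}_\lambda)\cong\Ext_{\msc{B}_\lambda}(\1,\1)$ must be the one induced by $\res_\lambda$; this is needed for the stated formula to be the claimed one and requires the compatibility-with-evaluation check carried out in Lemma \ref{lem:mys}, not just the Ginzburg--Kumar computation at $\lambda=1$.
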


We recall that cohomological support for a given finite tensor category $\msc{C}$ is defined via the tensor actions of $\Ext_\msc{C}(k,k)$ on $\Ext_\msc{C}(V,V)$, for varying $V$ in $\msc{C}$, and takes values in the corresponding projective spectrum $\Proj\Ext_\msc{C}(k,k)$.  For the small quantum group and small quantum Borels $\msc{B}_\lambda$, these projective spectra are the projectivized nilpotent cone $\mcl{N}$ and nilpotent subalgebras $\mfk{n}_\lambda$ respectively.
\par

The point of Theorem \ref{thm:natrl} is that $\tN$-support for the small quantum group can be reconstructed from cohomological support over the Borels.  Hence we can reduce a study of support for $\uqG$ to a corresponding study for the categories $\msc{B}_\lambda$.
\par

In addition to the naturality properties with respect to the restriction, recalled above, $\tN$-support has strong connections to cohomological support for the small quantum group as well.  At Theorem \ref{thm:tN_N} we show that $\tN$-support localizes cohomological support for $\uqG$, in the sense that pushing forward along the moment map $\kappa:\tN\to \mcl{N}$ provides an equality
\[
\kappa\left(\supp_{\tN}(V)\right)=\supp^{\operatorname{chom}}_{\mathcal{N}}(V).
\]
Here, as expected, $\supp^{\operatorname{chom}}_{\mathcal{N}}$ denotes cohomological support for the small quantum group.
\par

In total, we achieve the approximate calculation of the Balmer spectrum $\mbb{P}(\tN)\to \Spec(\stab\uqG)$ appearing in Theorem \ref{thm:bspec2} via an application of the naturality result of Theorem \ref{thm:natrl}, the analysis of the quantum Borel provided in \cite{negronpevtsova3} (where the type $A$ bottleneck occurs), and a projectivity test for infinite-dimensional quantum group representations via the Borels, which we now recall.

\begin{theoremA}[{\ref{thm:projectivity}}]
Let $G$ be an almost-simple algebraic group. A (possibly infinite-dimensional) $\uqG$-representation $V$ is projective if and only if its restriction $\res_\lambda V$ at each geometric point $\lambda:\Spec(K)\to G/B$ is projective in $\msc{B}_\lambda$.
\end{theoremA}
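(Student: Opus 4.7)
The plan is to reduce both directions to a statement about the single Hopf subalgebra $\uqB \subset \uqG$, using the transitive $G$-action on the family $\{\msc{B}_\lambda\}$ to relay information between $\lambda = 1$ and arbitrary geometric points. At $\lambda = 1$ we have $\msc{B}_1 = \Rep\uqB$ canonically and $\res_1 V = V|_{\uqB}$; for general $\lambda$, the $\Rep\uqG$-central tensor equivalences making up the $G$-action intertwine the $\res_\lambda$'s, so projectivity of $\res_\lambda V$ for one $\lambda$ is equivalent to projectivity for all of them.

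Given this reduction, the forward direction becomes: projective $\uqG$-modules restrict to projective $\uqB$-modules. This is immediate because $\uqG$ is free as a $\uqB$-module (Nichols--Zoeller, or a direct triangular-decomposition computation): any summand of $\bigoplus \uqG$ restricts to a summand of $\bigoplus \uqB^{[\uqG:\uqB]}$. The reverse direction is the substantive content. For finite-dimensional $V$ it follows from the standard argument: the induced module $\mrm{Ind}_{\uqB}^{\uqG}(V|_{\uqB})$ is projective over $\uqG$ because $V|_{\uqB}$ is projective over $\uqB$, and the counit $\mrm{Ind}_{\uqB}^{\uqG}(V|_{\uqB}) \to V$ splits via the Frobenius structure of the Hopf extension, exhibiting $V$ as a summand. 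For infinite-dimensional $V$ one passes to an Ext-vanishing criterion: projectivity equals injectivity over the Frobenius algebra $\uqG$, and injectivity of $V$ over $\uqG$ is forced by injectivity of $V|_{\uqB}$ over $\uqB$ via Frobenius reciprocity
\[
\Ext^i_{\uqG}\bigl(M,\, \mrm{Coind}_{\uqB}^{\uqG}(V|_{\uqB})\bigr) \cong \Ext^i_{\uqB}\bigl(M|_{\uqB},\, V|_{\uqB}\bigr),
\]
together with the modular-function identification of $\mrm{Coind}_{\uqB}^{\uqG}$ with $\mrm{Ind}_{\uqB}^{\uqG}$, which places $V$ as a summand of an injective coinduced module.

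The principal obstacle I expect is the infinite-dimensional case of the converse. Upgrading the finite-dimensional summand argument to handle infinite-dimensional modules requires careful control of the Frobenius extension data (modular function, relative trace), and in particular the production of a $\uqG$-linear splitting of $V$ into a coinduced module. An alternative, more geometric route --- in keeping with the half-quantum flag variety philosophy of the paper --- would be to realize $\uqG$-modules as equivariant sheaves over $G/B$ and test projectivity fibrewise; this sidesteps the Frobenius bookkeeping at the cost of importing the geometric framework developed elsewhere in the text.
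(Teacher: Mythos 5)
There is a genuine gap, and it occurs at the very first step: the claimed reduction from all geometric points of $G/B$ to the single point $\lambda=1$ is false, and the single-Borel statement you would then be proving is itself not true. The translation equivalences $x:\msc{B}_1\to\msc{B}_\lambda$ do not intertwine the restriction functors in the way you need: as in diagram \eqref{eq:502}, they satisfy $\res_\lambda\circ x\cong x\circ\res_1$ where the left-hand $x$ is the translation autoequivalence of $\FK{G}_q$ itself, so they identify $\res_\lambda(x\cdot V)$ with a translate of $\res_1 V$, not $\res_\lambda V$ with $\res_1 V$. A general object $V$ is not fixed up to isomorphism by translation---only the $\dG$-equivariant objects, i.e.\ those coming from the big quantum group, are---so projectivity of $\res_1 V$ says nothing about $\res_\lambda V$ at other points. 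This is not a technicality: the restrictions genuinely vary over the flag variety, which is exactly why the supports $\supp^{\operatorname{chom}}_\lambda(\res_\lambda V)$ assemble into a $\tN$-valued theory rather than collapsing to a single $\mfk{n}_\lambda$; if one Borel sufficed, the Springer resolution would play no role in the paper at all.

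The reduced statement, and with it your finite-dimensional argument, is refuted by quantum baby Verma modules. For quantum $\operatorname{SL}(2)$, the module $V=\uqG\ot_{u(B^-_q)}k_\mu$ induced from the \emph{opposite} Borel is free of rank one over $k[E]/(E^l)$ by the triangular decomposition, hence projective over $\uqB$ (the grouplikes are semisimple in characteristic $0$), but it is not projective over $\uqG$ for non-Steinberg $\mu$. Consequently the counit $\mathrm{Ind}_{\uqB}^{\uqG}(V|_{\uqB})\to V$ cannot split: splitting is precisely relative $\uqB$-projectivity of $V$, which fails here, and the coinduction/Frobenius-reciprocity variant has the same defect---it shows $\mathrm{Coind}_{\uqB}^{\uqG}(V|_{\uqB})$ is injective but supplies no $\uqG$-linear splitting exhibiting $V$ as a summand. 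The ``alternative, more geometric route'' you mention in passing is in fact the paper's proof, and it is essential rather than optional: one computes the derived fibers $\operatorname{L}\lambda^\ast\sRHom_{\FK{G}_q}(S,M)\simeq\RHom_{\msc{B}_\lambda}(\res_\lambda S,\res_\lambda M)$ at \emph{every} geometric point via Theorem \ref{thm:fiber_l} ($S$ the sum of the simples), uses the hypothesis to bound their cohomology uniformly, applies the commutative-algebra Lemma \ref{lem:CA1} to conclude $\sExt^{\gg 0}_{\FK{G}_q}(S,M)=0$ over $\dG/\dB$, and then invokes the sheaf-Ext projectivity criterion of Lemma \ref{lem:sExt_proj}. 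No purely Frobenius-extension shortcut at a single Borel can work.
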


For the finite characteristic analog of this result one can see \cite{pevtsova02}.  The proof given here is conceptually different from the one given in \cite{pevtsova02}.
\par

Having relayed our support theoretic results, we now turn our attention to $\tN$-support itself, and a preceding localization of $\Rep\uqG$ over the flag variety.

\subsection{Localizing $\Rep G_q$ via the half-quantum flag variety}
\label{sect:231}

Our construction of $\tN$-support relies fundamentally on a monoidal localization of the category $\Rep\uqG$ of small quantum group representations, over the flag variety.  Formally, this localization appears as a monoidal enhancement for $\Rep\uqG$ in the category of quasi-coherent sheaves over $G/B$ (see Section \ref{sect:enrich}).  We explain here how such an enhancement for the small quantum group comes into being.  Our implicit claim is that such a localization arises naturally when one studies quantum group representations via the quantum Borels $\msc{B}_\lambda$.
\par

As a starting point, we consider the category $\QCoh(G/B_q)$ of $B_q$-equivariant sheaves on $G$.  Here one might consider $B_q$ as a noncommutatively ringed space, and we act on $G$ via the quantum Frobenius $\Fr:B_q\to B$ composed with the translation action of $B$ on $G$.  The category $\QCoh(G/B_q)$ is the category of sheaves for our \emph{half-quantum flag variety} (cf.\ \cite{backelinkremnizer08}).
\par

The category $\QCoh(G/B_q)$ is monoidal under the product $\ot=\ot_{\O_{G}}$, and has an additional $\QCoh(G/B)$-linear structure provided by a central tensor embedding $\zeta^\ast:\QCoh(G/B)\to \QCoh(G/B_q)$ from the category of sheaves on the classical flag variety.  This central embedding is obtained directly from the quantum Frobenius functor.  We think of $\QCoh(G/B_q)$ as a flat sheaf of tensor categories over $G/B$ whose fibers are the small quantum Borels $\msc{B}_\lambda$ introduced in Section~\ref{sect:more!} (see Sections \ref{sect:borels} \& \ref{sect:fibers_G/B}).
\par

We show in Section \ref{sect:Kempf} that the restriction functors $\res_\lambda:\Rep\uqG\to \msc{B}_\lambda$ all ``glue" to form a fully faithful, exact monoidal embedding $\Rep\uqG\to \QCoh(G/B_q)$ into the category of sheaves for the half-quantum flag variety.  We show furthermore that the induced functor on unbounded derived categories remains monoidal and fully faithful
\[
\Kempf:D(\uqG)\hookrightarrow D(G/B_q).
\]
We refer to the above embedding as the Kempf embedding.
\par

Now, the subcategory $\Rep\uqG\subset \QCoh(G/B_q)$ is \emph{not} stable under the ambient $\QCoh(G/B)$-action, and so does not inherit a sheaf structure over the flag variety.  However, taking inner morphisms for the $\QCoh(G/B)$-action on $\QCoh(G/B_q)$ provides a tensor categorical enhancement $\sHom_{G/B_q}$ for $\QCoh(G/B_q)$ over $G/B$ (cf.\ \cite[\S 3.2]{etingofostrik04}).  This enhancement restricts to an enhancement for the full monoidal subcategory $\Rep\uqG$.  Hence we obtain sheaf-valued morphisms $\sHom_{\uqG}$ for the small quantum group which are a posteriori compatible with the tensor structure on $\Rep\uqG$.  Specifically, these sheaf-morphisms $\sHom_{\uqG}$ inherit natural composition and monoidal structure maps which localize those of the linear category of small quantum group representations (see Section \ref{sect:shom1}).
\par

Similarly, at the derived level, the action of $D(G/B)$ on $D(G/B_q)$ provides sheaf-valued derived morphisms for the quantum group $\sRHom_{\uqG}$ which again carry a localized monoidal structure (see Section \ref{sect:Enh_derived}).  In this way we obtain monoidal enhancements for both the abelian and derived categories of quantum group representations, over the flag variety.
\par

At the derived level, the Springer resolution manifests naturally via the sheafy derived endomorphisms of the unit.  Namely, via work of Ginzburg and Kumar we obtain an identification
\[
H^\ast(\sRHom_{\uqG}(\1,\1))=p_\ast\O_{\tN},
\]
where $p$ is the affine bundle map $p:\tN\to G/B$.  The natural actions of the sheaf-morphisms $\sRHom_{\uqG}(\1,\1)$ on each $\sRHom_{\uqG}(V,V)$, provided by the localized tensor structure, can then be used to define a support theory
\[
\supp_{\tN}(V):=\Supp_{\tN}H^\ast(\sRHom_{\uqG}(V,V))/\mbb{G}_m
\]
which takes values in the projectivized Springer resolution.  This is our $\tN$-support.  We invite the reader to see Section \ref{sect:AH!} for more details.

\subsection{Relations with geometric representation theory}
\label{sect:intro_grt}

Given the comments of the previous subsections, one might ask about the relationship between our approach to support for the small quantum group and the canonical studies of Arkhipov-Bezrukavnikov-Ginzburg and Bezrukavnikov-Lachowska \cite{arkhipovbezrukavnikovginzburg04,bezrukavnikovlachowska07} in geometric representation theory.  We would conjecture that the results of \cite{arkhipovbezrukavnikovginzburg04,bezrukavnikovlachowska07}, which relate quantum group representations to sheaves on the Springer resolution, can be approached naturally from the perspective of the half-quantum flag variety.
\par

One can look at the situation as follows: the right adjoint to the monoidal embedding $\zeta^\ast:\QCoh_{\dg}(G/B)\to \QCoh_{\dg}(G/B_q)$, which we now consider at an infinity categorical level, provides a pushforward functor
\[
\zeta_\ast:=\sRHom_{G/B_q}(\1,-):\QCoh_{\dg}(G/B_q)\to \QCoh_{\dg}(G/B).
\]
This functor is no longer monoidal, but is a map of stable infinity categories.  Modulo a certain formality claim (Conjecture \ref{conj:formality}), the global action of the endomorphisms of the unit endow the values $\zeta_\ast(V)$ with a natural action of $p_\ast\O_{\tN}$.  By taking account of this action we obtain a functor $\xi_\ast:\QCoh_{\dg}(G/B_q)\to \QCoh_{\dg}(\tN)$.
\par

Via the Kempf embedding we embed the principal block $\mathbf{DG}(\uqG)_0$ of dg representations for the small quantum group into the category of sheaves $\QCoh_{\dg}(G/B_q)$, and so obtain a functor
\begin{equation}\label{eq:325}
\xi_\ast|_{\operatorname{block}_0}:\mathbf{DG}(\uqG)_0\to \QCoh_{\dg}(\tN),
\end{equation}
which again is non-monoidal but is a map between stable infinity categories.  We have conjectured that the map \eqref{eq:325} recovers the equivalence of Bezrukavnikov-Lachowska \cite{bezrukavnikovlachowska07}, and also that of Arkhipov-Bezrukavnikov-Ginzburg \cite{arkhipovbezrukavnikovginzburg04} when we restrict to $G$-equivariant objects.  Further elaborations on this topic are given in Section \ref{sect:GRT} below.

\subsection{Comments on logarithmic TQFT and modular functors}
\label{sect:intro_logtqft}

In the final section of the text, Section \ref{sect:log}, we explain how one might view our $G/B$-enhancement in the light of derived approaches to logarithmic, or non-semisimple, topological quantum field theory.  We consider in particular a derived modular surface invariant
\begin{equation}\label{eq:353}
F_q:\Surf^c(q)\to Vect_{\dg}
\end{equation}
constructed by Schweigert and Woike \cite{schweigertwoike21}.  This functor takes as inputs ($2$-dimensional real) surfaces with certain labels from $\Rep \uqG$, and outputs cochain complexes.  The value $F_q(\Sigma)$ on a given surface is determined by cohomology $F_q(\Sigma)\cong\RHom_{\uqG}(V_{\Sigma},W_{\Sigma})$, and the value on $S^2$ in particular is the derived endomorphism algebra of the unit $\RHom_{\uqG}(k,k)\cong \O(\mcl{N})$.
\par

A main point of consideration here is the ``singular" nature of the functor $F_q$.  In particular, the values $F_q(\Sigma)$ are non-dualizable not only as dg vector spaces, but also as dg modules over the state space for the sphere $F_q(S^2)=\O(\mcl{N})$.  This failure of dualizability follows from the fact that the nilpotent cone itself is singular, and these singularities obstruct ones ability to extend the functor $F_q$ up from $2$-dimensions to produce meaningful values $F^{\rm Extended}_q(M)$ on $3$-manifolds.  (See Observation \ref{obs:sing} below.)
\par

In Section \ref{sect:log} we explain how one might use our $\QCoh(G/B)$-enhancement for the quantum group (discussed above) to replace the target category $Vect_{\dg}$ in \eqref{eq:353} with $\QCoh_{\dg}(G/B)$, and also to replace $\mcl{N}$ with its desingularization $\tN$.  We argue that, in principal, such a global desingularization provides a means of overcoming \emph{some} obstructions to dualizability present in the functor $F_q$.  The interested reader can see Section \ref{sect:log} for more details.

\subsection{Organization}

The paper has three main parts, which are preceded by the introductory sections \ref{sect:notation}--\ref{sect:borel}.  These introductory sections cover the necessary background materials for quantum groups.  In Section \ref{sect:borels} in particular we introduce the $G/B$-family of small quantum Borels $\msc{B}_\lambda$ referenced above.
\par

In Part I of the text produce monoidal localizations for $\Rep \uqG$ and $D(\uqG)$ over the flag variety.  One can see Theorem \ref{thm:enhA} and Proposition \ref{prop:D_Enh} respectively.  These localizations appear as consequences of a monoidal embedding $\Rep \uqG\to \QCoh(G/B_q)$, which is discussed in Section \ref{sect:Kempf}.  At Theorem \ref{thm:fiber_l} we calculate the fibers of the localized derived category for $\uqG$ over $G/B$, and relate these fibers to the small quantum Borels $\msc{B}_\lambda$.
\par

In Part II of the paper we explain how the localizations of Part I imply the existence of an $\tN$-valued support theory for the small quantum group, and a subsequent resolution of the Balmer spectrum for stable $\uqG$-representations.  In Part III we discuss relationships between the materials of Part I, fundamental results from geometric representations theory, and emergent studies of logarithmic TQFTs.

\subsection{Acknowledgements}

Thanks to David Ben-Zvi, Roman Bezrukavnikov, Ken Brown, Eric Friedlander, Sergei Gukov, David Jordan, Simon Lentner, Dan Nakano, Sunghyuk Park, Noah Snyder, and Lukas Woike for useful discussions which influenced the trajectory of this work.  C.\ Negron is supported by NSF grant DMS-2001608.  J.\ Pevtsova is supported by NSF grant DMS-1901854 and the Brian
and Tiffinie Pang faculty fellowship. This material is based upon work supported
by the National Science Foundation under Grant No. DMS-1440140, while the first
author was in residence at the Mathematical Sciences Research Institute in Berkeley,
California, and the second author was in digital
residence.


\section{Notation guide and categorical generalities}
\label{sect:notation}

Throughout $k$ is an algebraically closed field of characteristic $0$, and $G$ is an almost-simple algebraic group over $k$.  We take $h$ to be the Coxeter number for $G$, and $B\subset G$ is a fixed choice of Borel, which we recognize as the positive Borel in $G$.
\begin{itemize}
\item $q\in k$ is a root of unity of odd order $\ord(q)>h$.  When the Dynkin diagram for $G$ has a component of type $G_2$ we also assume $3\nmid \ord(q)$.\vspace{2mm}

\item $\pi:G\to G/B$ is the quotient map.  (Often we employ a dual group $\dG$ with Borel $\dB$ and have the corresponding quotient $\pi:\dG\to \dG/\dB$.)\vspace{2mm}

\item $\kappa:\tN\to \mcl{N}$ is the moment map for the Springer resolution and $p:\tN\to G/B$ is the projection to the flag variety.\vspace{2mm}

\item A geometric point $x:\Spec(K)\to Y$ in a ($k$-)scheme $Y$ is a map from the spectrum of an algebraically closed field extension $K$ of the base $k$.\vspace{2mm}

\item The symbol $\ot_k$ denotes a vector space tensor product, and the generic symbol $\ot$ indicates the product operation in a given monoidal category $\msc{C}$.  So this product can be a product of sheaves, or a product of group representations, etc.  We also let $\ot_k$ denote the induced action of $Vect_k$ on a linear category.\vspace{2mm}

\item $\1$ is the unit object in a given tensor category $\msc{C}$.
\end{itemize}

\subsection{Finite-dimensional vs.\ infinite-dimensional representations}
\label{sect:finite}

For our study it has been convenient to work with cocomplete categories, where one generally has enough injectives and can freely use representability theorems.  For this reason we employ categories of arbitrary (possibly infinite-dimensional) representations $\Rep A$ for a given Hopf algebra $A$.  This is, by definition, the category of locally finite representations, or equivalently representations which are the unions of their finite-dimensional subrepresentations.
\par

One recovers the category $\rep A$ of finite-dimensional representations as the subcategory of objects in $\Rep A$ which admit left and right duals.  Since tensor functors preserve dualizable objects \cite[Ex. 2.10.6]{egno15} restricting to the subcategory of dualizable objects $\rep A\subset \Rep A$ is a natural operation with respect to tensor functors.  In this way one moves freely between the ``small" and ``big" representation categories for $A$.
\par

When we work with derived categories, we take
\[
D(A):=\left\{\begin{array}{c}\text{the unbounded derived category of (generally}\\
\text{infinite-dimensional) $A$-representations}
\end{array}\right\}.
\]
We have the distinguished subcategories $D^b(A)$, $D^+(A)$, etc.\ of appropriately bounded complexes of (generally infinite-dimensional) representations, and a distinguished subcategory of bounded complexes of finite-dimensional representations, which one might write as $D^b(\rep A)$ or $D_{\rm fin}(A)$.  In following the philosophy proposed above, one can view $D_{\rm fin}(A)$ as the subcategory of dualizable objects in $D(A)$.

\subsection{Relative Hopf modules}

For a Hopf algebra $A$, and an $A$-comodule algebra $\O$, we let $_\O\mbf{M}^A$ denote the category of relative $(\O,A)$-Hopf modules, with no finiteness assumptions.  This is the category of simultaneous (left) $\O$-modules and (right) $A$-comodules $M$, for which the coaction $M\to M\ot_k A$ is a map of $\O$-modules.  Here $\O$ acts diagonally on the product $M\ot_k A$, $a\cdot(v\ot b)=a_1v\ot a_2b$.
\par

For a basic example, we consider an affine algebraic group $H$ acting on (the right of) an affine scheme $Y$.  This gives the algebra of functions $\O=\O(Y)$ a comodule structure over $A=\O(H)$.  The fact that the action map $Y\times H\to Y$ is a scheme map says that $\O$ is an $A$-comodule algebra under this comodule structure.  Relative Hopf modules are then identified with equivariant sheaves on $Y$ via the global sections functor,
\[
\Gamma(Y,-):\QCoh(Y)^H\overset{\sim}\to {_{\O(Y)}\mbf{M}^{\O(H)}}.
\]

\subsection{Descent along $H$-torsors}
\label{sect:descent}

Suppose an algebraic group $H$ acts on a scheme $Y$, that the quotient $Y/H$ exists, and the quotient map $\pi:Y\to Y/H$ is a (faithfully flat) $H$-torsor.  For example, we can consider an algebraic group $G$ and a closed subgroup $H\subset G$ acting on $G$ by translation.  In this case the quotient $G\to G/H$ exists, is faithfully flat, and realizes $G$ as an $H$-torsor over the quotient \cite[Theorem B.37]{milne17}.
\par

For $Y$ as prescribed, pulling back along the quotient map $\pi:Y\to Y/H$ defines an equivalence of categories
\begin{equation}\label{eq:420}
\pi^\ast:\QCoh(Y/H)\overset{\sim}\to \QCoh(Y)^H
\end{equation}
from sheaves on the quotient to $H$-equivariant sheaves on $Y$.  The inverse to this equivalence if provided by faithfully flat descent, or simply ``descent"
\[
\operatorname{desc}:\QCoh(Y)^H\to \QCoh(Y/H).
\]
For details on faithfully flat descent one can see \cite[Expos\'{e} VIII]{sga1} or \cite[\href{https://stacks.math.columbia.edu/tag/03O6}{Tag 03O6}]{stacks2}.  One may understand descent simply as the inverse to the equivalence \eqref{eq:420}, which we are claiming exists under the precise conditions outlined above.
\par

For \emph{any} scheme $Y$ with an $H$-action, we have a pair of adjoint functors 
\[
E_{-}:  \Rep H \to \QCoh(Y)^H\ \ \text{and}\ \ -|_{H}: \QCoh(Y)^H \to \Rep H,
\]
where for $V \in \Rep H$ we set $E_V$ to be the vector bundle on $Y$ corresponding to the $H$-equivariant $\O(Y)$-module $\O(Y) \otimes_k V$, with $\O(Y)$ acting on the left factor and $H$ acting diagonally. By construction, $E_V$ is locally free; it is coherent if and only if $V$ is finite-dimensional. The right adjoint $-|_{H}$ is given by taking  global section $\Gamma(Y,-)$ and then forgetting the action of $\O(Y)$.
\par

For an affine algebraic group $Y = G$ and a closed subgroup $H \subset G$, the descent of the equivariant vector bundle $E_V$, defined as above, is the familiar bundle from \cite[I.5.8]{jantzen03}, \cite[3.3]{suslinfriedlanderbendel97b}. 

\subsection{Compact and perfect objects}
\label{sect:perf_comp}

A compact object $V$ in a cocomplete triangulated (resp.\ abelian) category $T$ is an object for which $\Hom_T(V,-)$ commutes with set indexed sums (resp.\ filtered colimits).  Given a monoidal triangulated category $T=(T,\ot)$, we reserve the term \emph{perfect} to refer to objects in $T$ which are dualizable for the product $\ot$--although we won't use this term with any frequency in this text.  (By a dualizable object we mean an object which admits left and right duals, in the sense of \cite[Definition 2.1.1]{bakalovkirillov01}.)  This language is adapted from geometry, where we might view the category of $H$-representations for a group $H$, for example, as sheaves on the classifying stack $BH$, see for example \cite{benzvifrancisnadler10,hallrydh17,lurieSAG}.

\subsection{Enriched categories and enhancements}
\label{sect:enrich}

By a category $\underline{T}$ enriched in a given monoidal category $\mcl{Q}$ we mean a collection of objects $\operatorname{obj}\underline{T}$ for $\underline{T}$ and morphisms $\mcl{H}om(M,N)$, which are objects in $\mcl{Q}$, for each pair of objects $M$ and $N$ in $\underline{T}$.  We suppose, additionally, the existence of associative composition morphisms
\[
\circ:\mcl{H}om(M,N)\ot\mcl{H}om(L,M)\to\mcl{H}om(L,N)
\]
for each triple of objects in $\underline{T}$.  Basics on enriched categories can be found in \cite[Ch 3]{riehl14}, though we recall some essential points here.
\par

Given any lax monoidal functor $\xi:\mcl{Q}\to \mcl{Q}'$ we can push forward the morphisms in $\underline{T}$ along $\xi$ to get a new category $\xi(\underline{T})$ which is enriched in $\mcl{Q}'$ \cite[Lemma 3.4.3]{riehl14}.  As one expects, the objects in $\xi(\underline{T})$ are the same as those in $\underline{T}$, and the morphisms in $\xi(\underline{T})$ are given as $\xi\big(\mcl{H}om(M,N)\big)$.  The composition maps for $\xi(\underline{T})$ are induced by those of $\underline{T}$ and the (lax) monoidal structure on the functor $\xi$.
\par

Of particular interest is the ``global sections" functor $\Gamma(\mcl{Q},-)=\Hom_{\mcl{Q}}(\1,-):\mcl{Q}\to \operatorname{Set}$, with lax monoidal structure on $\Gamma(\mcl{Q},-)$ provided by the monoidal and unit structure maps for $\mcl{Q}$,
\[
\Gamma(\mcl{Q},A)\times \Gamma(\mcl{Q},B)\to \Gamma(\mcl{Q},A\ot B),\ \ (f,g)\mapsto f\ot g.
\]
For any enriched category $\underline{T}$ over $\mcl{Q}$ the global sections $\Gamma(\mcl{Q},\underline{T})$ are then an ordinary category \cite[Definition 3.4.1]{riehl14}.  Note that the category of global sections $\Gamma(\mcl{Q},\underline{T})$ also acts on $\underline{T}$, in the sense that any morphism $f\in \Gamma(\mcl{Q},\mcl{H}om(M,N))$ specifies composition and precomposition maps
\[
f_\ast:\mcl{H}om(L,M)\to \mcl{H}om(L,N)\ \ \text{and}\ \ f^\ast:\mcl{H}om(N,L)\to \mcl{H}om(M,L)
\]
via the unit structure on $\mcl{Q}$ and composition in $\underline{T}$.
\par

Suppose now that $\mcl{Q}$ is symmetric.  By a \emph{monoidal} category enriched in $\mcl{Q}$ we mean an enriched category $\underline{T}$ with a product structure on objects, unit object, and associator, where the unit and associator structure maps appear as global isomorphisms for $\underline{T}$.  We require the existence of tensor maps
\[
tens^{\underline{T}}:\mcl{H}om(M,N)\ot \mcl{H}om(M',N')\to \mcl{H}om(M\ot M',N\ot N')
\]
which are associative relative to the associators on $\mcl{Q}$ and $\underline{T}$, and appropriately compatible with composition.  This compatibility between the tensor and composition morphisms appears as an equality
\[
\begin{array}{l}
(g_1\ot g_2)\circ (f_1\ot f_2)=(g_1\circ f_1)\ot (g_2\circ f_2):\vspace{1mm}\\
\hspace{2cm}\mcl{G}_1\ot \mcl{F}_1\ot \mcl{G}_2\ot\mcl{F}_2\to \mcl{H}om(L_1\ot L_2,N_1\ot N_2)
\end{array}
\]
for maps $f_i:\mcl{F}_i\to \mcl{H}om(L_i,M_i)$ and $g_i:\mcl{G}_i\to \mcl{H}om(M_i,N_i)$.  Such a monoidal structure on $\underline{T}$ implies a monoidal structure on its category of global sections.
\par

By an \emph{enhancement} of a (monoidal) category $T$ in $\mcl{Q}$, we mean a choice of enriched (monoidal) category $\underline{T}$ and a choice of (monoidal) equivalence $T\cong \Gamma(\mcl{Q},\underline{T})$.

\section{Quantum groups}
\label{sect:quantumgroups}

We recall basic constructions and results for quantum groups.  We also recall a geometric (re)construction of the small quantum group via de-equivariantization along the quantum Frobenius functor.

\subsection{Lusztig's divided power algebra}

We briefly review Lusztig's divided power algebra, leaving the details to the original works \cite{lusztig90II,lusztig93}.  Let $\mfk{g}$ be a semisimple Lie algebra over the complex numbers, and fix some choice of simple roots for $\mfk{g}$.
\par

To begin, one considers the generic quantum universal enveloping algebra
\[
U^{\operatorname{gen}}_v(\mfk{g})=\frac{\mbb{Q}(v)\langle E_\alpha, F_\alpha, K^{\pm 1}_\alpha:\alpha\text{ is a simple root for }\mfk{g}\rangle}{(v\text{-analogs of Serre relations})},
\]
where the $v$-Serre relations are as described in \cite[\S 1.1]{lusztig90II}.  These relations can be written compactly as
\[
\begin{array}{c}
\operatorname{ad}_v(E_\alpha)^{1-\langle \alpha,\beta\rangle}(E_\beta)=0,\ \ \operatorname{ad}_v(F_\alpha)^{1-\langle \alpha,\beta\rangle}(F_\beta)=0,\ \ K_\beta E_\alpha K_\beta^{-1}=v^{(\alpha,\beta)}E_\alpha,\vspace{2mm}\\
K_\beta F_\alpha K_\beta^{-1}=v^{-(\alpha,\beta)}F_\alpha,\ \ [E_\alpha,F_\beta]=\delta_{\alpha\beta}(K_\alpha+K_\alpha^{-1})/(v-v^{-1}).
\end{array}
\]
The algebra $U^{\operatorname{gen}}_v(\mfk{g})$ admits a Hopf algebra structure over $\mbb{Q}(v)$ with coproduct
\begin{equation}\label{eq:424}
\Delta(E_\alpha)=E_\alpha\ot K_\alpha +1\ot E_\alpha,\ \ \Delta(F_\alpha)=F_\alpha\ot 1+K_\alpha^{-1}\ot F_\alpha,\ \ \Delta(K_\alpha)=K_\alpha\ot K_\alpha.
\end{equation}
We consider the distinguished subalgebra of ``Cartan elements"
\[
U^{\operatorname{gen}}_v(\mfk{g})^0=\mbb{Q}(v)\langle K_\alpha,K_\alpha^{-1}:\alpha\text{ simple}\rangle\subset U^{\operatorname{gen}}_v(\mfk{g}),
\]
which we refer to as the toral subalgebra in $U^{\operatorname{gen}}_v(\mfk{g})$.
\par

In $U^{\operatorname{gen}}_v(\mfk{g})$ one has the $\mbb{Z}[v,v^{-1}]$-subalgebra $U_v(\mfk{g})$ generated by the divided powers
\[
E_\alpha^{(i)}=E^i_\alpha/[i]_{d_\alpha}!,\ \ F_\alpha^{(i)}=F^i_\alpha/[i]_{d_\alpha}!,\ \ \text{and }K_\alpha^{\pm 1}.
\]
Here $d_\alpha=|\alpha|^2/|\text{short\ root}|^2$ and $[v]_{d_\alpha}!$ is the $v^{d_i}$-factorial (see \cite[\S 1 and Theorem 6.7]{lusztig90II}).  The subalgebra $U_v(\mfk{g})$ furthermore has a Hopf structure induced by that of $U^{\operatorname{gen}}_v(\mfk{g})$, which is given by the same formulas \eqref{eq:424}.  We define the toral subalgebra as the intersection $U_v(\mfk{g})\cap U^{\operatorname{gen}}_v(\mfk{g})^0$.  This subalgebra not only contains powers of the $K_\alpha$, but also certain divided powers in the $K_\alpha$.
\par

For any value $q\in \mbb{C}^\times$ we have the associated $\mbb{Z}$-algebra map $\phi_q:\mbb{Z}[v,v^{-1}]\to \mbb{C}$ which sends $v$ to $q$, and we change base along $\phi_q$ to obtain Lusztig's divided power algebra
\[
U_q(\mfk{g})=\mbb{C}\ot_{\mbb{Z}[v,v^{-1}]}U_v(\mfk{g})
\]
at parameter $q$.  An important aspect of this algebra is that, at $q$ of order $l$, we have
\[
E_\alpha^l=[l]_{d_\alpha}!E_\alpha^{(l)}=0\ \ \text{and}\ \ F_\alpha^l=[l]_{d_\alpha}!F_\alpha^{(l)}=0.
\]
Hence the elements $E_\alpha$ and $F_\alpha$ become nilpotent in $U_q(\mfk{g})$.  One should compare with the distribution algebra associated to an algebraic group in finite characteristic.
\par

In Section \ref{sect:borel} we also consider the divided power algebra $U_q(\mfk{b})$ for the Borel.  This is the subalgebra $U_q(\mfk{b})\subset U_q(\mfk{g})$ generated by the toral elements as well as the divided powers $E_\alpha^{(i)}$ for the positive simple roots.

\subsection{Big quantum groups}
\label{sect:Gq}

We follow \cite{lusztig93} \cite[\S 3]{andersenparadowski95}.  Fix $G$ almost-simple with associated character lattice $X$.  Let $q$ be of finite odd order $l$.  We consider the category of representations for the ``big" quantum group associated to $G$,
\[
\rep G_q:=\left\{\begin{array}{c}
\text{finite-dimensional representations for $U_q(\mfk{g})$}\\
\text{which are compatibly graded by the character lattice }X
\end{array}\right\}.
\]
Here when we say a representation $V$ is graded by the character lattice we mean that $V$ decomposes into eigenspaces $V=\oplus_{\lambda\in X} V_\lambda$ for the action of the toral subalgebra in $U_q(\mfk{g})$, with each $V_\lambda$ of the expected eigenvalue.  For example one requires $K_\alpha\cdot v=q^{d_\alpha\langle \alpha,\lambda\rangle}v$ for homogeneous $v\in V_\lambda$ (see \cite[\S 3.6]{andersenparadowski95}).  The category $\rep G_q$ is the same as the category of representations of Lusztig's modified algebra $\dot{U}_q(G)$ \cite{lusztig93} \cite[\S 1.2]{kashiwara94}.
\par

We let $\Rep G_q$ denote the category of generally infinite-dimensional, integrable, $G_q$-representations.  Equivalently, $\Rep G_q$ is the category of $U_q(\mfk{g})$-modules which are the unions of finite-dimensional $X$-graded submodules.  Note that all objects in $\Rep G_q$ remain $X$-graded.

Let $P$ and $Q$ denote the weight and root lattices for $G$ respectively, $\Phi$ denote the roots in $Q$, and let $(-,-):P\times P\to \mbb{Q}$ denote the normalized Killing form which takes value $2=(\alpha,\alpha)$ on short roots $\alpha$.  For $r=\exp(P/Q)$ we choose a $r$-th root $\sqrt[r]{q}$ of $q$ and define $q^{(\lambda,\mu)}=(\sqrt[r]{q})^{r(\lambda,\mu)}$.  We then have the associated $R$-matrix for $\Rep G_q$ which appears as
\begin{equation}\label{eq:R}
\begin{array}{rl}
R&=(\sum_{n:\Phi^+\to \mbb{Z}_{\geq 0}} \operatorname{coeff}(q,n)E_{\gamma_1}^{n_1}\dots E_{\gamma_r}^{n_r}\ot F_{\gamma_1}^{n_1}\dots F_{\gamma_r}^{n_r})\Omega^{-1}\vspace{1mm}\\
&=\Omega^{-1}+\text{higher order terms}.
\end{array}
\end{equation}
Here $\Omega$ is the global semisimple operator
\[
\Omega=\sum_{\lambda,\mu\in X}q^{(\lambda,\mu)}1_\lambda\ot 1_\mu
\]
with each $1_\lambda$ the natural projection $1_\lambda:V\to V_\lambda$ \cite[Ch.\ 32]{lusztig93} \cite[\S 1]{sawin06}.  We note that the sum in \eqref{eq:R} has only finitely many non-vanishing terms, since the $E_\gamma$ and $F_\gamma$ are nilpotent in $U_q(\mfk{g})$.  The operator $R$ endows $\Rep G_q$ with its standard braiding
\[
\begin{array}{l}
c_{W,V}:W\ot V\to V\ot W,\vspace{1mm}\\
c_{V,W}(w,v)=q^{(\deg(w),\deg(v))}(v\ot w+\sum_{n>0}\operatorname{coeff}(q,n) F_{\gamma_1}^{n_1}\dots F_{\gamma_r}^{n_r}v\ot E_{\gamma_1}^{n_1}\dots E_{\gamma_r}^{n_r}w).
\end{array}
\]

We consider the dual group $\dG$, which (at $q$ of the given order) is of the same Dynkin type as $G$, but has an alternate choice of character lattice.  Specifically, $\dG$ has roots $l\cdot \Phi$ with form $(-,-)^{\vee}=\frac{1}{l^2}(-,-)$ and character lattice $X^M$, where
\[
X^M:=\{\mu\in X:(\mu,\lambda)\in l\mbb{Z}\ \text{for all }\lambda\in X\}.
\]
So, for example, when $G$ is simply-connected the dual $\dG$ is the adjoint form for $G$.
\par

Via Lusztig's quantum Frobenius \cite[Ch.\ 35]{lusztig93} we have a Hopf map $fr^\ast:\dot{U}_q(G)\to \dot{U}(\dG)$,
\[
fr^\ast=\left\{
\begin{array}{l}
E_\gamma,\ F_\gamma\mapsto 0\\
E_\gamma^{(l)}\mapsto e_\gamma\\
F_\gamma^{(l)}\mapsto f_\gamma\\
1_\lambda\mapsto 1_\lambda\text{ when $\lambda\in X^M$ and $0$ otherwise}
\end{array}\right.
\]
which defines a braided tensor embedding
\[
\Fr:\Rep \dG\to \Rep G_q
\]
whose image is the M\"uger center in $\Rep G_q$ \cite[Theorem 5.3]{negron21}, i.e.\ the full tensor subcategory of all $V$ in $\Rep G_q$ for which $c_{-,V}c_{V,-}:V\ot-\to V\ot-$ is the identity transformation.

\begin{remark}\label{rem:fr}
Our Frobenius map $fr^\ast:\dot{U}_q(G)\to \dot{U}_q(\dG)$ is induced by that of \cite{lusztig93}, but is not precisely the map of \cite{lusztig93}.  Similarly, our dual group $\dG$ is not precisely the dual group $G^\ast$ from \cite{lusztig93}.  Specifically, Lusztig's dual group $G^\ast$ is defined by taking the dual lattice $X^\ast\subset X$ to consist of all $\mu$ with restricted pairings $(\alpha,\mu)\in l\mbb{Z}$ at all simple roots $\alpha$.  This lattice $X^\ast$ contains $X^M$, so that we have an inclusion $\Rep \dG\to \Rep G^\ast$.  We then obtain our quantum Frobenius, functor say, by restricting the more expansive quantum Frobenius $\Rep G^\ast\to \Rep G_q$ from \cite{lusztig93} along the inclusion from $\Rep \dG$.  Very directly, our dual group is dictated by the $R$-matrix while Lusztig's dual group is dictated by representation theoretic considerations.
\end{remark}

\subsection{Quantum function algebras}

For us the quantum function algebra $\O(G_q)$ is a formal device which allows us to articulate certain categorical observations in a ring theoretic language.  The Hopf algebra $\O(G_q)$ is the unique Hopf algebra so that we have an equality
\[
\operatorname{Corep}\O(G_q)=\Rep G_q
\]
of non-full monoidal subcategories in $Vect$.  Via Tannakian reconstruction \cite{delignemilne82,schauenburg92}, one obtains $\O(G_q)$ (uniquely) as the coendomorphism algebra of the forgetful functor
\[
forget:\Rep G_q\to Vect,
\]
and the Hopf structure on $\O(G_q)$ is derived from the monoidal structure on $forget$.

\subsection{The small quantum group}

For $G$ and $q$ as above the small quantum group $\uqG$ is essentially the small quantum group from Lusztig's original work \cite{lusztig90,lusztig90II}, but with some slight variation in the grouplikes.  First, let $A$ denote the character group on the quotient $X/X^M$, $A=(X/X^M)^\ast$.  For any root $\gamma$ let $K_\gamma\in A$ denote the character $K_\gamma:X/X^M\to \mbb{C}^\ast$, $K_\gamma(\bar{\lambda})=q^{(\gamma,\lambda)}$.
\par

We now define
\[
u(G_q):=\frac{k\langle E_\alpha,\ F_\alpha,\ \xi:\alpha\text{ simple roots, }\xi\in A\rangle}
{\left(\begin{array}{c}\text{$q$-Serre relations \cite[(a3)--(a5)]{lusztig90II}, relations from $A$,}\vspace{2mm}\\
\xi\cdot E_\alpha\cdot \xi^{-1}=\xi(\alpha)E_\alpha,\ \xi\cdot F_\alpha\cdot \xi^{-1}=\xi(-\alpha)F_\alpha
\end{array}\right)}.
\]
So, a representation of $\uqG$ is just a representations of Lusztig's usual small quantum group which admits an additional grading by $X/X^M$ for which the $K_\alpha$ act as the appropriate semisimple endomorphisms $K_\alpha \cdot v=q^{(\alpha,\deg(v))}v$.  The algebra $\uqG$ admits the expected Hopf structure, just as in \cite{lusztig90II}, with the $\xi\in A$ grouplike and the $E_\alpha$ and $F_\alpha$ skew primitive.
\par

Now, the simple representations for $\uqG$ are labeled by highest weights $L(\bar{\lambda})$, for $\bar{\lambda}\in X/X^M$, and $L(\bar{\lambda})$ is dimension $1$--and hence invertible with respect to the tensor product--precisely for those $\bar{\lambda}$ with $q^{(\alpha,\lambda)}=1$ at all simple $\alpha$.  So if we let $X^\ast\subset X$ denote the sublattice of weights $\lambda$ with $(\lambda,\alpha)\in l\mbb{Z}$ for all simple $\alpha$, then we have $X^M\subset X^\ast$ and the subgroup $X^\ast/X^M\subset X/X^M$ labels these $1$-dimensional simples.  These simples form a fusion subcategory in $\Rep \uqG$ so that we have a tensor embedding
\[
Vect(X^\ast/X^M)\to \Rep \uqG,
\]
where $Vect(X^\ast/X^M)$ denotes the category of $X^\ast/X^M$ graded vector spaces, and we have the corresponding Hopf quotient $\uqG\to \O(X^\ast/X^M)$.

\begin{example}
When $G$ is of adjoint type $X^\ast=X^M$, so that $Vect(X^\ast/X^M)$ is trivial.  When $G$ is simply-connected $X^\ast=lP$, $X^M=lQ$, and $Vect(X^\ast/X^M)$ is isomorphic to representations of the center $Z(G)$.
\end{example}

We note that the $R$-matrix for $G_q$ provides a well-defined global operator on products of $\uqG$-representations, so that we have the braiding on $\Rep \uqG$ given by the same formula
\[
c_{V,W}:V\ot W\to W\ot V,\ \ c_{V,W}(v,w)=R_{21}(w\ot v).
\]
One can see that this braiding on $\Rep \uqG$ is non-degenerate, in the sense that the M\"uger center vanishes, since the induced form $\bar{\Omega}$ on $X/X^M$ is non-degenerate \cite[Theorem 5.3]{negron21}.  We have the restriction functor
\[
\res:\Rep G_q\to \Rep \uqG
\]
which is braided monoidal.  The following result is essentially covered in works of Andersen and coauthors \cite{andersen03,andersenwen92,andersenparadowski95,andersenpolowen91}.

\begin{proposition}\label{prop:andersen}
\begin{enumerate}
\item For any simple object $L$ in $\Rep \uqG$, there is a simple $G_q$-representation $L'$ so that $L$ is a summand of $\res(L')$.
\item Any projective object in $\Rep G_q$ restricts to a projective in $\Rep \uqG$.
\item For any projective object $P$ in $\Rep \uqG$, there is a projective $G_q$-representation $P'$ so that $P$ is a summand of $\res(P')$.
\end{enumerate}
\end{proposition}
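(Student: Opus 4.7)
The plan is to prove the three parts in the order stated, with (3) deduced from (1) and (2) via the usual projective-cover argument. Throughout, the key inputs are Lusztig's quantum Steinberg tensor product theorem together with a PBW-type freeness of the divided power algebra over the small quantum group.

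For (1), I would invoke the Steinberg decomposition: every simple $L(\lambda)$ in $\Rep G_q$ with $\lambda \in X^+$ decomposes as $L(\lambda_0)\ot \Fr^\ast L(\lambda_1)$, where $\lambda=\lambda_0+l\lambda_1$ is the $l$-adic decomposition with $\lambda_0$ restricted. Upon restriction to $\uqG$, the second factor becomes a direct sum of $1$-dimensional simples coming from $X^\ast/X^M$, so $\res L(\lambda)$ contains $\res L(\lambda_0)$ as a summand after twisting by a $1$-dimensional character. A direct highest-weight computation then shows that, for restricted $\lambda_0$, the restriction $\res L(\lambda_0)$ is precisely the simple $\uqG$-module $L(\overline{\lambda_0})$. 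Since every class $\bar\mu \in X/X^M$ admits a restricted representative, every simple of $\uqG$ is recovered in this manner.

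For (2), I would rely on a PBW-type decomposition of Lusztig's divided power algebra: the integral form $U_q(\mfk{g})$ is free as both a left and right module over $\uqG$, once one appropriately accounts for the toral subalgebra and the ambient $X$-grading defining $\Rep G_q$. This freeness implies that the restriction functor $\res:\Rep G_q\to \Rep\uqG$ admits exact induction and coinduction adjoints on both sides. Since its right adjoint is exact, $\res$ preserves projectives; equivalently, because $\uqG$ is a Frobenius algebra, one can phrase the same conclusion in terms of restriction preserving injectives.

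For (3), let $P$ be projective in $\Rep \uqG$ and decompose into indecomposables, so that we may assume $P \cong P(\bar\lambda)$ is the projective cover of a simple $L(\bar\lambda)$. By (1), choose a simple $L'$ in $\Rep G_q$ with $L(\bar\lambda)$ appearing as a summand of $\res L'$, and let $P'$ denote the projective cover of $L'$ in $\Rep G_q$. By (2), $\res P'$ is projective in $\Rep \uqG$, and the composite surjection $\res P' \twoheadrightarrow \res L' \twoheadrightarrow L(\bar\lambda)$ forces $P(\bar\lambda)$ to appear as a direct summand of $\res P'$. The main obstacle is (2): making precise the freeness of the divided power algebra over $\uqG$ in a way compatible with the $X$-grading on $\Rep G_q$, and then drawing the sharp conclusion about adjoints. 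This is the step where one must genuinely invoke the concrete integral-form analyses of Andersen, Polo, and Wen; once in place, (1) and (3) follow as described.
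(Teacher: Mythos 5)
Your parts (1) and (3) run essentially parallel to the paper: (1) is the quantum Steinberg tensor product theorem of Andersen--Wen together with the observation that the Frobenius factor splits into one-dimensional $\uqG$-modules, and (3) is the standard projective-cover argument, a mild variant of the paper's use of the Steinberg module. Two caveats there. First, your closing claim in (1) that every class in $X/X^M$ admits a restricted representative is false: already for $\operatorname{SL}(2)$ one has $X^M=lQ$, so $X/X^M\cong\mathbb{Z}/2l\mathbb{Z}$, while there are only $l$ restricted weights. This is harmless, since your own factorization produces $L(\bar\lambda)\cong L(\overline{\lambda_0})\ot k_{\overline{l\lambda_1}}$ from any \emph{dominant} representative $\lambda=\lambda_0+l\lambda_1$, which is all that is needed--but the argument should be phrased that way. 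Second, the paper's $\uqG$ carries the enlarged grouplikes $(X/X^M)^\ast$, so the paper first reduces to the subalgebra $u'$ via the exact sequence $1\to u'\to\uqG\to\O(X^\ast/X^M)\to 1$ in order to quote Andersen--Paradowski and Andersen--Wen verbatim; you should address this, and in (3) you must also justify that $\Rep G_q$ has enough (finite-dimensional) projectives so that the projective cover $P'$ of $L'$ exists--at a root of unity this is itself a nontrivial input, supplied by the Steinberg-module results of Andersen--Polo--Wen that the paper cites.

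The genuine gap is in (2). You want to deduce preservation of projectives from freeness of Lusztig's $U_q(\mfk{g})$ over $\uqG$ and exactness of the resulting (co)induction adjoints. But $\Rep G_q$ is not the category of all $U_q(\mfk{g})$-modules: it is the category of integrable, $X$-graded modules, i.e.\ comodules over $\O(G_q)$ (equivalently modules over the modified algebra), and the paper's $\uqG$ is not even a subalgebra of $U_q(\mfk{g})$ in the naive sense because of the grouplikes. Consequently, projectivity of $P'$ in $\Rep G_q$ is not projectivity over the algebra $U_q(\mfk{g})$, and induction/coinduction along an algebra inclusion $u\subset U_q(\mfk{g})$ neither lands in $\Rep G_q$ nor computes the adjoints of $\res:\Rep G_q\to\Rep\uqG$; so ``free $\Rightarrow$ exact adjoints $\Rightarrow$ $\res$ preserves projectives'' does not go through as stated. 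The correct implementation of your idea is comodule-theoretic: $\res$ corresponds to the Hopf quotient $\O(G_q)\to(\uqG)^\ast$, its right adjoint is the cotensor product against $\O(G_q)$, and exactness of that adjoint is equivalent to coflatness of $\O(G_q)$ over $(\uqB)^\ast$-style quotients, i.e.\ injectivity of $\O(G_q)$ over $\uqG$, which follows via Takeuchi/Schneider from faithful flatness of $\O(G_q)$ over $\O(\dG)$--exactly the argument the paper runs for the Borel in Proposition \ref{prop:129} and Lemma \ref{lem:doi}. The paper itself sidesteps all of this for $G$ and instead deduces (2) and (3) from the single fact that the Steinberg representation is simple and projective over $G_q$ and restricts to a simple projective over $\uqG$, with citations to Andersen--Polo--Wen, Andersen--Wen, and Andersen. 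So your route for (2) is repairable, but the repaired version is the coflatness/injectivity argument for $\O(G_q)$, not a PBW freeness statement for the divided power algebra.
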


\begin{proof}
For $\lambda\in X$ let $\msc{L}(\lambda)$ denote the corresponding simple representation in $\Rep G_q$.  Let $A'=(X/X^\ast)^\ast$ and consider $u'\subset \uqG$ the Hopf subalgebra generated by the $E_\alpha$, $F_\alpha$, and elements in $A'$.  We have the exact sequence of Hopf algebras $1\to u'\to \uqG\to \O(X^\ast/X^M)\to 1$.  From the corresponding spectral sequence on cohomology we find that an object in $\Rep \uqG$ is projective if and only if its restriction to $u'$ is projective, and obviously any object with simple restriction to $u'$ is simple over $\uqG$.
\par

The representation category of $u'$ is the category $\msc{C}_{G_1}$ from \cite{andersenparadowski95}.  So by \cite[Theorem 3.12]{andersenparadowski95} we see that all simple representations for $u'$ are restricted from representations over $G_q$, and by the formula \cite[Theorem 1.10]{andersenwen92} one can see that all simples for $\Rep\uqG$ are summands of a simple from $G_q$.  (The representation ``$\bar{L}_k(\lambda_0)$" from \cite{andersenwen92} will split into $1$-dimensional simples over $\uqG$ and there will be no twists ``$\bar{L}_k(\lambda_i)^{(i)}$" since we assume $\operatorname{char}(k)=0$.)  So we obtain (i).  Statements (i) and (ii) follow from the fact that the Steinberg representation is simple and projective over $G_q$ and restricts to a simple and projective representation over $\uqG$ \cite[Proposition 2.2]{andersenwen92} \cite[Theorem 9.8]{andersenpolowen91} \cite[Corollary 9.7]{andersen03}.
\end{proof}

\subsection{A remark on grouplikes}

In the literature there are basically two choices of grouplikes for the small quantum group which are of interest.  In the first case, we take the small quantum group with grouplikes given by characters on the quotient $X/X^\ast$, where $X^\ast=lP\cap X$.  This is a choice which is relevant for many representations theoretic purposes, and which reproduces Lusztig's original small quantum group \cite{lusztig90,lusztig90II} at simply-connected $G$.  In the second case, one proceeds as we have here and considers grouplikes given by characters on the quotient $X/X^M$.  This is a choice relevant for physical applications, as one preserves the $R$-matrix and hence allows for the small quantum group to be employed in constructions and analyses of both topological and conformal field theories, see for example \cite{derenzigeerpatureau18,schweigertwoike21,brochierjordansafronovsnyder,creutziggannon17,feigintipunin,gannonnegron}.
\par

This movement of the grouplikes for the small quantum group corresponds precisely to the choice of dual group to $G_q$ for the quantum Frobenius (discussed above).  One has the maximal choice $G^\ast$, or the choice $\dG$ dictated by the $R$-matrix.

\subsection{De-equivariantization and the small quantum group}
\label{sect:de_equiv}

We have the quantum Frobenius $\Fr:\Rep \dG\to \Rep G_q$ as above, and define the de-equivariantization in the standard way
\[
(\Rep G_q)_{\dG}:=\left\{\begin{array}{c}\text{the category of arbitrary}\\
\text{$\O(\dG)$-modules in }\Rep G_q\end{array}\right\}={_{\O(\dG)}\mbf{M}^{\O(G_q)}}
\]
\cite{dgno10}.  Here we abuse notation to write the image of the object $\O(\dG)$ in $\Rep \dG$ under quantum Frobenius simply as $\O(\dG)\in \Rep G_q$.  The category $(\Rep G_q)_{\dG}$ is monoidal under the product $\ot=\ot_{\O(\dG)}$.  We have the de-equivariantization map $\de:\Rep G_q\to (\rep G_q)_{\dG}$, which is a free module functor $\de(V):=\O(\dG)\ot_k V$, and the category $(\Rep G_q)_{\dG}$ admits a unique braided monoidal structure so that this de-equivariantization map is a functor between braided monoidal categories.
\par

Via the monoidal equivalence $(-)^{\sim}:\O(\dG)\text{-Mod}\overset{\sim}\to \QCoh(\dG)$ the de-equivariantization is identified with a certain non-full monoidal subcategory in $\QCoh(\dG)$, which one might refer to as the category of $G_q$-equivariant sheaves over $\dG$.  We let $\QCoh(\dG)^{G_q}$ denote this category of $G_q$-equivariant sheaves on $\dG$, so that we have a monoidal equivalence
\[
\Gamma(\dG,-):\QCoh(\dG)^{G_q}\overset{\sim}\to (\Rep G_q)_{\dG}.
\]
This equivalence furthermore provides a braiding on the monoidal category $\QCoh(\dG)^{G_q}$ which is induced directly by the $R$-matrix,
\[
\operatorname{braid}_{M,N}:M\ot N\to N\ot M,\ \ m\ot n\mapsto R_{21}(n\ot m).
\]

\begin{definition}
The quantum Frobenius kernel for $G$ at $q$ is the braided monoidal category of $G_q$-equivariant quasi-coherent sheaves over $\dG$,
\[
\FK{G}_q:=\QCoh(\dG)^{G_q}.
\]
\end{definition}

The compact objects in $\FK{G}_q$ are precisely those equivariant sheaves which are coherent over $\dG$ \cite[Lemma 8.4]{negron21}.  As a consequence of Theorem \ref{thm:ag} below, all coherent sheaves are furthermore dualizable.
\par

\begin{remark}
Our use of sheaves over $\dG$ rather that $\O(\dG)$-modules in the definition of the quantum Frobenius kernel is stylistic.  The reader will not be harmed in thinking of the category $\FK{G}_q$ simply as the category of $\O(\dG)$-modules in $\Rep G_q$, or equivalently as the category of $(\O(\dG),\O(G_q))$-relative Hopf modules.
\end{remark}

We have the following observation of Arkhipov and Gaitsgory \cite{arkhipovgaitsgory03}, which is premeditated by works of Takeuchi and Schneider \cite{takeuchi79,schneider93}.  

\begin{theorem}[\cite{arkhipovgaitsgory03}]\label{thm:ag}
Taking the fiber at the identity in $\dG$ provides an equivalence of (abelian) braided monoidal categories
\[
1^\ast:\FK{G}_q\overset{\sim}\to \Rep \uqG.
\]
\end{theorem}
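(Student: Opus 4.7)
The plan is to recognize this as an instance of Schneider's faithfully flat descent theorem for Hopf algebra extensions, applied to the Frobenius inclusion on function algebras, together with a compatibility check for the braided monoidal structure.

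\textbf{Step 1 (reduction to Hopf modules).} I would identify $\FK{G}_q=\QCoh(\dG)^{G_q}$ with the category ${_{\O(\dG)}\mbf{M}^{\O(G_q)}}$ of relative Hopf modules via global sections, and identify the fiber at the identity with the functor $M \mapsto M\ot_{\O(\dG)}k = M/\O(\dG)^+M$, where $\O(\dG)^+$ is the kernel of the augmentation (corresponding to evaluation at the identity).

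\textbf{Step 2 (Hopf algebra exact sequence).} The quantum Frobenius $fr^\ast:\dot{U}_q(\mfk{g})\to \dot{U}(\dG)$ dualizes to a Hopf inclusion $\O(\dG)\hookrightarrow \O(G_q)$. I would identify the Hopf algebra quotient $\O(G_q)\ot_{\O(\dG)}k$ with $\uqG^\ast$; this comes down to observing that, dually, $\uqG$ is precisely the Frobenius kernel inside the divided power algebra (with the grouplikes adjusted per Section 3.4 of the text). After this identification, $\uqG^\ast$-comodules are the same as $\uqG$-modules since $\uqG$ is finite-dimensional, and the fiber functor naturally lands in $\mbf{M}^{\uqG^\ast}=\Rep\uqG$.

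\textbf{Step 3 (descent).} I would now invoke Schneider's theorem \cite{schneider93}: for a faithfully flat Hopf algebra extension $B\subseteq A$ with quotient $C=A/B^+A$, the fiber functor ${_B\mbf{M}^A}\to \mbf{M}^C$ is an equivalence, with quasi-inverse given by the cotensor product $V\mapsto A\square_{C}V$. Instantiating this for $B=\O(\dG)$, $A=\O(G_q)$, $C=\uqG^\ast$ gives the abelian equivalence $1^\ast:\FK{G}_q\overset{\sim}\to\Rep\uqG$.

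\textbf{Step 4 (monoidal and braided compatibility).} The tensor product on $\FK{G}_q$ is $\ot_{\O(\dG)}$ while that on $\Rep\uqG$ is $\ot_k$; these match under the fiber functor via the canonical isomorphism $(M\ot_{\O(\dG)}N)\ot_{\O(\dG)}k\cong (M\ot_{\O(\dG)}k)\ot_k(N\ot_{\O(\dG)}k)$. Since the braiding on both $\FK{G}_q$ and $\Rep\uqG$ is, by construction, induced from the universal $R$-matrix of $G_q$ through the common quotient functor $\Rep G_q\to \FK{G}_q\to \Rep\uqG$ (the latter being $\res$, which is braided), the equivalence transports braidings on the nose.

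\textbf{Main obstacle.} The essential technical input is the faithful flatness of $\O(G_q)$ over the Frobenius subalgebra $\O(\dG)$. In classical terms this would be automatic from the torsor picture $G\to G/H$, but in the quantum setting it is a nontrivial structural statement about the quantum function algebra. One resolves it via a PBW/triangular decomposition of $\O(G_q)$ that exhibits it as free of rank $\dim\uqG$ over $\O(\dG)$. This is precisely the point at which one invokes quantum group structure theory (compare \cite{takeuchi79,schneider93,arkhipovgaitsgory03}), and it is the one step in the argument that is not purely formal Hopf-algebraic.
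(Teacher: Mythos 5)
Your proposal is correct and follows essentially the same route the paper indicates: the paper cites \cite{arkhipovgaitsgory03} and notes in the accompanying remark that the theorem is alternately deduced from \cite[Theorem 2]{takeuchi79} and \cite[Remark 2.5]{schneider93}, i.e.\ precisely the reduction to relative Hopf modules, the exact sequence $k\to\O(\dG)\to\O(G_q)\to \uqG^\ast\to k$ coming from quantum Frobenius, and faithfully flat descent, which is also how the Borel analogue (Proposition \ref{prop:129} and Corollary \ref{cor:438}) is handled in the text. The only quibble is that your claim that $\O(G_q)$ is \emph{free} of rank $\dim\uqG$ over $\O(\dG)$ is stronger than what is needed or invoked; faithful flatness (Schneider) together with coflatness/injectivity over $\uqG$ (Takeuchi) is the actual input.
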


\begin{remark}
Theorem \ref{thm:ag} is alternately deduced from \cite[Theorem 2]{takeuchi79} and \cite[Remark 2.5]{schneider93}.
\end{remark}

The above theorem tells us that the dualizable objects in $\FK{G}_q$ are precisely the compact objects, i.e.\ coherent equivariant sheaves, as claimed above.  This subcategory of coherent sheaves is a finite tensor category which is equivalent to the category of finite-dimensional $\uqG$-representations, via the above equivalence.
\par

Since monoidal functors preserve duals \cite[Exercise 2.10.6]{egno15} we see that the image of any dualizable sheaf under the forgetful functor $\FK{G}_q\to \QCoh(\dG)$ is dualizable.  Since the dualizable objects in $\QCoh(\dG)$ are precisely finite rank vector bundles we see that all compact/dualizable objects in $\FK{G}_q$ are finite rank vector bundles over $\dG$, and all objects in $\FK{G}_q$ are therefore flat over $\dG$.
\par

From the above geometric perspective the de-equivariantization map for $(\Rep G_q)_{\dG}$ becomes an equivariant vector bundle map $E_-:\Rep G_q\to \FK{G}_q$, $E_V=\O_{\dG}\ot_k V$, which we still refer to as the de-equivariantization functor.  One sees immediately that the equivalence of Theorem \ref{thm:ag} fits into a diagram
\[
\xymatrix{
 & \Rep G_q\ar[dr]^{\res}\ar[dl]_{E_-}\\
\FK{G}_q\ar[rr]^\sim_{1^\ast} & & \Rep \uqG.
}
\]
From this point on we essentially forget about the Hopf algebra $\uqG$, and work strictly with its geometric incarnation $\FK{G}_q$.

\subsection{The $\dG$-action on the quantum Frobenius kernel}

As explained in \cite{arkhipovgaitsgory03,dgno10} we have a translation action of $\dG$ on the category $\FK{G}_q=\QCoh(\dG)^{G_q}$ of $G_q$-equivariant sheaves.  This gives an action of $\dG$ on $\FK{G}_q$ by braided tensor automorphism.  This action is algebraic, in the precise sense of \cite[Appendix A]{negron21}, and we have the corresponding group map $\dG\to \underline{\Aut}_{\ot}^{br}(\FK{G}_q)$.  In terms of the translation action of $\dG$, the de-equivariantization map from the big quantum group restricts to an equivalence
\[
E_-:\Rep G_q\overset{\sim}\longrightarrow (\FK{G}_q)^{\dG}
\]
onto the monoidal category of $\dG$-equivariant objects in $\FK{G}_q$ \cite[Proposition 4.4]{arkhipovgaitsgory03}.
\par

One can translate much of the analysis in this text from the small quantum group to the big quantum group by restricting to $\dG$-equivariant objects in $\FK{G}_q$.  One can compare, for example, with \cite{arkhipovbezrukavnikovginzburg04,boekujawanakano}.

\section{The quantum Borels}
\label{sect:borel}

We give a presentation of the small quantum Borel which is in line with the presentation of Section \ref{sect:quantumgroups} for the small quantum group.  At the conclusion of the section we extend the construction of the usual (positive) quantum Borel to provide a family of small quantum Borels which are parametrized by the flag variety $\dG/\dB$.
\par

While much of the material of this section is known, or at least deducible from known results in the literature, the construction of the small quantum Borel at an arbitrary geometric point $\lambda:\Spec(K)\to \dG/\dB$ in the flag variety is new.

\subsection{Quantum Frobenius for the Borel, and de-equivariantization}

As with the (big) quantum group, we let $\Rep B_q$ denote the category of integrable $U_q(\mfk{b})$-representations which are appropriately graded by the character lattice $X$.  The quantum Frobenius for the quantum group induces a quantum Frobenius for the quantum Borel, $\Fr:\Rep \dB\to \Rep B_q$ \cite{lusztig93}.  This quantum Frobenius identifies $\Rep \dB$ with the full subcategory of $B_q$-representations whose $X$-grading is supported on the sublattice $X^M$ (see Section \ref{sect:Gq}).
\par

The functor $\Fr$ is a fully faithful tensor embedding, in the sense that its image is closed under taking subquotients.  This implies that the corresponding Hopf algebra map $fr:\O(\dB)\to \O(B_q)$, which one can obtain directly by Tannakian reconstruction, is an inclusion \cite[Lemma 2.2.13]{schauenburg92}.
\par

We now consider the restriction functor $\Rep B_q\to \Rep \uqB$.  Since this functor is surjective, the corresponding Hopf map $\O(B_q)\to (\uqB)^\ast$ is surjective as well \cite[Lemma 2.2.13]{schauenburg92}.  Furthermore, an object in $\Rep B_q$ is in the image of quantum Frobenius if and only if that object has trivial restriction to $\uqB$.  Since $\uqB$ is normal in the big quantum Borel, it follows that $\O(\dB)$ is identified with the $\uqB$-invariants, or $(\uqB)^\ast$-coinvariants, in the quantum function algebra $\O(\dB)=\O(B_q)^{\uqB}$ via the map $fr:\O(\dB)\to \O(B_q)$.  It also follows that the Hopf algebra map $\O(B_q)\to (\uqB)^\ast$ induces an isomorphism from the fiber $k\ot_{\O(\dB)}\O(B_q)\cong (\uqB)^\ast$ \cite[proof of Proposition 3.11]{arkhipovgaitsgory03}.
\par

To rephrase what we have just said; we observe an exact sequence of Hopf algebras
\[
k\to \O(\dB)\overset{fr}\to \O(B_q)\to (\uqB)^\ast\to k
\]
via quantum Frobenius which corresponds to, and is (re)constructed from, the exact sequence of tensor categories
\[
Vect\to \Rep \dB\overset{\Fr}\longrightarrow \Rep B_q\to \Rep(\uqB)\to Vect
\]
\cite[Definition 3.7]{bruguieresnatale11} (cf.\ \cite{etingofgelaki17}).

\begin{proposition}\label{prop:129}
The quantum function algebra $\O(B_q)$ is faithfully flat over $\O(\dB)$, and injective over $\uqB$.
\end{proposition}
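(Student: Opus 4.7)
My plan is to deduce both claims from the Hopf algebra exact sequence
\[
k \to \O(\dB) \xrightarrow{fr} \O(B_q) \to (\uqB)^\ast \to k
\]
recorded just before the statement. The key intermediate goal is to show this extension is cleft, in the sense that one has an $\O(\dB)$-linear, $(\uqB)^\ast$-colinear isomorphism
\[
\O(B_q) \cong \O(\dB)\ot_k (\uqB)^\ast,
\]
where the right side carries the $\O(\dB)$-action on the first factor and the regular $(\uqB)^\ast$-comodule structure on the second. Both assertions fall out of such a decomposition. Faithful flatness over $\O(\dB)$ is immediate because the right hand side is visibly a free $\O(\dB)$-module. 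For injectivity over $\uqB$, one uses the equivalence between right $(\uqB)^\ast$-comodules and left $\uqB$-modules (valid since $\uqB$ is finite-dimensional) to recognize $\O(B_q)$ as a free $\O(\dB)$-module's worth of copies of the coregular representation $(\uqB)^\ast$. Since $\uqB$ is a finite-dimensional Hopf algebra, it is Frobenius by Larson--Sweedler, and consequently $(\uqB)^\ast$ is an injective left $\uqB$-module, so injectivity of $\O(B_q)$ follows by a direct sum argument.

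To establish cleftness I would proceed in two steps. First, verify that $\O(B_q)$ is a $(\uqB)^\ast$-Hopf--Galois extension of $\O(\dB)$, i.e.\ that the Galois map
\[
\beta\colon \O(B_q)\ot_{\O(\dB)}\O(B_q)\to \O(B_q)\ot (\uqB)^\ast,\quad x\ot y\mapsto (x\ot 1)\Delta(y),
\]
is bijective; this combines with $\O(\dB)=\O(B_q)^{co(\uqB)^\ast}$ recorded above. Second, produce a convolution-invertible, $(\uqB)^\ast$-colinear section $\sigma\colon (\uqB)^\ast\to \O(B_q)$ of the quotient map, which upgrades the Hopf--Galois structure to a cleft one and thereby yields the desired decomposition. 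The section can in principle be built by hand from the PBW basis of $U_q(\mfk{b})$: a choice of ordered monomials in the divided powers $E_\alpha^{(i)}$ and toral elements lifts a vector-space basis of $(\uqB)^\ast$ to $\O(B_q)$ in a manner compatible with the coaction.

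The main obstacle is the cleftness/faithful flatness step, which if approached directly requires a careful PBW analysis. In practice I would short-circuit the argument by appealing to the general machinery of Hopf algebra extensions, notably Schneider's theorems guaranteeing that normal inclusions into pointed or sufficiently Noetherian Hopf algebras are automatically faithfully flat and cleft. This is precisely the mode of argument used by Arkhipov--Gaitsgory \cite{arkhipovgaitsgory03} to establish the analogous assertion that $\O(G_q)$ is faithfully flat over $\O(\dG)$, and the same machinery applies verbatim to the Borel case treated here.
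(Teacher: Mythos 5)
Your reduction ``cleft $\Rightarrow$ both claims'' is fine, but the cleftness step itself is a genuine gap, and the machinery you invoke to close it does not exist in the generality you claim. First, the standard normal-basis/crossed-product theorems (Schneider, Doi--Takeuchi, Masuoka) take faithful flatness of $\O(B_q)$ over $\O(\dB)$ as a \emph{hypothesis}, so routing faithful flatness through cleftness is circular unless you first prove flatness by other means (pointedness of $\O(B_q)$ plus Radford's freeness theorem would do this, but that already gives the first half of the proposition without any Galois theory). Second, the blanket statement that normal inclusions into pointed or Noetherian Hopf algebras with finite-dimensional quotient are ``automatically cleft'' is false: $\O(\operatorname{PGL}(2))\subset \O(\operatorname{SL}(2))$ is a faithfully flat extension of Noetherian Hopf algebras with finite-dimensional quotient $\O(\mu_2)$, yet it is not cleft --- a convolution-invertible colinear section would require an invertible regular function on $\operatorname{SL}(2)$ in the odd $\mu_2$-eigenspace, and the only units are constants (equivalently, the odd part is the nontrivial line bundle on $\operatorname{PGL}(2)$, so there is no normal basis). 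So cleftness is strictly stronger than what the proposition asserts, it is not delivered by the cited general theorems, and your alternative PBW construction of a convolution-invertible colinear section is only gestured at (colinearity and convolution-invertibility of a basis lift are exactly the hard points). The attribution to Arkhipov--Gaitsgory is also off: their argument for $\O(G_q)$ over $\O(\dG)$ runs through faithful flatness and Takeuchi-type category equivalences, not through a cleft/crossed-product decomposition (which, by the same Picard-group obstruction as above, is doubtful for the adjoint-type $\dG$).

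For comparison, the paper's proof avoids cleftness entirely and needs only two citations: faithful flatness of $\O(B_q)$ over $\O(\dB)$ is quoted from Schneider \cite[Remark 2.5]{schneider93}, and then Takeuchi's Theorem 1 \cite{takeuchi79} yields that $\O(B_q)$ is coflat as a $(\uqB)^\ast$-comodule, which for the finite-dimensional Hopf algebra $\uqB$ is precisely injectivity as a $\uqB$-module. If you want to salvage your outline, prove faithful flatness first (Schneider, or Radford's freeness for the pointed Hopf algebra $\O(B_q)$), and then replace the cleftness step by the weaker coflatness statement, which is what the final claim actually requires.
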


\begin{proof}
Faithful flatness of $\O(B_q)$ over $\O(\dB)$ follows by Schneider \cite[Remark 2.5]{schneider93}.  Now by Takeuchi we see that $\O(B_q)$ is coflat as a $(\uqB)^\ast$-comodule \cite[Theorem 1]{takeuchi79}.  Equivalently, $\O(B_q)$ is injective over $\uqB$.
\end{proof}

As with the quantum group, we define the quantum Frobenius kernel $\FK{B}_q$ for the quantum Borel as the category of $B_q$-equivariant sheaves on $\dB$,
\[
\FK{B}_q:=\QCoh(\dB)^{B_q}.
\]
Via the global sections functor, this category of equivariant sheaves is identified with the category of relative Hopf modules $\QCoh(\dB)^{B_q}\overset{\sim}\to {_{\O(\dB)}\mbf{M}^{\O(B_q)}}$.  The following is an immediate application of Proposition \ref{prop:129} and \cite[Theorem 1]{takeuchi79}.

\begin{corollary}\label{cor:438}
Taking the fiber at the identity $1:\Spec(k)\to \dB$ provides an equivalence of (abelian) monoidal categories
\[
1^\ast:\FK{B}_q\overset{\sim}\to \Rep\uqB.
\]
\end{corollary}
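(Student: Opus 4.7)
\smallskip

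\noindent\textbf{Proof proposal.}  The plan is to reduce the claim to a standard Hopf module theorem of Takeuchi, using the exact sequence
\[
k\to \O(\dB)\overset{fr}\to \O(B_q)\to (\uqB)^\ast\to k
\]
and the faithful flatness assertion from Proposition \ref{prop:129}.  As a first step I would translate the statement out of the geometric language: the global sections functor realizes $\FK{B}_q=\QCoh(\dB)^{B_q}$ as the category $_{\O(\dB)}\mbf{M}^{\O(B_q)}$ of relative Hopf modules, and the geometric fiber $1^\ast$ at the identity corresponds, under this identification, to the base change functor $k\ot_{\O(\dB)}(-)$ along the counit $\O(\dB)\to k$.

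Next I would invoke Takeuchi's equivalence \cite[Theorem 1]{takeuchi79} (also used to justify Proposition \ref{prop:129}).  Since $\O(B_q)$ is faithfully flat over $\O(\dB)$, Takeuchi's theorem applies and says that base change along $\O(\dB)\to k$ defines an equivalence
\[
k\ot_{\O(\dB)}-\ :\ {_{\O(\dB)}\mbf{M}^{\O(B_q)}}\overset{\sim}\longrightarrow \operatorname{Comod}\big(k\ot_{\O(\dB)}\O(B_q)\big),
\]
with quasi-inverse given by cotensoring with $\O(B_q)$.  The identification of the fiber $k\ot_{\O(\dB)}\O(B_q)$ with $(\uqB)^\ast$, recorded in the discussion preceding Proposition \ref{prop:129}, then produces an equivalence of abelian categories $\FK{B}_q\overset{\sim}\to \Rep\uqB$, and a direct check shows this is exactly the functor $1^\ast$.

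It remains to promote this to a monoidal equivalence.  On $\FK{B}_q$ the product is $\ot_{\O_{\dB}}$, and on $\Rep\uqB$ the product is the underlying tensor of vector spaces with diagonal $(\uqB)^\ast$-coaction.  Since base change is symmetric monoidal for the relative tensor product, the canonical comparison map
\[
k\ot_{\O(\dB)}(M\ot_{\O(\dB)}N)\overset{\sim}\to (k\ot_{\O(\dB)}M)\ot_k (k\ot_{\O(\dB)}N)
\]
is an isomorphism, and compatibility with the coactions is straightforward from the definition of the coaction on a tensor product of relative Hopf modules.  The unit $\O_{\dB}$ goes to the trivial $\uqB$-representation $k$.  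Hence $1^\ast$ is a monoidal functor, and the abelian equivalence above upgrades to a monoidal equivalence as required.

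I expect the only subtle point to be the formal invocation of Takeuchi's theorem: one needs the exact sequence of Hopf algebras to satisfy his faithful-flatness hypothesis, which is precisely the content of Proposition \ref{prop:129}.  With that in hand every other step (global sections, identification of the fiber with $(\uqB)^\ast$, monoidality of base change) is routine.
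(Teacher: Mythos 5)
Your proposal is correct and follows essentially the same route as the paper, which deduces the corollary as an immediate application of Proposition \ref{prop:129} together with Takeuchi's theorem \cite[Theorem 1]{takeuchi79}. You simply spell out the steps (translation to relative Hopf modules, identification of the fiber $k\ot_{\O(\dB)}\O(B_q)\cong(\uqB)^\ast$, and monoidality of base change) that the paper leaves implicit.
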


As with the quantum group, discussed at Theorem \ref{thm:ag}, the compact/dualizable objects in $\FK{B}_q$ are precisely those equivariant sheaves which are coherent over $\dB$, and all objects in $\FK{B}_q$ are flat over $\dB$.

\subsection{A spectral sequences for $B_q$-extensions}

\begin{lemma}[{\cite{doi81}}]\label{lem:doi}
An object $V$ in $\Rep B_q$ is injective if and only if $V$ is a summand of some additive power $\oplus_{i\in I} \O(B_q)$.
\end{lemma}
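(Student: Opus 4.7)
The plan is to identify $\Rep B_q=\operatorname{Corep}\O(B_q)$ via Tannakian reconstruction, as in the discussion of quantum function algebras above, and then apply the standard cofree/forgetful adjunction for comodules over a coalgebra. Concretely, for any vector space $W$, equip $W\ot_k \O(B_q)$ with the $\O(B_q)$-comodule structure $\mathrm{id}_W\ot \Delta$ via the comultiplication on the second tensor factor. One then has the natural adjunction isomorphism
\[
\Hom^{\O(B_q)}\!\big(M,\, W\ot_k \O(B_q)\big)\ \cong\ \Hom_k(M,W),\qquad \varphi\mapsto (\mathrm{id}_W\ot\epsilon)\circ\varphi,
\]
for any $B_q$-comodule $M$. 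Since $\Hom_k(-,W)$ is exact on vector spaces, every cofree comodule $W\ot_k\O(B_q)$ is injective in $\Rep B_q$. Taking $W=k^I$ identifies $\oplus_{i\in I}\O(B_q)\cong k^I\ot_k\O(B_q)$ as a cofree comodule, hence an injective object, and consequently any direct summand of a free power $\oplus_I\O(B_q)$ is injective. This settles the ``if'' direction.

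For the converse, let $V$ be an arbitrary object of $\Rep B_q$ with coaction $\rho_V:V\to V\ot_k \O(B_q)$. Coassociativity $(\mathrm{id}\ot\Delta)\rho_V=(\rho_V\ot\mathrm{id})\rho_V$ says precisely that $\rho_V$ is a morphism of comodules, where the target carries the right-hand structure specified above. The counit provides a vector-space retraction $(\mathrm{id}\ot\epsilon)\circ \rho_V=\mathrm{id}_V$, so in particular $\rho_V$ is a monomorphism in $\Rep B_q$. If $V$ is injective then this monomorphism splits in $\Rep B_q$, and choosing a $k$-basis of $V$ identifies $V\ot_k\O(B_q)\cong \oplus_{\dim V}\O(B_q)$; this exhibits $V$ as a direct summand of an additive power of $\O(B_q)$, as required.

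The argument is entirely formal and presents no serious obstacle; the only point requiring attention is the verification that $\rho_V$ is a comodule map relative to the right-hand structure on $V\ot_k\O(B_q)$, which is nothing other than coassociativity. The proof uses nothing specific to the Hopf (as opposed to coalgebra) structure on $\O(B_q)$, which accounts for its validity in the full generality of Doi's framework.
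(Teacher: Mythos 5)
Your proof is correct and follows essentially the same route as the paper: the coaction $\rho_V:V\to \underline{V}\ot_k\O(B_q)$ is a split (at the vector space level) comodule monomorphism into a cofree comodule, cofree comodules are injective, and the two directions follow by splitting and by closure of injectives under summands. The only difference is that you verify injectivity of cofree comodules directly via the cofree/forgetful adjunction, whereas the paper simply cites Doi for that fact.
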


\begin{proof}
Comultiplication provides an injective comodule map $V\to \underline{V}\ot \O(B_q)$, where $\underline{V}$ is the vector space associated to the representation $V$.  Since any cofree comodule is injective \cite{doi81}, this inclusion is split.
\end{proof}

Since the $\uqB$-invariants in $\O(B_q)$ are precisely the classical algebra $\O(\dB)$, we observe the following.

\begin{corollary}\label{cor:143}
If $W$ is injective over $B_q$, and $V$ is a finite-dimensional $B_q$-representations, then $\Hom_{\uqB}(V,W)$ is an injective $\dB$-representation.
\end{corollary}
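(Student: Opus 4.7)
The plan is to reduce the statement to the case $V=k$ and then directly apply Lemma \ref{lem:doi} together with the identification $\O(B_q)^{\uqB}=\O(\dB)$.

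Since $V$ is finite-dimensional it admits a dual $V^\ast$ in $\Rep B_q$, and under the natural identification $\Hom_k(V,W)\cong V^\ast\ot W$ the $B_q$-conjugation action corresponds to the diagonal action. So we have a $\dB$-equivariant isomorphism $\Hom_{\uqB}(V,W)\cong (V^\ast\ot W)^{\uqB}$. Set $W':=V^\ast\ot W$. A first easy observation is that $W'$ remains injective over $B_q$: from the tensor-hom adjunction
\[
\Hom_{B_q}(-,V^\ast\ot W)\cong \Hom_{B_q}(-\ot V,W),
\]
the right-hand side is exact as a composite of the exact functor $-\ot V$ (exact over the field $k$) with the exact functor $\Hom_{B_q}(-,W)$ (exact since $W$ is injective over $B_q$).

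We are thus reduced to showing that $(W')^{\uqB}$ is injective in $\Rep\dB$ for any injective $B_q$-representation $W'$. By Lemma \ref{lem:doi}, $W'$ is a direct summand of a cofree comodule $\bigoplus_{i\in I}\O(B_q)$. Because representations in $\Rep B_q$ are locally finite, taking $\uqB$-invariants commutes with arbitrary direct sums, and of course commutes with retracts. Hence
\[
(W')^{\uqB}\ \text{is a $\dB$-summand of}\ \bigoplus_{i\in I}\O(B_q)^{\uqB}=\bigoplus_{i\in I}\O(\dB),
\]
where the final equality is the fundamental identification of $\O(\dB)$ with the $\uqB$-invariants in $\O(B_q)$ recalled at the start of Section \ref{sect:borel}. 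Since a direct sum of copies of the regular comodule $\O(\dB)$ is injective over $\dB$ (Lemma \ref{lem:doi} applied to the classical Borel), and summands of injectives are injective, the corollary follows.

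The only point requiring genuine care is the very first compatibility statement, namely that the natural $\dB$-action on $\Hom_{\uqB}(V,W)$ arising from normality of $\uqB$ in $B_q$ agrees, under the isomorphism $\Hom_k(V,W)\cong V^\ast\ot W$, with the $\dB$-action on the $\uqB$-invariants of the diagonal $B_q$-representation $V^\ast\ot W$. This is a dualization of the standard relative Hopf module formalism and follows routinely once one unwinds the definitions; it is the only step that genuinely uses the normality of $\uqB$ in $B_q$, all other steps being formal manipulations with injective comodules.
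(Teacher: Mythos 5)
Your proof is correct and follows essentially the same route as the paper: reduce to $V=k$ by rewriting $\Hom_{\uqB}(V,W)$ as the $\uqB$-invariants of a dual-twisted injective, then apply Lemma \ref{lem:doi} together with the identification $\O(\dB)=\O(B_q)^{\uqB}$. The only difference is that you spell out the routine details (injectivity of the tensor product via adjunction, invariants commuting with sums), and use $V^\ast$ where the paper uses the left dual ${^\ast V}$, which is immaterial here.
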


\begin{proof}
We have $\Hom_{\uqB}(V,W)=\Hom_{\uqB}(k,{^\ast V}\ot W)$, and ${^\ast V}\ot W$ is injective over $B_q$ in this case.  So it suffices to assume $V=k$, in which case the result follows by Lemma \ref{lem:doi} and the calculation $\O(\dB)=\O(B_q)^{\uqB}$.
\end{proof}

\begin{proposition}\label{prop:151}
Let $V$ and $W$ be in $\Rep(B_q)$, and assume that $V$ is finite-dimensional.  There is a natural isomorphism
\[
\RHom_B(k,\RHom_{\uqB}(V,W))\cong \RHom_{B_q}(V,W),
\]
and subsequent spectral sequence
\[
E_2^{i,j}=\Ext^i_B(k,\Ext^j_{\uqB}(V,W))\ \Rightarrow\ \Ext^{i+j}_{B_q}(V,W).
\]
\end{proposition}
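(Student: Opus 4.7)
The plan is to express $\Hom_{B_q}(V,-)$ as the composition of two functors and apply Grothendieck's spectral sequence for derived functor compositions. The essential input is the acyclicity result of Corollary \ref{cor:143}.

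First I would verify that for any $W\in \Rep B_q$ the vector space $\Hom_{\uqB}(V,W)$ carries a natural $\dB$-module structure, producing a functor $F:=\Hom_{\uqB}(V,-):\Rep B_q\to \Rep \dB$. This uses the fact that $\uqB$ is ``normal'' in $B_q$, as reflected by the exact sequence of Hopf algebras $k\to \O(\dB)\to \O(B_q)\to (\uqB)^\ast\to k$ recorded at the beginning of Section \ref{sect:borel}, together with the identification $\O(B_q)^{\uqB}=\O(\dB)$. Concretely, the diagonal $B_q$-action on $\Hom_k(V,W)$ preserves the subspace of $\uqB$-invariants and acts trivially on that subspace, so the residual action factors through the quotient $B_q/\uqB$, i.e.\ is a $\dB$-comodule structure via $\O(\dB)\subset \O(B_q)$. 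Composing $F$ with $G:=(-)^B=\Hom_B(k,-):\Rep\dB\to Vect$ recovers $\Hom_{B_q}(V,-)$, since $B_q$-invariants are computed as $\uqB$-invariants followed by $\dB$-invariants.

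Next I would verify the acyclicity hypothesis: $F$ sends injective objects of $\Rep B_q$ to $G$-acyclic objects in $\Rep \dB$. This is precisely Corollary \ref{cor:143}, which asserts that $F(W)$ is injective in $\Rep \dB$ whenever $W$ is injective in $\Rep B_q$. Since $\Rep B_q$ has enough injectives via the cofree comodules (Lemma \ref{lem:doi}), the Grothendieck spectral sequence applies to the composition $G\circ F$.

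Finally, the Grothendieck spectral sequence yields both the natural quasi-isomorphism $\RHom_{B_q}(V,W)\simeq \RHom_B(k,\RHom_{\uqB}(V,W))$ and the associated $E_2$-page
\[
E_2^{i,j}=\Ext^i_B(k,\Ext^j_{\uqB}(V,W))\ \Rightarrow\ \Ext^{i+j}_{B_q}(V,W).
\]
The main obstacle is the first step---making precise and natural the $\dB$-module structure on $\Hom_{\uqB}(V,W)$ arising from the Hopf algebra extension, and checking that $F$ lifts to a functor at the level of complexes compatibly with injective resolutions. Once this is in place, the remaining steps are formal applications of standard homological algebra.
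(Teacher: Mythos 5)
Your overall strategy is the same as the paper's: factor $\Hom_{B_q}(V,-)$ as $\Hom_B(k,-)\circ \Hom_{\uqB}(V,-)$, use Corollary \ref{cor:143} for the acyclicity of the outer functor on images of injectives, and run the Grothendieck spectral sequence. The identification of the $\dB$-structure on $\Hom_{\uqB}(V,W)$, which you flag as the main obstacle, is indeed the content underlying Corollary \ref{cor:143} (via $\Hom_{\uqB}(V,W)=\Hom_{\uqB}(k,{}^\ast V\ot W)$ and $\O(\dB)=\O(B_q)^{\uqB}$), so that part is fine.

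However, there is a missing step that your argument needs and never supplies: you must know that the right derived functors of $F=\Hom_{\uqB}(V,-)$, computed with injective resolutions \emph{in} $\Rep B_q$, actually compute $\Ext^j_{\uqB}(V,W)$. A priori $R^jF(W)$ is just the cohomology of $\Hom_{\uqB}(V,I^\bullet)$ for an injective resolution $W\to I^\bullet$ in $\Rep B_q$; to identify this with $\Ext_{\uqB}$ (and hence to identify the inner term with $\RHom_{\uqB}(V,W)$ and the $E_2$-page as stated) one needs injective objects of $\Rep B_q$ to restrict to injective, or at least $\Hom_{\uqB}(V,-)$-acyclic, $\uqB$-modules. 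This is exactly what the paper's proof establishes first: $\O(B_q)$ is injective over $\uqB$ by Proposition \ref{prop:129}, and since every injective in $\Rep B_q$ is a summand of a sum of copies of $\O(B_q)$ (Lemma \ref{lem:doi}), injective $B_q$-representations restrict to injective $\uqB$-modules. You cite Lemma \ref{lem:doi} only for the existence of enough injectives, not for this restriction property, and you never invoke Proposition \ref{prop:129} at all; without that input the spectral sequence you construct converges to $\Ext_{B_q}(V,W)$ but with an unidentified $E_2$-term $\Ext^i_B(k,R^jF(W))$. Adding this one verification closes the gap and makes your argument coincide with the paper's.
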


\begin{proof}
The quantum function algebra $\O(B_q)$ is an injective $\uqB$-module, by Proposition \ref{prop:129}.  It follows by Lemma \ref{lem:doi} that any injective $B_q$-representation restricts to an injective $\uqB$-representation.  So the result follows by Corollary \ref{cor:143}.
\end{proof}

\subsection{Kempf vanishing and a transfer theorem}

We recall some essential relations between quantum group representations and representations for the quantum Borel.  The following vanishing result, which first appears in works of Andersen, Polo, and Wen \cite{andersenpolowen91,andersenwen92} with some restrictions on the order of $q$, appears in complete generality in works of Woodock and Ryom-Hasen \cite[Theorem 8.7]{woodock97} \cite[Lemma 4.3, Theorem 5.5]{ryom03}.

\begin{theorem}\label{thm:kempf_vanish}
Let $\mathsf{I}^0$ denote induction from the quantum Borel, $\mathsf{I}^0:\Rep B_q\to \Rep G_q$.
\begin{enumerate}
\item $\mathsf{I}^0(\1)=\1$.
\item The higher derived functors $\mathsf{I}^{>0}(\1)$ vanish.
\end{enumerate}
\end{theorem}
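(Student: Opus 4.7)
The plan is to realize the induction functor as a cotensor/invariants functor, so that $\mathsf{I}^0(V)$ is computed as $(V\ot \O(G_q))^{B_q}$ with respect to the right regular coaction of $B_q$ on $\O(G_q)$, and the derived functors $\mathsf{I}^{>0}(V)$ are then computed by injective resolutions of $V$ in $\Rep B_q$. In particular $\mathsf{I}^0(\1)=\O(G_q)^{B_q}$, the ``functions on the quantum flag variety''.

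For (i), I would use Frobenius reciprocity: for any $V\in \Rep G_q$ one has $\Hom_{G_q}(V,\mathsf{I}^0(\1))=\Hom_{B_q}(V,\1)$. Since a nonzero $B_q$-linear map from a simple $G_q$-module $L(\mu)$ to the trivial module forces $L(\mu)$ to have a trivial $B_q$-quotient, and hence $\mu=0$ by highest weight theory for $G_q$ (see Proposition \ref{prop:andersen}), the socle of $\mathsf{I}^0(\1)$ is $\1$ with multiplicity one. To promote this to an equality, I would match characters: the weight space decomposition of $\O(G_q)^{B_q}$ (using the PBW-type decomposition $\O(G_q)\cong \O(B_q^-)\ot \O(B_q)$ as a right $B_q$-comodule) shows that $\mathsf{I}^0(\1)$ has the same character as $\1$, forcing equality.

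For (ii), the strategy is to reduce quantum Kempf vanishing to its classical counterpart via a flat deformation. Concretely, one works with the integral form $U_v(\mfk{g})$ over $\mbb{Z}[v,v^{-1}]$ and its Borel subalgebra $U_v(\mfk{b})$, so that $\mathsf{I}^0(\1)$ and its derived functors arise from the specialization at $v\mapsto q$ of an analogous integral induction functor. By classical Kempf vanishing for $G$ (which gives $H^i(G/B,\O)=0$ for $i>0$), the specialization at $v=1$ of $R\mathsf{I}(\1)$ is concentrated in degree $0$. A generic flatness and cohomology-and-base-change argument (applied to the integrally defined $R\mathsf{I}(\1)$) then propagates this vanishing to the specialization at $v=q$, giving $\mathsf{I}^{>0}(\1)=0$.

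The main obstacle is arranging the flatness/semicontinuity argument in (ii): one must construct the induction functor, and its derived functors, over the integral base in a way compatible with specialization at all roots of unity simultaneously, and then identify the $v=1$ and $v=q$ fibers. This is precisely the technical content of the transfer theorems of Andersen--Polo--Wen, Woodcock, and Ryom-Hansen. Once this is in place, both (i) and (ii) follow in a uniform manner; it is also at this step that the hypothesis on the order of $q$ becomes immaterial, as the cited later works remove the original restrictions on $\ord(q)$.
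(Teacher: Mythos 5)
The paper does not actually prove this statement: Theorem \ref{thm:kempf_vanish} is recalled from the literature, with the vanishing attributed to Andersen--Polo--Wen \cite{andersenpolowen91,andersenwen92} (under restrictions on $\ord(q)$) and in full generality to Woodcock \cite{woodock97} and Ryom-Hansen \cite{ryom03}. Your closing paragraph, which concedes that the flatness/specialization machinery "is precisely the technical content" of those transfer theorems, therefore ends up in the same place as the paper: the result is justified by citation, not by an independent argument. Read that way, your proposal is consistent with the paper's treatment, and your outline of the strategy behind those references (realize $\mathsf{I}^0$ as a cotensor/invariants functor, use Frobenius reciprocity, and transfer classical Kempf vanishing through an integral form over $\mbb{Z}[v,v^{-1}]$ by base change) is a fair description of how the cited proofs go.

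However, the portions you do argue independently contain a genuine gap. In (i), the character-matching step rests on the claim that $\O(G_q)\cong \O(B_q^-)\ot\O(B_q)$ as right $B_q$-comodules. This is false already classically: $B^-B$ is only a dense open cell in $G$, so one has an embedding $\O(G)\hookrightarrow \O(B^-)\ot\O(B)$ (dually, surjectivity of multiplication $U(\mfk{b}^-)\ot U(\mfk{b})\to U(\mfk{g})$ gives injectivity, not bijectivity, on function algebras), and the same is true quantumly. With only an injection, the computation bounds $\mathsf{I}^0(\1)$ inside $\O(B_q^-)$ and does not force its character to equal that of $\1$; the socle computation via Frobenius reciprocity and Proposition \ref{prop:andersen} gives $\operatorname{soc}\mathsf{I}^0(\1)=\1$ but by itself does not rule out a larger module. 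So (i) is not closed as written --- indeed $\mathsf{I}^0(\1)=\1$ is itself a nontrivial quantum analogue of "global functions on $G/B$ are constants" and in the cited sources it is established alongside, not prior to, the vanishing statement. Similarly, for (ii) the generic-flatness and cohomology-and-base-change argument is only asserted, and constructing the integral induction functor compatibly with specialization at roots of unity is exactly the hard content you are deferring. In short: as a pointer to the literature your proposal matches the paper; as a self-contained proof it is incomplete, with the comodule decomposition in (i) being the concrete false step.
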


We can now employ the information of Theorem~\ref{thm:kempf_vanish} and follow exactly the proof of~\cite[Theorem 2.1]{clineparshallscottkallen77} to observe the following transfer theorem.

\begin{theorem}[\cite{clineparshallscottkallen77}]\label{thm:cpsk}
For arbitrary $V$ and $W$ in $\Rep G_q$, and $i\geq 0$, the restriction functor $\Rep G_q\to \Rep B_q$ induces an isomorphism on cohomology
\[
\Ext^i_{G_q}(V,W)\overset{\cong}\to \Ext^i_{B_q}(V,W).
\]
\end{theorem}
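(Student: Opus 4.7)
The strategy is the standard Cline--Parshall--Scott--van der Kallen argument, promoted from the algebraic group setting to the quantum setting with Theorem~\ref{thm:kempf_vanish} playing the role of the classical Kempf vanishing. The key tool is the Grothendieck spectral sequence for the composite functor $\Hom_{G_q}(V,-)\circ \mathsf{I}^0$, combined with a tensor identity which ensures that the derived induction of a restricted $G_q$-representation vanishes in positive degree.

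First, I would recall Frobenius reciprocity for the quantum setting, namely that $\mathsf{I}^0:\Rep B_q\to \Rep G_q$ is right adjoint to the restriction functor $\res:\Rep G_q\to \Rep B_q$. Since $\res$ is exact, $\mathsf{I}^0$ sends injective $B_q$-representations to injective $G_q$-representations. Hence the Grothendieck spectral sequence
\[
E_2^{i,j}=\Ext^i_{G_q}(V,R^j\mathsf{I}^0 W)\ \Rightarrow\ \Ext^{i+j}_{B_q}(V,W)
\]
is valid for any $V\in \Rep G_q$ and any $B_q$-representation $W$.

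Next, I would establish the tensor identity
\[
\mathsf{I}^0(\res V\otimes W)\cong V\otimes \mathsf{I}^0(W),\qquad R^j\mathsf{I}^0(\res V\otimes W)\cong V\otimes R^j\mathsf{I}^0(W),
\]
for $V\in \Rep G_q$ and $W\in \Rep B_q$. The ungraded version is a formal consequence of the projection formula (or is dual to the projection formula for the morphism of quotient stacks $[\ast/B_q]\to [\ast/G_q]$); the derived version follows since tensoring with the finite-dimensional $V$ (or its restriction) is exact. Specializing to $W=\1$, Theorem~\ref{thm:kempf_vanish} yields $\mathsf{I}^0(\res V)=V$ and $R^{>0}\mathsf{I}^0(\res V)=0$ for every $V\in \Rep G_q$. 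In particular, feeding $W\in \Rep G_q$ (viewed as a $B_q$-representation via $\res$) into the spectral sequence above, all rows except $j=0$ vanish and the sequence collapses to the desired isomorphism $\Ext^i_{G_q}(V,W)\cong \Ext^i_{B_q}(V,W)$, which is readily identified with the edge map induced by restriction.

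The only real subtlety is reducing to the finite-dimensional case needed to invoke the tensor identity cleanly, since $V$ and $W$ may be infinite-dimensional. This is handled by writing both $V$ and $W$ as filtered colimits of their finite-dimensional $X$-graded subrepresentations and using that $\mathsf{I}^0$ and $\Ext$ commute with filtered colimits in the appropriate variables (the former because $\mathsf{I}^0$ is computed as a comodule cotensor product $-\,\square_{\O(B_q)}\O(G_q)$, the latter standard for locally finite representations over a coalgebra). With that in place, the argument is entirely parallel to \cite{clineparshallscottkallen77}, and I would expect the main point requiring care to be the verification of the tensor identity in the quantum-graded context, since the $X$-grading and the non-cocommutativity of $\O(G_q)$ enter the Frobenius-reciprocity isomorphism nontrivially.
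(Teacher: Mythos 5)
Your proposal is correct and is essentially the paper's own argument: the paper proves Theorem \ref{thm:cpsk} precisely by invoking quantum Kempf vanishing (Theorem \ref{thm:kempf_vanish}) and following the proof of \cite[Theorem 2.1]{clineparshallscottkallen77}, i.e.\ Frobenius reciprocity, the induction Grothendieck spectral sequence, and the tensor identity, exactly as you lay out. One small remark: the closing reduction to finite-dimensional $V$ and $W$ is unnecessary (and passing filtered colimits through the first variable of $\Ext$ is delicate), since the tensor identity and its derived form hold for arbitrary objects of $\Rep G_q$ because tensoring with any comodule preserves injectives, via the standard untwisting isomorphism $M\ot \O(B_q)\cong M_{\mathrm{triv}}\ot \O(B_q)$.
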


\subsection{Quantum Borels indexed by the flag variety}
\label{sect:borels}

For any $k$-point $\lambda:\Spec(k)\to \dG/\dB$ we have the corresponding $\dB$-coset $\iota_\lambda:\dB_\lambda\to \dG$, which is the fiber of $\lambda$ along the quotient map $\pi:\dG\to \dG/\dB$.  This coset is a $\dB$-torsor under the right translation action of $\dB$ and we have the corresponding algebra object $\O(\dB_\lambda)$ in $\Rep \dB\subset \Rep B_q$.  We then consider the monoidal category
\[
\msc{B}_\lambda:=\QCoh(\dB_\lambda)^{B_q}\cong {_{\O(\dB_\lambda)}\mbf{M}^{\O(B_q)}}
\]
of $B_q$-equivariant sheaves on $\dB_\lambda$.  The equivalence with relative Hopf modules here is given by taking global section, and the product on $\msc{B}_\lambda$ is the expected one $\ot=\ot_{\O_{\dB_\lambda}}$.
\par

Restriction along the inclusion $\iota_\lambda:\dB_\lambda\to \dG$ provides a central monoidal functor
\[
\res_\lambda:=\iota_\lambda^\ast:\FK{G}_q\to \msc{B}_\lambda
\]
with central structure given by the $R$-matrix
\[
b_{V,W}:W\ot \res_\lambda(V)\to \res_\lambda(V)\ot W,\ \ b_{W,V}(w\ot v)=R_{21}(v\ot w)
\]
\cite[Definition 4.15]{dgno10}.  We have $\msc{B}_1=\FK{B}_q$ and the functor $\res_1:\FK{G}_q\to \FK{B}_q$ is identified with the standard restriction functor for the small quantum group, in the sense that the diagram
\[
\xymatrix{
\FK{G}_q\ar[rr]^{\res_1}\ar[d]_{1^\ast}^\sim & & \FK{B}_q\ar[d]_{1^\ast}^\sim\\
\Rep \uqG\ar[rr]^{\res} & & \Rep \uqB
}
\]
commutes.
\par

At a general closed point $\lambda$, any choice of a point $x:\Spec(k)\to \dB_\lambda$ provides a $\dB$-equivariant isomorphism $x:\dB\to \dB_\lambda$ given by left translation.  Taking global sections then provides an isomorphism $x:\O(\dB_\lambda)\to \O(\dB)$ of algebra objects in $\Rep B_q$.  So we see that pushing forward along $x$ provides an equivalence of tensor categories $x:\msc{B}_1\to \msc{B}_\lambda$ which fits into a diagram
\begin{equation}\label{eq:502}
\xymatrix{
\FK{G}_q\ar[rr]^x_\sim\ar[d]_{\res} & & \FK{G}_q\ar[d]_{\res_\lambda}\\
\msc{B}_1\ar[rr]^x_\sim & & \msc{B}_\lambda.
}
\end{equation}

Now, let us consider and arbitrary \emph{geometric} point $\lambda:\Spec(K)\to \dG/\dB$.  At $\lambda$ we again have the fiber $\iota_\lambda: \dB_\lambda\to G$, which now has the structure of a $K$-scheme, and which is a torsor over $\dB_K$.  We consider the monoidal category
\[
\msc{B}_\lambda:=\QCoh(\dB_\lambda)^{(B_K)_q}
\]
of equivariant sheaves relative to the base change $(B_K)_q$.  Pulling back along $\iota_\lambda$ again provides a central monoidal functor
\[
\res_\lambda:=\iota_\lambda^\ast:\FK{G}_q\to \msc{B}_\lambda
\]
which factors as a base change map composed with restriction along $\iota_{K,\lambda}$
\[
\res_\lambda=\big(\FK{G}_q\overset{(-)_K}\longrightarrow \FK(G_K)_q\overset{\res_{K,\lambda}}\longrightarrow \msc{B}_\lambda\big).
\]
Here $\iota_{K,\lambda}:\dB_\lambda\to \dG_K$ is the map implied by the universal property of the pullback $\dG_K=\Spec(K)\times \dG$.  All of this is to say that, after base change, the construction of $\msc{B}_\lambda$ at a geometric point for $\dG/\dB$ is no different from the construction at a closed point.

\begin{definition}
At any geometric point $\lambda:\Spec(K)\to \dG/\dB$, the category of sheaves for the associated small quantum Borel is the $\FK{G}_q$-central, monoidal category $\msc{B}_\lambda=\QCoh(\dB)^{(B_K)_q}$.
\end{definition}

The following Proposition is deduced from the equivalence $x:(\msc{B}_1)_K\overset{\sim}\to \msc{B}_\lambda$ provided by any choice of $K$-point $x:\Spec(K)\to \dB_\lambda$.

\begin{proposition}
At each geometric point $\lambda:\Spec(K)\to \dG/\dB$ the monoidal category $\msc{B}_\lambda$ has the following properties:
\begin{itemize}
\item $\msc{B}_\lambda$ has enough projectives and injectives, and an object is projective if and only if it is injective (cf.\ \cite{faithwalker67}).
\item $\msc{B}_\lambda$ admits a compact projective generator.
\item The compact objects in $\msc{B}_\lambda$ are precisely those $(B_K)_q$-equivariant sheaves which are coherent over $\dB_\lambda$, and all compact objects are dualizable.
\item Coherent sheaves in $\msc{B}_\lambda$ form a finite tensor subcategory which is of (Frobenius-Perron) dimension $\dim \uqB$.
\item All objects in $\msc{B}_\lambda$ are flat over $\dB_\lambda$.
\item The central tensor functor $\res_{K,\lambda}:\FK(G_K)_q\to \msc{B}_\lambda$ is surjective.
\end{itemize}
\end{proposition}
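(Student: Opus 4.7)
The plan is to deduce all six properties from the stated equivalence $x^\ast:\msc{B}_\lambda\overset{\sim}\to (\msc{B}_1)_K$, combined with the identification $1^\ast:\FK{B}_q\overset{\sim}\to \Rep\uqB$ of Corollary \ref{cor:438}. The remaining content is bookkeeping across the algebraically closed field extension $K/k$.

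First I would construct the equivalence. Since $K$ is algebraically closed and $\dB_\lambda$ is a $\dB_K$-torsor over $\Spec(K)$, a $K$-point $x:\Spec(K)\to \dB_\lambda$ exists, and left translation by $x$ yields a $\dB_K$-equivariant isomorphism $\dB_K\overset{\sim}\to \dB_\lambda$, automatically $(B_K)_q$-equivariant after composing with quantum Frobenius. Pullback of sheaves along this isomorphism gives the monoidal equivalence $x^\ast:\msc{B}_\lambda\overset{\sim}\to (\msc{B}_1)_K$, and it identifies $\res_{K,\lambda}$ with $(\res_1)_K$ as central tensor functors, by the base change of the square \eqref{eq:502}.

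Next I would verify the six properties for $\msc{B}_1=\FK{B}_q$ using Corollary \ref{cor:438}. Since $\uqB$ is a finite-dimensional Frobenius Hopf algebra, $\Rep\uqB$ has enough projectives and injectives with projective equal to injective, and the regular representation is a compact projective generator. Compact objects in $\Rep\uqB$ are precisely the finite-dimensional modules; these are automatically dualizable and correspond under $1^\ast$ to coherent $B_q$-equivariant sheaves on $\dB$. The Frobenius--Perron dimension of $\rep\uqB$ equals $\dim\uqB$ by the standard calculation for representation categories of finite-dimensional Hopf algebras. Flatness over $\dB$ follows because the forgetful functor $\FK{B}_q\to\QCoh(\dB)$ preserves duals, so dualizable objects are locally free of finite rank, and the local finiteness of $\Rep\uqB$ expresses every object as a filtered colimit of such finite-rank subobjects. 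Surjectivity of $\res_1:\FK{G}_q\to\FK{B}_q$ reduces, via $1^\ast$, to the fact that every finite-dimensional $\uqB$-module $V$ embeds into the restriction of the induced module $\uqG\ot_{\uqB} V$.

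Finally I would transfer each property through base change and through $x^\ast$. Each of the six properties is preserved by tensor equivalences, and all behave well under base change along the algebraically closed extension $K/k$: coherence, projectivity, injectivity, dualizability, and flatness are preserved, and Frobenius--Perron dimension of a finite tensor category is unchanged. The one genuinely non-routine point, and therefore the main obstacle in my mind, is verifying that the central structure on $\res_{K,\lambda}$ coming from the $R$-matrix for $(G_K)_q$ agrees with the central structure on $(\res_1)_K$ after transport by $x^\ast$. This should be forced by naturality of the braiding in $\FK(G_K)_q$ together with $(B_K)_q$-equivariance of the translation map $\dB_K\to\dB_\lambda$, but it is the one check that cannot be outsourced to generalities.
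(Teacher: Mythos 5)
Your proposal is correct and takes essentially the same route as the paper, which likewise deduces all six properties by transporting the base case $\msc{B}_1=\FK{B}_q\simeq \Rep\uqB$ (Corollary \ref{cor:438} and the remarks following it) along the translation equivalence $(\msc{B}_1)_K\overset{\sim}\to\msc{B}_\lambda$ attached to a choice of $K$-point of $\dB_\lambda$. The only slight imprecision is that transport by $x$ identifies $\res_{K,\lambda}$ with $(\res_1)_K$ only up to precomposition with the translation autoequivalence of $\FK(G_K)_q$, exactly as recorded in diagram \eqref{eq:502}; since that autoequivalence is a braided tensor equivalence, this twist affects none of the listed properties, including surjectivity, and your flagged concern about central structures is not actually needed for this statement.
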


\begin{remark}
Of course, the flag variety $\dG/\dB$ is the same as $G/B$, since we are at an odd root of unity and hence do not change Dynkin types when taking the dual group.  However, when we consider quotients $\dG/H_q$ by various quantum groups one does want to explicitly employ the dual $\dG$.  The quotient $\dG/\dB$ will also be the correct object to consider at even order $q$, where Dynkin types do change.
\end{remark}

\subsection{A notational comment}

As mentioned in Section \ref{sect:quantumgroups}, the small quantum group is essentially never referenced in it linear form $\uqG$ in this text.  We will, however, make extensive use of the algebra $\uqB$ throughout.  Furthermore, the algebra $\uqB$ will often appear as a subscript in formulas.  For this reason \emph{we adopt the notation
\[
u:=\uqB
\]
globally throughout this document}.  An unlabeled algebra $u$ which appears anywhere in the text is always the small quantum enveloping algebra $\uqB$ for the positive Borel.

\part{Geometric Enhancements for Quantum Groups}

In Part I of the paper we construct a monoidal enhancement $D^{\Enh}(\FK{G}_q)$ of the derived category of sheaves for the quantum Frobenius kernel.  The morphisms $\sRHom_{\FK{G}_q}(M,N)$ in this category are quasi-coherent dg sheaves over the flag variety $\dG/\dB$--or more precisely are objects in the derived category $D(\dG/\dB)$ of quasi-coherent sheaves--and the monoidal structure on $D^{\Enh}(\FK{G}_q)$ is reflected in natural composition and tensor structure maps for these sheaf-morphisms.\footnote{For a discussion of $D^{\Enh}(\FK{G}_q)$ in relation to the Springer resolution, specifically, see Proposition \ref{prop:1749} and Conjecture \ref{conj:formality} below.}
\par

The construction of the enhancement $D^{\Enh}(\FK{G}_q)$ is facilitated by a certain \emph{half-quantum flag variety}, whose sheaves $\QCoh(\dG/B_q)$ form a sheaf of tensor categories over the classical flag variety.  Our fundamental approach, throughout this work, is to reduce analyses of the enhancement $D^{\Enh}(\FK{G}_q)$ to corresponding analyses of the half-quantum flag variety.  Such reductions are made possible by a strong embedding theorem, referred to as the Kempf embedding theorem, which is proved at Theorem \ref{thm:Kempf} below.

\section{The half-quantum flag variety}
\label{sect:G/Bq}

In Section \ref{sect:borels} we introduced a family of small quantum Borels $\msc{B}_\lambda$ which are parametrized by geometric points for the flag variety.  In this section we consider a universal small quantum Borel from this perspective, which is simply the monoidal category of $B_q$-equivariant sheaves over $\dG$.  This is the category of quasi-coherent sheaves $\QCoh(\dG/B_q)$ on the so-called \emph{half-quantum flag variety}.
\par

Just as one considers each $\msc{B}_\lambda$ as a $K$-linear monoidal category, one should consider the category of sheaves for the half-quantum flag variety as a $\dG/\dB$-linear monoidal category.  Such linearity can be expressed via an action of the category of sheaves over the flag variety on $\QCoh(\dG/B_q)$.

\subsection{The half-quantum flag variety}
\label{sect:G/Bq_intro}

\begin{definition}
The category of quasi-coherent sheaves for the half-quantum flag variety is the abelian monoidal category
\[
\QCoh(\dG/B_q):=\QCoh(\dG)^{B_q}\cong \{\text{Arbitrary $\O(\dG)$-modules in }\Rep B_q\},
\]
with product $\ot=\ot_{\O_{\dG}}$.  We let $\Coh(\dG/B_q)$ denote the full monoidal subcategory of sheaves which are coherent over $\dG$.
\end{definition}

We note that the category $\Coh(\dG/B_q)$ is precisely the category of compact objects in $\QCoh(\dG/B_q)$.  In our notation $\dG/B_q$ should be interpreted (informally) as a stack quotient, so that sheaves on this ``noncommutative space" are $B_q$-equivariant sheaves on $\dG$.
\par

We say an object $M$ in $\QCoh(\dG/B_q)$ is flat if the operation $M\ot-$ is exact.  Since the product for $\QCoh(\dG/B_q)$ is simply the product over $\O_{\dG}$, one sees that an object $M$ in $\QCoh(\dG/B_q)$ is flat whenever its image in $\QCoh(\dG)$ is flat.  Furthermore, one can check that dualizable objects in $\QCoh(\dG/B_q)$ are precisely those objects which are flat and coherent.
\par

As with the usual flag variety \cite[I.5.8]{jantzen03} \cite[3.3]{suslinfriedlanderbendel97b}, we have an equivariant vector bundle functor
\[
E_-:\Rep B_q\to \QCoh(\dG/B_q),\ \ E_V:=\O_G\ot_k V.
\]
This functor is exact and has right adjoint provided by the forgetful functor
\[
-|_{B_q}:\QCoh(\dG/B_q)\to \Rep B_q,
\]
which is defined explicitly by applying global sections $M|_{B_q}=\Gamma(\dG,M)$ and forgetting the $\O(\dG)$-action (cf.\ Section \ref{sect:descent}).

\begin{lemma}
The category $\QCoh(\dG/B_q)$ is complete, in the sense that it has all set indexed limits, and has enough injectives.
\end{lemma}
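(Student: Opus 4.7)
The plan is to identify $\QCoh(\dG/B_q)$ with a category of relative Hopf modules and verify the Grothendieck abelian category axioms; both claims then follow from standard theory.

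Since $\dG$ is affine, global sections yield an equivalence $\QCoh(\dG/B_q)\simeq {}_{\O(\dG)}\mbf{M}^{\O(B_q)}$. I would first check that the forgetful functor $U:\QCoh(\dG/B_q)\to Vect_k$ creates all small colimits. Given a diagram $\{M_i\}$, the vector-space colimit $M:=\operatorname{colim} M_i$ inherits an $\O(\dG)$-action via the canonical isomorphism $\O(\dG)\ot_k M\cong \operatorname{colim}(\O(\dG)\ot_k M_i)$ and an $\O(B_q)$-coaction via $M\to \operatorname{colim}(M_i\ot_k \O(B_q))\cong M\ot_k \O(B_q)$; compatibility of these two structures on $M$ follows from naturality applied to the compatibilities on each $M_i$. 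Filtered colimits in $Vect_k$ are exact, so axiom AB5 holds in $\QCoh(\dG/B_q)$.

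To produce a generator, I would use the adjunction $E_-\dashv (-)|_{B_q}$ from Section \ref{sect:descent}, which supplies the identity $\Hom_{\QCoh(\dG/B_q)}(E_V,M)=\Hom_{B_q}(V,M|_{B_q})$ for $V\in \Rep B_q$. Any $\O(B_q)$-comodule is locally finite, so a nonzero $M$ admits a nonzero finite-dimensional subrepresentation of $M|_{B_q}$, whence a nonzero map $E_V\to M$ for some $V\in \rep B_q$. Taking a set of representatives of isomorphism classes in $\rep B_q$ and summing gives a single generator.

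With AB5 and a generator established, $\QCoh(\dG/B_q)$ is a Grothendieck abelian category. Existence of all small limits then follows (for instance via the special adjoint functor theorem, or by constructing products through the generator and then equalizers), and the existence of enough injectives is Grothendieck's classical theorem from T\^ohoku. I do not anticipate a serious obstacle; the only point requiring care is the verification that the module and comodule structures lift compatibly to colimits, which is immediate from naturality.
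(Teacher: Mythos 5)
Your proposal is correct and follows essentially the same route as the paper: verify that $\QCoh(\dG/B_q)$ is a Grothendieck abelian category (cocomplete with exact filtered colimits, plus a generator built as a sum $\oplus_i E_{V_i}$ of equivariant vector bundles over a set of finite-dimensional $B_q$-representations, via the adjunction $E_-\dashv(-)|_{B_q}$ and local finiteness of comodules), and then invoke Grothendieck's T\^ohoku theorem for completeness and enough injectives. The only difference is that you spell out the colimit/AB5 verification that the paper treats as uncontroversial, while the paper is equally brief about why the proposed object generates.
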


\begin{proof}
The point is that $\QCoh(\dG/B_q)$ is a Grothendieck abelian category.  That is to say, $\QCoh(\dG/B_q)$ is cocomplete, has exact filtered colimits, and admits a generator.  All Grothendieck categories are complete and have enough injectives \cite[Th\'eor\`eme 1.10.1]{grothendieck57}.
\par

The only controversial issue here is the existence of a generator.  However, since $\rep B_q$ is essentially small we can choose a set of representations $\{V_i\}_{i\in I}$ so that each object in $\rep B_q$ admits a surjection from some $V_i$.  The object $\oplus_{i\in I}E_{V_i}$ is then a generator for $\QCoh(\dG/B_q)$, where $E_V$ is the vector bundle associated to a given $B_q$-representation $V$.
\end{proof}

\begin{remark}
Presumably the category $\QCoh(\dG/B_q)$ admits an ample line bundle $\mcl{L}_{-\rho}$, so that the powers $\oplus_{n\geq 0}\mcl{L}_{\rho}^{\ot n}$ provide a generator for $\QCoh(\dG/B_q)$ (cf.\ \cite[Proposition 4.4]{jantzen03}).
\end{remark}

\subsection{$\QCoh(\dG/B_q)$ as a sheaf of categories}

The quantum Frobenius maps for $G_q$ and $B_q$ fit into a diagram of tensor functors
\[
\xymatrix{
\Rep \dG\ar[rr]^{\Fr}\ar[d]_{\res} & & \Rep G_q\ar[d]^{\res}\\
\Rep \dB\ar[rr]^{\Fr} & & \Rep B_q.
}
\]
This diagram then implies the existence of a fully faithful monoidal embedding
\begin{equation}\label{eq:zeta}
\QCoh(\dG/\dB)\overset{\pi^\ast}\to \QCoh(\dG)^{\dB}\to \QCoh(\dG)^{B_q}=\QCoh(\dG/B_q).
\end{equation}
We let $\zeta^\ast$ denote this embedding.  The embedding $\zeta^\ast:\QCoh(\dG/\dB)\to \QCoh(\dG/B_q)$ admits a canonical central structure, i.e.\ lift to the Drinfeld center,
\[
\QCoh(\dG/\dB)\to Z(\QCoh(\dG/B_q))
\]
provided by the trivial symmetry
\begin{equation}\label{eq:symm}
\operatorname{symm}_{M,\msc{F}}:M\ot \zeta^\ast(\msc{F})\to \zeta^\ast(F)\ot M,\ \ \operatorname{symm}_{M,\msc{F}}(m\ot s)=s\ot m.
\end{equation}
This trivial symmetry gives $\QCoh(\dG/B_q)$ the structure of a symmetric bimodule category over $\QCoh(\dG/\dB)$.
\par

For $\msc{F}$ in $\QCoh(\dG/\dB)$ we let
\[
\msc{F}\star-:\Coh(\dG/B_q)\to \Coh(\dG/B_q)
\]
denote the corresponding action map, $\msc{F}\star-=\zeta^\ast(\msc{F})\ot-$.  The operation $\msc{F}\star-$ is exact whenever $\msc{F}$ is flat over $\dG/\dB$, and $-\star M$ is exat whenever $M$ is flat over $\dG/B_q$.
\par

One might think of this action as providing the quantum flag variety with the structure of a sheaf of categories over the classical flag variety $\dG/\dB$, whose sections over any open embedding $U\to \dG/\dB$, for example, are given by the base change $\QCoh(U)\ot_{\QCoh(\dG/\dB)}\QCoh(\dG/B_q)$ (see \cite{gaitsgory15}).

\begin{notation}
The functor $\zeta^\ast:\QCoh(\dG/\dB)\to \QCoh(\dG/B_q)$ is specifically the exact monoidal functor of \eqref{eq:zeta} along with the central structure \eqref{eq:symm}.
\end{notation}

Although $\zeta^\ast$ is not precisely the pullback equivalence $\pi^\ast:\QCoh(\dG/\dB)\overset{\sim}\to \QCoh(\dG)^{\dB}$, due to the appearance of quantum Frobenius, we will often abuse notation and write simply $\pi^\ast(\msc{F})$ for the object $\zeta^\ast(\msc{F})$.

\subsection{Sheafy morphisms over $\dG/\dB$}

We have just seen that $\QCoh(\dG/B_q)$ admits a natural module category structure over $\QCoh(\dG/\dB)$, and so becomes a sheaf of tensor categories over the flag variety.  Inner morphisms for this module category/sheaf structure provide a sheaf-Hom functor for $\QCoh(\dG/B_q)$.

\begin{lemma}
For any $M$ in $\QCoh(\dG/B_q)$, the operation $-\star M:\QCoh(\dG/\dB)\to \QCoh(\dG/B_q)$ has a right adjoint $\sHom_{\dG/B_q}(M,-)$.  The functors $\sHom_{\dG/B_q}(M,-)$ are furthermore natural in $M$, so that we have a bifunctor
\[
\sHom_{\dG/B_q}:\QCoh(\dG/B_q)^{op}\times \QCoh(\dG/B_q)\to \QCoh(\dG/B_q).
\]
\end{lemma}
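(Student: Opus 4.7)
The plan is to construct $\sHom_{\dG/B_q}(M,-)$ by appealing to the adjoint functor theorem for locally presentable (equivalently, Grothendieck abelian) categories, and then upgrade the construction to a bifunctor via Yoneda.

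First I would verify that, for each fixed $M$ in $\QCoh(\dG/B_q)$, the functor
\[
-\star M:\QCoh(\dG/\dB)\to \QCoh(\dG/B_q),\ \ \msc{F}\mapsto \zeta^\ast(\msc{F})\ot_{\O_{\dG}}M
\]
preserves all small colimits. The symmetric monoidal structure on $\QCoh(\dG/B_q)$ is inherited from the usual tensor product of $\O_{\dG}$-modules, which is cocontinuous in each variable since it is a left adjoint at the level of $\O(\dG)$-modules; pulling back $B_q$-equivariance imposes no obstruction, as colimits of $B_q$-equivariant sheaves are computed on the underlying sheaves. Likewise $\zeta^\ast$ is cocontinuous, since it is the composite of the (exact, cocontinuous) pullback $\pi^\ast$ along $\pi:\dG\to\dG/\dB$ with the relaxation from $\dB$- to $B_q$-equivariance (the latter being exact and preserving arbitrary coproducts). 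Thus $-\star M$ is cocontinuous.

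Next I would invoke the special adjoint functor theorem. We already established that $\QCoh(\dG/B_q)$ is Grothendieck abelian, and the same is true for $\QCoh(\dG/\dB)$ (by the same generator argument using equivariant vector bundles $E_V$ on $\dG/\dB$, or by faithfully flat descent along $\pi$). Both categories are therefore locally presentable, so any cocontinuous functor $\QCoh(\dG/\dB)\to \QCoh(\dG/B_q)$ admits a right adjoint. Define $\sHom_{\dG/B_q}(M,-)$ to be this right adjoint; by construction it satisfies the universal property
\[
\Hom_{\QCoh(\dG/B_q)}(\msc{F}\star M, N)\cong \Hom_{\QCoh(\dG/\dB)}(\msc{F},\sHom_{\dG/B_q}(M,N))
\]
naturally in $\msc{F}$ and $N$.

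Finally, for bifunctoriality in $M$, I would argue via Yoneda. The assignment $(\msc{F},M)\mapsto \msc{F}\star M$ is bifunctorial, so for each $N$ the presheaf $(\msc{F},M)\mapsto \Hom_{\QCoh(\dG/B_q)}(\msc{F}\star M,N)$ is a functor $\QCoh(\dG/\dB)^{op}\times \QCoh(\dG/B_q)^{op}\to \operatorname{Set}$. Representability in $\msc{F}$ at each fixed $M,N$ by $\sHom_{\dG/B_q}(M,N)$, combined with the uniqueness of representing objects, endows $(M,N)\mapsto \sHom_{\dG/B_q}(M,N)$ with a canonical bifunctor structure extending the adjunction. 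I expect no substantive obstacle here; the only mildly delicate step is confirming that the codomain category has enough ``room'' to represent the adjoint, but this is precisely what local presentability buys us, and the verification of cocontinuity for $-\star M$ reduces to standard properties of the tensor product and of quantum Frobenius.
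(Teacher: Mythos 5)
Your proposal is correct and follows essentially the same route as the paper: the paper's proof also observes that $-\star M$ commutes with colimits (in a Grothendieck abelian, hence locally presentable, setting) to obtain the right adjoint, and then invokes Yoneda's lemma for naturality in $M$. Your write-up simply spells out the cocontinuity check and the representability argument in more detail than the paper does.
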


\begin{proof}
The functor $-\star M$ commutes with colimits and thus admits a right adjoint.  Naturality in $M$ follows by Yoneda's lemma.
\end{proof}

We note that the functor $\sHom_{\dG/B_q}$ is left exact in both coordinates, since the functor
\[
\Hom_{\dG/\dB}(\msc{F},\sHom_{\dG/B_q}(-,-))\cong \Hom_{\dG/\dB}(\msc{F}\star-,-)
\]
is left exact in each coordinate at arbitrary $\msc{F}$.
\par

In Sections \ref{sect:shom1}--\ref{sect:shom2} we observe that, via general nonsense with adjunctions (cf.\ \cite{ostrik03,etingofostrik04}), the bifunctor $\sHom_{\dG/B_q}$ admits natural composition and monoidal structure maps.  These structure maps localize the monoidal structure on $\QCoh(\dG/B_q)$, in the sense that they recover the composition and tensor structure maps for $\QCoh(\dG/B_q)$ after taking global sections.  In the language of Section \ref{sect:enrich}, we are claiming specifically that the pairing
\begin{equation}\label{eq:991}
(\!\ \operatorname{obj}\QCoh(\dG/B_q),\ \sHom_{\dG/B_q}\!\ )
\end{equation}
provides a monoidal enhancement of $\QCoh(\dG/B_q)$ in the category of sheaves over the flag variety.
\par

Before delving further into these issues, we explain the (essential) role of the half-quantum flag variety in our study of the small quantum group.

\section{The Kempf embedding $\FK{G}_q\to \QCoh(\dG/B_q)$}
\label{sect:Kempf}

Let us consider again the quantum Frobenius kernel $\FK{G}_q$.  We have the obvious forgetful functor
\begin{equation}\label{eq:funcTOR}
\FK{G}_q=\QCoh(\dG)^{G_q}\to \QCoh(\dG)^{B_q}=\QCoh(\dG/B_q)
\end{equation}
This functor is immediately seen to be monoidal, and the $R$-matrix for the quantum group provides it with a central structure (see Section \ref{sect:Kempf_Z} below).  In this section we prove that the functor \eqref{eq:funcTOR} is fully faithful, and induces a fully faithful functor on unbounded derived categories as well.
\par

In the statement of the following theorem, $D(\FK{G}_q)$ denotes the unbounded derived category of complexes in $\FK{G}_q$, and $D(\dG/B_q)$ is the unbounded derived category of complexes of quasi-coherent sheaves for the half-quantum flag variety.

\begin{theorem}\label{thm:Kempf}
The forgetful functor $\FK{G}_q\to \QCoh(\dG/B_q)$ is fully faithful and induces a fully faithful monoidal embedding
\[
\Kempf:D(\FK{G}_q)\to D(\dG/B_q)
\]
for the corresponding unbounded derived categories.
\end{theorem}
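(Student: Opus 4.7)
The plan is to establish full faithfulness of $\Kempf$ by reducing to the Cline-Parshall-Scott-van der Kallen transfer theorem (Theorem \ref{thm:cpsk}), which identifies $\Ext^i_{G_q}(V,W)$ with $\Ext^i_{B_q}(V,W)$ for $V, W \in \Rep G_q$. The abelian forgetful functor $F:\FK{G}_q \to \QCoh(\dG/B_q)$ is exact, monoidal for $\otimes_{\O_{\dG}}$, and unit-preserving, so it extends to a triangulated functor $\Kempf$ between the unbounded derived categories. Monoidality persists at the derived level because the flat objects over $\dG$ are the same in source and target, so derived tensor products may be computed with identical flat resolutions.

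The key step is to transfer $\RHom$ computations into $\RHom$ in $\Rep G_q$ and $\Rep B_q$. The equivariant trivial-bundle functors $E_-:\Rep G_q \to \FK{G}_q$ and $E_-:\Rep B_q \to \QCoh(\dG/B_q)$, $V \mapsto \O_{\dG} \otimes_k V$ from Section \ref{sect:descent}, are manifestly exact, and each is left adjoint to the underlying-representation functor $|_{G_q}$ (resp.\ $|_{B_q}$). Since $\dG$ is affine these right adjoints are exact, and having an exact left adjoint forces them to preserve injectives. Hence for any $N \in \FK{G}_q$ with injective resolution $J^\bullet$, the complex $J^\bullet|_{G_q}$ is an injective resolution of $N|_{G_q}$ in $\Rep G_q$, yielding
\[
\RHom_{\FK{G}_q}(\1, N) \cong \RHom_{G_q}(k, N|_{G_q}),
\]
and analogously for $\QCoh(\dG/B_q)$.

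For a compact, equivalently dualizable, object $M \in \FK{G}_q$, the dual $M^\vee$ is a finite-rank vector bundle and hence flat over $\dG$. Duality then converts the problem into
\[
\RHom_{\FK{G}_q}(M, N) \cong \RHom_{G_q}\bigl(k,\, (M^\vee \otimes_{\O_{\dG}} N)|_{G_q}\bigr),
\]
with the analogous identity in $\QCoh(\dG/B_q)$ upon replacing $G_q$ by $B_q$. The integrable representation $V := (M^\vee \otimes N)|_{G_q} \in \Rep G_q$ is the same on both sides, so Theorem \ref{thm:cpsk} yields the required isomorphism $\RHom_{\FK{G}_q}(M, N) \cong \RHom_{\QCoh(\dG/B_q)}(M, N)$ for compact $M$. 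Full faithfulness then extends to arbitrary $M \in D(\FK{G}_q)$ via compact generation: $D(\FK{G}_q)$ is compactly generated by the bundles $E_V$ for $V \in \rep G_q$, and $\RHom(-, N)$ converts homotopy colimits in the first slot into homotopy limits in both categories, so the equivalence propagates from the compact generators through shifts and colimits.

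The main obstacle I anticipate is the injective-preservation step that bridges $\FK{G}_q$ with $\Rep G_q$ (and $\QCoh(\dG/B_q)$ with $\Rep B_q$); everything else in the argument is formal, but this bridge crucially uses both affineness of $\dG$ (so that $|_{G_q}$ and $|_{B_q}$ are exact) and the transparent form of the left adjoint $E_-$ (so that it is visibly exact). Once the bridge is in place, the nontrivial content of the proof is concentrated entirely in Theorem \ref{thm:cpsk}, and the remaining passage from compact $M$ to arbitrary $M$, as well as the verification of monoidality at the derived level, are standard.
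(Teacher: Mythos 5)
Your base case is essentially the paper's: you use the adjunction $E_-\dashv(-)|_{G_q}$ (resp.\ $(-)|_{B_q}$), exactness of $E_-$ and preservation of injectives by the right adjoint, to convert $\RHom$ over $\FK{G}_q$ and over $\dG/B_q$ into $\RHom_{G_q}$ and $\RHom_{B_q}$, and then invoke the transfer theorem (Theorem \ref{thm:cpsk}); your dualizability trick $\RHom(M,N)\cong\RHom(\1,M^\vee\ot N)$ is a harmless variant of the paper's reduction to summands of bundles $E_V$ via Proposition \ref{prop:andersen}. The problem is what happens after the base case. You only extend in the \emph{first} variable: your localizing argument in $M$ is fine (and, incidentally, does not even need your claim that the $E_V$ are compact generators of $D(\FK{G}_q)$ --- they are not all compact, only the ones with $V|_{\uqB}$ projective, but generation as a localizing subcategory is all you use). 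What you never address is the extension in $N$ from the heart to arbitrary unbounded complexes, and this is a genuine gap, not a routine formality.

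Concretely: your bridge $\RHom_{\FK{G}_q}(E_V,N)\cong\RHom_{G_q}(V,N|_{G_q})$ does extend to unbounded $N$ via $K$-injective resolutions, but then the last link in your chain requires $\RHom_{G_q}(V,W)\cong\RHom_{B_q}(V,W)$ for $W=N|_{G_q}$ an \emph{unbounded complex} of integrable representations, whereas Theorem \ref{thm:cpsk} is stated (and proved) only for objects of $\Rep G_q$. This does not follow formally: $k$ is not compact in $D(\Rep G_q)$ or $D(\Rep B_q)$, and the relevant functors have unbounded cohomological dimension ($\Ext_{G_q}(k,k)\cong\O(\mcl{N})$), so neither a compactness argument nor a convergent hypercohomology spectral sequence rescues the unbounded case without further input. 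The paper's route around exactly this issue is to fix $M$ a coherent projective, observe that $M$ is compact in $D(\FK{G}_q)$ \emph{and} that its image is compact in $D(\dG/B_q)$ (Lemma \ref{lem:1090} and Corollary \ref{cor:1110}, whose proof uses the relative projectivity and local-to-global machinery of Sections \ref{sect:shom1}--\ref{sect:shom2}, in particular Corollary \ref{cor:372}), so that $\Hom(M,-)$ commutes with coproducts on both sides and the class of good $N$ is localizing; only then does one vary $M$. Your proposal omits any analogue of this compactness-preservation step, so as written it proves full faithfulness only for $N$ in (or bounded complexes over) the heart, not for $D(\FK{G}_q)\to D(\dG/B_q)$ on unbounded derived categories.
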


\begin{proof}
Follows immediately by Theorem \ref{thm:kempf_ext} below.
\end{proof}

We refer to this embedding as the Kempf embedding, since the result essentially follows by Kempf vanishing \cite{kempf76}, or rather the quantum analog of Kempf vanishing provided in \cite{andersenpolowen91,woodock97,ryom03}.  From the perspective of the small quantum Borels the embedding of Theorem \ref{thm:Kempf} is a kind of universal restriction functor.

\subsection{Compact sheaves over the half-quantum flag variety}

We begin with a little lemma, the proof of which is deferred to Section \ref{sect:relative_p} below.

\begin{lemma}\label{lem:1090}
Suppose that $V$ is a finite-dimensional $B_q$-representation and that the restriction of $V$ to the small quantum Borel $\uqB$ is projective.  Then the associated vector bundle $E_V$ over $\dG/B_q$ is compact in $D(\dG/B_q)$.  Similarly, any summand of $E_V$ is compact in $D(\dG/B_q)$.
\end{lemma}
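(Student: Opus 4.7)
The plan is to use the adjunction $E_-\dashv(-)|_{B_q}$ to reduce compactness of $E_V$ in $D(\dG/B_q)$ to compactness of $V$ in $D(B_q)$, and then to exploit Proposition~\ref{prop:151} together with the projectivity hypothesis to reduce further to cohomology of the classical Borel.

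First I would observe that both $E_-$ and its right adjoint $(-)|_{B_q}$ are exact, and that $(-)|_{B_q}$ manifestly commutes with arbitrary direct sums (global sections on the affine scheme $\dG$ commute with sums, and forgetting the $\O(\dG)$-action is transparent). Consequently the adjunction passes to the unbounded derived categories to give a natural isomorphism
\[
\RHom_{\dG/B_q}(E_V,M)\cong \RHom_{B_q}(V,M|_{B_q})
\]
for $M\in D(\dG/B_q)$, and compactness of $V$ in $D(B_q)$ therefore implies compactness of $E_V$ in $D(\dG/B_q)$.

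Next I would apply Proposition~\ref{prop:151}. Since $V|_u$ is projective by hypothesis, the higher terms $\Ext^j_u(V,-)$ vanish and the spectral sequence collapses to the identification
\[
\Ext^n_{B_q}(V,W)\cong H^n(B,\Hom_u(V,W))
\]
for arbitrary $W\in\Rep B_q$. The functor $\Hom_u(V,-)$ commutes with direct sums because $V$ is finite-dimensional, and it remains to verify that $H^n(B,-)$ commutes with arbitrary direct sums on $\Rep B$. Writing $B=T\ltimes U$ with $U$ unipotent, Hochschild--Serre together with linear reductivity of the torus gives the identification $H^n(B,W)=H^n(\mfk{u},W)^T$; the Chevalley--Eilenberg resolution then computes $H^n(\mfk{u},-)$ via a bounded complex of functors each of which commutes with direct sums, and extraction of the weight-zero component for $T$ commutes with direct sums as well.

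The summand claim follows from the general fact that direct summands of compact objects are compact in any triangulated category. The only genuinely technical point in this approach is the interchange of $H^n(B,-)$ with direct sums, which I would settle via the Chevalley--Eilenberg computation sketched above; everything else is a routine application of the adjunction $E_-\dashv(-)|_{B_q}$ together with Proposition~\ref{prop:151}.
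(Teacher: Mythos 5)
Your reduction is fine as far as it goes: the adjunction $E_-\dashv(-)|_{B_q}$ between exact functors does pass to unbounded derived categories (the right adjoint has an exact left adjoint, hence preserves K-injectives), $(-)|_{B_q}$ commutes with sums, and so compactness of $E_V$ in $D(\dG/B_q)$ would indeed follow from compactness of $V$ in $D(B_q)$. The gap is in the second half. What you actually verify is that each functor $\Ext^n_{B_q}(V,-)$ commutes with set-indexed direct sums of \emph{objects} of $\Rep B_q$, and that statement does not imply compactness of $V$ in the \emph{unbounded} derived category. A standard counterexample: for $R=k[x]/(x^2)$ and $X=k$, every $\Ext^n_R(k,-)$ commutes with coproducts (resolve $k$ by finitely generated frees), yet $k$ is not compact in $D(R)$, since the compacts there are exactly the perfect complexes. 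So the inference ``Ext-level commutation with sums $\Rightarrow$ compactness in $D(B_q)$'' requires an extra ingredient, namely a \emph{uniform} vanishing $\Ext^{>N}_{B_q}(V,-)=0$ — which your collapsed spectral sequence does in fact provide, with $N=\dim\mfk{n}$, since in characteristic $0$ rational $\dB$-cohomology has finite cohomological dimension — but you never invoke it, and even with it in hand one must still run a truncation/convergence argument to pass from modules to unbounded complexes; equivalently, one needs that $k$ is compact in $D(\Rep\dB)$. This is a genuinely characteristic-zero phenomenon (it fails for unipotent group schemes in positive characteristic), so it cannot be purely formal. Incidentally, the step you single out as the technical one — commutation of $H^n(B,-)$ with sums — is automatic for any affine group scheme via the Hochschild/cobar complex; the real difficulty sits in the step you treat as routine.

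This is precisely the point the paper's proof is engineered to avoid. There one rewrites $\Ext^i_{\dG/B_q}(E_V,-)$ as $\Ext^i_{\dG/\dB}(\O_{\dG/\dB},\sHom_{\dG/B_q}(E_V,-))$ using Corollary \ref{cor:372}, where $\sHom_{\dG/B_q}(E_V,-)$ is the descent of $\Hom_{\su}(V,-)$, hence exact (Corollary \ref{cor:loc_proj}) and sum-preserving, so it passes to unbounded derived categories with no spectral sequence and no convergence issue; the only nontrivial input is then compactness of $\O_{\dG/\dB}$ in $D(\dG/\dB)$, a classical fact for the scheme $\dG/\dB$. Your route can be repaired by supplying the uniform vanishing and the compactness of $k$ over $\Rep\dB$ (or by switching to the paper's localization over the flag variety), but as written the key step would fail.
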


Now, by Proposition \ref{prop:andersen}, all coherent projectives in $\FK{G}_q$ are summands of vector bundles $E_V$ with $V$ projective over $G_q$, and hence projective over $\uqG$ and $\uqB$ as well.  So Lemma \ref{lem:1090} implies the following.

\begin{corollary}\label{cor:1110}
The functor $D(\FK{G}_q)\to D(\dG/B_q)$ induced by the forgetful functor sends compact objects in $D(\FK{G}_q)$ to compact objects in $D(\dG/B_q)$.
\end{corollary}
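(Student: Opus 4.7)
My plan is to reduce the statement to Lemma~\ref{lem:1090} via Proposition~\ref{prop:andersen}, using standard closure properties of the class of compact objects in a triangulated category.

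First I would identify the compact objects of $D(\FK{G}_q)$. Via the equivalence $1^\ast:\FK{G}_q\overset{\sim}\to \Rep\uqG$ of Theorem~\ref{thm:ag}, the category $D(\FK{G}_q)$ is the unbounded derived category of modules over the finite-dimensional self-injective Hopf algebra $\uqG$. A standard argument identifies the compact objects in this derived category with the perfect complexes, i.e.\ bounded complexes of coherent projectives in $\FK{G}_q$. Since the compact objects in any triangulated category form a thick subcategory (closed under shifts, cones, and summands), it suffices to show that every coherent projective in $\FK{G}_q$ is sent to a compact object in $D(\dG/B_q)$.

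Second, let $P$ be a coherent projective in $\FK{G}_q$. Unwinding Proposition~\ref{prop:andersen}(iii) through the commuting diagram $1^\ast\circ E_-=\res$ recalled in Section~\ref{sect:de_equiv}, one sees that $P$ is a direct summand of $E_{P'}$ for some projective $P'$ in $\Rep G_q$. By Proposition~\ref{prop:andersen}(ii), this $P'$ restricts to a projective representation over $\uqG$, and hence over $\uqB$ as well. Lemma~\ref{lem:1090} then asserts exactly that $E_{P'}$ is compact in $D(\dG/B_q)$, and that any summand of it---in particular $P$---is compact as well. Combining this with the reduction of the first paragraph gives the corollary.

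The real content of the argument is thus housed in Proposition~\ref{prop:andersen} and in the (yet to be proved) Lemma~\ref{lem:1090}; the remaining work is the formal passage from compactness of the generating objects to compactness of arbitrary perfect complexes. The main obstacle is therefore entirely insulated in Lemma~\ref{lem:1090}, whose proof (deferred to Section~\ref{sect:relative_p}) will require controlling derived morphisms out of $E_V$ in $D(\dG/B_q)$, presumably via the adjunction between $E_-$ and the forgetful functor $-|_{B_q}$ together with the spectral sequence of Proposition~\ref{prop:151} relating $B_q$-extensions to $\uqB$-extensions.
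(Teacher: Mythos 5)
Your proposal is correct and follows essentially the same route as the paper: compacts in $D(\FK{G}_q)$ are generated (as a thick subcategory) by the coherent projectives, which by Proposition \ref{prop:andersen} are summands of bundles $E_V$ with $V$ projective over $G_q$, hence over $\uqB$, so Lemma \ref{lem:1090} applies. The extra details you spell out (identification of compacts with perfect complexes, closure of compacts under shifts, cones, and summands) are exactly what the paper leaves implicit.
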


\subsection{Kempf embedding via extensions}

\begin{theorem}\label{thm:kempf_ext}
The forgetful functor $\FK{G}_q\to \QCoh(\dG/B_q)$ induces an isomorphism on cohomology
\begin{equation}\label{eq:1093}
\Ext^i_{\FK{G}_q}(M,N)\overset{\cong}\longrightarrow \Ext^i_{\dG/B_q}(M,N),
\end{equation}
for all $i$ and all $M$ and $N$ in $D(\FK{G}_q)$.
\end{theorem}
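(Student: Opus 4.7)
The approach is to reduce the theorem, via equivariant-vector-bundle adjunctions, to the quantum analog of the Cline-Parshall-Scott-van der Kallen transfer theorem (Theorem~\ref{thm:cpsk}), and then to extend from $M = E_V$ (for $V \in \rep G_q$) to arbitrary $M \in D(\FK{G}_q)$ by a compact generation argument. Recall the adjunction $E_- \dashv -|_{G_q}$ between $\Rep G_q$ and $\FK{G}_q$, where $E_V = \O_{\dG} \ot_k V$ and $-|_{G_q}: \FK{G}_q \to \Rep G_q$ is the forgetful functor; both are exact (exactness of the forgetful using affineness of $\dG$). Analogously, $E_- \dashv -|_{B_q}$ between $\Rep B_q$ and $\QCoh(\dG/B_q)$ is an adjoint pair of exact functors, and the forgetful functor $\FK{G}_q \to \QCoh(\dG/B_q)$ sends $E_V$ to $E_{V|_{B_q}}$. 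Exactness of both members of each adjunction, together with the existence of $K$-injective resolutions of unbounded complexes in these Grothendieck abelian categories, promotes the adjunctions to the derived level:
\[
\RHom_{\FK{G}_q}(E_V, N) \simeq \RHom_{G_q}(V, N|_{G_q}), \qquad \RHom_{\dG/B_q}(E_{V|_{B_q}}, N) \simeq \RHom_{B_q}(V|_{B_q}, N|_{B_q})
\]
for every $N \in D(\FK{G}_q)$. The transfer theorem~\ref{thm:cpsk}, enhanced to complexes using Kempf vanishing (Theorem~\ref{thm:kempf_vanish}) and Frobenius reciprocity, supplies a quasi-isomorphism $\RHom_{G_q}(V, N|_{G_q}) \simeq \RHom_{B_q}(V|_{B_q}, N|_{B_q})$. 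After a naturality check, the composite of these three quasi-isomorphisms agrees with the map induced by the forgetful $\FK{G}_q \to \QCoh(\dG/B_q)$, establishing the Ext iso for $M = E_V$ with $V \in \rep G_q$ and arbitrary $N$.

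To extend to arbitrary $M$, I fix $N$ and consider the full subcategory $\mathcal{T}_N \subset D(\FK{G}_q)$ of $M$ for which the iso holds. This subcategory is thick and, by the preceding paragraph, contains every $E_V$ for $V \in \rep G_q$. By Proposition~\ref{prop:andersen}(iii), projectives in $\FK{G}_q \simeq \Rep \uqG$ arise as summands of the restrictions $E_{P'}|_{\uqG}$ for projective $P' \in \Rep G_q$; by Lemma~\ref{lem:1090} each such $E_{P'}$ is compact in $D(\FK{G}_q)$, and together they form a set of compact generators. Hence $\mathcal{T}_N$ contains all compact objects of $D(\FK{G}_q)$. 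Since the forgetful $\FK{G}_q \to \QCoh(\dG/B_q)$ is exact with both adjoints (restriction of equivariance from $G_q$ to $B_q$ admits induction and coinduction on either side), $\Kempf$ preserves both homotopy colimits and, by Corollary~\ref{cor:1110}, compact objects. Consequently, both $\RHom$ functors convert homotopy colimits in the $M$-variable into compatible homotopy limits in $D(\mathrm{Ab})$. Writing a general $M$ as the homotopy colimit of its compact approximations---possible by compact generation of $D(\FK{G}_q)$---then propagates the iso from compact $M$ to arbitrary $M \in D(\FK{G}_q)$.

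The main technical obstacle is the derived enhancement of the transfer theorem to unbounded $W \in D(\Rep G_q)$: one must verify that the natural map $\RHom_{G_q}(V, W) \to \RHom_{B_q}(V|_{B_q}, W|_{B_q})$ is a quasi-isomorphism when $W$ is unbounded. The way I would handle this is to take a $K$-injective resolution of $W$ in $\Rep G_q$ and apply Kempf vanishing cohomological-degree-by-degree to control the derived induction functor $R\mathsf{I}^0$, but this requires genuine care with convergence and boundedness in the hypercohomology/Grothendieck spectral sequence. A secondary, essentially routine obstacle is the naturality diagram chase identifying the composite of the adjunction and transfer isomorphisms with the map induced by the forgetful functor.
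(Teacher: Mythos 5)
Your skeleton matches the paper's in two of its three moves (the adjunction reduction to the transfer theorem for $M=E_V$, and the final extension in the $M$-variable via thick/localizing subcategories generated by the compact projectives), but the way you handle the $N$-variable contains a genuine gap, and it is precisely the step you flag: you need the transfer quasi-isomorphism $\RHom_{G_q}(V,W)\to\RHom_{B_q}(V,W)$ for \emph{unbounded} $W$ in order to get the isomorphism for $M=E_V$ and arbitrary $N\in D(\FK{G}_q)$, and the ``degree-by-degree'' hypercohomology argument you sketch does not obviously converge for unbounded complexes (the relevant spectral sequences are only conditionally convergent, and there is no boundedness to control them). Moreover, you cannot repair this by a compactness argument in the $N$-variable at the level of generality you have set up: for a general finite-dimensional $V\in\rep G_q$ the bundle $E_V$ is \emph{not} compact in $D(\dG/B_q)$ --- Lemma~\ref{lem:1090} requires $V|_{\uqB}$ to be projective --- so $\Ext_{\dG/B_q}(E_V,-)$ need not commute with coproducts, and the class of $N$ for which your map is an isomorphism is not visibly localizing.

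The paper's proof avoids the unbounded transfer theorem altogether by reordering the quantifiers: it applies Theorem~\ref{thm:cpsk} only with $N$ in the heart $\FK{G}_q$ (where the stated abelian-level version suffices), deduces the isomorphism for $M$ simple or projective in $\FK{G}_q$ via Proposition~\ref{prop:andersen}, and only \emph{then} extends in $N$ --- but with $M$ restricted to coherent projectives, which are compact in $D(\FK{G}_q)$ and, by Corollary~\ref{cor:1110}, have compact image in $D(\dG/B_q)$. For such $M$ both $\Ext$ functors commute with coproducts, so the full subcategory of good $N$ is localizing and contains the heart, hence is all of $D(\FK{G}_q)$ by $\operatorname{Loc}(D(\FK{G}_q)^{\heartsuit})=D(\FK{G}_q)$; the $M$-extension via $\operatorname{Loc}(\operatorname{proj}\FK{G}_q)=D(\FK{G}_q)$ then goes exactly as in your last paragraph (and there you don't even need your claim that the forgetful functor has adjoints on both sides --- which is dubious as stated --- only that it is exact and preserves coproducts). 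If you adopt this ordering, your proposal closes up without any new input; as written, the key case $M=E_V$, $N$ unbounded, is left unproved.
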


\begin{proof}
The forgetful functor is exact and hence induces a map on unbounded derived categories.  We first claim that the map on extensions is an isomorphism whenever $M$ is in the image of the de-equivariantization/equivariant vector bundle map $E_-:\Rep G_q\to \FK{G}_q$, and $N$ is arbitrary in $\FK{G}_q$.  Recall that this vector bundle functor has an exact right adjoint $-|_{G_q}:\FK{G}_q\to \Rep G_q$ provided by taking global sections and forgetting the $\O(\dG)$-action, and we have the analogous adjunction for $\QCoh(\dG/B_q)$, as discussed in Section \ref{sect:G/Bq_intro}.  (We make no notational distinction between vector bundles in $\FK{G}_q$ and in $\QCoh(\dG/B_q)$, and rely on the context to distinguish the two classes of sheaves.)
\par

Since the equivariant vector bundle functors for $\FK{G}_q$ and $\QCoh(\dG/B_q)$ are exact, the adjoints $-|_{G_q}$ and $-|_{B_q}$ preserve injectives.  So we have identifications
\[
\Ext^\ast_{\FK{G}_q}(E_V,N)\overset{\cong}\to \Ext^\ast_{G_q}(V,N|_{G_q})\ \ \text{and}\ \ \Ext^\ast_{\dG/B_q}(E_V,N)\overset{\cong}\to \Ext^\ast_{B_q}(V,N|_{B_q})
\]
which fit into a diagram
\[
\xymatrix{
\Ext^\ast_{\FK{G}_q}(E_V,N)\ar[rr]^{forget}\ar[d]_\cong & & \Ext^\ast_{\dG/B_q}(E_V,N)\ar[d]^\cong\\
\Ext^\ast_{G_q}(V,N|_{G_q})\ar[rr]^{restrict}_{\cong} & & \Ext^\ast_{B_q}(V,N|_{B_q}).
}
\]
The bottom morphism here is an isomorphism by Kempf vanishing, or more precisely by the transfer theorem of Theorem \ref{thm:cpsk}.  So we conclude that the top map is an isomorphism.
\par

We now understand that the map \eqref{eq:1093} is an isomorphism whenever $M=E_V$ for some $V$ in $\Rep G_q$ and $N$ is arbitrary in $\FK{G}_q$.  It follows that \eqref{eq:1093} is an isomorphism whenever $M$ is a summand of a vector bundle $E_V$ and $N$ is arbitrary in $\FK{G}_q$, and hence whenever $M$ is simple or projective in $\FK{G}_q$ by Proposition \ref{prop:andersen}.
\par

Since coherent projectives in $\FK{G}_q$ are compact in $D(\FK{G}_q)$, and the functor $D(\FK{G}_q)\to D(\dG/B_q)$ preserves compact objects by Corollary \ref{cor:1110}, it follows that \eqref{eq:1093} is an isomorphism whenever $M$ is a coherent projective in $\FK{G}_q$ and $N$ is in the localizing subcategory
\begin{equation}\label{eq:1118}
\operatorname{Loc}(\FK{G}_q)=\operatorname{Loc}(D(\FK{G}_q)^{\heartsuit})\subset D(\FK{G}_q)
\end{equation}
generated by the heart of the derived category.  But, this localizing subcategory is all of $D(\FK{G}_q)$ \cite[\S 5.10]{krause10}, so that the map \eqref{eq:1093} is an isomorphism whenever $M$ is coherent projective and $N$ is arbitrary in $D(\FK{G}_q)$.  We use the more precise identification
\[
\operatorname{Loc}(\operatorname{proj}\FK{G}_q)=D(\FK{G}_q)
\]
\cite[\S 5.10]{krause10}, where $\operatorname{proj}\FK{G}_q$ denotes the category of coherent projectives in $\FK{G}_q$, to see now that \eqref{eq:1093} is an isomorphism at arbitrary $M$ and $N$.
\end{proof}

\subsection{A discussion of Kempf's embedding}

Let us take a moment to discuss the role that Theorem \ref{thm:Kempf} plays in our analysis of the small quantum group.
\par

By the Kempf embedding theorem we can consider the derived category of sheaves for the quantum Frobenius kernel $\FK{G}_q$ as a (full) monoidal subcategory in the derived category of sheaves over $\dG/B_q$.  At the derived level, we still have the action of $D(\dG/\dB)$ on $D(\dG/B_q)$, and again speak of $D(\dG/B_q)$ as a sheaf of categories over the flag variety (cf.\ Section \ref{sect:GRT}).
\[
\begin{tikzpicture}
\shade[inner color=white,outer color=black!43](1,1) ellipse (4cm and .85cm);
\draw[color=violet, thick, dashed] (1,1) ellipse (1.8cm and .4cm);
\draw (-4.5,1) node {$D(\dG/B_q)$};
\draw (1,2.8) node {$D(\FK{G}_q)$};
\shade[inner color=black,outer color=white] (-3,-.6) .. controls (-0.4,-.2) and (2,-1) .. (5,-.6);
\draw (1,-1) node {$\dG/\dB$};
\draw[thick, ->] (1,2.5) -- (1,1.2);
\draw[color=violet] (1,1) node {\tiny$\cong D(\FK{G}_q)$};
\end{tikzpicture}
\]

We note, however, that the monoidal subcategory $D(\FK{G}_q)\subset D(\dG/B_q)$ is \emph{not} stable under the action of $D(\dG/\dB)$ (one can check this explicitly).  Equivalently, if we think in the mode of sheaves of categories, $\FK{G}_q$ is not a sheaf of categories over $\dG/\dB$, or rather does not admit such a structure which is induced by the given inclusion into $\QCoh(\dG/B_q)$.  So, one can reasonably say that we are including only into the global sections
\[
D(\FK{G}_q)\hookrightarrow \text{global sections of $D(\dG/B_q)$ over }\dG/\dB.
\]
\par

In any case, although $D(\FK{G}_q)$ is not stable under the $D(\dG/\dB)$ action on $D(\dG/B_q)$, we can consider a ``dual" structure on $D(\dG/B_q)$ provided by this action, which is a corresponding enhancement in the symmetric monoidal category of sheaves on $\dG/\dB$.  This enhanced structure is, of course, provided by the inner-Homs $\sHom_{\dG/B_q}$ of Section \ref{sect:G/Bq} and is described further in Section \ref{sect:shom1}--\ref{sect:Enh_derived} below.  We record an obvious lemma.

\begin{lemma}
Suppose $T=(\operatorname{obj}T,\ot,\Hom_T)$ is a monoidal category which is equipped with an enhancement $(\operatorname{obj}T,\ot,\sHom_T)$ in a symmetric monoidal category $\mcl{Q}$.  Then any full monoidal subcategory $S\subset T$ inherits a monoidal $\mcl{Q}$-enhancement.  Namely, we have the enhancement $(\operatorname{obj}S,\ot,\sHom_S)$ where $\sHom_S(X,Y)=\sHom_{T}(X,Y)$ for each pair of objects $X$ and $Y$ in $S$.
\end{lemma}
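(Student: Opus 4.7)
The plan is to take the obvious candidate enhancement on $S$, namely the one obtained by simply restricting $\sHom_T$ to pairs of objects of $S$, and then observe that every structure morphism and every axiom of the enhanced monoidal structure on $T$ restricts along the inclusion $S \subset T$. Since $S$ is a \emph{full} monoidal subcategory, the unit object $\1_T$ already lies in $S$ and products $X \ot Y$ of objects of $S$ lie in $S$, so the ambient monoidal structure on $T$ provides $S$ with its own product, unit, and associator.

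First I would define $\sHom_S := \sHom_T|_{S^{op}\times S}$ as a bifunctor into $\mcl{Q}$. The composition morphisms for $\underline{S}$ are defined by restriction,
\[
\circ^S_{L,M,N} := \circ^T_{L,M,N} : \sHom_S(M,N)\ot \sHom_S(L,M) \to \sHom_S(L,N),
\]
for objects $L,M,N$ of $S$, and associativity of $\circ^S$ is immediate from associativity of $\circ^T$. Taking global sections gives
\[
\Gamma(\mcl{Q},\sHom_S(X,Y)) = \Gamma(\mcl{Q},\sHom_T(X,Y)) = \Hom_T(X,Y) = \Hom_S(X,Y),
\]
the final equality using that $S \subset T$ is full; hence $S \cong \Gamma(\mcl{Q},\underline{S})$ as ordinary categories.

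Next I would install the monoidal structure on $\underline{S}$. The tensor morphisms of $\underline{T}$ restrict to
\[
tens^S_{L,M,L',M'} := tens^T_{L,M,L',M'}:\sHom_S(L,M)\ot \sHom_S(L',M')\to \sHom_S(L\ot L', M\ot M'),
\]
which is well-defined since $S$ is closed under $\ot$. The unit object and associator of $\underline{T}$ are already global morphisms in $\Gamma(\mcl{Q},\underline{T})=T$; under the identification $\Gamma(\mcl{Q},\underline{S})=S$ they become the unit and associator of the ambient monoidal subcategory $S$, so they are global isomorphisms of $\underline{S}$. Associativity of $tens^S$ relative to the associators, and the compatibility identity
\[
(g_1\ot g_2)\circ(f_1\ot f_2)=(g_1\circ f_1)\ot(g_2\circ f_2)
\]
in $\underline{S}$, are identical to the corresponding identities in $\underline{T}$ evaluated on objects of $S$, and therefore hold automatically.

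There is no real obstacle here; the content of the lemma is only that every piece of data and every axiom is of a local, pointwise nature that survives restriction to a full, $\ot$-closed collection of objects. The only mild point to verify carefully is that the equivalence $S \cong \Gamma(\mcl{Q},\underline{S})$ chosen above is monoidal, but this is immediate because the comparison maps $\Gamma(\mcl{Q},\sHom_S(X,Y))\times \Gamma(\mcl{Q},\sHom_S(X',Y'))\to \Gamma(\mcl{Q},\sHom_S(X\ot X',Y\ot Y'))$ induced by $tens^S$ agree by construction with those induced by $tens^T$, and the latter realize the given monoidal equivalence $T \cong \Gamma(\mcl{Q},\underline{T})$ on the subcategory $S$.
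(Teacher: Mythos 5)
Your proposal is correct and follows exactly the route the paper intends: the paper records this as ``an obvious lemma'' and offers no written proof, the point being precisely that all structure maps and axioms of the enhancement are pointwise in the objects and so restrict verbatim to a full, $\ot$-closed subcategory containing the unit. Your spelled-out verification (restricting $\sHom_T$, $\circ^T$, $tens^T$, and using fullness to identify $\Gamma(\mcl{Q},\sHom_S(X,Y))$ with $\Hom_S(X,Y)$, together with the observation that the restricted comparison maps realize the given monoidal equivalence on $S$) is the implicit argument, carried out completely.
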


So, the $\dG/\dB$-\emph{enhancement} for $D(\dG/B_q)$ will induce a monoidal enhancement for the full subcategory $D(\FK{G}_q)$ via the Kempf embedding.  In this sense the sheaf structure on $D(\dG/B_q)$ over $\dG/\dB$ still provides us with a means of employing the flag variety in an analysis of the category of quantum group representations, as a tensor category.  This entrance of the flag variety also leads naturally to an appearance of the Springer resolution in the derived category of sheaves over the half-quantum flag variety, and the derived category of representations over the quantum group as well (see Sections \ref{sect:RGq} \& \ref{sect:AH!}).
\[
\begin{tikzpicture}
\node at (0,1.1) {\sf Kempf embedding};
\node at (0,-1.3) {$\sRHom_{\dG/B_q}$};
\node[align=center,draw=teal!70, thick, rounded corners, inner sep=8pt] at  (-.9,0) {$D(\FK{G}_q)$};
\draw[->, color=teal] (-.05,0) -- (.45,0);
\node[align=center,draw=black, inner sep=16pt] at (0,0) {\hspace{3.8cm}\\ \hspace{3.8cm}\ };
\node[align=center] at (1.3,0) {\large$D(\dG/B_q)$};
\draw[dotted,<->,rotate=-90] (1.5,4.2) parabola bend (0,2.56) (-1.5,4.2);
\draw[dotted,<->,rotate=90] (1.5,4.2) parabola bend (0,2.56) (-1.5,4.2);
\draw[dotted,<->,rotate=-90] (1.4,5.5) parabola bend (0,3.5) (-1.4,5.5);
\draw[dotted,<->,rotate=90] (1.4,5.5) parabola bend (0,3.5) (-1.4,5.5);

\node[align=center] at (-4.3,0) {enhancement\\ via inner-Homs};
\node[align=center] at (4.3,0) {generates (dg)\\ sheaves on $\dG/\dB$.};
\end{tikzpicture}
\]

\subsection{Remarks on centrality of the Kempf embedding}
\label{sect:Kempf_Z}

Let us conclude with a remark on centrality of the Kempf embedding.
\par

We have argued above that one should consider $\QCoh(\dG/B_q)$ as a tensor category over $\QCoh(\dG/\dB)$, with its given symmetric bimodule structure.  Hence the appropriate ``Drinfeld center" for $\QCoh(\dG/B_q)$ should be the centralizer
\[
Z_{\dG/\dB}(\QCoh(\dG/B_q)):=\text{the centralizer of $\QCoh(\dG/\dB)$ in }Z(\QCoh(\dG/B_q)).
\]
We have the central structure $\Kempf':\FK{G}_q\to Z(\QCoh(\dG/B_q))$ for the Kempf embedding which is provided by the $R$-matrix in the expected way,
\begin{equation}\label{eq:1237}
\beta_{N,M}:N\ot \Kempf(M)\to \Kempf(M)\ot N,\ \ \beta_{N,M}(n,m)=R_{21}(m\ot n).
\end{equation}
One sees directly that for $N$ in the image of the embedding $\zeta:\QCoh(\dG/\dB)\to \QCoh(\dG/B_q)$, $\beta_{N,M}$ is the trivial symmetry $\operatorname{symm}_{N,M}$.  It follows that the image of $\Kempf'$ does in fact lie in the centralizer of $\QCoh(\dG/\dB)$, so that we restrict to obtain the desired ``$\QCoh(\dG/\dB)$-linear" central structure
\begin{equation}\label{eq:1241}
\Kempf':\FK{G}_q\to Z_{\dG/\dB}(\QCoh(\dG/B_q)).
\end{equation}
When we speak of the Kempf embedding as a central tensor functor we mean specifically the functor $\Kempf:\FK{G}_q\to \QCoh(\dG/B_q)$ along with the lift \eqref{eq:1241} provided by the half-braidings \eqref{eq:1237}.

\section{Structure of $\sHom$ I: Linearity, composition, and tensoring}
\label{sect:shom1}

In the next two sections we provide an analysis of the inner-Hom, or sheaf-Hom, functor for the action of $\QCoh(\dG/\dB)$ on $\QCoh(\dG/B_q)$.  Here we show that sheaf-Homs admit natural composition and monoidal structure maps, and so provide a monoidal enhancement $\QCoh^{\Enh}(\dG/B_q)$ for the monoidal category of sheaves on the half-quantum flag variety.  Many of the results of this section are completely general and completely formal.  So some proofs are sketched and/or delayed to the appendix.
\par

In the subsequent section, Section \ref{sect:shom2}, we provide an explicit description of the sheafy morphisms $\sHom_{\dG/B_q}$ and describe objects in $\QCoh(\dG/B_q)$ which are projective for this functor.

\begin{remark}
Sections \ref{sect:shom1}--\ref{sect:Enh_derived} are in some sense ``busywork", as we are just taking account of various structures for the inner-Hom functor, both at the abelian and derived levels.  One might therefore skim these contents on a first reading.  The next substantial result comes in Section \ref{sect:fibers_G/B}, where we calculate the fibers of derived sheaf-Hom $\operatorname{L}\lambda^\ast\sRHom_{\dG/B_q}$ over the flag variety in terms of derived maps over the small quantum Borels.
\end{remark}

\subsection{$\QCoh(\dG/\dB)$-linearity of sheaf-Hom}

The adjoint to the identity map $id:\sHom_{\dG/B_q}(M,N)\to \sHom_{\dG/B_q}(M,N)$ provides an evaluation morphism
\[
ev:\sHom_{\dG/B_q}(M,N)\star M\to N.
\]
The evaluation is just the counit for the $(\star,\sHom)$-adjunction.  For any $\msc{F}$ in $\QCoh(\dG/\dB)$ the map
\[
id\ot ev:\msc{F}\star(\sHom_{\dG/B_q}(M,N)\star M)\to \msc{F}\star N
\]
provides a natural morphism $\msc{F}\star\sHom(M,N)\to \sHom(M,\msc{F}\star N)$ in the category of sheaves over the flag variety.  In analyzing this natural morphism it is helpful to consider a notion of projectivity for the sheaf-Hom functor.

\begin{definition}\label{def:loc_inj/proj}
An object $M$ in $\QCoh(\dG/\dB)$ is called relatively projective (resp.\ relatively injective) if the functor $\sHom_{\dG/B_q}(M,-)$ (resp.\ $\sHom_{\dG/B_q}(-,M)$) is exact.
\end{definition}

Discussions of relatively injective and projective sheaves are provided in Lemma \ref{lem:loc_inj} and Section \ref{sect:proof1090} below, respectively.

\begin{lemma}[{cf.\ \cite[Lemma 3.3]{ostrik03}}]\label{lem:sHom-linear}
Consider the structural map
\begin{equation}\label{eq:469}
\msc{F}\star \sHom_{\dG/B_q}(M,N)\to \sHom_{\dG/B_q}(M,\msc{F}\star N)
\end{equation}
at $M$ and $N$ in $\QCoh(\dG/B_q)$, and $\msc{F}$ in $\QCoh(\dG/\dB)$.  The map \eqref{eq:469} is an isomorphism under any of the following hypotheses:
\begin{itemize}
\item $\msc{F}$ is a coherent vector bundle.
\item $M$ is relatively projective and $\msc{F}$ is coherent.
\item $M$ is coherent and relatively projective, and $\msc{F}$ is arbitrary.
\item $M$ admits a presentation $E'\to E\to M$ by coherent, relatively projective sheaves and $\msc{F}$ is flat.
\end{itemize}
\end{lemma}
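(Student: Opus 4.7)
The plan is to establish the four bulleted cases in succession, reducing each to the preceding one by standard homological devices. The structural map \eqref{eq:469} is obtained as the adjoint of the composite $\msc{F}\star\sHom_{\dG/B_q}(M,N)\star M\xrightarrow{\msc{F}\star ev}\msc{F}\star N$. For the first bullet I would exploit dualizability: a coherent vector bundle $\msc{F}$ is dualizable in $\QCoh(\dG/\dB)$, and since $\zeta^\ast$ is monoidal it sends $\msc{F}$ to a dualizable object of $\QCoh(\dG/B_q)$ with dual $\zeta^\ast(\msc{F}^\vee)$. By Yoneda, for any $\msc{G}\in\QCoh(\dG/\dB)$ both sides of \eqref{eq:469}, after applying $\Hom_{\dG/\dB}(\msc{G},-)$ and invoking the $(\star,\sHom_{\dG/B_q})$-adjunction, become canonically identified with $\Hom_{\dG/B_q}(\msc{F}^\vee\star\msc{G}\star M,N)$; naturality will then show that the structural map realizes this identification, hence is an isomorphism.

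For the second bullet, I would take a presentation $\msc{E}_1\to\msc{E}_0\to\msc{F}\to 0$ of $\msc{F}$ by coherent locally free sheaves on $\dG/\dB$. Both sides of \eqref{eq:469} are right exact functors of $\msc{F}$: the left because $-\star\sHom_{\dG/B_q}(M,N)$ is essentially tensor product over $\O_{\dG}$, and the right because $-\star N$ is right exact and $\sHom_{\dG/B_q}(M,-)$ is exact by relative projectivity of $M$. The first bullet then provides the isomorphism on $\msc{E}_0$ and $\msc{E}_1$, and the five lemma concludes. For the third bullet, I would write $\msc{F}=\varinjlim\msc{F}_i$ as a filtered colimit of coherent subsheaves. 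The left side commutes with filtered colimits in $\msc{F}$ since $-\star\sHom_{\dG/B_q}(M,N)$ does; for the right side, coherence of $M$ makes it compact in $\QCoh(\dG/B_q)$, and since $\zeta^\ast$ preserves compact objects the composition $-\star M$ sends compacts to compacts, forcing the right adjoint $\sHom_{\dG/B_q}(M,-)$ to commute with filtered colimits. Applying the second bullet to each $\msc{F}_i$ then delivers the conclusion.

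For the fourth bullet, the presentation $E_1\to E_0\to M\to 0$ gives, by left exactness of $\sHom_{\dG/B_q}(-,N')$, a left exact sequence $0\to\sHom_{\dG/B_q}(M,N')\to\sHom_{\dG/B_q}(E_0,N')\to\sHom_{\dG/B_q}(E_1,N')$ for any $N'\in\QCoh(\dG/B_q)$; I would apply this at $N'=N$ and at $N'=\msc{F}\star N$. Flatness of $\msc{F}$ would ensure that $\msc{F}\star-$ preserves the kernel in the $N'=N$ sequence, and by the third bullet the comparison maps at $E_0$ and $E_1$ are isomorphisms, whence the induced map on kernels $\msc{F}\star\sHom_{\dG/B_q}(M,N)\to\sHom_{\dG/B_q}(M,\msc{F}\star N)$ is an isomorphism as well. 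The main technical point is the compactness assertion used in bullet three, namely that $-\star M$ sends compact objects to compact objects when $M$ is coherent. This reduces to $\zeta^\ast$ preserving coherent sheaves together with the identification of compact objects in $\QCoh(\dG/B_q)$ with coherent $B_q$-equivariant sheaves on $\dG$, both of which are standard but warrant explicit verification in this setting.
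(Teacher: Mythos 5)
Your proof is correct and follows essentially the same route as the paper's: dualizability plus Yoneda for vector bundle $\msc{F}$, resolving $\msc{F}$ by vector bundles for the coherent case, passing to filtered colimits of coherent subsheaves for the third case, and resolving $M$ by coherent relative projectives for the fourth. Your compactness argument for why $\sHom_{\dG/B_q}(M,-)$ commutes with filtered colimits, and your appeal to the third bullet (rather than the second) when $\msc{F}$ is flat but not coherent, simply make explicit steps the paper leaves implicit.
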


\begin{proof}
When $\msc{F}$ is a vector bundle it is dualizable, so that we have (explicit) natural isomorphisms \cite[Lemma 2.2]{ostrik03}
\[
\begin{array}{rl}
\Hom_{\dG/\dB}(-,\msc{F}\star \sHom(M,N))& \cong \Hom_{\dG/B_q}(\msc{F}^\vee\ot-,\sHom(M,N))\vspace{1mm}\\
&\cong \Hom_{\dG/B_q}((\msc{F}^\vee\ot -)\star M,N)\vspace{1mm}\\
&\cong \Hom_{\dG/B_q}(-\star M,\msc{F}\star N)\vspace{1mm}\\
&\cong \Hom_{\dG/\dB}(-,\sHom(M,\msc{F}\star N))
\end{array}
\]
and deduce an isomorphism $\msc{F}\star \sHom(M,N)\cong \sHom(M,\msc{F}\star N)$ via Yoneda.  One traces the identity map through the above sequence to see that this isomorphism is in fact just the structure map \eqref{eq:469}.  The second statement follows from the first, after we resolve $\msc{F}$ by vector bundles.  The third statement follows from the second and the fact that $\sHom_{\dG/B_q}(M,-)$ commutes with colimits in this case.  The fourth statement follows from the second by resolving $M$ by relatively projective coherent sheaves.
\end{proof}

We will see at Corollary \ref{cor:enough_proj} below that all coherent sheaves in $\QCoh(\dG/B_q)$ admit a resolution by relative projectives.  So the fourth point of Lemma \ref{lem:sHom-linear} simply says that $\sHom_{\dG/B_q}(M,-)$ is linear with respect to the action of flat sheaves over $\dG/\dB$, whenever $M$ is coherent. 

\begin{remark}
One should compare Lemma \ref{lem:sHom-linear} with the familiar case of a linear category.  For a linear category $\msc{C}$, i.e.\ a module category over $Vect$, we have natural maps $V\ot_k\Hom_{\msc{C}}(A,B)\to \Hom_{\msc{C}}(A,V\ot_k B)$ which one generally thinks of as associated to an identification between $V\ot_k-$ and a large coproduct.  This map is an isomorphism provided $V$ is sufficiently finite (dualizable), or $A$ is sufficiently finite (compact).
\end{remark}

\subsection{Enhancing $\QCoh(\dG/B_q)$ via sheaf-Hom}

The evaluation maps for $\sHom_{\dG/B_q}$ provide a natural composition function
\[
\circ:\sHom_{\dG/B_q}(M,N)\ot\sHom_{\dG/B_q}(L,M)\to \sHom_{\dG/B_q}(L,N)
\]
which is adjoint to the map
\[
\sHom_{\dG/B_q}(M,N)\ot\sHom_{\dG/B_q}(L,M)\star L\overset{id\ot ev}\longrightarrow \sHom_{\dG/B_q}(M,N)\star M\overset{ev}\to N.
\]
We also have monoidal structure maps
\[
t:\sHom_{\dG/B_q}(M,N)\ot\sHom_{\dG/B_q}(M',N')\to \sHom(M\ot M',N\ot N')
\]
which are adjoint to the composition
\[
\begin{array}{l}
\sHom_{\dG/B_q}(M_1,N_1)\ot \sHom_{\dG/B_q}(M_2,N_2)\star (M_1\ot M_2)\vspace{2mm}\\
\hspace{1cm}\overset{symm}\longrightarrow(\sHom_{\dG/B_q}(M_1,N_1)\star M_1)\ot (\sHom_{\dG/B_q}(M_2,N_2)\star M_2)\vspace{2mm}\\
\hspace{2cm}\overset{ev\ot ev}\longrightarrow N_1\ot N_2.
\end{array}
\]
One can check the following basic claim, for which we provide a proof in Appendix \ref{sect:A} (cf.\ \cite{ostrik03,etingofostrik04}).

\begin{proposition}\label{prop:enriched}
The composition and monoidal structure maps for $\sHom_{\dG/B_q}$ are associative, and are compatible in the sense of Section \ref{sect:enrich}.
\end{proposition}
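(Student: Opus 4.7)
The plan is to verify each required identity by the standard Yoneda argument for inner-Hom objects: every structure morphism for $\sHom_{\dG/B_q}$ is defined as the adjunct under the $(\star,\sHom)$-adjunction of a canonical morphism built from evaluations $ev_{M,N}:\sHom_{\dG/B_q}(M,N)\star M\to N$ and the symmetric bimodule structure \eqref{eq:symm}. To check that two morphisms into some $\sHom_{\dG/B_q}(L,P)$ agree, it therefore suffices to check that their adjuncts---which are morphisms into $P$ in $\QCoh(\dG/B_q)$---agree. This reduces every assertion in the proposition to a manipulation of evaluation maps together with the trivial symmetry $\operatorname{symm}$ and the associator of $\QCoh(\dG/B_q)$.

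First I would handle associativity of composition. Given $L,M,N,P$, both iterated composites $\sHom(N,P)\otimes\sHom(M,N)\otimes\sHom(L,M)\to \sHom(L,P)$ have, after adjunction along $-\star L$, the form of the triple evaluation obtained by applying $ev_{L,M}$, then $ev_{M,N}$, then $ev_{N,P}$. Associativity of the ambient tensor product on $\QCoh(\dG/B_q)$ then forces the two adjuncts to coincide, and Yoneda delivers the identity. Next I would treat the associativity of the tensor structure maps $t$ relative to the associators on $\QCoh(\dG/\dB)$ and $\QCoh(\dG/B_q)$: both adjuncts reduce to $ev^{\otimes 3}$ applied in three tensor slots, with reordering of factors accomplished via $\operatorname{symm}$. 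Because $\operatorname{symm}$ comes from the trivial symmetry of $\zeta^\ast(\QCoh(\dG/\dB))$ inside $\QCoh(\dG/B_q)$, no braidings of the $R$-matrix intervene, and the coherence theorem for symmetric monoidal categories applied to these slides suffices.

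The compatibility identity $(g_1\otimes g_2)\circ (f_1\otimes f_2)=(g_1\circ f_1)\otimes (g_2\circ f_2)$ is the only clause that requires real bookkeeping. Writing out both sides as adjuncts of morphisms $\mathcal{G}_1\otimes\mathcal{F}_1\otimes\mathcal{G}_2\otimes\mathcal{F}_2\star(L_1\otimes L_2)\to N_1\otimes N_2$, each side becomes the composite of $(ev_{M_1,N_1}\otimes ev_{M_2,N_2})$ after $(ev_{L_1,M_1}\otimes ev_{L_2,M_2})$, interleaved with a single application of $\operatorname{symm}$ used to move $\mathcal{G}_2\otimes\mathcal{F}_2$ past $M_1$. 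Since the morphism objects are pulled back from $\QCoh(\dG/\dB)$ along $\zeta^\ast$ and the bimodule symmetry is trivial, this reordering incurs no obstruction, and the two adjuncts agree.

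The principal hazard is purely notational: one must track the associator of $\QCoh(\dG/B_q)$, the symmetry $\operatorname{symm}$, and the various adjunction counits across several tensor factors without losing sight of which instance of $\operatorname{symm}$ is being used. A coherence--style diagram chase, or equivalently an appeal to MacLane coherence inside the symmetric monoidal subcategory $\zeta^\ast\QCoh(\dG/\dB)\subset Z(\QCoh(\dG/B_q))$, dispatches this cleanly. No new input beyond the formalism of inner-Homs for module categories as in \cite{ostrik03,etingofostrik04} is required; the proof is therefore relegated to Appendix \ref{sect:A} as a formal verification.
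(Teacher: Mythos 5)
Your proposal is correct and takes essentially the same route as the paper's appendix proof: there, too, each identity is verified by comparing adjuncts under the $(\star,\sHom)$-adjunction, both associativity constraints are identified with a single (triple-)evaluation morphism (for composition the paper additionally records the alternative description via $\QCoh(\dG/\dB)$-linearity), and the interchange law $(g_1\ot g_2)\circ(f_1\ot f_2)=(g_1\circ f_1)\ot(g_2\circ f_2)$ is settled by a string-diagram chase. One small emphasis to keep straight: the decisive input for the interchange is naturality of the half-braiding $\operatorname{symm}$ against the evaluation morphisms, which live in $\QCoh(\dG/B_q)$ and not in the subcategory $\zeta^\ast\QCoh(\dG/\dB)$, so the final step should invoke that naturality rather than coherence inside the symmetric subcategory alone.
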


This result says that the pairing of objects from $\QCoh(\dG/B_q)$, along with the sheaf-morphisms $\sHom_{\dG/B_q}$, constitutes a monoidal category enriched in the symmetric monoidal category of quasi-coherent sheaves on $\dG/\dB$.

\begin{definition}
We let $\QCoh^{\Enh}(\dG/B_q)$ denote the enriched monoidal category
\[
\QCoh^{\Enh}(\dG/B_q):=\big(\operatorname{obj}\QCoh(\dG/B_q),\ \sHom_{\dG/B_q}\big),
\]
with composition and tensor structure maps as described above.
\end{definition}

As the notation suggests, the category $\QCoh^{\Enh}(\dG/B_q)$ does in fact provide an enhancement for the category of sheaves on the half-quantum flag variety.

\begin{theorem}\label{thm:enhA}
The adjunction isomorphism
\[
\Gamma(\dG/\dB,\sHom_{\dG/B_q}(-,-))\overset{\cong}\to \Hom_{\dG/B_q}(-,-)
\]
induces a isomorphism of monoidal categories
\[
\Gamma(\dG/\dB,\QCoh^{\Enh}(\dG/B_q))\overset{\cong}\to \QCoh(\dG/B_q).
\]
\end{theorem}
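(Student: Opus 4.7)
The plan is to exhibit the claimed isomorphism essentially tautologically from the $(\star,\sHom)$-adjunction, and then verify that global sections intertwine the enriched composition and tensor structure with the ordinary composition and tensor product on $\QCoh(\dG/B_q)$.

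First, I would establish the isomorphism at the level of Hom-sets. By definition $\Gamma(\dG/\dB,-) = \Hom_{\dG/\dB}(\O_{\dG/\dB},-)$, and since $\O_{\dG/\dB} = \1$ is the unit for the $\star$-action, the $(\star,\sHom)$-adjunction gives a natural bijection
\[
\Gamma(\dG/\dB,\sHom_{\dG/B_q}(M,N)) = \Hom_{\dG/\dB}(\1,\sHom_{\dG/B_q}(M,N)) \cong \Hom_{\dG/B_q}(\1\star M, N) = \Hom_{\dG/B_q}(M,N),
\]
where the last equality uses the unit isomorphism $\1\star M \cong M$. This bijection is natural in both $M$ and $N$, and sends a global section $f:\1\to \sHom_{\dG/B_q}(M,N)$ to the composite $\operatorname{ev}\circ(f\star \operatorname{id}_M):M\cong \1\star M\to \sHom_{\dG/B_q}(M,N)\star M\to N$. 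This gives the claimed equivalence on the underlying categories (and on objects it is the identity).

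Next I would check compatibility with composition. Given global sections $f:\1\to \sHom_{\dG/B_q}(M,N)$ and $g:\1\to \sHom_{\dG/B_q}(L,M)$, their enriched composite is $\circ\circ(f\otimes g):\1\cong\1\otimes\1\to \sHom_{\dG/B_q}(L,N)$. Tracing through the definition of $\circ$ as adjoint to $\operatorname{ev}\circ(\operatorname{id}\otimes\operatorname{ev})$, and unwinding the unit isomorphisms, one sees that this global section corresponds under the adjunction to the ordinary composite $\bar f\circ \bar g:L\to M\to N$ in $\QCoh(\dG/B_q)$. The analogous computation for the tensor structure map $t$ shows, using the symmetry built into its definition, that the enriched product of global sections $f$ and $f'$ corresponds to the ordinary tensor product $\bar f\otimes \bar f'$ of morphisms in $\QCoh(\dG/B_q)$. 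These verifications are diagrammatic unwindings of the adjunction, entirely parallel to \cite[Proposition 3.1]{ostrik03} and \cite[\S 3.2]{etingofostrik04}, and are essentially formal once Proposition \ref{prop:enriched} is known.

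The only mildly delicate point, which I would flag as the main bookkeeping obstacle, is ensuring that the monoidal structure (as opposed to the mere multiplicative structure on Hom-sets) on $\Gamma(\dG/\dB,\QCoh^{\Enh}(\dG/B_q))$ really coincides with the given one on $\QCoh(\dG/B_q)$: this requires checking that the associator and unit constraints for the enriched monoidal structure, which live as global isomorphisms by the definition of a monoidal enrichment in Section \ref{sect:enrich}, restrict under the adjunction to the associator and unit constraints of $\QCoh(\dG/B_q)$. This follows once one observes that all of these constraints on the enriched side were defined by adjointness from the corresponding constraints on the ambient category, so the whole argument reduces to the uniqueness of the adjoint.
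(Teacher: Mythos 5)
Your proposal is correct and follows essentially the same route as the paper: identify $\Gamma(\dG/\dB,\sHom_{\dG/B_q}(M,N))$ with $\Hom_{\dG/B_q}(M,N)$ via the $(\star,\sHom)$-adjunction applied to the unit $\O_{\dG/\dB}$, and then use compatibility with evaluation to check that the enriched composition and tensor maps have global sections equal to ordinary composition and tensor of morphisms. The paper merely packages the same argument through the binatural map $\Hom_{\dG/B_q}(M,N)\ot_k\O_{\dG/\dB}\to\sHom_{\dG/B_q}(M,N)$ (handling all sections at once rather than one at a time), so the content is the same.
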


The proof of Theorem \ref{thm:enhA} is essentially the same as \cite[proof of Lemma 3.4.9]{riehl14}, for example, and is outlined in Appendix \ref{sect:A}.  Let us enumerate the main points here in any case.  Recall that the linear structure on $\QCoh(\dG/B_q)$ corresponds to an action of $Vect$ on $\QCoh(\dG/B_q)$.  The inner-Homs with respect to this action are the usual vector space of morphisms $\Hom_{\dG/B_q}$ with expected evaluation
\[
ev:\Hom_{\dG/B_q}(M,N)\ot_k M\to N,\ \ f\ot m\mapsto f(m).
\]
This evaluation map specifies a unique binatural morphism
\begin{equation}\label{eq:978}
\Hom_{\dG/B_q}(M,N)\ot_k \O_{\dG/\dB}\to \sHom_{\dG/B_q}(M,N)
\end{equation}
which is compatible with evaluation, in the sense that the diagram
\begin{equation}\label{eq:982}
\xymatrix{
\Hom_{\dG/B_q}(M,N)\ot_k M\ar[d] \ar[r] & N\\
\sHom_{\dG/B_q}(M,N)\star M\ar[ur] & 
}
\end{equation}
commutes.  If we consider $\Hom_{\dG/B_q}(M,N)$ as a constant sheaf of vector spaces, the map \eqref{eq:978} is specified by a morphism of sheaves $\Hom_{\dG/B_q}(M,N)\to \sHom_{\dG/B_q}(M,N)$, which is in turn specified by its value on global sections.
\par

One can check that the global sections of the map $\Hom_{\dG/B_q}(M,N)\to \sHom_{\dG/B_q}(M,N)$ of \eqref{eq:978} recovers the adjunction isomorphism referenced in Theorem \ref{thm:enhA}.  One then uses compatibility with evaluation \eqref{eq:982} to see that the adjunction isomorphism is compatible with composition and the monoidal structure maps, and hence that $\QCoh^{\Enh}(\dG/B_q)$ provides an enhancement of $\QCoh(\dG/B_q)$ over the flag variety, as claimed.

\subsection{Implications for the quantum Frobenius kernel $\FK{G}_q$}

Recall that the forgetful functor $\FK{G}_q=\Coh(\dG)^{G_q}\to \QCoh(\dG/B_q)$ is a monoidal embedding, which we have called the Kempf embedding.  By restricting along this embedding the enhancement $\sHom_{\dG/B_q}$ for $\QCoh(\dG/B_q)$ restricts to an monoidal enhancement for the quantum Frobenius kernel.

\begin{theorem}
Let $\sHom_{\FK{G}_q}$ denote the restriction of the inner-$\operatorname{Hom}$s $\sHom_{\dG/B_q}$ to the full monoidal subcategory $\FK{G}_q$ via Kempf embedding,
\[
\sHom_{\FK{G}_q}(M,N):=\sHom_{\dG/B_q}(M,N).
\]
Then the pairing $(\operatorname{obj}\FK{G}_q,\ \sHom_{\FK{G}_q})$ provides a monoidal enhancement for the quantum Frobenius kernel $\FK{G}_q$ in the category of quasi-coherent sheaves over the flag variety $\dG/\dB$.
\end{theorem}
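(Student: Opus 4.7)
The plan is essentially to apply the general lemma stated just after the Kempf embedding discussion, which asserts that any full monoidal subcategory of an enhanced monoidal category inherits a monoidal enhancement by restriction of the sheafy Homs. All of the ingredients are in place, so the argument is nearly formal.

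First I would invoke Theorem \ref{thm:Kempf} to observe that the forgetful functor $\Kempf\colon \FK{G}_q\to \QCoh(\dG/B_q)$ is a fully faithful monoidal embedding. Consequently $\FK{G}_q$ may be identified with a full monoidal subcategory of $\QCoh(\dG/B_q)$, with the same objects and the same tensor product computed in the ambient category. Next, by Theorem \ref{thm:enhA}, the pairing $(\operatorname{obj}\QCoh(\dG/B_q),\sHom_{\dG/B_q})$ together with its composition and tensor structure maps (cf.\ Proposition \ref{prop:enriched}) is a monoidal enhancement of $\QCoh(\dG/B_q)$ in the symmetric monoidal category $\QCoh(\dG/\dB)$.

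The key step is then to invoke the elementary fact, already recorded in the paper, that restricting an enhancement to any full monoidal subcategory yields an enhancement of that subcategory: setting
\[
\sHom_{\FK{G}_q}(M,N):=\sHom_{\dG/B_q}(\Kempf M,\Kempf N)
\]
for $M,N\in\FK{G}_q$, the composition and tensor structure maps for $\sHom_{\dG/B_q}$ restrict to the corresponding morphisms for $\sHom_{\FK{G}_q}$, because these structure maps are determined by the evaluation morphisms and by the tensor product, both of which are computed inside the full monoidal subcategory $\FK{G}_q$. Associativity of composition and compatibility with the monoidal structure are inherited verbatim from Proposition \ref{prop:enriched}.

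The only remaining point is to verify that the category of global sections is genuinely $\FK{G}_q$; this is where full faithfulness of Kempf enters. Theorem \ref{thm:enhA} gives
\[
\Gamma\bigl(\dG/\dB,\sHom_{\dG/B_q}(M,N)\bigr)\cong \Hom_{\dG/B_q}(M,N),
\]
and by Theorem \ref{thm:Kempf} (or already by full faithfulness of the underlying abelian functor, which is a special case of Theorem \ref{thm:kempf_ext}) the right-hand side coincides with $\Hom_{\FK{G}_q}(M,N)$ for $M,N\in\FK{G}_q$. Thus
\[
\Gamma\bigl(\dG/\dB,\QCoh^{\Enh}(\dG/B_q)\bigr)\big|_{\FK{G}_q}\;\cong\;\FK{G}_q
\]
as monoidal categories, which is the required enhancement statement. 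I do not expect any serious obstacle: the content of the theorem is entirely the assembly of the Kempf embedding (Theorem \ref{thm:Kempf}) and the general enrichment result for $\QCoh(\dG/B_q)$ (Theorem \ref{thm:enhA}) via the pull-back-to-a-full-subcategory lemma.
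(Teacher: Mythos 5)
Your proposal is correct and is exactly the argument the paper intends: the theorem is stated without separate proof precisely because it follows from combining the Kempf embedding (Theorem \ref{thm:Kempf}), the enhancement result for $\QCoh(\dG/B_q)$ (Theorem \ref{thm:enhA} with Proposition \ref{prop:enriched}), and the recorded lemma that a full monoidal subcategory inherits a monoidal $\mcl{Q}$-enhancement by restriction of $\sHom$. Your additional check that full faithfulness identifies the global sections with $\Hom_{\FK{G}_q}$ is the right final verification and matches the paper's reasoning.
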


\subsection{A local-to-global spectral sequence}

The following lemma says that injectives in $\QCoh(\dG/B_q)$ are relatively injective for the sheaf-Hom functor.

\begin{lemma}\label{lem:loc_inj}
If $M$ is flat over $\dG/B_q$ then the functor $\sHom_{\dG/B_q}(M,-)$ sends injectives in $\QCoh(\dG/B_q)$ to injectives in $\QCoh(\dG/\dB)$.  Additionally, when $I$ is injective over $\dG/B_q$, the functor $\sHom_{\dG/B_q}(-,I)$ is exact.
\end{lemma}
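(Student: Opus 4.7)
Both parts of the lemma are formal consequences of the adjunction $(-\star M)\dashv \sHom_{\dG/B_q}(M,-)$ from Section~\ref{sect:G/Bq}, together with the exactness of $\zeta^\ast$ and its compatibility with flatness. My plan is to handle the two assertions separately, each as a structural consequence.

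For the first claim, the key observation is that when $M$ is flat, the left adjoint $-\star M = \zeta^\ast(-)\otimes_{\O_{\dG}} M$ is exact: $\zeta^\ast$ is exact by construction, and tensoring with the flat object $M$ is exact by the paper's definition of flatness in $\QCoh(\dG/B_q)$ from Section~\ref{sect:G/Bq_intro}. Since a right adjoint to an exact functor preserves injective objects, $\sHom_{\dG/B_q}(M,-)$ sends injectives to injectives.

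For the second claim, my plan is to show $\sHom_{\dG/B_q}(-,I)$ takes short exact sequences to short exact sequences. Left exactness is automatic, so only right exactness needs argument. Fix $0\to M' \to M\to M''\to 0$ in $\QCoh(\dG/B_q)$ and any flat $\mathcal{F}$ on $\dG/\dB$. The functor $\mathcal{F}\star-$ is exact (by flatness of $\mathcal{F}$ combined with exactness of $\zeta^\ast$), and $\Hom_{\dG/B_q}(-,I)$ is exact by injectivity of $I$. Composing and invoking the adjunction
\[
\Hom_{\dG/\dB}(\mathcal{F},\sHom_{\dG/B_q}(N,I)) \;\cong\; \Hom_{\dG/B_q}(\mathcal{F}\star N,I)
\]
exhibits $\Hom_{\dG/\dB}(\mathcal{F},\sHom_{\dG/B_q}(-,I))$ as an exact contravariant functor of $N$, so the sequence obtained by applying $\sHom_{\dG/B_q}(-,I)$ is exact after testing against every flat $\mathcal{F}$.

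To upgrade this to genuine exactness as a sequence of sheaves on $\dG/\dB$, I would argue locally: restricting to an affine open $U\subset \dG/\dB$ over which the torsor $\pi:\dG \to \dG/\dB$ trivializes in the Zariski topology, $\QCoh(\dG/B_q)|_U$ is equivalent to $\O(U)$-modules in $\Rep \uqB$, and $\sHom_{\dG/B_q}(-,I)|_U$ becomes the $\uqB$-invariant internal Hom into $I|_{\pi^{-1}U}$. Exactness then reduces to the elementary fact that $\Hom_{\uqB}(-, J)$ is exact whenever $J$ is an injective (equivalently, projective) representation of the finite-dimensional Hopf algebra $\uqB$. Alternatively, one could simply test vanishing of the cokernel against the generating family of flat line bundles $\{\O(\lambda)\}_\lambda$ furnished by the ample cone of the projective variety $\dG/\dB$. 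The main obstacle is ensuring that an injective $I$ in $\QCoh(\dG/B_q)$ restricts, on a Zariski-local model, to an injective $\uqB$-representation; this should follow from the compatibility of the local product decomposition with the categorical structure of equivariant injectives, essentially reducing to Proposition~\ref{prop:129}.
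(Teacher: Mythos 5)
Your proof is correct and follows essentially the same route as the paper: the first assertion is the standard fact that the exact left adjoint $-\star M$ (exactness from flatness of $M$ and exactness of $\zeta^\ast$) forces its right adjoint $\sHom_{\dG/B_q}(M,-)$ to preserve injectives, and the second is proved by testing exactness of $\sHom_{\dG/B_q}(-,I)$ through the adjunction against flat objects of $\QCoh(\dG/\dB)$ — the paper uses vector bundles, i.e.\ your fallback option of the generating family of line bundles, rather than your local trivialization route. The only point to phrase more carefully is ``test vanishing of the cokernel against line bundles'': since $\Hom_{\dG/\dB}(\O(\lambda),-)$ is not right exact, one should instead note that any coherent subsheaf of $\sHom_{\dG/B_q}(M',I)$ receives a surjection from a finite sum of line bundles and that the resulting map lifts to $\sHom_{\dG/B_q}(M,I)$ by the surjectivity you established, which is exactly the step the paper compresses into ``this is sufficient.''
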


\begin{proof}
Suppose that $I$ is injective over $\dG/B_q$ and that $M$ is flat.  Then the functor $\Hom_{\dG/B_q}(-\ot M,I)$ is exact, and hence $\Hom_{\dG/B_q}(-\star M,I)$ is an exact functor from $\QCoh(\dG/\dB)$.  Via adjunction we find that the functor $\Hom_{\dG/B_q}(-,\sHom_{\dG/B_q}(M,I))$ is exact.  So $\sHom_{\dG/B_q}(M,I)$ is injective.
\par

Now, if $I$ is injective then for all vector bundles $\msc{E}$ over $\dG/\dB$ the operation $\Hom_{\dG/\dB}(\msc{E}\star -,I)$ is exact, which implies that each functor
\[
\Hom_{\dG/\dB}(\msc{E},\sHom_{\dG/B_q}(-,I))
\]
is exact.  This is sufficient to ensure that $\sHom_{\dG/B_q}(-,I)$ is exact.
\end{proof}

Recall that we have the natural identification
\[
\begin{array}{rl}
\Hom_{\dG/B_q}(M,N)&\cong \Hom_{\dG/\dB}(\O_{\dG/\dB},\sHom_{\dG/B_q}(M,N))\vspace{1mm}\\
&=\Gamma(\dG/\dB,\sHom_{\dG/B_q}(M,N))
\end{array}
\]
provided by adjunction.  We therefore obtain a natural map
\[
\RHom_{\dG/B_q}(M,N)\to \operatorname{R}\Gamma(\dG/\dB,\sRHom_{\dG/B_q}(M,N)),
\]
where we derive $\sHom_{\dG/B_q}(M,-)$ by taking injective resolutions.  Lemma \ref{lem:loc_inj} implies that this map is a quasi-isomorphism.

\begin{corollary}\label{cor:372}
The natural map $\RHom_{\dG/B_q}(M,N)\to \operatorname{R}\Gamma(\dG/\dB,\sRHom_{\dG/B_q}(M,N))$ is a quasi-isomorphism.  Hence we have a local-to-global spectral sequence
\[
H^\ast(\dG/\dB,\sExt^\ast_{\dG/B_q}(M,N))\ \Rightarrow\ \Ext^\ast_{\dG/B_q}(M,N)
\]
at arbitrary $M$ and $N$ in $\QCoh(\dG/B_q)$.
\end{corollary}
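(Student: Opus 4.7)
The plan is to obtain the quasi-isomorphism as the comparison map of a Grothendieck composition-of-derived-functors spectral sequence, applied to the adjunction $\Hom_{\dG/B_q}(M,N)\cong \Gamma(\dG/\dB,\sHom_{\dG/B_q}(M,N))$ displayed immediately before the statement. This presents $\Hom_{\dG/B_q}(M,-)$ as the composite of the two left-exact functors $\sHom_{\dG/B_q}(M,-):\QCoh(\dG/B_q)\to \QCoh(\dG/\dB)$ and $\Gamma(\dG/\dB,-):\QCoh(\dG/\dB)\to Vect$, so standard homological algebra will produce both a natural derived comparison map and an accompanying spectral sequence once we verify the appropriate acyclicity hypothesis.

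First I would compute $\sRHom_{\dG/B_q}(M,N)$ by choosing an injective resolution $N\to I^\bullet$ in $\QCoh(\dG/B_q)$, which is possible since the category is Grothendieck abelian. The second assertion of Lemma \ref{lem:loc_inj} says that $\sHom_{\dG/B_q}(-,I^k)$ is exact for each $k$, which shows that $\sHom_{\dG/B_q}(M,I^\bullet)$ is indeed a model for the right-derived functor of $\sHom_{\dG/B_q}(M,-)$ applied to $N$. Next I would show that each sheaf $\sHom_{\dG/B_q}(M,I^k)$ is acyclic for $\Gamma(\dG/\dB,-)$; together with the exactness of $\sHom_{\dG/B_q}(-,I^k)$ this certifies that the edge map $\operatorname{R}\Gamma(\dG/\dB,\sRHom_{\dG/B_q}(M,N))\to \RHom_{\dG/B_q}(M,N)$ is a quasi-isomorphism in the standard Grothendieck setup.

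The $\Gamma$-acyclicity is the crux. When $M$ is flat over $\dG/B_q$, the first half of Lemma \ref{lem:loc_inj} gives that each $\sHom_{\dG/B_q}(M,I^k)$ is already injective in $\QCoh(\dG/\dB)$, hence $\Gamma$-acyclic, and the argument concludes immediately. For arbitrary $M$, I would first replace $M$ by a flat resolution $E^\bullet\to M$, e.g.\ by equivariant vector bundles of the form $E_V$ (which are flat over $\dG$ and hence over $\dG/B_q$ in the sense of Section \ref{sect:G/Bq_intro}), reduce to the flat case termwise, and assemble the conclusion from the double complex $\Gamma(\dG/\dB,\sHom_{\dG/B_q}(E^\bullet,I^\bullet))$ by the usual bicomplex spectral sequence argument. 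The main obstacle will be bookkeeping in this bicomplex step; however, once the flat-$M$ statement is in hand, there is no conceptual difficulty. Finally, the local-to-global spectral sequence is the second hypercohomology spectral sequence of $\operatorname{R}\Gamma(\dG/\dB,\sRHom_{\dG/B_q}(M,N))$, whose $E_2$-page reads $H^p(\dG/\dB,\sExt^q_{\dG/B_q}(M,N))\Rightarrow \Ext^{p+q}_{\dG/B_q}(M,N)$, as asserted.
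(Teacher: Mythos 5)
Your proposal is correct and follows essentially the same route as the paper: the quasi-isomorphism is obtained from the adjunction $\Hom_{\dG/B_q}\cong\Gamma(\dG/\dB,\sHom_{\dG/B_q})$ by deriving $\sHom_{\dG/B_q}(M,-)$ with an injective resolution of $N$ and invoking Lemma \ref{lem:loc_inj}. Your reduction of the non-flat case via a resolution of $M$ by (sums of) equivariant vector bundles $E_V$ and the resulting first-quadrant bicomplex is precisely the detail the paper compresses into ``Lemma \ref{lem:loc_inj} implies that this map is a quasi-isomorphism,'' and it is sound; the only cosmetic slip is that exactness of $\sHom_{\dG/B_q}(-,I^k)$ is not what makes $\sHom_{\dG/B_q}(M,I^\bullet)$ compute $\operatorname{R}\sHom_{\dG/B_q}(M,-)(N)$ (that is automatic), but rather what makes your termwise comparison with $\sHom_{\dG/B_q}(E^\bullet,I^\bullet)$ a quasi-isomorphism.
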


\begin{remark}
Spectral sequences analogous to Corollary \ref{cor:372} can be found in much earlier works of Suslin, Friedlander, and Bendal \cite[Theorem 3.6]{suslinfriedlanderbendel97b}.  So certain pieces of the enhancement $\QCoh^{\Enh}(\dG/\dB)$, and Kempf embedding, had already been employed in works which appeared as early as the 90's.
\end{remark}

\section{Structure of $\sHom$ II: explicit description of sheaf-Hom}
\label{sect:shom2}

We show that the morphisms $\sHom_{\dG/B_q}(M,N)$ are explicitly the descent
\begin{equation}\label{eq:nmbvz}
\sHom_{\dG/B_q}(M,N)=\text{descent of }\Hom_{\QCoh(\dG)^{\uqB}}(M,N)^\sim
\end{equation}
of morphisms in the category $\QCoh(\dG)^{\uqB}$ of $\uqB$-equivariant sheaves over $\dG$, whenever $M$ is coherent.  Here $\uqB$ is taken to act trivially on $\dG$, so that $\uqB$ acts by $\O_{\dG}$-linear endomorphisms on such sheaves.  Also, implicit in the above formula is a claim that the morphism spaces over $\QCoh(\dG)^{\uqB}$ admit natural, compatible actions of $\O(\dG)$ and $\dB$, so that the associated sheaf over $\dG$ is $\dB$-equivariant.  We then apply descent to produce a corresponding sheaf on the flag variety.  (See Proposition \ref{prop:shom_exp} below.)
\par

We note that the category $\Coh(\dG)^{\uqB}$ is also monoidal under the product $\ot=\ot_{\O_{\dG}}$.  We prove additionally that, under the identification \eqref{eq:nmbvz} the composition and tensor structure maps for $\sHom_{\dG/B_q}$ are identified with those provided by the monoidal structure on $\Coh(\dG)^{\uqB}$.

\subsection{$B_q$-equivariant structure on $\sHom_{\dG}$}
\label{sect:HomG}

Recall that for an algebraic group $H$ acting on a finite-type scheme $Y$, the usual sheaf-Hom functor $\sHom_Y$ provides the inner-Homs for the tensor action of $\Coh(Y)^H$ on itself.  We claim that this is also true when we act via a quantum group.  Specifically, we claim that when $M$ and $N$ are $B_q$-equivariant sheaved on $\dG$, and $M$ is coherent, the sheaf-morphisms $\sHom_{\dG}(M,N)$ admit a natural $B_q$-equivariant structure.  Indeed, the functor $\sHom_{\dG}(M,-)$, with its usual evaluation morphism, provides the right adjoint for the action of $M$ on $\QCoh(\dG/B_q)$.  We describe the equivariant structure on $\sHom_{\dG}(M,N)$ explicitly below.
\par
Let $-|_{B_q}:\QCoh(\dG/B_q)\to \Rep B_q$ denote the global sections functor, which we understand as adjoint to the vector bundle map $E_-:\Rep B_q\to \QCoh(\dG/B_q)$.  The $U_q(\mfk{b})$-actions on $M|_{B_q}$ and $N|_{B_q}$ induce an action of $U_q(\mfk{b})$ on the linear morphisms $\Hom_k(M|_{B_q},N|_{B_q})$, via the usual formula $(x\cdot f)(m)=x_1f(S(x_2)m)$.  The compatibility between the $\O(\dG)$ and $U_q(\mfk{b})$-actions on $M$ and $N$ ensure that the subspace of $\O(\dG)$-linear maps
\[
\Hom_{\O(\dG)}(M|_{B_q},N|_{B_q})\subset \Hom_k(M|_{B_q},N|_{B_q})
\]
forms a $U_q(\mfk{b})$-subrepresentation, and the natural action of $\O(\dG)$ provides the space $\Hom_{\O(\dG)}(M|_{B_q},N|_{B_q})$ with the structure of a $U_q(\mfk{b})$-equivariant $\O(\dG)$-module.
\par

Now, provided $M$ is coherent, the action of $U_q(\mfk{b})$ on the above $\O(\dG)$-linear morphism space integrates to a $B_q$-action, so that $\Hom_{\O(\dG)}(M|_{B_q},N|_{B_q})$ is naturally an object in the category of relative Hopf modules $_{\O(\dG)}\mbf{M}^{\O(B_q)}$.  Indeed, one can observe such integrability by resolving $M$ by vector bundles $E_W\to E_V\to M$.  Since
\[
\Gamma(\dG,\sHom_{\dG}(M,N))=\Hom_{\O(\dG)}(M|_{B_q},N|_{B_q})
\]
we see that $\sHom_{\dG}(M,N)$ is naturally a $B_q$-equivariant sheaf on $\dG$.
\par

We now understand that we have an endofunctor
\[
\sHom_{\dG}(M,-):\QCoh(\dG/B_q)\to \QCoh(\dG/B_q)
\]
provided by usual sheaf-Hom.  The evaluation maps exhibiting $\sHom_{\dG}(M,-)$ as the right adjoint to the functor $-\ot M$ are the expected ones
\[
ev:\sHom_{\dG}(M,N)\ot M\to N,\ \ f\ot m\mapsto f(m).
\]

\subsection{Explicit description of $\sHom_{\dG/B_q}$}

Suppose that $M$ is a coherent sheaf over $\dG/B_q$.  Recall our notation $\su=\uqB$.  Let us define
\[
\sHom_{\dG/\su}(M,-):=\sHom_{\dG}(M,-)^{\uqB}:\QCoh(\dG/B_q)\to \QCoh(\dG)^{\dB}.
\]
By the materials of Section \ref{sect:HomG}, the above expression makes sense.  Left exactness of the invariants functor ensures that the sections of $\sHom_{\dG/\su}$ over opens are as expected,
\[
\begin{array}{rl}
\sHom_{\dG/\su}(M,N)(U)&=\Hom_{\O(U)}(M(U),N(U))^{\uqB}\\
&=\Hom_{\O(U)\ot \uqB}(M(U),N(U)).
\end{array}
\]

\begin{proposition}\label{prop:shom_exp}
Let $M$ be in $\Coh(\dG/B_q)$.  Then, at arbitrary $N$ in $\QCoh(\dG/B_q)$, we have a natural identification
\[
\sHom_{\dG/B_q}(M,N)=\text{\rm descent of the $\dB$-equivariant sheaf }\sHom_{\dG/\su}(M,N).
\]
Under the subsequent natural isomorphism $\pi^\ast\sHom_{\dG/B_q}(M,-)\cong \sHom_{\dG/\su}(M,-)$, the evaluation maps for $\sHom_{\dG/B_q}$ are identified with the morphisms
\[
\sHom_{\dG/\su}(M,N)\ot M\to M,\ \ f\ot m\mapsto f(m).
\]
\end{proposition}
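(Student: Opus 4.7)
The plan is to verify the proposed formula by checking that the descent of $\sHom_{\dG/\su}(M,N)$ represents the right adjoint to $-\star M$, and then appeal to uniqueness of adjoints.

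First I would unpack the target. Under the descent equivalence $\pi^\ast:\QCoh(\dG/\dB)\overset{\sim}\to\QCoh(\dG)^{\dB}$, it suffices to produce, for all $\msc{F}$ in $\QCoh(\dG/\dB)$, a natural isomorphism
\[
\Hom_{\dG/B_q}(\zeta^\ast\msc{F}\ot M,\, N)\;\cong\;\Hom_{\QCoh(\dG)^{\dB}}\!\bigl(\pi^\ast\msc{F},\,\sHom_{\dG/\su}(M,N)\bigr),
\]
where the right-hand side is viewed via descent as $\Hom_{\dG/\dB}(\msc{F},\text{desc}\,\sHom_{\dG/\su}(M,N))$. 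The main bridge is Section \ref{sect:HomG}: because $M$ is coherent, $\sHom_{\dG}(M,N)$ carries a natural $B_q$-equivariant structure, whose associated $U_q(\mfk{b})$-action is given by $(x\cdot f)(m)=x_1 f(S(x_2)m)$, and the classical evaluation $\sHom_{\dG}(M,N)\ot M\to N$ is $B_q$-equivariant. This already identifies $\sHom_{\dG}(M,-)$ as the right adjoint to $-\ot M$ on the ambient category $\QCoh(\dG/B_q)=\QCoh(\dG)^{B_q}$.

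Next I would execute the adjunction in two stages. Starting from a $B_q$-equivariant $\O_{\dG}$-linear map $\zeta^\ast\msc{F}\ot M\to N$, the ordinary $(-\ot M,\sHom_{\dG}(M,-))$ adjunction gives an $\O_{\dG}$-linear map $\zeta^\ast\msc{F}\to\sHom_{\dG}(M,N)$, and I would check this adjunction is $B_q$-equivariant using the equivariant structures of Section \ref{sect:HomG}. Now the crucial point is that $\zeta^\ast\msc{F}$, being in the image of quantum Frobenius, has trivial $\uqB$-action; hence any $\uqB$-equivariant map from $\zeta^\ast\msc{F}$ into $\sHom_{\dG}(M,N)$ lands in the $\uqB$-invariants, giving
\[
\Hom_{B_q}\!\bigl(\zeta^\ast\msc{F},\sHom_{\dG}(M,N)\bigr)\;=\;\Hom_{\dB}\!\bigl(\zeta^\ast\msc{F},\sHom_{\dG}(M,N)^{\uqB}\bigr)\;=\;\Hom_{\dB}\!\bigl(\pi^\ast\msc{F},\sHom_{\dG/\su}(M,N)\bigr).
\]
Combining with the $\O_{\dG}$-linearity, this identifies the $B_q$-equivariant, $\O_{\dG}$-linear maps with the $\dB$-equivariant, $\O_{\dG}$-linear maps into $\sHom_{\dG/\su}(M,N)$. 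Applying the descent equivalence $\pi^\ast$ on the right converts this into $\Hom_{\dG/\dB}(\msc{F},\text{desc}\,\sHom_{\dG/\su}(M,N))$, yielding the desired adjunction. By uniqueness of representing objects, $\sHom_{\dG/B_q}(M,N)\cong\text{desc}\,\sHom_{\dG/\su}(M,N)$ naturally in $N$.

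For the statement about evaluation, I would trace the identity element through the chain of isomorphisms. The counit of the adjunction $(-\star M,\sHom_{\dG/B_q}(M,-))$ corresponds under the identifications above to the image of $\mrm{id}\in\Hom_{\dB}(\pi^\ast\sHom_{\dG/B_q}(M,N),\sHom_{\dG/\su}(M,N))$, so unwinding the two-step adjunction shows it agrees with the honest evaluation $f\ot m\mapsto f(m)$ coming from $\sHom_{\dG}(M,N)\ot M\to N$ after restricting to $\uqB$-invariants on the first tensorand. The main obstacle, and essentially the only delicate point, is the $B_q$-integrability argument in Section \ref{sect:HomG} that requires coherence of $M$: without it one only gets a $U_q(\mfk{b})$-action on $\sHom_{\dG}(M,N)$ rather than an honest $B_q$-equivariant structure, and the passage from $U_q(\mfk{b})$-equivariance to $\dB$-equivariance on invariants breaks down. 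Everything else is formal adjunction bookkeeping combined with the fact that $\zeta^\ast$ factors through quantum Frobenius and therefore trivializes the $\uqB$-action on its image.
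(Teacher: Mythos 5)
Your proposal is correct and follows essentially the same route as the paper: both exhibit the descent of $\sHom_{\dG/\su}(M,N)$ as the right adjoint to $-\star M$ by composing the $(-\ot M,\sHom_{\dG}(M,-))$ adjunction (using coherence of $M$ for the $B_q$-equivariant structure) with the observation that maps out of objects in the image of $\zeta^\ast$ factor through $\uqB$-invariants, then apply descent and uniqueness of adjoints, and identify evaluation by tracing the identity. Your explicit justification that $\zeta^\ast\msc{F}$ has trivial $\uqB$-action is exactly the content of the paper's identity $\Hom_{\dG/B_q}(F,-)=\Hom_{\QCoh(\dG)^{\dB}}(F,(-)^{\su})$, so there is no substantive difference.
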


\begin{proof}
By the materials of Section \ref{sect:HomG} we have the adjunction
\[
\Hom_{\dG/B_q}(L\ot M,-)\cong \Hom_{\dG/B_q}(L,\sHom_{\dG}(M,-)).
\]
For any $F$ in $\QCoh(\dG)^{\dB}\subset \QCoh(\dG/B_q)$ we have
\[
\Hom_{\dG/B_q}(F,-)=\Hom_{\QCoh(\dG)^{\dB}}(F,(-)^{\su}).
\]
So for $\msc{F}$ in $\QCoh(\dG/\dB)$ the above two formulae give
\begin{equation}\label{eq:1409}
\begin{array}{rl}
\Hom_{\dG/B_q}(\msc{F}\star M,-)&=\Hom_{\dG/B_q}(\pi^\ast(\msc{F})\ot M,-)\vspace{1mm}\\
&=\Hom_{\dG/B_q}(\pi^\ast(\msc{F}),\sHom_{\dG}(M,-))\vspace{1mm}\\
&=\Hom_{\QCoh(\dG)^{\dB}}(\pi^\ast(\msc{F}),\sHom_{\dG}(M,-)^\su)\vspace{1mm}\\
&\cong \Hom_{\dG/\dB}(\msc{F},\text{desc.\ of\ }\sHom_{\dG/\su}(M,-)).
\end{array}
\end{equation}
The above formula demonstrates the descent of the sheaf $\sHom_{\dG/\su}(M,-)$ as the right adjoint to $-\star M$, and hence identifies $\sHom_{\dG/B_q}(M,-)$ with the descent of $\sHom_{\dG/\su}(M,-)$.  Tracing the identity map through the sequence \eqref{eq:1409} calculates evaluation for $\sHom_{\dG/B_q}$ as the expected morphism
\[
\pi^\ast\sHom_{\dG/B_q}(M,N)\ot M\cong \sHom_{\dG/\su}(M,N)\ot M\to M,\ \ f\ot m\mapsto f(m).
\]
\end{proof}

For arbitrary $M$ in $\QCoh(\dG/B_q)$ we may write $M$ as a colimit $M=\varinjlim_\alpha M_\alpha$ of coherent sheaves to obtain
\[
\sHom_{\dG/B_q}(M,N)=\varprojlim_\alpha \sHom_{\dG/B_q}(M_\alpha,N),
\]
where the final limit is the (somewhat mysterious) limit in the category of quasi-coherent sheaves over $\dG/\dB$.  So, Proposition \ref{prop:shom_exp} provides a complete description of the inner-Hom functor.

\subsection{Relatively projective sheaves}
\label{sect:relative_p}

For any finite-dimensional $B_q$-representation $V$ we consider the functor
\[
\Hom_{\su}(V,-):\QCoh(\dG/B_q)\to \QCoh(\dG)^{\dB},
\]
where specifically $\Hom_{\su}(V,-)=\Hom_k(V,-)^{\uqB}$.  We have the following description of sheaf-Hom for the equivariant vector bundles which refines the description of Proposition \ref{prop:shom_exp}.

\begin{proposition}
For any finite-dimensional $B_q$-representation $V$, there is a natural identification between $\sHom_{\dG/B_q}(E_V,-)$ and the descent of the functor $\Hom_{\su}(V,-)$.
\end{proposition}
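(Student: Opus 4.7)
The plan is to reduce everything to Proposition \ref{prop:shom_exp}, which already identifies $\pi^\ast\sHom_{\dG/B_q}(E_V,N)$ with the $\dB$-equivariant sheaf $\sHom_{\dG/\su}(E_V,N)=\sHom_{\dG}(E_V,N)^{\su}$. Since descent along $\pi:\dG\to \dG/\dB$ is an equivalence from $\QCoh(\dG)^{\dB}$ onto $\QCoh(\dG/\dB)$, it suffices to construct a natural isomorphism of $\dB$-equivariant $\O_{\dG}$-modules
\[
\sHom_{\dG}(E_V,N)^{\su}\;\cong\;\Hom_k(V,N)^{\su}\;=\;\Hom_{\su}(V,N),
\]
functorial in $N\in\QCoh(\dG/B_q)$.

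First, for any finite-dimensional vector space $V$ and any $N$ in $\QCoh(\dG)$ there is a canonical isomorphism of quasi-coherent $\O_{\dG}$-modules
\[
\sHom_{\O_{\dG}}(\O_{\dG}\ot_k V,\,N)\;\cong\;V^\ast\ot_k N,
\]
sending an $\O_{\dG}$-linear map to its restriction to $V$, with inverse given by $\O_{\dG}$-linear extension.  The right-hand side is exactly the sheaf $\Hom_k(V,N)$ used in the definition of $\Hom_{\su}(V,-)$. I will check that this isomorphism is $B_q$-equivariant when $E_V$ carries its standard equivariant structure: by Section \ref{sect:HomG}, $B_q$ acts on $\sHom_{\O_{\dG}}(E_V,N)$ through the formula $(x\cdot f)(m)=x_1f(S(x_2)m)$, and restricting such an $f$ to the generating subspace $V\subset E_V$ transports this to the usual diagonal action $(x\cdot f)(v)=x_1f(S(x_2)v)$ on $\Hom_k(V,N)$, where $B_q$ acts on $V$ through its own representation structure and on $N$ through its equivariant structure.

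Next I pass to $\su$-invariants. Because $\su=\uqB$ is the kernel of the quantum Frobenius $B_q\to\dB$, it acts trivially on $\O_{\dG}$; hence the $\O_{\dG}$-action on $\sHom_{\dG}(E_V,N)$ commutes with the $\su$-action and descends to an $\O_{\dG}$-module structure on the $\su$-invariants. Under the $B_q$-equivariant isomorphism above this structure is transported to the obvious $\O_{\dG}$-action on $N$, and normality of $\su$ in $B_q$ ensures that the residual $\dB$-equivariant structures on the two sides of
\[
\sHom_{\dG}(E_V,N)^{\su}\;\cong\;\Hom_k(V,N)^{\su}
\]
agree.  Naturality in $N$ is immediate.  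Applying Proposition \ref{prop:shom_exp} and descending along $\pi$ then yields the claimed identification $\sHom_{\dG/B_q}(E_V,-)\cong\operatorname{desc}\circ\Hom_{\su}(V,-)$.

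There is no substantive obstacle here: the heavy lifting was done in Proposition \ref{prop:shom_exp}, and all that remains is careful bookkeeping of the $\O_{\dG}$-module structure, the $B_q$-equivariant structure, the $\su$-invariants, and the residual $\dB$-equivariance through the single canonical isomorphism $\sHom_{\O_{\dG}}(\O_{\dG}\ot_k V,N)\cong V^\ast\ot_k N$.
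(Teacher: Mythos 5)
Your argument is correct, but it takes a different route than the paper. The paper's proof never touches the explicit model of Proposition \ref{prop:shom_exp}: it simply computes, for arbitrary $\msc{F}$ in $\QCoh(\dG/\dB)$,
\[
\Hom_{\dG/B_q}(\msc{F}\star E_V,-)\cong\Hom_{\dG/B_q}(\pi^\ast(\msc{F})\ot_k V,-)\cong \Hom_{\dG/B_q}(\pi^\ast(\msc{F}),\Hom_k(V,-)^{\su})\cong\Hom_{\dG/\dB}(\msc{F},\operatorname{desc.\ of\ }\Hom_{\su}(V,-)),
\]
using dualizability of the finite-dimensional $V$ and the fact that any equivariant map out of $\pi^\ast(\msc{F})$ lands in the $\su$-invariants, and then concludes by uniqueness of adjoints. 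You instead feed $E_V$ (which is indeed coherent, so Proposition \ref{prop:shom_exp} applies) into the explicit description $\sHom_{\dG/B_q}(E_V,-)=\operatorname{desc}\,\sHom_{\dG}(E_V,-)^{\su}$ and identify $\sHom_{\dG}(E_V,N)\cong\Hom_k(V,N)$ as $B_q$-equivariant $\O_{\dG}$-modules via restriction along $V\cong 1\ot V\subset E_V$, then pass to $\su$-invariants; your equivariance bookkeeping (the formula $(x\cdot f)(m)=x_1f(S(x_2)m)$ from Section \ref{sect:HomG}, triviality of the $\su$-action on $\O_{\dG}$ via quantum Frobenius, and normality of $\su$ giving the residual $\dB$-structure on invariants) is accurate and there is no circularity, since Proposition \ref{prop:shom_exp} does not depend on this statement. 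What each approach buys: the paper's adjunction argument is shorter and avoids all equivariance checks, characterizing the inner Hom purely by its universal property; your version makes the identification concrete inside the model of Proposition \ref{prop:shom_exp}, which is harmless here since the statement only asks for some natural identification, and arguably makes the evaluation maps more visible.
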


\begin{proof}
For any $\msc{F}$ in $\QCoh(\dG/\dB)$ we calculate
\[
\begin{array}{l}
\Hom_{\dG/B_q}(\msc{F}\star E_V,-)=\Hom_{\dG/B_q}(\pi^\ast(\msc{F})\ot_k V,-)\\
\hspace{1cm}=\Hom_{\dG/B_q}(\pi^\ast(\msc{F}),-\ot_k V^\ast)\\
\hspace{1cm}=\Hom_{\dG/B_q}(\pi^\ast(\msc{F}),\Hom_k(V,-))\\
\hspace{1cm}=\Hom_{\dG/B_q}(\pi^\ast(\msc{F}),\Hom_k(V,-)^{\su})=\Hom_{\dG/\dB}(\msc{F},\operatorname{desc.\ of\ }\Hom_\su(V,-)).
\end{array}
\]
Thus, by uniqueness of adjoints, we find that $\sHom_{\dG/B_q}(E_V,-)$ is identified with the descent of the functor $\Hom_{\su}(V,-)$.
\end{proof}

As a corollary we observe a natural class of relative projective sheaves in $\QCoh(\dG/B_q)$.

\begin{corollary}\label{cor:loc_proj}
Suppose that $V$ is a finite-dimensional $B_q$-representation which is projective over $\uqB$.  Then the functor $\sHom_{\dG/B_q}(E_V,-)$ is exact.
\end{corollary}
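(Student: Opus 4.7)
The plan is to leverage the explicit description of $\sHom_{\dG/B_q}(E_V,-)$ provided by the immediately preceding proposition, together with the hypothesis that $V$ is projective over $\su=\uqB$. By that proposition, $\sHom_{\dG/B_q}(E_V,-)$ is naturally isomorphic to the descent, along $\pi:\dG\to \dG/\dB$, of the functor $\Hom_{\su}(V,-)=\Hom_k(V,-)^{\uqB}$ valued in $\QCoh(\dG)^{\dB}$. Since faithfully flat descent along the $\dB$-torsor $\pi$ is an equivalence of abelian categories (Section \ref{sect:descent}), it is in particular exact. Thus it suffices to check that the functor
\[
\Hom_{\su}(V,-):\QCoh(\dG/B_q)\to \QCoh(\dG)^{\dB}
\]
is exact under the projectivity hypothesis on $V$.

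To verify this, I would take an arbitrary short exact sequence $0\to N'\to N\to N''\to 0$ in $\QCoh(\dG/B_q)$ and observe that, since $V$ is finite-dimensional, we have a natural identification $\Hom_k(V,-)\cong V^\ast\ot_k -$, so $\Hom_k(V,-)$ is exact on the underlying vector spaces (and in fact on $\O(\dG)$-modules with compatible $U_q(\mfk{b})$-action). The subspace of $\uqB$-invariants is, by definition, $\Hom_{\su}(V,N)=\Hom_{\uqB}(V,N|_{\su})$, where $N|_{\su}$ denotes the underlying $\uqB$-module obtained by restriction along $\uqB\hookrightarrow U_q(\mfk{b})$. Since $V$ is projective over $\uqB$, the functor $\Hom_{\uqB}(V,-)$ on $\uqB$-modules is exact, which yields exactness of the resulting sequence
\[
0\to \Hom_{\su}(V,N')\to \Hom_{\su}(V,N)\to \Hom_{\su}(V,N'')\to 0
\]
of $\dB$-equivariant $\O(\dG)$-modules.

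Composing with descent then gives exactness of $\sHom_{\dG/B_q}(E_V,-)$, completing the argument. There is no real obstacle here; the only delicate point is to note that the $\O(\dG)$-linear and $\dB$-equivariant structures on $\Hom_{\su}(V,N)$ constructed in Section \ref{sect:HomG} are compatible with the exact sequence produced by $\uqB$-projectivity of $V$, which is automatic because all constructions are natural in $N$.
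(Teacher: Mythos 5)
Your argument is correct and follows essentially the same route as the paper: identify $\sHom_{\dG/B_q}(E_V,-)$ with the descent of $\Hom_{\su}(V,-)$ via the preceding proposition, note descent is exact, and deduce exactness of $\Hom_{\su}(V,-)$ from projectivity of $V$ over $\uqB$. The paper's proof is just a terser version of the same reasoning.
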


\begin{proof}
To establish exactness of $\sHom_{\dG/B_q}(E_V,-)$ it suffices to show that the functor $\Hom_{\su}(V,-)$ is exact.  But this follows by projectivity of $V$ over $\su=\uqB$.
\end{proof}

Note that any coherent sheaf $M$ in $\QCoh(\dG/B_q)$ admits a surjection $E\to M$ from an equivariant vector bundle $E=E_V$, with $V$ finite-dimensional and projective over $\uqB$.  So Corollary \ref{cor:loc_proj} implies that the category of coherent sheaves over $\dG/B_q$ has enough relative projectives.

\begin{corollary}\label{cor:enough_proj}
Any coherent sheaf $M$ in $\QCoh(\dG/B_q)$ admits a resolution $\cdots\to E^{-1}\to E^0\to M$ by coherent, relatively projective sheaves $E^i$.
\end{corollary}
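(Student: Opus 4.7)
The plan is a straightforward iterative construction. The essential input is already provided in the paragraph preceding the statement: every coherent $M$ in $\QCoh(\dG/B_q)$ admits a surjection from a bundle $E_V$ with $V$ a finite-dimensional $B_q$-representation that is projective upon restriction to $\su=\uqB$, and by Corollary \ref{cor:loc_proj} any such $E_V$ is relatively projective. Thus I would only need to iterate this covering step, and the only nontrivial point is to check that the kernel at each stage is again coherent over $\dG/B_q$.

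First, given $M^0:=M$, choose a finite-dimensional $B_q$-subrepresentation $W^0\subset M^0|_{B_q}$ which generates $M^0$ as an $\O(\dG)$-module; such a $W^0$ exists by coherence of $M^0$. Then pick a finite-dimensional $V^0\in\Rep B_q$ that is projective over $\uqB$ together with a surjection $V^0\twoheadrightarrow W^0$ (for instance via a suitable cover in $\Rep B_q$ lifting the $\uqB$-projective cover of $W^0|_\su$). The composite
\[
\epsilon^0:E^0:=E_{V^0}=\O_{\dG}\ot_k V^0\ \longrightarrow\ E_{W^0}\ \longrightarrow\ M^0
\]
is a surjection of $B_q$-equivariant sheaves, and $E^0$ is coherent and relatively projective by Corollary \ref{cor:loc_proj}.

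Next, set $K^0:=\ker(\epsilon^0)$. As a sheaf on $\dG$, $K^0$ sits in a short exact sequence of coherent $\O_{\dG}$-modules, so $K^0$ is coherent on $\dG$ (the underlying scheme $\dG$ is Noetherian, hence kernels of maps between coherent sheaves are coherent). The $B_q$-equivariant structure on $K^0$ is inherited from the equivariant map $\epsilon^0$, so $K^0$ is a coherent object in $\QCoh(\dG/B_q)$. Apply the first step again to $K^0$ to produce a surjection $\epsilon^{-1}:E^{-1}\twoheadrightarrow K^0$ from a coherent relatively projective $E^{-1}$, and splice with $\epsilon^0$ by composing $E^{-1}\to K^0\hookrightarrow E^0$. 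Iterating indefinitely produces the desired resolution
\[
\cdots\to E^{-2}\to E^{-1}\to E^0\to M\to 0.
\]

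The one place that requires a word of justification is the existence of a finite-dimensional $B_q$-cover of $W^0$ which is projective over $\uqB$: this is the main potential obstacle, but it follows from standard generation properties of $\Rep B_q$ (e.g.\ tensoring a finite-dimensional cover $V'\twoheadrightarrow W^0$ with a finite-dimensional $\uqB$-projective generator such as the Steinberg module, and then using that tensor products with $\uqB$-projectives are $\uqB$-projective, yields the needed $V^0$). Everything else is formal: the surjectivity of $E_{V^0}\to M^0$, the relative projectivity of $E_{V^0}$ in the sense of Definition \ref{def:loc_inj/proj}, and the coherence of successive kernels.
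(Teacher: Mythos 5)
Your proof is correct and follows the paper's own route: the paper likewise deduces the corollary from the observation that every coherent $M$ admits a surjection $E_V\to M$ with $V$ finite-dimensional and projective over $\uqB$, combined with Corollary \ref{cor:loc_proj}, and then iterates. You merely make explicit the details the paper leaves implicit (coherence of successive kernels and the Steinberg-twist construction of the $\uqB$-projective cover), both of which are fine.
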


\subsection{A proof of Lemma \ref{lem:1090}}
\label{sect:proof1090}

At Lemma \ref{lem:1090} above, we have claimed that the vector bundle $E_V$ is compact in the unbounded derived category $D(\dG/B_q)$ whenever the given $B_q$-representation $V$ is projective over $\uqB$.  We can now prove this result.

\begin{proof}[Proof of Lemma \ref{lem:1090}]
We have the functor $\sHom_{\dG/B_q}(E_V,-)$ which is the descent of the functor $\Hom_{\su}(V,-)$.  This functor is exact, and finiteness of $V$ implies that $\Hom_{\su}(V,-)$ commutes with set indexed sums.  Hence these inner-Homs provide a well-defined operation
\[
\sHom_{\dG/B_q}(E_V,-):D(\dG/B_q)\to D(\dG/\dB)
\]
which commutes with set indexed sums.  We then have
\[
\Ext_{\dG/B_q}^i(E_V,-)=\Ext_{\dG/\dB}^i(\O_{\dG/\dB},\sHom_{\dG/B_q}(E_V,-))
\]
at each integer $i$ by Corollary \ref{cor:372}.  Compactness of $\O_{\dG/\dB}$ over $\dG/\dB$ therefore implies compactness of $E_V$ over $\dG/B_q$.
\end{proof}

\subsection{Composition and tensor structure maps}
\label{sect:comp_ot_exp}

Suppose that $M$ is coherent in $\QCoh(\dG/B_q)$.   From Proposition \ref{prop:shom_exp} we have an identification
\begin{equation}\label{eq:1450}
\pi^\ast\sHom_{\dG/B_q}(M,-)\cong \sHom_{\dG/\su}(M,-)
\end{equation}
under which the evaluation morphisms for $\sHom_{\dG/B_q}$ are identified with the usual evaluation morphisms
\[
\sHom_{\dG/\su}(M,N)\ot M\to M,\ \ f\ot m\mapsto f(m)
\]
for $\sHom_{\dG/\su}$.  It follows that, under the identification \eqref{eq:1450}, the composition and tensor maps
\[
\circ:\sHom_{\dG/B_q}(M,N)\ot\sHom_{\dG/B_q}(L,M)\to \sHom_{\dG/B_q}(L,N)
\]
and
\[
tens:\sHom_{\dG/B_q}(M,N)\ot\sHom_{\dG/B_q}(M',N')\to \sHom_{\dG/B_q}(M\ot M',N\ot N')
\]
pull back to the expected morphisms
\[
\begin{array}{c}
\pi^\ast\circ:\sHom_{\dG/\su}(M,N)\ot\sHom_{\dG/\su}(L,M)\to \sHom_{\dG/\su}(L,N)\\
(f, g)\mapsto f\circ g
\end{array}
\]
and
\[
\begin{array}{c}
\pi^\ast tens:\sHom_{\dG/\su}(M,N)\ot\sHom_{\dG/\su}(M',N')\to \sHom_{\dG/\su}(M\ot M',N\ot N')\\
(f,f')\mapsto f\ot f'.
\end{array}
\]
So, under the identification of Proposition \ref{prop:shom_exp} the composition and tensor structure maps for $\sHom_{\dG/B_q}$ are seen to pull back to the usual structure maps for $\Coh(\dG)^{\uqB}$.

\section{The enhanced derived category $D^{\Enh}(\dG/B_q)$}
\label{sect:Enh_derived}

In Section \ref{sect:shom1} we saw that the sheaf-Hom functor $\sHom_{\dG/B_q}$ provides a monoidal enhancement for $\QCoh(\dG/B_q)$ over the classical flag variety.  At this point we want to provide a corresponding enhancement for the (unbounded) derived category $D(\dG/B_q)$ of quasi-coherent sheaves over the half-quantum flag variety.  Given the information we have already collected, this move to the derived setting is a relatively straightforward process.  We record some of the details here.

\subsection{$\sHom_{\dG/B_q}$ for complexes}

Let $\operatorname{dgQCoh}(\dG/B_q)$ denote the category of quasi-coherent dg sheaves on $\dG/B_q$.  This is the category of quasi-coherent sheaves $M$ with a grading $M=\oplus_{n\in \mbb{Z}} M^n$, and a degree $1$ square zero map $d_M:M\to M$.  Morphisms in this category are the usual morphisms of complexes.  We similarly define $\operatorname{dgQCoh}(\dG/\dB)$.  Consider the forgetful functor $f:\operatorname{dgQCoh}(\dG/B_q)\to \QCoh(\dG/B_q)$.
\par

Let $M$ and $N$ be in $\operatorname{dgQCoh}(\dG/B_q)$.  For an open $U\subset \dG/\dB$ and $M$ in $\QCoh(\dG/B_q)$ write $M|_U=M|_{\pi^{-1}U}$.  A section
\[
s:\O_U\to \sHom_{\dG/B_q}(fM,fN)|_U
\]
over an open $U\subset \dG/\dB$ is said to be homogenous of degree $n$ if, for all $i\in \mbb{Z}$, the restriction $s|_{M^i}:M^i|_U\to N|_U$ has image in $N^{i+n}|_U$.  Here $s:M|_U\to N|_U$ is specifically the composite
\[
M|_U=\O_U\star M|_U\to \sHom_{\dG/B_q}(fM,fN)|_U\star M|_U\overset{ev|_U}\to N|_U.
\]
The collection of degree $n$ maps in $\sHom_{\dG/B_q}(fM,fN)$ form a subsheaf which we denote
\[
\sHom_{\dG/B_q}^n(M,N)\subset \sHom_{\dG/B_q}(fM,fN).
\]
The sum of all homogeneous morphisms also provides a subsheaf
\[
\oplus_{n\in \mbb{Z}}\sHom^n_{\dG/B_q}(M,N)\subset \sHom_{\dG/B_q}(fM,fN).
\]

\begin{definition}
For $M$ and $N$ in $\operatorname{dgQCoh}(\dG/B_q)$ we define the inner-$\Hom$ complex to be the dg sheaf consisting of all homogenous inner morphisms
\[
\sHom_{\dG/B_q}(M,N)=\oplus_{n\in \mbb{Z}}\sHom^n_{\dG/B_q}(M,N)
\]
equipped with the usual differential $d_{\sHom(M,N)}=(d_N)_\ast-(d_M)^\ast$.
\end{definition}

We note that, when $M$ is bounded above and $N$ is bounded below, this complex is just the expected one
\[
\sHom_{\dG/B_q}(M,N)=\oplus_{n\in \mbb{Z}}(\oplus_{i}\sHom_{\dG/B_q}(M^i,N^{i+n}))\ \ \text{along with } d_{\sHom}.
\]
Evaluation for $\sHom_{\dG/B_q}(fM,fN)$ restricts to an evaluation map for the Hom complex $ev:\sHom_{\dG/B_q}(M,N)\star M\to N$.  This evaluation map induces an adjunction
\[
\Hom_{\operatorname{dgQCoh}(\dG/B_q)}(\msc{F}\star M,N)\overset{\cong}\to \Hom_{\operatorname{dgQCoh}(\dG/\dB)}(\msc{F},\sHom_{\dG/B_q}(M,N)).
\]

\subsection{Enhancements for the derived category}
\label{sect:D_Enh}

We consider the central action
\[
\star:D(\dG/\dB)\times D(\dG/B_q)\to D(\dG/B_q)
\]
for the unbounded derived categories of quasi-coherent sheaves, and we have an adjunction
\[
\Hom_{D(\dG/B_q)}(\msc{F}\star M,N)\cong \Hom_{D(\dG/\dB)}(\msc{F},\sRHom_{\dG/B_q}(M,N))
\]
which one deduces abstractly, or from the adjunction at the cochain level described above.  Here the $\star$-action is derived by resolving $M$ by $K$-flat sheaves and $\sRHom_{\dG/B_q}(M,N)=\sHom_{\dG/B_q}(M,I_N)$ for a $K$-injective resolution $N\to I_N$.  Evaluation
\[
ev:\sRHom_{\dG/B_q}(M,N)\ot M\to N
\]
provides composition and tensor structure maps for derived sheaf-Hom $\sRHom_{\dG/B_q}$ so that we obtain a monoidal category
\[
D^{\Enh}(\dG/B_q)=(\text{ob}D(\dG/B_q),\ \sRHom_{\dG/B_q})
\]
which is enriched in the unbounded derived category of quasi-coherent sheaves on the flag variety.  One restricts along the Kempf embedding, Theorem \ref{thm:Kempf}, to obtain a corresponding enriched category $D^{\Enh}(\FK{G}_q)$ for the quantum Frobenius kernel.
\par

We have the derived global sections
\[
\mbb{H}^0(\dG/\dB,-)=\Hom_{D(\dG/\dB)}(\O_{\dG/\dB},-)
\]
and corresponding adjunction isomorphism
\begin{equation}\label{eq:1699}
\Hom_{D(\dG/B_q)}\overset{\cong}\longrightarrow \mbb{H}^0(\dG/\dB,\sRHom_{\dG/B_q}).
\end{equation}
Just as in the proof of Theorem \ref{thm:enhA} (Appendix \ref{sect:A}), one finds that the binatural isomorphism \eqref{eq:1699} realizes $D^{\Enh}(\dG/B_q)$, and $D^{\Enh}(\FK{G}_q)$, as enhancements for their respective derved categories.

\begin{proposition}\label{prop:D_Enh}
The isomorphism \eqref{eq:1699} induces an isomorphism of monoidal categories
\[
\mbb{H}^0(\dG/\dB, D^{\operatorname{Enh}}(\dG/B_q))\overset{\sim}\to D(\dG/B_q),
\]
and similarly $\mbb{H}^0(\dG/\dB, D^{\operatorname{Enh}}(\FK{G}_q))\overset{\sim}\to D(\FK{G}_q)$.
\end{proposition}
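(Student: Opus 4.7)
The plan is to follow the template established in the proof of Theorem~\ref{thm:enhA}, adapted to the derived setting. The starting point is the observation that the binatural isomorphism~\eqref{eq:1699} is induced by a canonical map of dg sheaves which is compatible with the derived evaluation $\sRHom_{\dG/B_q}(M,N)\star M\to N$. One then leverages this compatibility to transport the composition and monoidal structure maps through derived global sections.

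First, I would exhibit a canonical morphism
\[
\Hom_{D(\dG/B_q)}(M,N)\ot_k \O_{\dG/\dB}\to \sRHom_{\dG/B_q}(M,N)
\]
in $D(\dG/\dB)$, defined as the adjoint of the composite
\[
\Hom_{D(\dG/B_q)}(M,N)\ot_k M \to N,
\]
built from the unit of $\O_{\dG/\dB}$ and derived evaluation. Its image under derived global sections recovers the adjunction isomorphism~\eqref{eq:1699}, and by construction the analog of the commutative diagram~\eqref{eq:982} holds at the derived level.

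Second, I would verify compatibility of~\eqref{eq:1699} with composition and with the tensor structure maps. By construction, the composition map $\circ$ for $\sRHom_{\dG/B_q}$ is adjoint to the iterated evaluation $\sRHom(M,N)\ot\sRHom(L,M)\star L\to N$, while the tensor map $tens$ is adjoint to the evaluation obtained by combining the symmetry on $\QCoh(\dG/\dB)$ with the pair of evaluations. Tracing the adjunctions as in the proof of Theorem~\ref{thm:enhA} (compare Appendix~\ref{sect:A}) shows that applying $\mbb{H}^0(\dG/\dB,-)$ sends these structure maps to the ordinary composition and monoidal structure on $D(\dG/B_q)$. Hence the global sections functor induces a strict monoidal equivalence between the enriched data and the triangulated monoidal category $D(\dG/B_q)$.

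Finally, for the $\FK{G}_q$ statement, I would invoke the Kempf embedding (Theorem~\ref{thm:Kempf}) to identify $D(\FK{G}_q)$ with a full monoidal subcategory of $D(\dG/B_q)$ on which $\sRHom_{\FK{G}_q}$ is defined by restriction. Full faithfulness of the embedding implies that~\eqref{eq:1699} restricts to a binatural isomorphism $\Hom_{D(\FK{G}_q)}(M,N)\overset{\cong}\to \mbb{H}^0(\dG/\dB,\sRHom_{\FK{G}_q}(M,N))$, and the compatibility argument above transfers verbatim. The only subtle point, which I expect to be the main obstacle, is ensuring that the various derived adjunctions and evaluations assemble into honestly commuting diagrams. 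This requires fixing a sufficiently functorial scheme of $K$-flat and $K$-injective resolutions so that the derived composition and tensor structure maps cohere on the nose; once such resolutions are in hand, the argument is completely formal and parallel to the abelian case.
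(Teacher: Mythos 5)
Your proposal follows the same route as the paper: the paper's proof simply notes that, as in Theorem \ref{thm:enhA}, the adjunction supplies a map from the constant sheaf $\Hom_{D(\dG/B_q)}(M,N)$ to $\sRHom_{\dG/B_q}(M,N)$ compatible with the (derived) evaluation maps, which suffices to transport composition and tensor structure through $\mbb{H}^0(\dG/\dB,-)$, with the $\FK{G}_q$ case following by restriction along the Kempf embedding. Your closing worry about rigidifying $K$-flat/$K$-injective resolutions is unnecessary, since all the structure maps are already fixed at the level of $D(\dG/\dB)$ in the construction of $D^{\Enh}(\dG/B_q)$, but this does not affect the correctness of the argument.
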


\begin{proof}
As in the proof of Theorem \ref{thm:enhA}, one sees that the adjunction map provides a morphism from the constant sheaf
\[
\Hom_{D(\dG/B_q)}(M,N)\to \sRHom_{\dG/B_q}(M,N)
\]
which recovers the standard evaluation maps for $\Hom_{D(\dG/B_q)}$.  This is sufficient to deduce the result.
\end{proof}

\subsection{Coherent dg sheaves}
\label{sect:coherent_sheaves}

In the following section, and also in Part II of the text, we will be interested in objects in $D(\dG/B_q)$ and $D(\FK{G}_q)$ which satisfy certain finiteness conditions.  Abstractly, we consider the subcategories of perfect, or dualizable, objects.  In terms of our specific geometric presentations of these categories, we are interested in coherent dg sheaves.  Let us take a moment to describe these categories clearly.
\par

We consider the derived category $D_{\coh}(\dG/B_q)\subset D(\dG/B_q)$ of coherent, $B_q$-equivariant dg sheaves over $\dG/B_q$.  These are, equivalently, complexes in $D(\dG/B_q)$ with bounded coherent cohomology, or complexes in $D(\dG/B_q)$ which are dualizable with respect to the product $\ot=\ot^{\operatorname{L}}$.  We have the subcategory $D_{\coh}(\FK{G}_q)\subset D_{\coh}(\dG/B_q)$ of coherent $G_q$-equivariant dg sheaves.  These are, again, the dualizable objects in $D(\FK{G}_q)$ and the subcategory $D_{\coh}(\FK{G}_q)$ is equivalent to the full subcategory of objects with bounded coherent cohomology.
\par

In the case of $\FK{G}_q$, all objects in $D_{\coh}(\FK{G}_q)$ are obtainable from the simples in $\FK{G}_q$ via a finite sequence of extensions, so that
\[
D_{\coh}(\FK{G}_q)=\langle S\rangle,\ \ S=\text{ the sum of the simples in }\FK{G}_q.
\]
Here $\langle A\rangle$ denotes the thick subcategory in $D(\FK{G}_q)$ generated by a given object $A$.  All objects in $D_{\coh}(\FK{G}_q)$ have finite length cohomology, and under the equivalence
\[
1^\ast:D(\FK{G}_q)\overset{\sim}\to D(\uqG)(=D(\Rep\uqG))
\]
the subcategory $D_{\coh}(\FK{G}_q)$ is sent to the subcategory $D_{\rm fin}(\uqG)$ in $D(\uqG)$ consisting of bounded complexes of finite-dimensional representations.

\subsection{Enhancements for the coherent derived categories}
\label{sect:perf_Enh}
When we consider the enhancements $D^{\Enh}(\dG/B_q)$ and $D^{\Enh}(\FK{G}_q)$ provided above, we can be much more explicit about the evaluation and tensor maps when we restrict to the subcategories of perfect (i.e.\ dualizable) objects.  Let $D_{\coh}^{\Enh}(\dG/B_q)$ and $D_{\coh}^{\Enh}(\FK{G}_q)$ denote the full (enriched, monoidal) subcategories consisting of coherent dg sheaves in $D^{\Enh}(\dG/B_q)$ and $D^{\Enh}(\FK{G}_q)$ respectively.
\par

For coherent $M$ and bounded $N$ we can adopt any of the explicit models
\[
\begin{array}{rl}
\sRHom_{\dG/B_q}(M,N)&=\sHom_{\dG/B_q}(M,I_N)\ \text{or}\ \ \sHom_{\dG/B_q}(P_M,I_N)\vspace{1mm}\\ &\hspace{1cm}\text{or}\ \ \sHom_{\dG/B_q}(P_M,N)
\end{array}
\]
depending on our needs, where $P_M\to M$ and $N\to I_N$ are resolutions by relative projectives and injectives respectively.  The composition maps for these inner morphisms can then be obtained from composition at the dg level
\[
\circ:\sHom_{\dG/B_q}(M,I_N)\ot\sHom_{\dG/B_q}(P_L,M)\to \sHom_{\dG/B_q}(P_L,I_N),
\]
as can the tensor structure maps
\[
tens:\sHom_{\dG/B_q}(P_M,N)\ot\sHom_{\dG/B_q}(P_{M'},N')\to \sHom_{\dG/B_q}(P_M\ot P_{M'},N\ot N').
\]
Of course, the equivalences of Proposition \ref{prop:D_Enh} realize $D_{\coh}^{\Enh}(\dG/B_q)$ and $D_{\coh}^{\Enh}(\FK{G}_q)$ as enhancements of $D_{\coh}(\dG/B_q)$ and $D_{\coh}(\FK{G}_q)$ respectively.

\section{Fibers over $\dG/\dB$ and the small quantum Borels}
\label{sect:fibers_G/B}

For each geometric point $\lambda:\Spec(K)\to \dG/\dB$, pulling back along the map $\iota_\lambda:\dB_\lambda\to \dG$ provides a monoidal functor
\[
\iota_\lambda^\ast:\QCoh(\dG/B_q)\to \QCoh(\dB_\lambda)^{(B_K)_q}=\msc{B}_\lambda.
\]
When we precompose with the Kempf embedding we recover the standard restriction functor from the quantum Frobenius kernel $\res_\lambda=\iota_\lambda^\ast\circ\Kempf$, as described in Section \ref{sect:borels}.
\par

In this section we show that the derived pullback $\operatorname{L}\iota_\lambda^\ast:D(\dG/B_q)\to D(\msc{B}_\lambda)$ localizes to provide natural maps
\begin{equation}\label{eq:1763}
\operatorname{L}\lambda^\ast\sRHom_{\dG/B_q}(M,N)\to \RHom_{\msc{B}_\lambda}(\operatorname{L}\iota_\lambda^\ast M,\operatorname{L}\iota_\lambda^\ast N)
\end{equation}
which are in fact quasi-isomorphisms whenever $M$ is coherent and $N$ is bounded.  (See Theorem \ref{thm:fiber_l} below.)  We view this result as reflecting a calculation of the fibers
\begin{equation}\label{eq:1621}
\xymatrix@C=18pt{
Vect_{K(\lambda)}\ot_{\QCoh_{\dg}(\dG/\dB)}\QCoh_{\dg}(\dG/B_q)\ar[rr]^(.7){\sim}_(.7){\rm Expected} & & D_{\dg}(\msc{B}_\lambda),
}
\end{equation}
at the level of infinity categories.  One should compare with the calculations of \cite{benzvifrancisnadler10} in the symmetric setting.
\par

The calculation \eqref{eq:1763} is of fundamental importance in our study of support theory for the small quantum group, as it allows us to reduce analyses of support for the small quantum group to corresponding analyses of support for the small quantum Borels.

\subsection{Elaborating on the pullback map}

Consider a geometric point $\lambda:\Spec(K)\to \dG/\dB$.  As discussed above we have the pullback functor
\[
\iota_\lambda^\ast:\QCoh(\dG/B_q)\to \msc{B}_\lambda.
\]
We let $\QCoh(\dG/\dB)$ act on $\msc{B}_\lambda$ via the fiber $\lambda^\ast:\QCoh(\dG/\dB)\to Vect$ and the linear structure on $\msc{B}_\lambda$.  We claim that under this action on $\msc{B}_\lambda$ the map $\iota^\ast_\lambda$ is $\QCoh(\dG/\dB)$-linear.  Indeed, for any sheaf $\msc{F}$ over $\dG/\dB$ we have
\[
\iota_\lambda^\ast(\msc{F}\star-)=\iota_\lambda^\ast(\pi^\ast \msc{F}\ot -)=\iota_\lambda^\ast\pi^\ast(\msc{F})\ot\iota_\lambda^\ast-
\]
and $\iota_\lambda^\ast\pi^\ast$ is naturally isomorphic to $\operatorname{unit}^\ast\lambda^\ast$, since we have the diagram
\[
\xymatrix{
\dB_\lambda\ar[rr]^{\iota_\lambda}\ar[d]_{\operatorname{unit}} & & \dG\ar[d]^\pi\\
\Spec(K)\ar[rr]_\lambda & & \dG/\dB.
}
\]
This identification of pullbacks therefore provides a natural isomorphism
\[
\iota_\lambda^\ast\pi^\ast(\msc{F})\ot\iota_\lambda^\ast-\cong \operatorname{unit}^\ast\lambda^\ast(\msc{F})\ot\iota_\lambda-=\lambda^\ast(\msc{F})\ot_k\iota_\lambda-.
\]
So in total we have the natural isomorphism
\[
\iota_\lambda^\ast(\msc{F}\star-)\cong \lambda^\ast(\msc{F})\ot_k\iota_\lambda^\ast(-)
\]
at all $\msc{F}$ in $\QCoh(\dG/\dB)$ which provides the pullback functor $\iota_\lambda^\ast:\QCoh(\dG/B_q)\to \msc{B}_\lambda$ with the claimed $\QCoh(\dG/\dB)$-linear structure.

\subsection{The natural map $\lambda^\ast\sHom_{\dG/B_q}\to \Hom_{\msc{B}_\lambda}$}
\label{sect:1226}

Consider $M$ and $N$ in $\QCoh(\dG/B_q)$.  By the information of the previous subsection we can apply the pullback functor $\iota^\ast_\lambda$ to the evaluation maps
\[
ev:\sHom_{\dG/B_q}(M,N)\star M\to N
\]
to obtain a map
\begin{equation}\label{eq:1705}
\iota^\ast_\lambda ev:\lambda^\ast\sHom_{\dG/B_q}(M,N)\ot_k \iota_\lambda^\ast M\to\iota_\lambda^\ast N\ \ \text{in the category }\msc{B}_\lambda.
\end{equation}
By adjunction we then obtain a natural map
\begin{equation}\label{eq:1352}
\phi_{M,N}=\phi(\lambda)_{M,N}:\lambda^\ast\sHom_{\dG/B_q}(M,N)\to \Hom_{\msc{B}_\lambda}(\iota^\ast_\lambda M,\iota^\ast_\lambda N)
\end{equation}
which is compatible with the evaluation, and hence compatible with composition and the tensor structure.  This is to say, the maps $\phi_{M,N}$ collectively provide a linear monoidal functor
\[
\phi(\lambda):\lambda^\ast\QCoh^{\Enh}(\dG/B_q)\to \msc{B}_\lambda.
\]

\begin{proposition}\label{prop:1356}
The map $\phi_{M,N}$ of \eqref{eq:1352} is an isomorphism whenever $M$ is coherent and relatively projective.
\end{proposition}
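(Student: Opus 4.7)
The plan is to derive the isomorphism as an instance of the $\QCoh(\dG/\dB)$-linearity supplied by Lemma \ref{lem:sHom-linear}, combined with flat base change and the projection formula for the closed immersion $\iota_\lambda:\dB_\lambda\to\dG$.

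Since $M$ is coherent and relatively projective, the third case of Lemma \ref{lem:sHom-linear} yields a natural isomorphism $\msc{F}\star\sHom_{\dG/B_q}(M,N)\cong\sHom_{\dG/B_q}(M,\msc{F}\star N)$ for arbitrary $\msc{F}$ in $\QCoh(\dG/\dB)$. I specialize to the skyscraper sheaf $\msc{F}=\lambda_\ast\O_{\Spec K}$ at the point $\lambda$. Since $\Gamma(\dG/\dB,\lambda_\ast\O_{\Spec K}\ot_{\O_{\dG/\dB}}\msc{G})=\lambda^\ast\msc{G}$ for any quasi-coherent $\msc{G}$ on $\dG/\dB$, applying $\Gamma(\dG/\dB,-)$ and invoking Theorem \ref{thm:enhA} converts Lemma \ref{lem:sHom-linear} into an isomorphism
\[
\lambda^\ast\sHom_{\dG/B_q}(M,N)\cong\Hom_{\dG/B_q}\big(M,\,(\lambda_\ast\O_{\Spec K})\star N\big).
\]

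Next I identify the argument $(\lambda_\ast\O_{\Spec K})\star N$ as $\iota_{\lambda,\ast}\iota_\lambda^\ast N$. Using the Cartesian square relating $\lambda$, $\pi$, $\iota_\lambda$, and the unit projection $\dB_\lambda\to\Spec K$, together with the fact that $\pi:\dG\to\dG/\dB$ is a $\dB$-torsor and hence flat, flat base change gives $\pi^\ast\lambda_\ast\O_{\Spec K}=\iota_{\lambda,\ast}\O_{\dB_\lambda}$. The projection formula for the affine morphism $\iota_\lambda$ then yields
\[
(\lambda_\ast\O_{\Spec K})\star N=\pi^\ast\lambda_\ast\O_{\Spec K}\ot_{\O_\dG}N=\iota_{\lambda,\ast}\iota_\lambda^\ast N
\]
as $B_q$-equivariant sheaves on $\dG$. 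Finally, the $(\iota_\lambda^\ast,\iota_{\lambda,\ast})$-adjunction at the level of equivariant sheaves identifies $\Hom_{\dG/B_q}(M,\iota_{\lambda,\ast}\iota_\lambda^\ast N)\cong\Hom_{\msc{B}_\lambda}(\iota_\lambda^\ast M,\iota_\lambda^\ast N)$. Composing the three isomorphisms gives the claimed identification, and tracing the universal evaluation $ev:\sHom_{\dG/B_q}(M,N)\star M\to N$ through the adjunctions shows that the composite coincides with the map $\phi_{M,N}$ of \eqref{eq:1352}.

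The principal obstacle is bookkeeping: I must verify that flat base change, the projection formula, and the $(\iota_\lambda^\ast,\iota_{\lambda,\ast})$-adjunction are all compatible with the $B_q$-equivariant structures carried by the sheaves involved. This compatibility is essentially formal, since the $B_q$-action on $\dG$ factors through $\dB$ via the quantum Frobenius map, making $\pi$ and $\iota_\lambda$ automatically equivariant and ensuring that each of the $\O_\dG$-linear constructions above lifts canonically to the equivariant setting. The final check that the composite really is $\phi_{M,N}$, rather than some other natural transformation with the same source and target, amounts to a diagram chase identical in spirit to those appearing in the proofs of Theorem \ref{thm:enhA} and Proposition \ref{prop:shom_exp}.
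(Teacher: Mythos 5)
Your argument is correct, and it reaches the isomorphism by a route that differs in execution from the paper's, though both turn on the same key input, namely the $\QCoh(\dG/\dB)$-linearity of Lemma \ref{lem:sHom-linear} for coherent, relatively projective $M$. The paper applies that linearity \emph{upstairs}: it pulls back along $\pi:\dG\to\dG/\dB$, invokes the explicit description of sheaf-Hom as the descent of the $\uqB$-invariant morphism sheaf $\sHom_{\dG/\su}(M,N)$ from Proposition \ref{prop:shom_exp}, shows that $\O(\dB_\lambda)\ot_{\O(\dG)}\Hom_{\dG/\su}(M,N)\to\Hom_{\dG/\su}(M,\iota_\lambda^\ast N)$ is an isomorphism, and then takes $\dB$-invariants, at which point the comparison with $\Hom_{\msc{B}_\lambda}(\iota_\lambda^\ast M,\iota_\lambda^\ast N)$ is a literal equality of Hom spaces. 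You instead stay on the base: you feed the skyscraper $\lambda_\ast\O_{\Spec K}$ into Lemma \ref{lem:sHom-linear}, pass to global sections via Theorem \ref{thm:enhA}, identify $(\lambda_\ast\O_{\Spec K})\star N$ with $\iota_{\lambda,\ast}\iota_\lambda^\ast N$ by base change and the projection formula, and finish with the $(\iota_\lambda^\ast,\iota_{\lambda,\ast})$-adjunction. Your version buys independence from Proposition \ref{prop:shom_exp}, reducing everything to standard sheaf-theoretic compatibilities; the paper's version buys that the equivariant bookkeeping is handled once and for all in terms of relative Hopf modules and that no adjunction chase is needed at the end. Two points to tighten: $\iota_\lambda$ is a closed immersion only when $K=k$ -- for a general geometric point it is merely affine, which is all you actually need for the base change and projection formula steps (and flatness of $\pi$ is not essential there either); and the equivariant upgrades of base change, projection formula and adjunction should be phrased at the level of relative Hopf modules, including the $B_q$ versus $(B_K)_q$ base-change bookkeeping of Section \ref{sect:borels}, though this is indeed routine. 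Your concluding identification of the composite with $\phi_{M,N}$ via evaluation-compatibility is asserted rather than carried out, but it is of the same nature as the evaluation-tracing arguments the paper itself leaves implicit.
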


We delay the proof to the end of the section, and focus instead on the implications of Proposition \ref{prop:1356} to our analysis of the derived inner-$\Hom$ functor.

\subsection{The derived map $\operatorname{L}\phi(\lambda):\operatorname{L}\lambda^\ast D^{\Enh}(\dG/B_q)\to D^{\Enh}(\msc{B}_\lambda)$}

Let us begin with a basic lemma.

\begin{lemma}\label{lem:1399}
\begin{enumerate}
\item If $M$ is relatively projective in $\QCoh(\dG/B_q)$ and $N$ is flat, then the sheaf $\sHom_{\dG/B_q}(M,N)$ is flat over $\dG/\dB$.
\item For any sheaf $\msc{F}$ over $\dG/\dB$, complex $P$ of relatively projective sheaves over $\dG/B_q$, and bounded complex $N$ of flat sheaves over $\dG/B_q$, the natural map
\[
\msc{F}\ot^{\operatorname{L}}\sHom_{\dG/B_q}(P,N)\to \msc{F}\ot\sHom_{\dG/B_q}(P,N)
\]
is a quasi-isomorphism.
\item For any closed subscheme $i:Z\to \dG/\dB$, and $P$ and $N$ as in {\rm (2)}, the natural map
\[
\operatorname{L}i^\ast\sHom_{\dG/B_q}(P,N)\to i^\ast\sHom_{\dG/B_q}(P,N)
\]
is a quasi-isomorphism.
\end{enumerate}
\end{lemma}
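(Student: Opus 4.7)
The plan is to establish the three claims in sequence, each building on its predecessor.

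For (1), the natural strategy is to invoke Lemma~\ref{lem:sHom-linear}, which under the hypothesis that $M$ is coherent and relatively projective furnishes a natural isomorphism $\msc{F}\otimes_{\O_{\dG/\dB}}\sHom_{\dG/B_q}(M,N)\cong \sHom_{\dG/B_q}(M,\msc{F}\star N)$ at arbitrary $\msc{F}\in\QCoh(\dG/\dB)$. Flatness of $\sHom_{\dG/B_q}(M,N)$ over $\dG/\dB$ then reduces to exactness of the functor $\msc{F}\mapsto \sHom_{\dG/B_q}(M,\msc{F}\star N)$, and this appears as a composition of two exact functors: the action $-\star N=\pi^\ast(-)\otimes_{\O_{\dG}}N$ is exact because $\pi$ is flat and $N$ is $\dG/B_q$-flat, while $\sHom_{\dG/B_q}(M,-)$ is exact by the relative projectivity of $M$.

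For (2), I would first observe that the complex $\sHom_{\dG/B_q}(P,N)$ is termwise flat over $\dG/\dB$. Indeed, each graded piece $\sHom^n_{\dG/B_q}(P,N)$ is a finite direct sum of terms $\sHom_{\dG/B_q}(P^i,N^{i+n})$---the finiteness coming from boundedness of $N$---and each such term is flat over $\dG/\dB$ by part~(1). Given a flat resolution $\msc{F}'\to\msc{F}$, the induced map $\msc{F}'\otimes\sHom_{\dG/B_q}(P,N)\to\msc{F}\otimes\sHom_{\dG/B_q}(P,N)$ is then a quasi-isomorphism, establishing the desired identification $\msc{F}\otimes^{\operatorname{L}}\sHom_{\dG/B_q}(P,N)\overset{\sim}\to\msc{F}\otimes\sHom_{\dG/B_q}(P,N)$.

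For (3), I would specialize (2) to $\msc{F}=i_\ast\O_Z$. The projection formula for the closed immersion $i:Z\to\dG/\dB$ gives $i_\ast\operatorname{L}i^\ast\sHom_{\dG/B_q}(P,N)\simeq i_\ast\O_Z\otimes^{\operatorname{L}}_{\O_{\dG/\dB}}\sHom_{\dG/B_q}(P,N)$, and by (2) this is quasi-isomorphic to $i_\ast\O_Z\otimes\sHom_{\dG/B_q}(P,N)\simeq i_\ast i^\ast\sHom_{\dG/B_q}(P,N)$. Since pushforward along a closed immersion is fully faithful (hence conservative) on derived categories of quasi-coherent sheaves, the natural map $\operatorname{L}i^\ast\sHom_{\dG/B_q}(P,N)\to i^\ast\sHom_{\dG/B_q}(P,N)$ is itself a quasi-isomorphism.

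The main obstacle is the K-flatness question in part~(2): for possibly unbounded $P$, one must confirm that the termwise-flat complex $\sHom_{\dG/B_q}(P,N)$ genuinely computes the derived tensor product with $\msc{F}$. This is under control in the intended applications, where $P$ is a bounded-above resolution by relatively projective sheaves; then $\sHom_{\dG/B_q}(P,N)$ is bounded-below and termwise flat, hence K-flat by a standard criterion.
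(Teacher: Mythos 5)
Your parts (1) and (3) are essentially the paper's argument: (1) is the linearity isomorphism of Lemma \ref{lem:sHom-linear} combined with exactness of $-\star N$ (flatness of $N$) and of $\sHom_{\dG/B_q}(M,-)$ (relative projectivity), and (3) is the specialization of (2) to $\msc{F}=i_\ast\O_Z$, which is exactly how the paper deduces it.  One small caveat on (1): the statement does not assume $M$ coherent, so rather than the third item of Lemma \ref{lem:sHom-linear} you should use the second (with $\msc{F}$ coherent and $M$ merely relatively projective) and check exactness of $-\ot\sHom_{\dG/B_q}(M,N)$ against short exact sequences of coherent sheaves only, which suffices for flatness; this is what the paper does, and your termwise-flatness claim in (2) (correctly reduced to finite sums of $\sHom_{\dG/B_q}(P^i,N^{i+n})$ using boundedness of $N$) should be routed through the same version.

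The genuine gap is the quasi-isomorphism in (2).  For an \emph{arbitrary} flat resolution $\msc{F}'\to\msc{F}$, termwise flatness of $C=\sHom_{\dG/B_q}(P,N)$ does not give that $\msc{F}'\ot C\to\msc{F}\ot C$ is a quasi-isomorphism: the cone of $\msc{F}'\to\msc{F}$ is in general unbounded below while $C$ is only bounded below, so the relevant double complex has exact rows but is unbounded in both directions along each total degree, and its direct-sum totalization need not be acyclic (the staircase correction never terminates; the spectral sequence does not converge).  Your fallback rests on a false criterion: bounded-\emph{below} complexes of flats are not K-flat in general -- the standard statement is for bounded-\emph{above} complexes.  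For instance, over $R=k[x]/(x^2)$ the complex $R\xrightarrow{x}R\xrightarrow{x}R\to\cdots$ placed in degrees $\geq 0$ is termwise free and quasi-isomorphic to $k$, yet its termwise tensor with $k$ is concentrated in degrees $\geq 0$, whereas $k\ot^{\operatorname{L}}_R k$ has nonzero cohomology in every degree $\leq 0$; so this complex is not K-flat.  The missing ingredient, and the paper's actual argument, is to take a \emph{finite} flat resolution $\msc{F}'\to\msc{F}$ -- available because $\dG/\dB$ is smooth of finite dimension, hence of finite Tor-dimension -- so that $\msc{F}'$ is K-flat (being a bounded complex of flats) and the cone of $\msc{F}'\to\msc{F}$ is a \emph{bounded} acyclic complex; tensoring that bounded cone with the termwise-flat $C$ and running the now-convergent spectral sequence (finitely many columns) yields the desired quasi-isomorphism, after which your deduction of (3) goes through verbatim.
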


\begin{proof}
Let us note, before beginning, that flatness of $N$ implies exactness of the operation $-\star N=\pi^\ast(-)\ot N$.  For the first point, consider an exact sequence $0\to \msc{F}'\to \msc{F}\to \msc{F}''\to 0$ of coherent sheaves and the corresponding possibly (non-)exact sequence
\[
0\to \msc{F}'\ot \sHom(M,N)\to \msc{F}\ot \sHom(M,N)\to \msc{F}''\ot \sHom(M,N)\to 0.
\]
By the $\Coh(\dG/\dB)$-linearity of $\sHom(M,-)$, Lemma \ref{lem:sHom-linear}, the above sequence is isomorphic to the sequence
\[
0\to \sHom(M,\msc{F}'\star N)\to \sHom(M,\msc{F}\star N)\to \sHom(M,\msc{F}''\star N)\to 0.
\]
The second sequence is exact by flatness of $N$ and local projectivity of $M$.  So we see that $\sHom(M,N)$ is flat relative to the action of coherent sheaves on $\dG/\dB$.  This is sufficient to see that $\sHom(M,N)$ is flat in $\QCoh(\dG/\dB)$.
\par

For the second point, resolve $\msc{F}$ by a finite complex of flat sheaves $\msc{F}'\to \msc{F}$.  Via a spectral sequence argument, using flatness of $\sHom(P,N)$ in each degree, one sees that the induced map
\[
\msc{F}\ot^{\operatorname{L}}\sHom(P,N)=\msc{F}'\ot\sHom(P,N)\to \msc{F}\ot\sHom(P,N)
\]
is a quasi-isomorphism.  Point (3) follows from point (2) and the identification $\operatorname{L}i^\ast(-)=i_\ast\O_Z\ot^{\operatorname{L}}-$.
\end{proof}

We now consider the derived category $D(\dG/B_q)$.  We have the derived pullback
\[
\operatorname{L}\iota^\ast_\lambda:D(\dG/B_q)\to D(\msc{B}_\lambda)
\]
which still annihilates the $D(\dG/\dB)$-action.  So, as in Section \ref{sect:1226}, we get an induced map on inner-Homs
\[
\operatorname{L}\phi_{M,N}:\operatorname{L}\lambda^\ast\sRHom_{\dG/B_q}(M,N)\to \RHom_{\msc{B}_\lambda}(\operatorname{L}\iota^\ast_\lambda M,\operatorname{L}\iota^\ast_\lambda N)
\]
which is compatible with composition and the tensor structure.  We therefore obtain a monoidal functor
\begin{equation}\label{eq:1470}
\operatorname{L}\phi(\lambda):\operatorname{L}\lambda^\ast D^{\Enh}(\dG/B_q)\to D^{\Enh}(\msc{B}_\lambda),
\end{equation}
where $D^{\Enh}(\msc{B}_\lambda)$ is the linear enhancement of $D(\msc{B}_\lambda)$ implied by the action of $D(Vect_k)$.
\par

Consider coherent $M$ and bounded $N$ in $D(\dG/B_q)$.  (By coherent we mean that $M$ is in $D_{\coh}(\dG/B_q)$.)   If we express $\sRHom_{\dG/B_q}$ by resolving the first coordinate by relatively projective sheaves, which are necessarily flat, and we replace $N$ with a bounded complex of flat sheaves if necessary, the map $\operatorname{L}\phi_{M,N}$ is simply the fiber map
\[
\operatorname{L}\phi_{M,N}=\phi_{M,N}:\lambda^\ast\sHom_{\dG/B_q}(P_M,N)\to \Hom_{\msc{B}_\lambda}(\iota_\lambda^\ast P_M,\iota_\lambda^\ast N)
\]
defined at equation \eqref{eq:1352}, via Lemma \ref{lem:1399} (3).  By Proposition \ref{prop:1356}, the map $\operatorname{L}\phi_{M,N}$ is then seen to be an isomorphism whenever $M$ is coherent and $N$ is bounded.

\begin{theorem}\label{thm:fiber_l}
The map
\[
\operatorname{L}\phi_{M,N}:\operatorname{L}\lambda^\ast\sRHom_{\dG/B_q}(M,N)\to \RHom_{\msc{B}_\lambda}(\operatorname{L}\iota^\ast_\lambda M,\operatorname{L}\iota^\ast_\lambda N)
\]
is a quasi-isomorphism whenever $M$ is coherent and $N$ is bounded.  Consequently, the monoidal functor
\[
\operatorname{L}\phi(\lambda):\operatorname{L}\lambda^\ast D^{\Enh}(\dG/B_q)\to D^{\Enh}(\msc{B}_\lambda)
\]
is fully faithful when restricted to the full subcategory of coherent dg sheaves.  Furthermore, every simple representation in $\msc{B}_\lambda$ is in the image of $\operatorname{L}\phi(\lambda)$, and a projective generator for $\msc{B}_\lambda$ is also in the image of $\operatorname{L}\phi(\lambda)$.
\end{theorem}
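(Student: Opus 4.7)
The quasi-isomorphism in (i) is essentially assembled from the machinery developed in Sections~\ref{sect:shom2} and~\ref{sect:fibers_G/B} and the discussion immediately preceding the theorem. I would proceed as follows. Given coherent $M$ and bounded $N$, first resolve $M$ by a bounded-above complex $P_M \to M$ of coherent relatively projective sheaves using Corollary~\ref{cor:enough_proj}, and replace $N$ by a quasi-isomorphic bounded complex of flat sheaves; such a flat replacement is available because the equivariant vector bundles $E_V$ are flat and form a generating class in $\QCoh(\dG/B_q)$. With these choices in hand, $\sHom_{\dG/B_q}(P_M, N)$ represents $\sRHom_{\dG/B_q}(M,N)$ at the dg level, and its terms are flat over $\dG/\dB$ by Lemma~\ref{lem:1399}(1).

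With the representing complex termwise flat over $\dG/\dB$, Lemma~\ref{lem:1399}(3) identifies $\operatorname{L}\lambda^\ast \sHom_{\dG/B_q}(P_M, N)$ with the naive pullback $\lambda^\ast \sHom_{\dG/B_q}(P_M, N)$. Proposition~\ref{prop:1356} then applies termwise—its hypothesis of coherence and relative projectivity in the first argument is precisely what has been arranged—to yield an isomorphism onto $\Hom_{\msc{B}_\lambda}(\iota_\lambda^\ast P_M, \iota_\lambda^\ast N)$. The latter computes $\RHom_{\msc{B}_\lambda}(\operatorname{L}\iota_\lambda^\ast M, \operatorname{L}\iota_\lambda^\ast N)$ since $\iota_\lambda^\ast P_M$ resolves $\operatorname{L}\iota_\lambda^\ast M$ by relatively projective (hence flat) objects in $\msc{B}_\lambda$. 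Composing these identifications produces the desired quasi-isomorphism $\operatorname{L}\phi_{M,N}$. Full faithfulness on $D_{\coh}^{\Enh}(\dG/B_q)$ is then formal: in the enriched setting it amounts exactly to the assertion that morphism objects transform by isomorphisms, which is (i) applied to coherent (and therefore bounded) $M$ and $N$.

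To witness that every simple and some projective generator of $\msc{B}_\lambda$ lies in the image of $\operatorname{L}\phi(\lambda)$, I would produce explicit preimages from equivariant vector bundles on $\dG/B_q$. A choice of $K$-point $x\colon\Spec(K)\to \dB_\lambda$ as in diagram~\eqref{eq:502} identifies $\msc{B}_\lambda$ with a base change of $\Rep\uqB$, under which the simples correspond to one-dimensional characters $\bar\mu \in X/X^M$. For each $\bar\mu$, choose a lift $\mu \in X$; the derived pullback $\operatorname{L}\iota_\lambda^\ast E_{k_\mu}$ (which equals $\iota_\lambda^\ast E_{k_\mu}$ since $E_{k_\mu}$ is flat) realizes the simple indexed by $\bar\mu$. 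For the projective generator take a finite-dimensional $B_q$-representation $Q$ whose restriction to $\uqB$ is a projective generator of $\Rep\uqB$—for instance the direct sum $\bigoplus_{\mu} \mathit{St} \otimes k_\mu$ with $\mu$ running over representatives of $X/X^M$, using that Steinberg tensored with any representation is $\uqB$-projective and that the resulting heads exhaust the simples. Corollary~\ref{cor:loc_proj} ensures $E_Q$ is coherent and relatively projective on $\dG/B_q$, and $\iota_\lambda^\ast E_Q$ is the sought projective generator of $\msc{B}_\lambda$.

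The main technical obstacle is the flat-replacement of $N$ paired with commuting $\operatorname{L}\lambda^\ast$ past $\sHom_{\dG/B_q}$; both steps are controlled by Lemma~\ref{lem:1399}, whose content is essentially the only ingredient required beyond Proposition~\ref{prop:1356}. The existence of a $B_q$-lift of a projective generator for $\Rep\uqB$ is a secondary external input, standard in the representation theory of quantum groups.
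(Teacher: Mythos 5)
Your proposal is correct and follows essentially the same route as the paper: resolve $M$ by coherent relatively projective sheaves, flat-replace the bounded $N$, identify $\operatorname{L}\lambda^\ast$ with the underived pullback via Lemma \ref{lem:1399}(3), and conclude with Proposition \ref{prop:1356}, with full faithfulness on coherent objects then formal. The only (harmless) divergence is in the last claim, where the paper transports the problem to $\msc{B}_1=\FK{B}_q$ via a point of $\dB_\lambda$ and cites that simples and a projective generator lie in the image of de-equivariantization, whereas you exhibit explicit $B_q$-lifts (characters, and Steinberg twisted by characters); both are fine.
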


\begin{proof}
We have already argued above that $\operatorname{L}\phi(\lambda)$ is fully faithful.  For the claim about the simples and projectives, we just note that $\operatorname{L}\iota^\ast_\lambda E_V=\iota^\ast_\lambda E_V$ for each $B_q$-representation $V$.  Hence all of the vector bundles $\O_{\dB_\lambda}\ot_k V=\iota^\ast_\lambda E_V$ are in the image of $\operatorname{L}\phi(\lambda)$, which we recall is simply $\operatorname{L}\iota^\ast_\lambda$ on objects.
\par

Now, for any $x\in\dB_\lambda$, the tensor equivalence $x:\msc{B}_\lambda\to \msc{B}_1=\FK{B}_q$ sends the bundle $\O_{\dB_\lambda}\ot_kV$ to the vector bundle $\O_{\dB}\ot_k V=E'_V$, where $E'_{-}:\rep B_q\to \FK{B}_q$ is the de-equivariantization map.  Since all of the simples, and a projective generator, for $\FK{B}_q$ lie in the image of de-equivariantization, it follows that all simples and a projective generator for $\msc{B}_\lambda$ lie in the image of $\operatorname{L}\iota_\lambda$, and hence in the image of $\operatorname{L}\phi(\lambda)$.
\end{proof}

One might read the latter point of Theorem \ref{thm:fiber_l}, loosely, as the claim that each fiber $\operatorname{L}\lambda^\ast D^{\Enh}(\dG/B_q)$ is approximately the derived category of sheaves for the associated Borel
\[
\operatorname{L}\lambda^\ast D^{\Enh}(\dG/B_q)\approx D^{\Enh}(\msc{B}_\lambda).
\]

\subsection{Proof of Proposition \ref{prop:1356}}

\begin{proof}[Proof of Proposition \ref{prop:1356}]
Recall the explicit expression of $\sHom_{\dG/B_q}$ in terms of morphisms in $\QCoh(\dG/\su)=\QCoh(\dG)^{\uqB}$, provided by Proposition \ref{prop:shom_exp}.  After pulling back $\pi^\ast:\QCoh(\dG/\dB)\overset{\cong}\to \QCoh(\dG)^{\dB}$ we, equivalently, have a morphism
\begin{equation}\label{eq:1723}
(\O(\dB_\lambda)\ot_{\O(\dG)}\Hom_{\dG/\su}(M,N))^{\dB}\to \Hom_{\msc{B}_\lambda}(\iota^\ast_\lambda M,N)
\end{equation}
which is compatible with evaluation, and we are claiming that this map is an isomorphism.  By Lemma \ref{lem:sHom-linear}, or rather the proof of Lemma \ref{lem:sHom-linear}, and local projectivity of $M$ the map
\[
\O(\dB_\lambda)\ot_{\O(\dG)}\Hom_{\dG/\su}(M,N)\to \Hom_{\dG/\su}(M,\iota^\ast_\lambda N),\ \ f\ot \xi\mapsto f\cdot \operatorname{red}_\lambda \xi
\]
is an isomorphism, where $\operatorname{red}_\lambda:N\to \iota^\ast_\lambda N$ is the reduction map.  (Here we are viewing sheaves on $\dB_\lambda$ as sheaves on $\dG$ via pushforward.)  Furthermore $\Hom_{\dG/\su}(M,\iota^\ast_\lambda N)=\Hom_{\dG/\su}(\iota^\ast_\lambda M,\iota^\ast_\lambda N)$ and when we take invariants we have
\[
\Hom_{\dG/\su}(\iota^\ast_\lambda M,\iota^\ast_\lambda N)^{\dB}=\Hom_{\Coh(\dG/B_q)}(\iota^\ast_\lambda M,\iota^\ast_\lambda N).
\]
We therefore have an isomorphism
\begin{equation}\label{eq:1731}
(\O(\dB_\lambda)\ot_{\O(\dG)}\Hom_{\dG/\su}(M,N))^{\dB}\overset{\cong}\to \Hom_{\dG/B_q}(\iota^\ast_\lambda M,\iota^\ast_\lambda N)
\end{equation}
given by $f\ot \xi\mapsto f\operatorname{red}_\lambda(\xi)$, and under this isomorphism the reduction of evaluation for $\Hom_{\dG/\su}(M,N)$ appears as the expected evaluation map
\[
\Hom_{\dG/B_q}(\iota^\ast_\lambda M,\iota^\ast_\lambda N)\ot_k \iota^\ast_\lambda M\to \iota^\ast_\lambda N,\ \ \xi\ot m\mapsto \xi(m).
\]
\par

Now, under the identification \eqref{eq:1731} the map \eqref{eq:1723} now appears as
\begin{equation}\label{eq:1742}
\Hom_{\dG/B_q}(\iota^\ast_\lambda M,\iota^\ast_\lambda N)\to \Hom_{\msc{B}_\lambda}(\iota_\lambda^\ast M,\iota^\ast_\lambda N),\ \ \xi\mapsto \xi.
\end{equation}
One simply observes that these morphism spaces are literally equal, since $\msc{B}_\lambda=\Coh(\dB_\lambda)^{B_q}$ and $\Coh(\dG/B_q)=\Coh(\dG)^{B_q}$, and notes the map \eqref{eq:1742} is the identity to see that \eqref{eq:1742} is an isomorphism.  It follows that our original map \eqref{eq:1723} is an isomorphism.
\end{proof}


\part{Support Theory and the Springer Resolution}

In Part II of this document we explain how the monoidal enhancement $D^{\Enh}(\FK{G}_q)$ from Part I can be used to produce a $\tN$-valued support theory for the small quantum group.  Having accepted the appearance of the flag variety $\dG/\dB$ from Part I, the Springer resolution then appears via the global (tensor) action of the endomorphism algebra $\sRHom_{\FK{G}_q}(\1,\1)$ on $D^{\Enh}(\FK{G}_q)$.  The cohomology of this algebra is shown to be the pushforward $p_\ast \O_{\tN}$ of the structure sheaf for the Springer resolution along the (affine) projection $p:\tN\to \dG/\dB$ (see Theorem \ref{thm:enh_GK} below).
\par

We show that our $\tN$-valued support for the small quantum group induces a surjection from the projectivized Springer resolution $\mbb{P}(\tN)$ to the Balmer spectrum for $\FK{G}_q$, at least in type $A$.  This surjection is a generic homeomorphism, so that the Springer resolution essentially resolves the Balmer spectrum in this case.
\par 

Throughout this portion of the paper we focus on the category of sheaves for the quantum Frobenius kernel $\FK{G}_q$, rather than the half-quantum flag variety.  When necessary, we translate results about the half-quantum flag variety to the quantum Frobenius kernel by restricting along the Kempf embedding (Theorem \ref{sect:Kempf}).

\section{The algebra $\msc{R}(G_q)$ and the Springer resolution}
\label{sect:RGq}

Recall our monoidal enhancement $D^{\Enh}(\FK{G}_q)$ with its corresponding morphism sheaves $\sRHom_{\FK{G}_q}$ (see Section \ref{sect:D_Enh}).  We have the algebra
\[
\msc{R}(G_q):=\sRHom_{\FK{G}_q}(\1,\1)
\]
in the derived category $D(\dG/\dB)$ which acts on the left and right of all morphisms $\sRHom_{\FK{G}_q}(M,N)$ via the tensor structure.
\par

We provide below some preliminary comments on the algebra $\msc{R}(G_q)$ which orient our support theoretic discussions.  The first point of the section is to give a basic description of the algebra $\msc{R}(G_q)$ and its binatural global action on $D^{\Enh}(\FK{G}_q)$.  The second point is to recognize that the cohomology of $\msc{R}(G_q)$ recovers the structure sheaf for the Springer resolution,
\[
H^\ast(\msc{R}(G_q))=p_\ast\O_{\tN},\ \ \text{where }p:\tN\to \dG/\dB\text{ is the projection}.
\]
The cohomology groups $\sExt_{\FK{G}_q}(M,N)$ can then be understood as $\mbb{G}_m$-equivariant sheaves on the Springer resolution, at arbitrary $M$ and $N$.

\subsection{Commutativity of $\msc{R}(G_q)$ and centrality of the global action}

For any $M$ and $N$ in $D(\FK{G}_q)$ we claim that the diagram
\[
\xymatrix{
\msc{R}(G_q)\ot\sRHom(M,N)\ar[rr]^{\rm act}\ar[d]_{\rm symm} & & \sRHom(M,N)\\
 \sRHom(M,N)\ot \msc{R}(G_q)\ar[urr]_{\rm act} & 
}
\]
commutes.  This is equivalent to the claim that the adjoint diagram
\begin{equation}\label{eq:diagram}
\xymatrix{
\msc{R}(G_q)\ot\sRHom(M,N)\star M\ar[rr]\ar[d]_{\rm symm\ot 1} & & N\\
\sRHom(M,N)\ot \msc{R}(G_q)\star M\ar[urr] & 
}
\end{equation}
commutes.

\begin{proposition}\label{prop:1660}
The algebra $\msc{R}(G_q)$ is commutative in $D(\dG/\dB)$, and the tensor actions give each $\sRHom_{\FK{G}_q}(M,N)$ the structure of a symmetric $\msc{R}(G_q)$-bimodule in $D(\dG/\dB)$.
\end{proposition}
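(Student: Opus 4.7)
The strategy is a generalized Eckmann--Hilton argument adapted to our monoidal enhancement. The category $D^{\Enh}(\FK{G}_q)$ is a monoidal category enriched in the symmetric monoidal category $D(\dG/\dB)$, and in any such setting the endomorphism object of the unit is automatically a commutative algebra whose tensor action on morphism objects factors through symmetric bimodules. This is just the classical fact, for instance that $\pi_1$ of an $H$-space is abelian, interpreted in the correct abstract setting.

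Concretely, I would first unravel both composites in diagram \eqref{eq:diagram}. The global action of $\msc{R}(G_q)$ on $\sRHom_{\FK{G}_q}(M,N)$ is by definition the composite
\[
\msc{R}(G_q)\ot \sRHom(M,N)\ \overset{t}\longrightarrow\ \sRHom(\1\ot M,\ \1\ot N)\ \cong\ \sRHom(M,N),
\]
with $t$ the tensor structure map, and analogously on the other side. Post-composing with $ev\star \mrm{id}_M$, both paths in \eqref{eq:diagram} become expressions of the form $ev\circ(t(r\ot\phi)\star\mrm{id}_M)$ and $ev\circ(t(\phi\ot r)\star\mrm{id}_M)\circ(\mrm{symm}\star\mrm{id}_M)$ respectively. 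I would then apply the interchange law
\[
(g_1\ot g_2)\circ(f_1\ot f_2)=(g_1\circ f_1)\ot(g_2\circ f_2)
\]
from Section \ref{sect:enrich} to the two obvious factorizations
\[
r\ot\phi=(r\ot\mrm{id}_N)\circ(\mrm{id}_\1\ot\phi)=(\mrm{id}_\1\ot\phi)\circ(r\ot\mrm{id}_M),
\]
together with the unit axioms identifying $r\ot\mrm{id}_N$ and $\mrm{id}_M\ot r$ with the respective actions by $r$ after unit isomorphisms. The diagram then reduces to the symmetry of $D(\dG/\dB)$ intertwining the two factorizations.

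The commutativity of $\msc{R}(G_q)$ itself is the $M=N=\1$ case of the bimodule statement: symmetry of the two actions on $\sRHom(\1,\1)$ unwinds to the assertion $t(r\ot s)=t(s\ot r)\circ \mrm{symm}$, which is exactly commutativity of multiplication in $D(\dG/\dB)$ after identifying $t$ with $\circ$ via the unit isomorphism $\1\ot\1\cong\1$. I expect the only real obstacle to be bookkeeping: carefully tracking the unit isomorphisms, the symmetric braiding of $D(\dG/\dB)$, and the axiomatic compatibilities between composition and tensor structure so that the interchange law can be cleanly invoked. No additional input beyond the abstract formalism of Section \ref{sect:enrich} and the construction of the enhancement from Section \ref{sect:Enh_derived} should be required.
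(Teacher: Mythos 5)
There is a genuine gap, and it sits exactly where you wave at ``the unit axioms'': the identification of $r\ot\mrm{id}_M$ with $\mrm{id}_M\ot r$ after unit isomorphisms. In this enriched setting that identity is not an axiom and is not formal. Unwinding the definition of the tensor structure map $t$ (it is adjoint to a map built from the half-braiding $\operatorname{symm}$ of the $\QCoh(\dG/\dB)$-action followed by evaluations), comparing $t(r\ot u_M)$ with $t(u_M\ot r)$ amounts to asking that the evaluation morphism out of $\zeta^\ast\msc{R}(G_q)$ commute with the half-braiding past $M$, i.e.\ that evaluation be a \emph{central} morphism in $D(\dG/B_q)$. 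The paper flags precisely this failure: the evaluation maps $\sHom_{\dG/B_q}(M,N)\star M\to N$ are not central morphisms in $D(\dG/B_q)$, even when $M$ and $N$ are central objects. Indeed, a telling sanity check is that your argument uses only the enriched monoidal structure of $D^{\Enh}(\dG/B_q)$, which exists for \emph{all} objects, so if it worked it would prove the symmetric-bimodule statement for arbitrary $M,N$ in $D(\dG/B_q)$ --- a statement the authors explicitly say they do not expect to hold. (The Eckmann--Hilton argument is essentially fine in the special case $M=N=\1$, where the offending crossing is with the unit; it is the bimodule assertion for general $M,N$ that cannot be reached this way.)

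The paper's actual proof supplies the missing input from the quantum group: it passes to the mixed equivariant category $D(\dG/\mcl{M}_q)$, which is braided by the $R$-matrix and receives both $D(\dG/\dB)$ and $D(\FK{G}_q)$ centrally, and then manipulates string diagrams in which the (non-lifting) evaluation morphisms are braided past objects using naturality of the half-braiding, with at least one M\"uger-central object at every crossing. This is where the hypothesis that $M$ and $N$ lie in the image of the Kempf embedding $D(\FK{G}_q)\to D(\dG/B_q)$ --- and not merely in $D(\dG/B_q)$ --- is used in an essential way. To repair your proof you would need to justify the centrality (or braided naturality) of the relevant evaluation maps, which is exactly the content of the paper's appendix argument rather than a consequence of the formalism of Section on enriched categories.
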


This result is not completely formal, and is not valid when $M$ and $N$ are replaced with objects in the ambient category $D(\dG/B_q)$, as far as we can tell.  (The problem is that the evaluation maps $\sHom(M,N)\star M\to M$ are not central morphisms in $D(\dG/B_q)$, even when $M$ and $N$ are central objects.)  We provide a proof in Section \ref{sect:1660} of the appendix.

\subsection{The Springer resolution}

Recall that the Springer resolution $\tN$ is the affine bundle $\tN=\dG\times_{\dB}\mfk{n}$ over the flag variety, where $\mathfrak{n}$ is the (positive) nilpotent subalgebra $\mfk{n}\subset \mfk{g}$.  (Here $\mfk{g}$ is the Lie algebra for $\dG$, which is the same as the Lie algebra for $G$.)  Equivalently, the Springer resolution is obtained as the relative spectrum \cite[Ex.\ 5.17]{hartshorne13} of the descent of the $\dB$-equivariant algebra $\O_{\dG}\ot \Sym(\mfk{n}^\ast)$ over $\dG$,
\begin{equation}\label{eq:1932}
\tN=\dG\times_{\dB}\mfk{n}=\Spec_{\dG/\dB}\left(\text{descent\ of }\O_{\dG}\ot_k \Sym(\mfk{n}^\ast)\right)=\Spec_{\dG/\dB}(\Sym(\msc{E})).
\end{equation}
Here $\msc{E}$ is the vector bundle on $\dG/\dB$ associated to the $\dB$-representation $\mfk{n}^\ast$.
\par

From this construction of $\tN$ as the relative spectrum of $\Sym(\msc{E})$ we see that pushing forward along the (affine) structure map $p:\tN\to \dG/\dB$ provides an identification $p_\ast\O_{\tN}=\Sym(\msc{E})$ and also an equivalence of monoidal categories
\[
p_\ast:\QCoh(\tN)\overset{\sim}\longrightarrow \QCoh(p_\ast\O_{\tN}).
\]
To be clear, the latter category is the category of modules over the algebra object $p_\ast\O_{\tN}$ in $\QCoh(\dG/\dB)$, and the monoidal product is as expected $\ot_{p_\ast\O_{\tN}}$.
\par

It is well-known that the Springer resolution $\tN$ is identified with the moduli space of choices of a Borel in $\mfk{g}$, and a nilpotent element in the given Borel,
\begin{equation}\label{eq:moduli}
\mcl{M}oduli=\{(\mfk{b}_\lambda,x):\mfk{b}_\lambda\subset\mfk{g}\ \text{a Borel}\ x\in \mfk{b}_\lambda\ \text{is nilpotent}\}
\end{equation}
\cite[\S 3.2]{chrissginzburg09}.  This moduli space sits in the product $\mcl{M}oduli\subset \dG/\dB\times \mcl{N}$, where $\mcl{N}$ is the nilpotent cone in $\mfk{g}$, and we have an explicit isomorphism between $\tN$ and $\mcl{M}oduli$ given by the $\dG$-actions on the two factors in this product,
\[
\tN=\dG\times_{\dB} \mfk{n}\overset{\cong}\to \mcl{M}oduli,\ \ (g,x)\mapsto (\operatorname{Ad}_g(\mfk{b}),\operatorname{Ad}_g(x)).
\]
In the above formula $\mfk{b}$ is the positive Borel in $\mfk{g}$.  We identify $\tN$ with this moduli space when convenient, via the above isomorphism.

We consider $\tN$ as a conical variety over $\dG/\dB$ by taking the generating bundle $\msc{E}$ to be in (cohomological) degree $2$.  In terms of the moduli description given above, this conical structure corresponds to a $\mbb{G}_m$-action defined by the squared scaling $c\cdot (\mfk{b}_\lambda, x)=(\mfk{b}_\lambda, c^2\cdot x)$.

\begin{remark}
For certain applications $\tN$ might be viewed, more fundamentally, as a dg scheme over $\dG/\dB$ which is generated in degree $2$ and has vanishing differential.  Compare with \cite{arkhipovbezrukavnikovginzburg04} and Part III of this text.
\end{remark}

\subsection{The moment map}

Given our identification of the Springer resolution $\tN$ with the moduli of pairs \eqref{eq:moduli}, we have two projections $\mfk{b}_\lambda\leftarrow (\mfk{b}_\lambda,x)\to x$ which define maps $p:\tN\to \dG/\dB$ and $\kappa:\tN\to \mcl{N}$.  The map $p$ is affine, and simply recovers the structural map $\tN=\Spec_{\dG/\dB}(\Sym(\msc{E}))\to \dG/\dB$.  The map $\kappa$ provides an identification of the affinization of $\tN$ with the nilpotent cone,
\[
\bar{\kappa}:\tN_{\rm aff}\overset{\cong}\longrightarrow \mcl{N}.
\]
The map $\kappa$ is called the \emph{moment map}.  The moment map is a proper birational equivalence, and so realizes the Springer resolution as a resolution of singularities for the nilpotent cone \cite[Theorem 10.3.8]{hottatanisaki07}.

\subsection{A calculation of cohomology}
\label{sect:GK}

The following result is deduced immediately from results of Ginzburg and Kumar \cite{ginzburgkumar93}.

\begin{theorem}\label{thm:enh_GK}
There is a canonical identification
\[
H^\ast(\msc{R}(G_q))=p_\ast\O_{\tN},
\]
as sheaves of graded algebras over $\dG/\dB$.
\end{theorem}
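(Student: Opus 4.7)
The plan is to transfer the computation via the Kempf embedding to the half-quantum flag variety, pull back along $\pi:\dG\to\dG/\dB$ to reduce to an invariants calculation over the small quantum Borel, and then invoke the Ginzburg--Kumar identification of $\Ext^\ast_{\uqB}(k,k)$ with $\Sym(\mfk{n}^\ast)$.

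First, since the unit in $\FK{G}_q$ is $\O_{\dG}$, Theorem \ref{thm:Kempf} yields an identification $\msc{R}(G_q)\simeq \sRHom_{\dG/B_q}(\O_{\dG},\O_{\dG})$ in $D(\dG/\dB)$, so it suffices to analyze the right-hand side. I would then establish a derived version of Proposition \ref{prop:shom_exp}. Choosing a coherent relatively projective resolution $P^\bullet\to\O_{\dG}$ (available by Corollary \ref{cor:enough_proj}) and noting that such resolutions are adapted to $\sHom_{\dG/B_q}(-,\O_{\dG})$ by Corollary \ref{cor:loc_proj}, one applies Proposition \ref{prop:shom_exp} termwise to obtain a $\dB$-equivariant quasi-isomorphism
\[
\pi^\ast\sRHom_{\dG/B_q}(\O_{\dG},\O_{\dG})\ \simeq\ \RHom_{\uqB}(k,\O_{\dG})
\]
of dg algebras on $\dG$. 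Since $\uqB$ acts trivially on the $k$-flat module $\O_{\dG}$, a standard base-change argument gives $\RHom_{\uqB}(k,\O_{\dG})\simeq \O_{\dG}\otimes_k \RHom_{\uqB}(k,k)$ equivariantly for $\dB$.

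Next I would invoke \cite{ginzburgkumar93}, which provides a $\dB$-equivariant isomorphism of graded algebras $\Ext^\ast_{\uqB}(k,k)\cong \Sym(\mfk{n}^\ast)$ with $\mfk{n}^\ast$ placed in cohomological degree $2$. Taking cohomology above yields $\pi^\ast H^\ast(\msc{R}(G_q))\cong \O_{\dG}\otimes \Sym(\mfk{n}^\ast)$ as a $\dB$-equivariant sheaf of graded algebras on $\dG$. Descending along the $\dB$-torsor $\pi:\dG\to\dG/\dB$ and applying the presentation \eqref{eq:1932} of the Springer resolution converts this into $H^\ast(\msc{R}(G_q))\cong \Sym(\msc{E})=p_\ast\O_{\tN}$, which is the desired identification.

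The main obstacle is matching the algebra structures. Here I would appeal to Section \ref{sect:comp_ot_exp}, which shows that composition in $\sHom_{\dG/B_q}$ pulls back to ordinary composition in $\Coh(\dG)^{\uqB}$; on cohomology this becomes Yoneda composition on $\Ext^\ast_{\uqB}(k,k)$, and the Ginzburg--Kumar isomorphism is itself one of graded algebras, closing the circle. Graded-commutativity (in fact strict commutativity, as everything lives in even degrees) of the resulting algebra is separately guaranteed by Proposition \ref{prop:1660}. A secondary technical point is formalizing the derived pullback identity above, but this reduces, via the termwise statement of Proposition \ref{prop:shom_exp} and exactness of $\pi^\ast$ on $\dB$-equivariant sheaves, to a routine bookkeeping argument.
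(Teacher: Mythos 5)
Your proposal is correct and follows essentially the same route as the paper: identify $\pi^\ast\sRHom_{\dG/B_q}(\1,\1)$ with $\O_{\dG}\ot_k\RHom_{\uqB}(k,k)$ (with its natural $\dB$-equivariant structure), invoke the Ginzburg--Kumar computation $\Ext_{\uqB}(k,k)\cong\Sym(\mfk{n}^\ast)$, and descend via Proposition \ref{prop:shom_exp} to obtain $\Sym(\msc{E})=p_\ast\O_{\tN}$. The extra care you take with the multiplicative structure (Section \ref{sect:comp_ot_exp} and Proposition \ref{prop:1660}) is left implicit in the paper's proof but is consistent with it.
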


\begin{proof}
Recall that $\QCoh(\dG/u)=\QCoh(\dG)^{\uqB}$, by definition.  We have
\[
\sRHom_{\dG/u}(\1,\1)=\O_{\dG}\ot_k \RHom_{\uqB}(k,k),
\]
where $\RHom_{\uqB}(k,k)$ is given its natural $\dB$-action as inner morphisms for the $\Rep(\dB)$-action on $\Rep(B_q)$.  Hence, by the calculation of $\Ext_{\uqB}(k,k)$ provided in \cite[Lemma 2.6]{ginzburgkumar93}, we have
\[
H^\ast(\sRHom_{\dG/u}(\1,\1))=\O_{\dG}\ot_k \Ext_{\uqB}(k,k)=\O_{\dG}\ot_k \Sym(\mfk{n}^\ast).
\]
One therefore applies Proposition \ref{prop:shom_exp} to obtain
\[
H^\ast(\msc{R}(G_q))=H^\ast(\sRHom_{\dG/B_q}(\1,\1))=\text{descent of }\O_{\dG}\ot_k \Sym(\mfk{n}^\ast)=p_\ast\O_{\tN}.
\]
\end{proof}

As noted in the original work \cite{ginzburgkumar93}, the higher global section $H^{>0}(\dG/\dB,p_\ast\O_{\tN})$ vanish so that Theorem \ref{thm:enh_GK} implies a computation of extensions for $\FK{G}_q$.  In the following statement $\O(\mcl{N})$ is considered as a graded algebra with generators in degree $2$.

\begin{corollary}[\cite{ginzburgkumar93}]\label{cor:GK}
There is an identification of graded algebras $\Ext_{\FK{G}_q}(\1,\1)=\O(\mcl{N})$.
\end{corollary}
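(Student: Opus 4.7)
The plan is to chain together the identifications already laid out in the excerpt. The first move is to use the Kempf embedding theorem (Theorem~\ref{thm:kempf_ext}) to replace $\Ext^\ast_{\FK{G}_q}(\1,\1)$ with $\Ext^\ast_{\dG/B_q}(\1,\1)$. Next, I would apply the local-to-global spectral sequence of Corollary~\ref{cor:372} to identify
\[
\Ext^\ast_{\dG/B_q}(\1,\1)\ \cong\ \mbb{H}^\ast\!\big(\dG/\dB,\ \msc{R}(G_q)\big),
\]
and then feed in Theorem~\ref{thm:enh_GK}, which supplies $H^\ast(\msc{R}(G_q))=p_\ast\O_{\tN}=\Sym(\msc{E})$ as a sheaf of graded algebras on $\dG/\dB$.

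At this point the task reduces to collapsing the hypercohomology spectral sequence and computing $H^\ast(\dG/\dB,p_\ast\O_{\tN})$. Since $p:\tN\to \dG/\dB$ is an affine morphism, one has $H^i(\dG/\dB, p_\ast\O_{\tN})=H^i(\tN,\O_{\tN})$. The key input is the vanishing $H^{>0}(\tN,\O_{\tN})=0$, which I would invoke as a classical fact about the Springer resolution (it follows, for instance, from the fact that the moment map $\kappa:\tN\to \mcl{N}$ is a resolution of singularities of a normal variety with rational singularities, so that $R\kappa_\ast\O_{\tN}=\O_{\mcl{N}}$ and $\mcl{N}$ is affine; this is exactly the vanishing flagged in the remark preceding the corollary and originally used in \cite{ginzburgkumar93}). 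This vanishing forces the spectral sequence to degenerate at $E_2$ onto its $i=0$ row and yields
\[
\Ext^\ast_{\FK{G}_q}(\1,\1)\ \cong\ H^0(\tN,\O_{\tN})\ =\ \O(\mcl{N}),
\]
where the final equality comes from $\bar\kappa:\tN_{\rm aff}\overset{\cong}\to \mcl{N}$.

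For the graded structure, I would observe that the generating bundle $\msc{E}$ is placed in cohomological degree $2$ (the conical $\mbb{G}_m$-action on $\tN$ scales by squares), so that $\Sym(\msc{E})$ — and therefore $\O(\mcl{N})$ — is graded with generators in degree $2$, which matches the standard grading on $\O(\mcl{N})$ as the Ext algebra. The ring structure carries through the chain automatically, since each identification in the sequence (Kempf embedding on extensions, local-to-global spectral sequence, Theorem~\ref{thm:enh_GK}, and affine pushforward) is an isomorphism of graded algebras.

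The main obstacle is really just the vanishing $H^{>0}(\tN,\O_{\tN})=0$ needed to collapse the spectral sequence; everything else is bookkeeping tying together results already established in the text. Since this vanishing is a known feature of the Springer resolution and is precisely what is used in \cite{ginzburgkumar93}, I would cite it rather than reprove it, so the corollary falls out as an immediate consequence of Theorem~\ref{thm:enh_GK} combined with Corollary~\ref{cor:372} and Theorem~\ref{thm:kempf_ext}.
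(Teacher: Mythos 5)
Your proposal is correct and follows essentially the same route as the paper: the text deduces the corollary from Theorem \ref{thm:enh_GK} together with the vanishing $H^{>0}(\dG/\dB,p_\ast\O_{\tN})=0$ (cited from \cite{ginzburgkumar93}), which collapses the local-to-global spectral sequence of Corollary \ref{cor:372} onto global sections, exactly as you argue. The only cosmetic difference is that the paper works directly with $\sRHom_{\FK{G}_q}$ (defined by restriction along the Kempf embedding) rather than passing explicitly through $\Ext_{\dG/B_q}(\1,\1)$, which is the same content.
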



\section{$\tN$-support, and an approximate derived category}
\label{sect:AH!}

In this section we define $\tN$-support for the category $D_{\coh}(\FK{G}_q)$ of coherent dg sheaves for the quantum Frobenius kernel.
\par

If we ignore some subtle points, one can think of the situation as follows:  For the category $D^{\Enh}(\FK{G}_q)$, we have the algebra of extensions $\msc{R}(G_q)$ which acts centrally on all objects in $D^{\Enh}(\FK{G}_q)$ via the tensor structure.  Following a standard philosophy of support theory, articulated in \cite{bensoniyengarkrause08} for example, we understand that this central action of $\msc{R}(G_q)$ then defines a support theory for objects in $D^{\Enh}(\FK{G}_q)=\operatorname{obj}D(\FK{G}_q)$ which takes values in a spectrum $\Spec_{\dG/\dB}(\msc{R}(G_q))$.  This spectrum is approximately the Springer resolution, by Theorem \ref{thm:enh_GK}.
\par

Below, we restrict ourselves to coherent dg sheaves and work with the cohomology of $D^{\Enh}(\FK{G}_q)$, rather than $D^{\Enh}(\FK{G}_q)$ itself.  At the level of cohomology, we have precisely $\Spec_{\dG/\dB}(H^\ast(\msc{R}(G_q)))=\tN$, and thus gain immediate access to the Springer resolution.

\subsection{Preliminary remarks on support notation}

For a Noetherian scheme $Y$, and $\msc{F}$ in $\QCoh(Y)$, we denote the standard sheaf-theoretic support of $\msc{F}$ as
\[
\Supp_Y(\msc{F})=\{y\in Y:\msc{F}_y\neq 0\}.
\]
When $\msc{F}$ is coherent we usually employ the (equivalent) definition via vanishing of fibers.  More generally, if $p:X\to Y$ is an affine map, and $\msc{G}$ is a sheaf of $p_\ast\O_{X}$-modules in $\QCoh(Y)$, then we let $\Supp_{X}(\msc{G})$ denote the support of the unique-up-to-isomorphism quasi-coherent sheaf on $X$ which pushes forward to $\msc{G}$.  In the coherent setting this support can again be calculated via the vanishing of fibers $K(x)\ot_{p_\ast\O_{X}}\msc{G}$ along algebra maps $x: p_\ast\O_{X}\to K(x)$.
\par

In addition to sheaf-theoretic supports we are also interested in support \emph{theories} for various triangulated categories.  (One can see Section \ref{sect:supp_nons} for a thorough discussion of support theories in the abstract.)  Such support theories take values in a scheme $Y$, and will be written in a lower case
\[
\supp_Y\ \ \text{or}\ \ \supp^{\operatorname{flav}}_Y.
\]
The optional label ``$\operatorname{flav}$" indicates the particular type of support theory under consideration (cohomological, universal, hypersurface, etc.).  This upper case/lower case distinction for support is employed throughout the text, without exception.

\subsection{Defining $\tN$-support}
\label{sect:supp_tN}

Let $M$ be an object in $D(\FK{G}_q)$ and consider the sheafy derived endomorphisms $\sRHom_{\FK{G}_q}(M,M)$.  Via the monoidal structure on $D^{\Enh}(\FK{G}_q)$ these endomorphisms form a sheaf of modules over $\msc{R}(G_q)=\sRHom_{\FK{G}_q}(\1,\1)$ in the derived category of sheaves over $\dG/\dB$.  Hence we take cohomology to obtain a sheaf
\[
\sExt_{\FK{G}_q}(M,M):=H^\ast(\sRHom_{\FK{G}_q}(M,M))
\]
of $H^\ast(\msc{R}(G_q))=p_\ast\O_{\tN}$-modules (see Theorem \ref{thm:enh_GK}).  We can then consider the support
\begin{equation}\label{eq:2061}
\Supp_{\tN}\sExt_{\FK{G}_q}(M,M)\subset \Spec_{\dG/\dB}(p_\ast\O_{\tN})=\tN.
\end{equation}
This support is a conical subspace in $\tN$.  The following lemma is deduced as an immediate consequence of Proposition \ref{prop:coherent} below.

\begin{lemma}\label{lem:2066}
For $M$ in $D_{\coh}(\FK{G}_q)$, the support \eqref{eq:2061} is closed in $\tN$.
\end{lemma}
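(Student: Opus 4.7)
The plan is to prove something strictly stronger than Lemma \ref{lem:2066}: namely, that $\sExt_{\FK{G}_q}(M,M)$ is a \emph{coherent} sheaf on $\tN$ (i.e. when viewed as a sheaf of $p_\ast\O_{\tN}$-modules on $\dG/\dB$, it corresponds under $p_\ast$ to a coherent $\O_{\tN}$-module). Granting coherence, closedness of the support is automatic since $\tN$ is Noetherian.

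First I would reduce the question to an affine, equivariant statement over $\dG$ by pulling back along the faithfully flat $\dB$-torsor $\pi:\dG\to \dG/\dB$. By Proposition \ref{prop:shom_exp} and the formulas of Section \ref{sect:comp_ot_exp}, the pullback is
\[
\pi^\ast\sExt_{\FK{G}_q}(M,M)\;=\;H^\ast\bigl(\sHom_{\dG/\su}(P_M, M)\bigr)\;\cong\;\O_{\dG}\otimes_k \Ext^\ast_{\su}(M,M),
\]
as a $\dB$-equivariant sheaf on $\dG$, where we have used that the $u$-action on $M$ extends freely over $\O_{\dG}$ for any coherent relative-projective resolution $P_M\to M$. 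Matching actions, the module structure of the left-hand side over $\pi^\ast p_\ast\O_{\tN}=\O_{\dG}\otimes_k \Sym(\mfk{n}^\ast)$ (see Theorem \ref{thm:enh_GK}) is identified with the tensor product of the tautological $\O_{\dG}$-module structure and the standard action of $\Ext^\ast_{\su}(k,k)=\Sym(\mfk{n}^\ast)$ on $\Ext^\ast_{\su}(M,M)$. Faithfully flat descent along $\pi$ (cf.\ Section \ref{sect:descent}) then reduces coherence of $\sExt_{\FK{G}_q}(M,M)$ over $p_\ast\O_{\tN}$ to coherence of $\Ext^\ast_{\su}(M,M)$ over $\Sym(\mfk{n}^\ast)$ (equivariantly, which automatic once we have the underlying finite generation).

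Next, I would establish finite generation of $\Ext^\ast_{\su}(M,M)$ over $\Ext^\ast_{\su}(k,k)=\Sym(\mfk{n}^\ast)$ by devissage. By Section \ref{sect:coherent_sheaves}, $D_{\coh}(\FK{G}_q)$ equals the thick subcategory $\langle S\rangle$ generated by the sum $S$ of the simples, and under $1^\ast$ corresponds to bounded complexes of finite-dimensional $\uqG$-representations. The class of pairs $(M,N)$ of objects in $D_{\coh}(\FK{G}_q)$ for which $\Ext^\ast_{\su}(M,N)$ is a finitely generated $\Sym(\mfk{n}^\ast)$-module is closed under shifts, triangles, and direct summands in each variable, by the long exact sequence of $\Ext_{\su}$ and the fact that $\Sym(\mfk{n}^\ast)$ is Noetherian. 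So it suffices to treat the case where both $M$ and $N$ are simples in $\FK{G}_q$, hence finite-dimensional $\uqB$-representations via restriction. This case is precisely the Ginzburg--Kumar finite generation result (the same input that yields Corollary \ref{cor:GK}, applied with nontrivial coefficients). By the same devissage, finite generation in both variables reduces to the diagonal case $M=N$.

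The main obstacle is precisely this devissage step, since the ``diagonal'' finiteness statement is not formally implied by finite generation of $\Ext^\ast_{\su}(k,k)$; one must either invoke a bimodule version of Ginzburg--Kumar (for pairs of simples) or else run a two-variable induction along cones and retracts using the fact that finite generation is two-out-of-three in exact triangles. Once this is secured, the earlier steps are purely formal: coherence of $\sExt_{\FK{G}_q}(M,M)$ as an $\O_{\tN}$-module follows by descent, and then $\Supp_{\tN}\sExt_{\FK{G}_q}(M,M)$ is closed as the support of a coherent sheaf on the Noetherian scheme $\tN$. The conical (i.e.\ $\mbb{G}_m$-stable) structure is immediate from the grading.
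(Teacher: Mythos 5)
Your overall strategy---prove that $\sExt_{\FK{G}_q}(M,M)$ is coherent over $\tN$ and deduce closedness from Noetherianity---is the same one the paper uses (Lemma \ref{lem:2066} is deduced from Proposition \ref{prop:coherent}), but your first reduction contains a genuine gap. The identification $\pi^\ast\sExt_{\FK{G}_q}(M,M)\cong\O_{\dG}\otimes_k\Ext^\ast_{\su}(M,M)$ is only valid when $M$ is an equivariant vector bundle $E_V$ attached to a $B_q$- (or $G_q$-) representation $V$; that is precisely the content of Proposition \ref{prop:shom_exp} and Section \ref{sect:relative_p}. A general coherent object of $\FK{G}_q$ corresponds under $1^\ast$ to a finite-dimensional $\uqG$-module which need not lift to $G_q$ or $B_q$, so it is not of the form $\O_{\dG}\otimes_k V$ as an equivariant sheaf, and the phrase ``the $u$-action on $M$ extends freely over $\O_{\dG}$'' does not produce such a trivialization. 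Worse, the descent datum you would need is a $\dB$-equivariant structure on $\O_{\dG}\otimes_k\Ext_{\su}(M,M)$, and for a bare $\uqG$-module $1^\ast M$ there is no natural $\dB$-action on $\Ext_{\su}(1^\ast M,1^\ast M)$ at all (the $\dB$-action on $\Ext_{\su}(V,W)$ comes from inner Homs for the $\Rep\dB$-action on $\Rep B_q$, so it requires $V,W$ to be $B_q$-modules). Hence the step ``reduce coherence over $\tN$ to finite generation of $\Ext_{\su}(M,M)$ over $\Sym(\mfk{n}^\ast)$'' is not available at this level of generality.

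The fix is to reorder your two steps, which is what the paper's proof of Proposition \ref{prop:coherent} does: run the devissage directly on the sheaves $\sExt_{\FK{G}_q}(-,-)$, using that $\sRHom_{\FK{G}_q}$ is exact in each variable so that exact triangles give long exact sequences of $p_\ast\O_{\tN}$-modules, where coherence satisfies two-out-of-three just as in your argument for $\Ext_{\su}$. This reduces to simple $M$ and $N$, and Proposition \ref{prop:andersen} reduces further to summands of bundles $E_V$, $E_W$ with $V,W$ finite-dimensional $G_q$-representations; only at that point does one invoke the trivialized description $\sExt_{\FK{G}_q}(E_V,E_W)=$ descent of $\O_{\dG}\otimes_k\Ext_{\uqB}(V,W)$ together with finite generation of $\Ext_{\uqB}(V,W)$ over $\Sym(\mfk{n}^\ast)$. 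Incidentally, the point you flag as the ``main obstacle'' is not one: the two-variable finite generation statement for finite-dimensional modules is exactly \cite[Theorem 2.5]{ginzburgkumar93}, as cited in the paper. The real gap is the unjustified global trivialization, i.e.\ the order in which you perform the devissage.
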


If we take this point for granted for the moment, we obtain a reasonable theory of support for coherent dg sheaves.

\begin{definition}
For any $M$ in $D_{\coh}(\FK{G}_q)$ we define the $\tN$-support $\supp_{\tN}(M)$ as
\[
\supp_{\tN}(M):=\mbb{P}\left(\Supp_{\tN}\sExt_{\FK{G}_q}(M,M)\right)\ \subset\ \mbb{P}(\tN).
\]
\end{definition}

By Lemma \ref{lem:2066}, this support provides a map
\[
\supp_{\tN}:\text{obj}D_{\coh}(\FK{G}_q)\to \{\text{closed subsets in }\mbb{P}(\tN)\}
\]
We discuss the basic properties of $\tN$-support below, after establishing the appropriate global framework for our discussions.

\subsection{The $\QCoh(\tN)^{\mbb{G}_m}$-enriched category $\msc{D}(\FK{G}_q)$}
\label{sect:RGq_act}

The cohomology sheaves $\sExt_{\FK{G}_q}(M,N)$, introduced above, exist within an enriched monoidal category obtained as the cohomology the category $D^{\Enh}(\FK{G}_q)$.  We take a moment to describe this enriched category.
\par

Recall our algebra $\msc{R}(G_q)=\sRHom_{\FK{G}_q}(\1,\1)$ from Section \ref{sect:RGq}, and take $\msc{R}_{MN}=\sRHom_{\FK{G}_q}(M,N)$.  The associativity of the monoidal structure on $D^{\Enh}(\FK{G}_q)$ ensures that the product maps
\[
\msc{R}_{MN}\ot\msc{R}_{M'N'}\to \msc{R}_{M\ot M',N\ot N'}
\]
are $\msc{R}(G_q)$-bilinear, in the sense that we have a diagram
\[
\msc{R}_{MN}\ot\msc{R}(G_q)\ot\msc{R}_{M'N'}\rightrightarrows\msc{R}_{MN}\ot\msc{R}_{M'N'}\overset{tens}\to \msc{R}_{M\ot M',N\ot N'}.
\]
By a similar reasoning, composition is $\msc{R}(G_q)$-bilinear in the sense that we have a diagram
\[
\msc{R}_{MN}\ot\msc{R}(G_q)\ot\msc{R}_{LM}\rightrightarrows\msc{R}_{MN}\ot\msc{R}_{LM}\overset{\circ}\to \msc{R}_{LN}.
\]
Hence we apply the lax monoidal functor $H^\ast:D(\dG/\dB)\to \QCoh(\dG/\dB)^{\mbb{G}_m}$ and observe the identification $\tN=\Spec_{\dG/\dB}(H^\ast(\msc{R}(G_q)))$ of Theorem \ref{thm:enh_GK} above to find the following.

\begin{proposition}\label{prop:1749}
The cohomology $H^\ast(D^{\Enh}(\FK{G}_q))$ of the $D(\dG/\dB)$-enriched category $D^{\Enh}(\FK{G}_q)$ is naturally enriched in the symmetric monoidal category of $\mbb{G}_m$-equivariant sheaves on the Springer resolution $\tN$.
\end{proposition}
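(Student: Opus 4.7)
The plan is to produce the enrichment in two stages: first push forward the $D(\dG/\dB)$-enrichment on $D^{\Enh}(\FK{G}_q)$ along a lax monoidal functor into $\QCoh(\dG/\dB)^{\mbb{G}_m}$, and then upgrade this along the affine map $p\colon \tN\to \dG/\dB$ to an enrichment over $\mbb{G}_m$-equivariant sheaves on the Springer resolution.

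For the first stage, note that cohomology $H^\ast\colon D(\dG/\dB)\to \QCoh(\dG/\dB)^{\mbb{G}_m}$ is a lax symmetric monoidal functor, via the K\"unneth-style maps $H^\ast(\msc{F})\otimes H^\ast(\msc{F}')\to H^\ast(\msc{F}\otimes^{\operatorname{L}} \msc{F}')$. By the general push-forward principle for enriched categories recalled in Section \ref{sect:enrich}, applying $H^\ast$ to each morphism object $\sRHom_{\FK{G}_q}(M,N)$ and to the composition and tensor structure maps produces a category enriched in $\QCoh(\dG/\dB)^{\mbb{G}_m}$ whose morphism objects are $\sExt_{\FK{G}_q}(M,N) = H^\ast(\sRHom_{\FK{G}_q}(M,N))$.

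For the second stage, the objects $\sExt_{\FK{G}_q}(M,N)$ carry natural commuting left and right actions of $H^\ast(\msc{R}(G_q))$, obtained by applying $H^\ast$ to the $\msc{R}(G_q)$-bimodule structure maps described just above the statement. By Proposition \ref{prop:1660}, these two actions coincide, so each $\sExt_{\FK{G}_q}(M,N)$ is a module over the commutative algebra $H^\ast(\msc{R}(G_q)) = p_\ast\O_{\tN}$ (Theorem \ref{thm:enh_GK}). Moreover the $\msc{R}(G_q)$-bilinearity of composition and of the tensor product maps at the chain level descends under $H^\ast$ to genuine $p_\ast\O_{\tN}$-bilinearity of composition and tensor on cohomology. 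Hence the structure maps factor through the relative tensor product over $p_\ast\O_{\tN}$. Applying the monoidal equivalence $p_\ast\colon \QCoh(\tN)\overset{\sim}\to \QCoh(p_\ast\O_{\tN})$ from Section \ref{sect:RGq}, together with its $\mbb{G}_m$-equivariant refinement, yields an enrichment in $\QCoh(\tN)^{\mbb{G}_m}$ with the required composition and tensor morphisms.

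The only real subtlety, and the one deserving attention, is verifying that the $\msc{R}(G_q)$-bilinearity of composition and tensor at the level of $D(\dG/\dB)$ is a strong enough statement to pass through $H^\ast$ and produce a well-defined $p_\ast\O_{\tN}$-bilinear composition law; this in turn depends on Proposition \ref{prop:1660}, which guarantees that the left and right module structures on each $\sExt_{\FK{G}_q}(M,N)$ agree. Granted this, the associativity and tensor compatibility axioms required for an enriched monoidal category (Section \ref{sect:enrich}) follow by applying $H^\ast$ to the corresponding diagrams in $D^{\Enh}(\FK{G}_q)$, since $H^\ast$ is a functor and its lax monoidal structure is compatible with the associators and symmetries.
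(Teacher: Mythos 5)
Your argument is correct and follows essentially the same route as the paper: push the enrichment forward along the lax monoidal functor $H^\ast:D(\dG/\dB)\to \QCoh(\dG/\dB)^{\mbb{G}_m}$, use the $\msc{R}(G_q)$-bilinearity of composition and tensor (with Proposition \ref{prop:1660} ensuring the left and right actions agree) together with the identification $H^\ast(\msc{R}(G_q))=p_\ast\O_{\tN}$ of Theorem \ref{thm:enh_GK}, and then pass through the equivalence $p_\ast:\QCoh(\tN)^{\mbb{G}_m}\overset{\sim}\to\QCoh(p_\ast\O_{\tN})^{\operatorname{graded}}$. Your explicit remark that bilinearity forces the structure maps to factor through the relative tensor product over $p_\ast\O_{\tN}$ is exactly the point the paper leaves implicit, so there is nothing to correct.
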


To be more precise, Theorem \ref{thm:enh_GK} tells us that the cohomology $H^\ast(D^{\Enh}(\FK{G}_q))$ is naturally enriched in the category of graded, quasi-coherent $p_\ast\O_{\tN}$-modules over $\dG/\dB$.  Via the equivalence $p_\ast:\QCoh(\tN)^{\mbb{G}_m}\to \QCoh(p_\ast\O_{\tN})^{\operatorname{graded}}$ we then view $H^\ast(D^{\Enh}(\FK{G}_q))$ as enriched in the category of $\mbb{G}_m$-equivariant sheaves over the Springer resolution, by an abuse of notation.

\begin{definition}
We let $\msc{D}(\FK{G}_q)$ denote the $\QCoh(\tN)^{\mbb{G}_m}$-enriched monoidal category
\[
\msc{D}(\FK{G}_q):=H^\ast(D^{\Enh}(\FK{G}_q)).
\]
We take
\[
\sExt_{\FK{G}_q}(M,N)^{\Sp}=\left\{\begin{array}{c}
\text{the quasi-coherent $\mbb{G}_m$-equivariant sheaf}\\
\text{on $\tN$ associated to the graded $p_\ast\O_{\tN}$-module}\\
\sExt_{\FK{G}_q}(M,N)
\end{array}
\right\}.
\]
So the morphisms in $\msc{D}(\FK{G}_q)$ are the sheaves $\sExt_{\FK{G}_q}(M,N)^{\Sp}$.
\end{definition}

As we have just explained, we have an identification of $p_\ast\O_{\tN}$-modules
\[
p_\ast\sExt_{\FK{G}_q}(M,N)^{\Sp}=\sExt_{\FK{G}_q}(M,N)
\]
at arbitrary $M$ and $N$ in $D(\FK{G}_q)$.  We note that the category $\msc{D}(\FK{G}_q)$ is not an enhancement of the derived category $D(\FK{G}_q)$, but is some approximation of $D(\FK{G}_q)$.  Indeed, we have the local-to-global spectral sequence (Corollary \ref{cor:372})
\[
\begin{array}{l}
E_2^{i,j}=\\
\operatorname{R}\Gamma(\tN,\sExt_{\FK{G}_q}(M,N)^{\Sp})^{i,j}=\operatorname{R}\Gamma^i(\dG/\dB,\sExt^j_{\FK{G}_q}(M,N))\ \Rightarrow\ \Ext^{i+j}_{\FK{G}_q}(M,N)
\end{array}
\]
which calculates the extensions $\Ext_{\FK{G}_q}(M,N)$ as an $\O(\mcl{N})$-module.

\begin{remark}
To say that the cohomology $\msc{D}(\FK{G}_q)$ of the category $D^{\Enh}(\FK{G}_q)$ is monoidally enriched in $\QCoh(\tN)^{\mbb{G}_m}$ simply collects a number of structural results about the cohomology of the derived sheaf-morphisms $\sRHom_{\FK{G}_q}$ into a single statement.  Having accepted these structural results, one can easily avoid direct references to the ambient category $\msc{D}(\FK{G}_q)$.  For this reason, $\msc{D}(\FK{G}_q)$ makes very few appearances in later sections of the text.
\end{remark}

\subsection{A coherence result}

Before stating our coherence result we note that the functors
\[
\begin{array}{c}
\sRHom_{\FK{G}_q}(M,-):D(\FK{G}_q)\to D(\dG/\dB)\vspace{2mm}\\
\text{and}\ \ \sRHom_{\FK{G}_q}(-,M):D(\FK{G}_q)^{op}\to D(\dG/\dB)
\end{array}
\]
are exact.  One can see this directly, or by referring to the abstract result \cite[Lemma 5.3.6]{neeman01}.  We recall also, from Section \ref{sect:coherent_sheaves}, that the category $D_{\coh}(\FK{G}_q)$ of coherent dg sheaves for the quantum Frobenius kernel is equal to the thick subcategory in $D(\FK{G}_q)$ generated by the simple (coherent) objects in $\FK{G}_q$.  The following result is proved by a standard reduction to the simples.

\begin{proposition}\label{prop:coherent}
For $M$ and $N$ in $D_{\coh}(\FK{G}_q)$ the sheaf $\sExt_{\FK{G}_q}(M,N)^{\Sp}$ is coherent over $\tN$.
\end{proposition}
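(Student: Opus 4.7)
The plan is to reduce via thickness to the case of equivariant vector bundles and then invoke the Ginzburg-Kumar finite generation theorem. For fixed $M \in D_{\coh}(\FK{G}_q)$, the full subcategory of $N \in D_{\coh}(\FK{G}_q)$ with $\sExt^{\Sp}_{\FK{G}_q}(M,N)$ coherent on $\tN$ is thick: exactness of $\sRHom_{\FK{G}_q}(M,-)$ (noted before the proposition), combined with the fact that coherent $\mbb{G}_m$-equivariant sheaves form a thick subcategory of $\QCoh(\tN)^{\mbb{G}_m}$, gives closure under cones, shifts, and summands. A symmetric argument in $M$, together with $D_{\coh}(\FK{G}_q) = \langle S \rangle$, reduces the claim to the case when $M$ and $N$ are simple. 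By Proposition \ref{prop:andersen} every simple in $\FK{G}_q$ is a summand of an equivariant vector bundle $E_V = \O_{\dG} \otimes_k V$ for some $V \in \rep G_q$, and coherence passes to summands, so I may assume $M = E_V$ and $N = E_W$ with $V, W$ finite-dimensional.

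From Section \ref{sect:relative_p}, $\sHom_{\dG/B_q}(E_V,-)$ is the descent of $\Hom_{\su}(V,-)$. Using self-injectivity of $\su$, tensoring any $\su$-injective resolution $W \to I^{\bullet}$ with $\O_{\dG}$ produces a resolution $\O_{\dG} \otimes_k W \to \O_{\dG} \otimes_k I^{\bullet}$ by $\su$-injectives: cofree modules $V' \otimes_k \su^{\ast}$ are preserved under $\O_{\dG} \otimes_k -$, and every injective is a summand of a cofree module. Since $V$ is finite-dimensional, $\Hom_{\su}(V,-)$ commutes with $\O_{\dG} \otimes_k -$, and on cohomology I obtain
$$\pi^{\ast}\sExt^{\ast}_{\FK{G}_q}(E_V, E_W) \;\cong\; \O_{\dG} \otimes_k \Ext^{\ast}_{\su}(V,W)$$
as graded $\dB$-equivariant sheaves on $\dG$. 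Descending along $\pi$ identifies $\sExt^{\ast}_{\FK{G}_q}(E_V, E_W)$ with the vector bundle on $\dG/\dB$ associated to the graded $\dB$-representation $\Ext^{\ast}_{\su}(V, W)$, together with its natural $\Sym(\mfk{n}^{\ast}) = \Ext^{\ast}_{\su}(k,k)$-module structure.

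The Ginzburg-Kumar theorem \cite{ginzburgkumar93} asserts that $\Ext^{\ast}_{\su}(V,W)$ is finitely generated as a $\Sym(\mfk{n}^{\ast})$-module whenever $V, W$ are finite-dimensional, and this finite generation is $\dB$-equivariant. The associated vector bundle on $\dG/\dB$ is therefore finitely generated over $p_{\ast}\O_{\tN}$. Since $p: \tN \to \dG/\dB$ is affine, local finite generation of the pushforward of a $\mbb{G}_m$-equivariant sheaf on $\tN$ is equivalent to coherence of the sheaf itself, which yields the result.

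The most delicate step is verifying that the $\Sym(\mfk{n}^{\ast})$-action underlying Ginzburg-Kumar finite generation coincides with the $p_{\ast}\O_{\tN}$-action on $\sExt^{\ast}_{\FK{G}_q}(E_V, E_W)$ coming from the tensor structure on $\msc{D}(\FK{G}_q)$ of Section \ref{sect:RGq_act}. Both actions arise from Yoneda multiplication, so compatibility should reduce to tracing the canonical identifications of Theorem \ref{thm:enh_GK} through the explicit computation above.
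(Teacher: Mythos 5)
Your proposal is correct and follows essentially the same route as the paper's proof: reduce via exactness of $\sRHom_{\FK{G}_q}$ in each argument and Proposition \ref{prop:andersen} to the equivariant vector bundles $E_V$, $E_W$, identify $\sExt_{\FK{G}_q}(E_V,E_W)$ with the descent of $\O_{\dG}\ot_k\Ext_{\uqB}(V,W)$, and conclude by Ginzburg--Kumar finite generation over $\Sym(\mfk{n}^\ast)$. The action-compatibility you flag in your last paragraph is exactly what Section \ref{sect:comp_ot_exp} supplies (the composition and tensor structure maps for $\sHom_{\dG/B_q}$ pull back to the standard ones on $\Coh(\dG)^{\uqB}$ under the identification of Proposition \ref{prop:shom_exp}), so it is not a genuine gap.
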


\begin{proof}
For any $L$ in $D(\FK{G}_q)$ the functors $\sRHom_{\FK{G}_q}(-,L)$ and $\sRHom_{\FK{G}_q}(L,-)$ are exact, and hence provide long exact sequences on cohomology when applied to exact triangles.  For a triangle $M\to N\to M'$ in $D(\FK{G}_q)$ the corresponding long exact sequence on cohomology provides exact sequences
\[
\sExt(M',L)\to \sExt(N,L)\to \sExt(M,L)\ \ \text{and}\ \ \sExt(L,M)\to \sExt(L,N)\to \sExt(L,M')
\]
of $p_\ast\O_{\tN}$-modules.  Via these exact sequences we see that it suffices to prove coherence of the sheaf $\sExt_{\FK{G}_q}(M,N)^{\Sp}$ for simple $M$ and $N$ in $\FK{G}_q$.  By Proposition \ref{prop:andersen} it then suffices to establish coherence of $\sExt_{\FK{G}_q}(E_V,E_W)^{\Sp}$ for equivariant vector bundles associated to finite-dimensional $G_q$-representations $V$ and $W$.  In this case we have
\[
\sExt_{\FK{G}_q}(E_V,E_W)=\text{descent of }\O_{\dG}\ot_k\Ext_{\uqB}(V,W)
\]
as a $p_\ast\O_{\tN}$-module, by Proposition \ref{prop:shom_exp}.  Since $\Ext_{\uqB}(V,W)$ is finitely-generated over $\Ext_{\uqB}(k,k)=\Sym(\mfk{n}^\ast)$ \cite[Theorem 2.5]{ginzburgkumar93}, it follows that the descended sheaf $\sExt_{\FK{G}_q}(E_V,E_W)$ is generated by a coherent subsheaf $\msc{F}\subset \sExt_{\FK{G}_q}(E_V,E_W)$ over $\dG/\dB$, as a $p_\ast\O_{\tN}$-module, and hence the associated sheaf $\sExt_{\FK{G}_q}(E_V,E_W)^{\Sp}$ over $\tN$ is coherent.
\end{proof}

\subsection{Properties of $\tN$-support}
\label{sect:RGq_tN}

As was mentioned above, the coherence result of Proposition \ref{prop:coherent} implies that the support
\[
\Supp_{\tN}\sExt_{\FK{G}_q}(M,M)\ \subset\ \tN
\]
of a coherent dg sheaf $M$ is a closed conical subvariety in the Springer resolution.  Subsequently, the $\tN$-supports $\supp_{\tN}(M)$ are closed subspaces in $\mbb{P}(\tN)$.  This point was already recorded at Lemma \ref{lem:2066} above.
\par

The following standard properties of $\tN$-support are proved as in \cite[\S 5.7]{benson91}, and simply follow from naturality of the $\msc{R}(G_q)$-action on the sheaves $\sRHom_{\FK{G}_q}(M,N)$ and the fact that $\sRHom_{\FK{G}_q}$ is exact in each argument.  We let $\Sigma$ denotes the shift operation on the derived category $D(\FK{G}_q)$.

\begin{lemma}\label{lem:triangle}
The assignment $M\mapsto \supp_{\tN}(M)$ is a triangulated support theory for $D_{\coh}(\FK{G}_q)$.  That is to say, $\tN$-support has the following properties:\vspace{1mm}
\begin{itemize}
\item $\supp_{\tN}(\1)=\mbb{P}(\tN)$.
\item $\supp_{\tN}(M)=\supp_{\tN}(\Sigma M)$ for all $M$ in $D_{\coh}(\FK{G}_q)$, and $\supp_{\tN}(0)=\emptyset$.\vspace{2mm}
\item $\supp_{\tN}(M\oplus N)=\supp_{\tN}(M)\cup\supp_{\tN}(N)$.\vspace{2mm}
\item For any triangle $M\to N\to M'$ in $D_{\coh}(\FK{G}_q)$ we have
\[
\supp_{\tN}(N)\subset(\supp_{\tN}(M)\cup\supp_{\tN}(M')).
\]
\item For the sum of the simples $S$ in $\FK{G}_q$ one has
\[
\supp_{\tN}(M)=\mbb{P}\left(\Supp_{\tN}\sExt_{\FK{G}_q}(S,M)\right).
\]
\end{itemize}
\end{lemma}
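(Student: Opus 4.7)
The plan rests on two structural facts established earlier: the identification $H^\ast(\msc{R}(G_q)) = p_\ast\O_{\tN}$ of Theorem \ref{thm:enh_GK}, which provides a natural action of $p_\ast\O_{\tN}$ on every $\sExt_{\FK{G}_q}(M,N)$ through the unit via left- and right-tensoring and composition; and the exactness of the bifunctor $\sRHom_{\FK{G}_q}(-,-)$ in each variable, which converts distinguished triangles in $D_{\coh}(\FK{G}_q)$ into long exact sequences of coherent $p_\ast\O_{\tN}$-modules on $\dG/\dB$. Properties (1) and (2) of the lemma follow directly: $\sExt(\1,\1)^{\Sp} = \O_{\tN}$ has support all of $\tN$, the shift induces an isomorphism of $p_\ast\O_{\tN}$-modules $\sExt(\Sigma M,\Sigma M) \cong \sExt(M,M)$ up to an internal regrading that does not affect support, and $\sExt(0,0) = 0$.

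The key step is property (5), which I would establish by a two-sided inclusion of supports on $\tN$. For $\Supp_{\tN}\sExt(S,M) \subseteq \Supp_{\tN}\sExt(M,M)$, the monoidal structure gives $\sExt(S,M)$ the structure of a left $\sExt(M,M)$-module under composition, with $1_M$ acting as the identity. At a point $x \in \tN$ where $\sExt(M,M) \otimes_{p_\ast\O_{\tN}} K(x) = 0$, the element $1_M$ becomes zero in the fiber, and compatibility of composition with the $p_\ast\O_{\tN}$-action (Proposition \ref{prop:1660}) forces $\sExt(S,M) \otimes K(x) = 0$. For the reverse inclusion, I would observe that the full subcategory of $N \in D_{\coh}(\FK{G}_q)$ with $\sExt(N,M) \otimes K(x) = 0$ is thick by the exactness of $\sRHom_{\FK{G}_q}(-,M)$ and additivity; when $x \notin \Supp_{\tN}\sExt(S,M)$ this subcategory contains $S$, so the identification $D_{\coh}(\FK{G}_q) = \langle S\rangle$ recorded in Section \ref{sect:coherent_sheaves} forces it to contain $M$, giving $\sExt(M,M) \otimes K(x) = 0$.

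Granted (5), properties (3) and (4) follow by transferring the question to $\sExt(S,-)$. Additivity of $\sRHom_{\FK{G}_q}(S,-)$ yields $\sExt(S, M \oplus N) = \sExt(S,M) \oplus \sExt(S,N)$, whose sheaf-support on $\tN$ is the union of the two constituent supports; applying (5) to each side gives (3). For (4), a triangle $M \to N \to M'$ produces a long exact sequence
\[
\cdots \to \sExt(S,M) \to \sExt(S,N) \to \sExt(S,M') \to \cdots
\]
of $p_\ast\O_{\tN}$-modules, and at any $x \notin \Supp_{\tN}\sExt(S,M) \cup \Supp_{\tN}\sExt(S,M')$ the outer terms vanish fiberwise, hence so does the middle, and (5) closes the argument.

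The main subtlety is not any individual deduction but the bookkeeping needed to verify compatibility among the several $p_\ast\O_{\tN}$-module structures on each $\sExt$-sheaf, arising from unit tensoring on either side and from composition with $\sExt(M,M)$ or $\sExt(S,S)$. That compatibility is precisely what Proposition \ref{prop:1660} and the enriched-category setup of Section \ref{sect:RGq_act} were designed to supply. With those inputs in place, the proof follows the standard pattern of \cite[\S 5.7]{benson91}.
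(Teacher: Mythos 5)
Your proof is correct and follows essentially the same route as the paper, which simply defers to the standard argument of \cite[\S 5.7]{benson91} using exactly the ingredients you invoke: exactness of $\sRHom_{\FK{G}_q}$ in each variable, the $\msc{R}(G_q)$-bilinearity of composition and tensoring (Proposition \ref{prop:1660} and Section \ref{sect:RGq_act}), and the generation $D_{\coh}(\FK{G}_q)=\langle S\rangle$. One cosmetic point: in the long-exact-sequence steps you should argue with localizations (which are exact) rather than fibers (only right exact); this is harmless since the sheaves involved are coherent over $\tN$ by Proposition \ref{prop:coherent}, so fiber-vanishing and stalk-vanishing agree by Nakayama.
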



\section{Sheaf-extensions and projectivity via the Borels}
\label{sect:perf_shf}

In this section we analyze the behaviors of sheaf-Ext for coherent dg sheaves in $D(\FK{G}_q)$.  We prove that $\tN$-support for the coherent derived category vanishes precisely on the subcategory of bounded complexes of projectives in $D_{\coh}(\FK{G}_q)$.  See Proposition \ref{prop:projectives} below.
\par

In the quasi-coherent setting, we use sheaf-extensions directly to provide a strong projectivity test for objects in $D(\FK{G}_q)$ via the quantum Borels.  Our primary result for the section is the following.

\begin{theorem}[Projectivity test]\label{thm:projectivity}
For a bounded complex $M$ of quasi-coherent sheaves in $D(\FK{G}_q)$ the following are equivalent:
\begin{itemize}
\item[(a)] $M$ is isomorphic to a bounded complex of projectives in $D(\FK{G}_q)$.
\item[(b)] At each geometric point $\lambda:\Spec(K)\to \dG/\dB$, the restriction $\res_\lambda M$ is isomorphic to a bounded complex of projectives in $D(\msc{B}_\lambda)$.
\end{itemize}
\end{theorem}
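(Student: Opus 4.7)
The plan for (a)$\Rightarrow$(b) is to show that $\res_\lambda:\FK{G}_q\to \msc{B}_\lambda$ is exact and preserves projectives, so that it sends bounded complexes of projectives to bounded complexes of projectives.  Exactness holds because $\res_\lambda$ factors as base change to $K$ composed with pullback along the closed immersion $\iota_{K,\lambda}:\dB_\lambda\to \dG_K$.  By Proposition~\ref{prop:andersen}, projectives in $\FK{G}_q$ are summands of equivariant vector bundles $E_V$ with $V$ projective over $G_q$, hence projective over $\uqB$ by Proposition~\ref{prop:andersen}\,(ii); applying $\res_\lambda$ yields the equivariant bundle over $\dB_\lambda$ associated to $V_K|_{(B_K)_q}$, which is projective in $\msc{B}_\lambda$ by the Borel analog of Proposition~\ref{prop:andersen} combined with Corollary~\ref{cor:loc_proj}.

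For (b)$\Rightarrow$(a), the strategy is to detect projectivity sheafwise on the flag variety using the fiber formula of Theorem~\ref{thm:fiber_l}.  First I would reduce to the case $M\in D^b_{\coh}(\FK{G}_q)$.  The point is that $M$ is isomorphic to a bounded complex of projectives iff each cohomology sheaf $H^j(M)$, viewed in $\Rep\uqG$, is a projective $\uqG$-module, and projectivity of a (possibly infinite-dimensional) $\uqG$-module is detected against finite-dimensional test modules because $\uqG$ is finite-dimensional Frobenius; the hypothesis descends to coherent subobjects because $\res_\lambda$ is exact and compatible with filtered colimits.  With $M$ coherent, Corollary~\ref{cor:372} together with the finite cohomological dimension of $\dG/\dB$ reduces the problem to producing a uniform integer $n$ with $\sExt^i_{\FK{G}_q}(M,S)=0$ for all $i\geq n$, where $S$ is the sum of the simples in $\FK{G}_q$.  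By Proposition~\ref{prop:coherent} the totality $\bigoplus_i\sExt^i_{\FK{G}_q}(M,S)$ is the pushforward of a coherent $\mbb{G}_m$-equivariant sheaf $\msc{F}$ on the Springer resolution $\tN$, and since $\tN$ is conical with generators in degree $2$ over $\dG/\dB$, the desired vanishing is equivalent to $\msc{F}$ being set-theoretically supported on the zero section $\dG/\dB\subset \tN$.  By Theorem~\ref{thm:fiber_l} combined with the hypothesis, the derived fiber $\operatorname{L}\lambda^\ast\sRHom_{\FK{G}_q}(M,S)\simeq \RHom_{\msc{B}_\lambda}(\res_\lambda M,\res_\lambda S)$ is cohomologically bounded at every geometric $\lambda$, and for the Borel, finite projective dimension of $\res_\lambda M$ forces the support of $\Ext^\ast_{\msc{B}_\lambda}(\res_\lambda M,\res_\lambda S)$ as a module over $\Ext_{\uqB}(k,k)=\Sym(\mfk{n}_\lambda^\ast)$ to be the origin of $\mfk{n}_\lambda$.

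The main obstacle I anticipate is passing from this fiberwise Borel-level support statement to the global statement $\Supp_{\tN}(\msc{F})\subseteq \dG/\dB$.  Pointwise boundedness of $\operatorname{L}\lambda^\ast\sRHom$ does not formally imply global boundedness of $\sExt$; one must combine the Tor spectral sequence for the derived pullback along $\lambda$ with coherence of $\msc{F}$ over $p_\ast\O_{\tN}=\Sym(\msc{E})$ and a graded Nakayama argument to conclude that $\lambda^\ast\msc{F}$ is supported at the origin of $\mfk{n}_\lambda$ for every geometric $\lambda$, after which closedness of $\Supp_{\tN}(\msc{F})$ together with $\mbb{G}_m$-equivariance yields the desired global statement and completes the projective dimension bound.
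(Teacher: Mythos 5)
Your direction (a)$\Rightarrow$(b) is fine, but the reduction you propose at the start of (b)$\Rightarrow$(a) contains a genuine gap, and it is a gap the paper's own proof is specifically designed to avoid. First, the claimed criterion ``$M$ is isomorphic to a bounded complex of projectives iff each cohomology sheaf $H^j(M)$ is projective'' is false in the forward direction: over a Frobenius algebra a bounded complex of projectives can easily have non-projective cohomology (e.g.\ $u\xrightarrow{x}u$ over $k[x]/(x^2)$ has cohomology $k$ in two degrees), so projectivity of the complex cannot be tested on its cohomology objects. Second, even with a corrected criterion, the hypothesis (b) does not ``descend to coherent subobjects'': knowing that $\res_\lambda M$ is a bounded complex of projectives says nothing about $\res_\lambda M'$ for a coherent subobject or subcomplex $M'\subset M$, since projectivity is not inherited by subobjects and exactness of $\res_\lambda$ with filtered colimits does not help. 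Your subsequent argument genuinely needs $M$ coherent, because you place $M$ in the \emph{first} slot, $\sExt_{\FK{G}_q}(M,S)$, both to invoke Theorem \ref{thm:fiber_l} and to get coherence over $\tN$ via Proposition \ref{prop:coherent}; so the flawed reduction is load-bearing, not cosmetic.

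The paper's proof needs no reduction at all: it keeps $M$ an arbitrary bounded complex of quasi-coherent sheaves and studies $\sRHom_{\FK{G}_q}(S,M)$ with the coherent object $S$ (the sum of the simples) in the first argument, which is exactly the situation covered by Proposition \ref{prop:1356}, Lemma \ref{lem:1399} and Theorem \ref{thm:fiber_l}. The hypothesis then gives $H^{>r}(\operatorname{L}\lambda^\ast\sRHom_{\FK{G}_q}(S,M))=0$ with $r$ \emph{uniform} in $\lambda$ (controlled by the top nonvanishing cohomology of $M$), and the passage from fiberwise to global boundedness—the obstacle you correctly identify at the end—is handled by the commutative algebra Lemma \ref{lem:CA1} on the finite-dimensional smooth base $\dG/\dB$, after which Lemma \ref{lem:sExt_proj} (valid for all of $D^+(\FK{G}_q)$, not just coherent objects) concludes. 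Your proposed fix via the Tor spectral sequence, coherence over $p_\ast\O_{\tN}$ and graded Nakayama is essentially the coherent-case statement already contained in Propositions \ref{prop:projectives} and \ref{prop:lambda_supp}, and note that the comparison of $\Supp\,\lambda^\ast H^\ast$ with $\Supp\,H^\ast(\operatorname{L}\lambda^\ast-)$ in Proposition \ref{prop:lambda_supp} is itself nontrivial (it uses a formality input), so even for coherent $M$ your sketch is thinner than it appears; but the decisive issue is that the quasi-coherent case cannot be reduced to it in the way you describe.
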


We note that (a) and (b) are equivalent to the vanishing of $M$ and $\res_\lambda M$ in their respective stable categories (see Section \ref{sect:stable}).  So one can view Theorem \ref{thm:projectivity} as a vanishing result for objects in the stable category $\Stab(\FK{G}_q)$.  Theorem \ref{thm:projectivity} plays a fundamental role in our analysis of thick ideals and the Balmer spectrum for $D_{\coh}(\FK{G}_q)$, provided in Sections \ref{sect:spec}--\ref{sect:spec2} below.

\subsection{Sheaf-$\Ext$ and projectivity}

We have the following general result about the behaviors of $\sExt_{\FK{G}_q}$ at projectives.

\begin{lemma}\label{lem:sExt_proj}
Let $S$ denote the sum of the simple objects in $\FK{G}_q$.  Then an object $N$ in $D^+(\FK{G}_q)$ (resp.\ $D_{\coh}(\FK{G}_q)$) is isomorphic to a bounded complex of projectives (resp.\ coherent projectives) if and only if $\sExt^{\gg 0}_{\FK{G}_q}(S, N)=0$.
\end{lemma}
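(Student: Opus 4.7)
The plan is to establish the two implications separately, using the fiber equivalence $1^\ast\colon \FK{G}_q\overset{\sim}\to \Rep\uqG$ of Theorem \ref{thm:ag} together with the fact that $\uqG$ is a finite-dimensional Frobenius Hopf algebra, so in $\FK{G}_q$ the classes of projectives and injectives coincide and there are only finitely many isomorphism classes of simples.

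For the forward implication, suppose $N$ is quasi-isomorphic to a bounded complex $P^{\bullet}$ of projectives in $\FK{G}_q$. By d\'evissage on the length of $P^{\bullet}$ together with exactness of $\sRHom_{\FK{G}_q}(S,-)$ on triangles, it suffices to show $\sExt^{>0}_{\FK{G}_q}(S,P)=0$ for any single projective $P$. Taking a resolution of $S$ by coherent relatively projective sheaves whose terms are compact in $D(\dG/B_q)$ (Corollary \ref{cor:enough_proj} and Lemma \ref{lem:1090}) allows $\sRHom_{\FK{G}_q}(S,-)$ to commute with arbitrary direct sums, so we may assume $P$ is coherent. By Proposition \ref{prop:andersen}, both $S$ and $P$ are then summands of equivariant vector bundles $E_V$ and $E_W$ with $V,W$ finite-dimensional $G_q$-representations, and $W$ is $u$-injective: it is $\uqG$-projective by Proposition \ref{prop:andersen}(2), hence $\uqG$-injective by the Frobenius property, and restriction to the Hopf subalgebra $u=\uqB\subset\uqG$ preserves injectivity. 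The computation appearing in the proof of Proposition \ref{prop:coherent} then identifies $\sExt^{\ast}_{\FK{G}_q}(E_V,E_W)$ with the descent of $\O_{\dG}\ot_k\Ext^{\ast}_{u}(V,W)$, which vanishes in positive degree by the $u$-injectivity of $W$.

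For the converse, assume $\sExt^{\gg 0}_{\FK{G}_q}(S,N)=0$. The local-to-global spectral sequence of Corollary \ref{cor:372}, combined with Grothendieck vanishing for quasi-coherent cohomology on the Noetherian separated scheme $\dG/\dB$, upgrades this to $\Ext^{\gg 0}_{\FK{G}_q}(S,N)=0$. Transport $N$ through the fiber equivalence and represent $1^\ast N$ by a minimal complex of injectives $I^{\bullet}$ in $\Rep\uqG$. A standard property of minimality forces the differential $d^i$ to annihilate the socle of $I^i$ in every degree; hence the differentials in $\Hom_{\uqG}(L,I^{\bullet})$ vanish for each simple $L$, and the multiplicity of the injective hull $J(L)$ inside $I^i$ coincides with $\dim_k\Ext^i_{\uqG}(L,1^\ast N)$. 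Since $\uqG$ has only finitely many simples and each produces vanishing $\Ext^{\gg 0}$, we conclude $I^i=0$ for $i\gg 0$, so $I^{\bullet}$ is bounded. In the coherent case these multiplicities are additionally finite, forcing each $I^i$ to be coherent. Finally, since $\uqG$ is Frobenius every injective is projective, and $N$ is realized as a bounded complex of projectives.

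The primary difficulty is the reverse implication, where one must coordinate a sheaf-level hypothesis on $\dG/\dB$ with a minimal injective resolution inside the abelian category $\FK{G}_q$. The bridge is the local-to-global spectral sequence together with the finite cohomological dimension of the flag variety, which converts sheaf-Ext vanishing into ordinary Ext vanishing; from there the argument reduces to a classical representation-theoretic computation over the finite-dimensional Hopf algebra $\uqG$.
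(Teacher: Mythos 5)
Your proposal is correct and takes essentially the same route as the paper's proof: the forward direction reduces, via Proposition \ref{prop:andersen} and commutation of sheaf-Ext with direct sums, to the computation $\sExt_{\FK{G}_q}(E_V,E_W)=$ descent of $\O_{\dG}\ot_k\Ext_{\uqB}(V,W)$, which vanishes in positive degrees by $\uqB$-injectivity of $W$, while the converse uses the local-to-global identification of Corollary \ref{cor:372} and finite cohomological dimension of $\dG/\dB$ to get $\Ext^{\gg 0}_{\FK{G}_q}(S,N)=0$ and then the Frobenius property. Your minimal-injective-complex argument over $\uqG$ simply makes explicit the standard step ("$\Ext^{\gg 0}(S,N)=0$ implies $N$ is a bounded complex of injectives") that the paper leaves implicit.
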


\begin{proof}
Suppose that $N$ is projective (equivalently injective) in $\FK{G}_q$.  For a projective generator $W$ from $\rep G_q$, we therefore have a split surjection $\oplus_{i\in I}E_W\to N$ from some large sum of the associated free module.  Since
\[
\oplus_{i\in I}\sExt_{\FK{G}_q}(S,E_W)=\sExt_{\FK{G}_q}(S,\oplus_{i\in I}E_W)
\]
we see that vanishing of the extension $\sExt^{>0}_{\FK{G}_q}(S,E_W)$ at the sum of the simples implies vanishing of the extensions $\sExt^{>0}_{\FK{G}_q}(S,N)$.  But now, by Proposition \ref{prop:andersen}, there is a $G_q$-representation $V$ for which $S$ is a summand of $E_V$.  Hence vanishing of $\sExt^{>0}_{\FK{G}_q}(E_V,E_W)$ at such $V$ and $W$ implies vanishing of $\sExt^{>0}_{\FK{G}_q}(S,N)$.
\par

Now, having established the above information, we compute
\[
\begin{array}{rl}
\pi^\ast\sExt_{\FK{G}_q}(E_V,E_W)&=\sExt_{\dG/\su}(E_V,E_W)\vspace{2mm}\\
&=\O_{\dG}\ot_k\Ext_{\uqB}(V,W).
\end{array}
\]
Since $W$ is projective/injective in $\Rep G_q$, its restriction is injective over $\uqG$ and thus injective over $\uqB$ as well.  So we see $\Ext_{\uqB}^{>0}(V,W)=0$ and hence that $\sExt_{\FK{G}_q}^{>0}(S,N)=0$ at any projective $N$.  It follows that $\sExt_{\FK{G}_q}^{\gg 0}(S,N)=0$ at any bounded complex of projectives $N$.
\par

For the converse, suppose that $\sExt^{>m}_{\FK{G}_q}(S,N)=0$ at some $m$.  Then the complex $\sRHom_{\FK{G}_q}(S,N)$ is isomorphic to a bounded complex $\msc{F}$ of sheaves over $\dG/\dB$.  From the identification
\[
\Ext_{\FK{G}_q}(S,N)=\mbb{H}^\ast(\dG/\dB,\sRHom_{\FK{G}_q}(S,N))\cong \mbb{H}^\ast(\dG/\dB,\msc{F})
\]
of Proposition \ref{prop:D_Enh} (or Corollary \ref{cor:372}), and the fact that the hypercohomology of a bounded dg sheaf over $\dG/\dB$ is bounded, it follows that $\Ext^{\gg 0}_{\FK{G}_q}(S,N)=0$.  Therefore $N$ is isomorphic to a bounded complex of injectives in $D^+(\FK{G}_q)$, and hence a bounded complex of projectives since $\FK{G}_q$ is Frobenius.  When $N$ is in $D_{\coh}(\FK{G}_q)$ these projectives can furthermore be chosen to be coherent.
\end{proof}

\subsection{Projectivity and $\tN$-support}

Essentially as a corollary to Lemma \ref{lem:sExt_proj} we obtain the following.

\begin{proposition}\label{prop:projectives}
For an object $M$ in $D_{\coh}(\FK{G}_q)$ the following are equivalent:
\begin{itemize}
\item [a)] The $\tN$-support of $M$ vanishes, $\supp_{\tN}(M)=\emptyset$.
\item[b)] $M$ is isomorphic to a bounded complex of projectives.
\end{itemize}
\end{proposition}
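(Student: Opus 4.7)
The plan is to reduce the statement to the coherence of $\sExt_{\FK{G}_q}(S,M)^{\Sp}$ over $\tN$ together with the previously established projectivity criterion of Lemma \ref{lem:sExt_proj}.

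First, by the last bullet of Lemma \ref{lem:triangle}, for any $M \in D_{\coh}(\FK{G}_q)$ we have
\[
\supp_{\tN}(M) = \mbb{P}\bigl(\Supp_{\tN}\sExt_{\FK{G}_q}(S,M)^{\Sp}\bigr),
\]
where $S$ denotes the sum of the simples in $\FK{G}_q$. Thus (a) is equivalent to the statement that the support of the coherent (by Proposition \ref{prop:coherent}) $\mbb{G}_m$-equivariant sheaf $\sExt_{\FK{G}_q}(S,M)^{\Sp}$ is contained in the zero section of $p:\tN \to \dG/\dB$. By Lemma \ref{lem:sExt_proj}, (b) is equivalent to the vanishing $\sExt^{\gg 0}_{\FK{G}_q}(S,M) = 0$. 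So it suffices to establish, for a coherent $\mbb{G}_m$-equivariant sheaf $\msc{F}$ on $\tN$, the equivalence between the support of $\msc{F}$ being contained in the zero section and the graded $p_\ast \O_{\tN}$-module $p_\ast\msc{F}$ being bounded above in degree.

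This last equivalence is a graded commutative algebra reduction. The generating bundle $\msc{E}$ of $p_\ast \O_{\tN} = \Sym(\msc{E})$ sits in (positive) cohomological degree $2$, and the zero section of $\tN$ is the closed subscheme cut out by the augmentation ideal $\msc{E} \cdot \Sym(\msc{E})$. Coherence of $p_\ast\msc{F}$ as a $\Sym(\msc{E})$-module together with quasi-compactness of $\dG/\dB$ gives a uniform bound: if $\msc{F}$ is supported on the zero section, then $\msc{E}^N$ annihilates $p_\ast\msc{F}$ for some $N \gg 0$, and since $\msc{F}$ is coherent and the quotient $\Sym(\msc{E})/\msc{E}^N$ is a coherent $\O_{\dG/\dB}$-module concentrated in bounded degrees, we conclude $p_\ast\msc{F}$ itself is bounded in degree. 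Conversely, if $p_\ast\msc{F}$ is bounded in degree, then coherence forces a power of $\msc{E}$ to annihilate every local section, so $\msc{F}$ is set-theoretically supported on the zero section.

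Combining these reductions yields the proposition. The only mildly nontrivial point is the graded algebra step above; it is standard, but one must take care with the $\mbb{G}_m$-action convention (generators in degree $2$) and use quasi-compactness of $\dG/\dB$ to pass from pointwise nilpotency of $\msc{E}$ on fibers of $p_\ast\msc{F}$ to a uniform global nilpotency bound. Everything else is a direct appeal to Lemmas \ref{lem:triangle} and \ref{lem:sExt_proj} and Proposition \ref{prop:coherent}.
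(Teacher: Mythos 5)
Your proof is correct and follows essentially the same route as the paper: both reduce via the last bullet of Lemma \ref{lem:triangle} and Proposition \ref{prop:coherent} to the statement that a coherent $\mbb{G}_m$-equivariant sheaf on $\tN$ is supported on the zero section if and only if its pushforward to $\dG/\dB$ has bounded grading, and then conclude with Lemma \ref{lem:sExt_proj}. The graded commutative algebra step, including the use of a $\mbb{G}_m$-stable affine cover of $\tN$ to get a uniform nilpotency bound, is exactly the argument in the paper's proof.
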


\begin{proof}
A $\mbb{G}_m$-equivariant coherent sheaf $\msc{F}$ over $\tN$ is supported at the $0$-section of the flag variety $\dG/\dB\to \tN$ if and only if $\msc{F}$ is annihilated by a power of the ideal of definition $m\subset \O_{\tN}$ for the $0$-section.  By checking over affine opens $U\to \dG/\dB$, whose preimages $p^{-1}U\subset \tN$ provide a $\mbb{G}_m$-stable affine cover of $\tN$, one sees that such an $\msc{F}$ must have bounded grading when considered as a graded sheaf over $\dG/\dB$.  From this generic information we see that $\sExt_{\FK{G}_q}^{\gg 0}(S,M)=0$, where $S$ is the sum of the simples, if and only if $\sExt_{\FK{G}_q}(S,M)$ is supported at $\dG/\dB\subset \tN$.  This in turn occurs if and only if $\supp_{\tN}(M)=\emptyset$.  Finally, Lemma \ref{lem:sExt_proj} tells us that $\sExt_{\FK{G}_q}^{\gg 0}(S,M)=0$ if and only if $M$ is isomorphic to a bounded complex of projectives.
\end{proof}

\subsection{A strong projectivity test: The proof of Theorem \ref{thm:projectivity}}

We now arrive at the main point of the section.  We first recall a commutative algebra lemma.

\begin{lemma}\label{lem:CA1}
Let $\msc{F}$ be a complex of quasi-coherent sheaves over a finite type scheme $X$.  Suppose that there exists some $n>0$ such, at all geometric points $\lambda:\Spec(K)\to X$, $H^{i}(\operatorname{L}\lambda^\ast\msc{F})=0$ whenever $i>n$.  Then $H^{j}(\msc{F})=0$ for all $j>n+\dim(X)$.
\end{lemma}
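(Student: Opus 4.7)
The plan is to reduce to a pointwise commutative-algebra statement and then induct on Krull dimension.

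Since vanishing of $H^{j}(\msc{F})$ is Zariski-local, I would first cover $X$ by finitely many affine opens and reduce to the case $X=\Spec R$ with $R$ a Noetherian ring of Krull dimension at most $d$; then $\msc{F}$ corresponds to a complex $M^{\bullet}$ of $R$-modules. To check vanishing of the cohomology modules $H^{j}(M^{\bullet})$ it suffices to do so at every prime stalk, and by faithful flatness of base change to an algebraic closure the hypothesis on geometric fibers is equivalent to cohomological vanishing of $M^{\bullet}\otimes^{\operatorname{L}}_{R}\kappa(\mfk{q})$ above degree $n$ for every prime $\mfk{q}\in\Spec R$. Consequently the claim reduces to the following assertion: for a Noetherian local ring $(R,\mfk{m},k)$ of Krull dimension $d'\leq d$, if $M^{\bullet}\otimes^{\operatorname{L}}_{R}\kappa(\mfk{q})$ has cohomology concentrated in degrees $\leq n$ for every $\mfk{q}\in\Spec R$, then $H^{j}(M^{\bullet})=0$ for all $j>n+d'$.

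I would prove this local statement by induction on $d'$. In the base case $d'=0$, $R$ is Artinian local so $\mfk{m}$ is nilpotent and Nakayama's lemma applies without any finite-generation hypothesis (since $N=\mfk{m}N$ forces $N=\mfk{m}^{s}N=0$ when $\mfk{m}^{s}=0$). The standard Tor spectral sequence
\[
E_{2}^{p,q}=\operatorname{Tor}^{R}_{-p}\bigl(H^{q}(M^{\bullet}),\,k\bigr)\ \Longrightarrow\ H^{p+q}(M^{\bullet}\otimes^{\operatorname{L}}_{R}k)
\]
is concentrated in the half-plane $p\leq 0$; at the top non-vanishing cohomological degree $j_{0}$ of $M^{\bullet}$ (handling unbounded complexes by truncation) there are no nontrivial incoming or outgoing differentials at $E^{0,j_{0}}$, so $E_{\infty}^{0,j_{0}}=H^{j_{0}}(M^{\bullet})\otimes_{R}k$. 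This is a subquotient of $H^{j_{0}}(M^{\bullet}\otimes^{\operatorname{L}}_{R}k)$, which vanishes whenever $j_{0}>n$, and Nakayama then forces $H^{j_{0}}(M^{\bullet})=0$; one iterates downwards.

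For the inductive step ($d'>0$), applying the inductive hypothesis to each localization $R_{\mfk{p}}$ at a prime $\mfk{p}\neq\mfk{m}$ (where $R_{\mfk{p}}$ has Krull dimension strictly less than $d'$) gives $H^{j}(M^{\bullet})_{\mfk{p}}=0$ for all $j>n+d'-1$, so $H^{j}(M^{\bullet})$ is supported at the closed point $\mfk{m}$ in the range $j>n+d'-1$. The main obstacle is to upgrade this support statement to outright vanishing in the slightly higher range $j>n+d'$, using only the bound on the fiber $M^{\bullet}\otimes^{\operatorname{L}}_{R}k$ at the closed point; the subtlety is that an $\mfk{m}$-torsion module need not have nonzero tensor product with $k$ (e.g.\ an injective hull of the residue field), so naive Nakayama does not suffice. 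I expect to close the gap by tensoring $M^{\bullet}$ with the Koszul complex $K_{\bullet}(x_{1},\ldots,x_{d'})$ on a system of parameters for $R$, which has $d'+1$ terms and reduces the question to an Artinian quotient of $R$; equivalently, one can invoke the fact that the local cohomology functors $\operatorname{R}\Gamma_{\mfk{m}}$ on $\QCoh(R)$ have cohomological dimension at most $d'$. The combination of the fiber vanishing at $\mfk{m}$ with the $\mfk{m}$-support conclusion then yields the required $H^{j}(M^{\bullet})=0$ for $j>n+d'$.
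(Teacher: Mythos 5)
Your overall architecture --- reduce to a Noetherian local ring, induct on Krull dimension, handle the Artinian case by a Nakayama-type argument, and use localization at non-maximal primes to force high-degree cohomology to be $\mfk{m}$-power torsion --- is the same induction-on-dimension strategy that the paper only gestures at (it cites \cite[Lemma 1.3]{pevtsova02} for smooth $X$, the only case it uses, and \cite{avramovfoxby91} for the general case). But the step you yourself flag as the main obstacle is not closed by the tools you name, and as written this is a genuine gap. Tensoring with the Koszul complex $K_\bullet(x_1,\dots,x_{d'})$ on a system of parameters does \emph{not} ``reduce the question to an Artinian quotient of $R$'': $M^\bullet\ot K_\bullet(x)$ is a complex of $R$-modules, not of $R/(x)$-modules, so the base case cannot be applied to it. What you do know about it --- its cohomology is killed by $(x)$, hence $\mfk{m}$-power torsion, and its derived fibers vanish above $n$ --- does not imply its cohomology vanishes above $n$: your own example, the injective hull $E(k)$ over a DVR placed in degree $0$, is exactly a complex supported at the closed point whose derived fibers at \emph{all} primes vanish above $n=-1$ but whose cohomology persists up to degree $n+\dim R$. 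The closing sentence of your sketch (``the combination of the fiber vanishing at $\mfk{m}$ with the $\mfk{m}$-support conclusion then yields the required vanishing'') is precisely the implication that $E(k)$ refutes, and the alternative appeal to the cohomological dimension of $\operatorname{R}\Gamma_{\mfk{m}}$ is not accompanied by any mechanism that avoids the same issue; the dimension shift you are trying to beat simply reappears for the auxiliary complex.

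A repair in the same spirit works one parameter at a time: choose $x_1\in\mfk{m}$ avoiding the relevant minimal primes \emph{and} a nonzerodivisor on $R$, so that $R/(x_1)\simeq K_\bullet(x_1)$ in $D(R)$; then $M^\bullet\ot^{\operatorname{L}}_R R/(x_1)$ genuinely lies in $D(R/(x_1))$, inherits the fiber hypothesis, and the inductive hypothesis kills its cohomology above $n+d'-1$; the long exact sequence of the triangle $M^\bullet\xrightarrow{\ x_1\ }M^\bullet\to M^\bullet\ot^{\operatorname{L}}_R R/(x_1)$ then shows $x_1$ acts injectively on $H^j(M^\bullet)$ for $j>n+d'$, which together with the torsionness from localization gives the vanishing. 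Even this needs extra care when $\operatorname{depth}R=0$ (no regular parameter exists), e.g.\ a further d\'evissage, or one can instead reduce the whole finite-type case to the regular case by Noether normalization and quote \cite[Lemma 1.3]{pevtsova02} there, as the paper effectively does. Separately, your base case silently assumes a top nonvanishing cohomological degree: for complexes unbounded above, truncation $\tau^{\leq j}$ can destroy the fiber-vanishing hypothesis (the fiber of $\tau^{>j}M^\bullet$ contributes Tor classes in degrees $\leq j$), so ``handling unbounded complexes by truncation'' is unjustified as stated; in the Artinian case this is fixable by d\'evissage along the finite filtration of $R$ by powers of $\mfk{m}$, whose subquotients are $k$-vector spaces, which yields the bound with no boundedness assumption and no Nakayama argument at all.
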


For $X$ smooth this result follows by \cite[Lemma 1.3]{pevtsova02}.  In general one can proceed by induction on $\dim(X)$ and argue as in the proof of \cite[Proposition 5.3.F]{avramovfoxby91}.  Since we only use this result in the smooth setting we leave the details for the singular case to the interested reader.

\begin{proof}[Proof of Theorem \ref{thm:projectivity}]
Since base change and restriction preserve projectives, (a) implies (b).  We suppose now that $M$ has the property that $\res_\lambda M$ is isomorphic to a bounded complex of projectives in $D(\msc{B}_\lambda)$, at all geometric points $\lambda:\Spec(K)\to \dG/\dB$.  We want to prove that $M$ is isomorphic to a bounded complex of projectives in $D(\FK{G}_q)$.  Consider the sheaf $\sRHom_{\FK{G}_q}(S,M)=\sHom_{\FK{G}_q}(P_S,M)$, where $P_S\to S$ is a projective resolution of the simples in $\FK{G}_q$.  By Proposition \ref{prop:1356} and Lemma \ref{lem:1399} we calculate the fibers at geometric point $\lambda:\Spec(K)\to \dG/\dB$ as
\[
\operatorname{L}\lambda^\ast\sRHom_{\FK{G}_q}(S,M)\overset{\sim}\to \sRHom_{\msc{B}_\lambda}(\res_\lambda S,\res_\lambda M).
\]
Since $\res_\lambda M$ is isomorphic to a bounded complex of projective, and hence injective objects in $\msc{B}_\lambda$ by hypothesis, we have
\[
H^{>r}(\operatorname{L}\lambda^\ast\sRHom_{\FK{G}_q}(S,M))=0
\]
for some uniformly chose $r$ at all $\lambda$.  (Here $r$ can be chosen to be the maximal integer so that $H^r(M)\neq 0$.)  By Lemma \ref{lem:CA1} it follows that
\[
\sExt^{>n}_{\FK{G}_q}(S,M)=H^{>n}(\sRHom_{\FK{G}_q}(S,M))=0
\]
at some $n$, and hence that $M$ is isomorphic to a bounded complex of projective in $D(\FK{G}_q)$ by Lemma \ref{lem:sExt_proj}.  We are done.
\end{proof}

\begin{remark}
One can compare the above result with its modular analog from \cite{pevtsova02}, where the methods employed are somewhat different.
\end{remark}

\section{$\tN$-support as a global support over the Borels}
\label{sect:naturality}

We consider cohomological support for the small quantum Borels, and adopt the shorthand
\[
\supp^{\operatorname{chom}}_{\lambda}(L):=\supp^{\operatorname{chom}}_{\mathfrak{n}_\lambda}(L)=\mbb{P}(\Supp_{\mfk{n}_\lambda}\Ext_{\msc{B}_\lambda}(L,L))\ \ \text{for $L$ in }D_{\coh}(\msc{B}_\lambda).
\]
Here we employ the identification $\Ext_{\msc{B}_\lambda}(\1,\1)=\O(\mfk{n}_\lambda)$ induced by the restriction functor
\[
\res_\lambda:\O(\mcl{N})=\Ext_{\FK{G}_q}(\1,\1)\to \Ext_{\msc{B}_\lambda}(\1,\1).
\]

\begin{theorem}[Naturality and reconstruction]\label{thm:natrl}
For any $M$ in $D_{\coh}(\FK{G}_q)$, choice of geometric point $\lambda:\Spec(K)\to \dG/\dB$, and corresponding map $i_\lambda:\mfk{n}_\lambda\to \tN$ we have
\[
\mbb{P}(i_\lambda)^{-1}\supp_{\tN}(M)=\supp^{\operatorname{chom}}_{\lambda}(\res_\lambda M).
\]
Furthermore, $\tN$-support is reconstructed (as a set) from the supports over the Borels,
\[
\supp_{\tN}(M)=\bigcup_{\lambda:\Spec(K)\to \dG/\dB} \mbb{P}(i_\lambda)\left(\supp^{\operatorname{chom}}_\lambda(\res_\lambda M)\right).
\]
\end{theorem}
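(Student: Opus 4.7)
The second (reconstruction) formula follows immediately from the first (naturality), so my plan is to focus on the naturality identity and deduce reconstruction as a corollary: since $\mbb{P}(\tN)=\bigcup_\lambda \mbb{P}(i_\lambda)(\mbb{P}(\mfk{n}_\lambda))$ as a set, with the union running over geometric points of $\dG/\dB$, substituting the naturality identity into the tautological equation $\supp_{\tN}(M)=\bigcup_\lambda \mbb{P}(i_\lambda)\bigl(\mbb{P}(i_\lambda)^{-1}\supp_{\tN}(M)\bigr)$ yields the displayed reconstruction.

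For naturality, the central input will be Theorem \ref{thm:fiber_l}, which for coherent $M$ provides a monoidal quasi-isomorphism
\[
\operatorname{L}\phi_{M,M}:\operatorname{L}\lambda^*\sRHom_{\FK{G}_q}(M,M)\overset{\sim}\to \RHom_{\msc{B}_\lambda}(\res_\lambda M,\res_\lambda M)
\]
in $D(K)$ that intertwines the actions of $\operatorname{L}\lambda^*\msc{R}(G_q)$ and $\RHom_{\msc{B}_\lambda}(\1,\1)$. First I would observe that since $p:\tN\to \dG/\dB$ is a flat affine bundle, $\operatorname{L}\lambda^*\msc{R}(G_q)$ is cohomologically concentrated in degree zero with $H^0=\lambda^*p_*\O_{\tN}=\O(\mfk{n}_\lambda)$, which by Theorem \ref{thm:enh_GK} matches $\Ext^*_{\msc{B}_\lambda}(\1,\1)$ under the fiber identification. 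Next I would write down the hypercohomology spectral sequence
\[
E_2^{-a,q}=\operatorname{L}_a\lambda^*\sExt^q_{\FK{G}_q}(M,M)\ \Longrightarrow\ \Ext^{q-a}_{\msc{B}_\lambda}(\res_\lambda M,\res_\lambda M),
\]
which is $\O(\mfk{n}_\lambda)$-linear. Combining affine base change $\operatorname{L}_a\lambda^*\sExt^q=p'_*(\operatorname{L}_a i_\lambda^*\sExt^{q,\Sp})$ with the standard Tor-support containment $\Supp\operatorname{L}_a i_\lambda^*\msc{F}\subseteq i_\lambda^{-1}\Supp_{\tN}\msc{F}$, every entry of the spectral sequence is supported inside $i_\lambda^{-1}\Supp_{\tN}\sExt^{\Sp}_{\FK{G}_q}(M,M)$, so the abutment inherits this containment. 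This yields the inclusion $\supp^{\operatorname{chom}}_\lambda(\res_\lambda M)\subseteq \mbb{P}(i_\lambda)^{-1}\supp_{\tN}(M)$.

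The reverse inclusion is the main obstacle, since differentials in the spectral sequence could in principle kill classes in $E_2^{0,*}=\lambda^*\sExt^*$ before they reach $E_\infty$. My plan to dispose of it is a d\'evissage to the case $M=E_V$ for $V$ a finite-dimensional $G_q$-representation. Here Proposition \ref{prop:shom_exp} combined with the Ginzburg-Kumar calculation underlying Theorem \ref{thm:enh_GK} presents $\sExt^{q,\Sp}(E_V,E_V)$ as the descent of the $\dG$-flat sheaf $\O_{\dG}\otimes_k\Ext^q_{\uqB}(V,V)$; this forces the higher Tors to vanish and the spectral sequence to degenerate at $E_2$, producing an honest $\O(\mfk{n}_\lambda)$-module isomorphism $\lambda^*\sExt^*_{\FK{G}_q}(E_V,E_V)\cong \Ext^*_{\msc{B}_\lambda}(\res_\lambda E_V,\res_\lambda E_V)$ and thus equality of supports. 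For general coherent $M$, I would surject from vector bundles via Proposition \ref{prop:andersen} and propagate equality through the resulting cofiber sequences, using the triangulated support axioms of Lemma \ref{lem:triangle} together with their Borel analogue, and matching the global vanishing criterion of Proposition \ref{prop:projectives} with the fiberwise Borel vanishing secured by Theorem \ref{thm:projectivity}. The subtlest step is precisely this triangulated propagation, where one must argue both containments simultaneously along the chosen resolution.
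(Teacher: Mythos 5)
Your treatment of the easy inclusion and of the vector bundle case is sound and runs parallel to the paper: the degeneration you observe for $M=E_V$ is exactly the paper's Lemma \ref{lem:2289}, and the Tor/spectral sequence containment gives $\supp^{\operatorname{chom}}_\lambda(\res_\lambda M)\subseteq \mbb{P}(i_\lambda)^{-1}\supp_{\tN}(M)$ (modulo one point you gloss over: that the two identifications of $\Ext_{\msc{B}_\lambda}(\1,\1)$ with $\O(\mfk{n}_\lambda)$ -- via restriction from $\Ext_{\FK{G}_q}(\1,\1)$ and via the fiber of $\msc{R}(G_q)$ -- agree, which the paper isolates as Lemma \ref{lem:mys}).

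The genuine gap is the reverse inclusion for general coherent $M$. Equality of two support theories does not propagate through exact triangles: the triangulated axioms of Lemma \ref{lem:triangle} (and their Borel analogues) only give one-sided containments $\supp(N)\subseteq \supp(M)\cup\supp(M')$, and in a cone supports can cancel, so agreement on the generators $E_V$ of the thick subcategory $D_{\coh}(\FK{G}_q)$ does not yield agreement on all its objects. Matching Proposition \ref{prop:projectives} with Theorem \ref{thm:projectivity} only compares the \emph{empty-support} loci globally versus fiberwise, which is far weaker than the pointwise identity you need at each $\lambda$. The step you flag as ``subtlest'' is in fact where a new idea is required, and the paper supplies it by a different mechanism: Proposition \ref{prop:lambda_supp} proves directly, for arbitrary coherent $M$, that $\Supp_{\mfk{n}_\lambda}\lambda^\ast\sExt_{\FK{G}_q}(M,M)=\Supp_{\mfk{n}_\lambda}H^\ast(\operatorname{L}\lambda^\ast\sRHom_{\FK{G}_q}(M,M))$, i.e.\ that taking the fiber commutes with taking supports even though it does not commute with cohomology. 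This rests on two inputs absent from your plan: the Arkhipov--Bezrukavnikov--Ginzburg formality of $\sRHom_{\dG/\su}(\1,\1)\simeq \O_{\dG}\ot_k\Sym(\mfk{n}^\ast)$, which lets one view $\sRHom_{\dG/\su}(M,M)$ as a coherent dg module over an honest graded commutative algebra, and the Carlson--Iyengar theorem that for such a dg module the vanishing of the fiber of cohomology at a (conical) point is equivalent to acyclicity of the derived fiber; one then descends along the flat surjection $t:\dB_\lambda\times\mfk{n}\to\mfk{n}_\lambda$. Without some substitute for this detection-by-derived-fibers argument, your d\'evissage cannot close the reverse inclusion.
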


This naturality property allows us to reduce various analyses of support for the quantum Frobenius kernel to corresponding analysis of support for the quantum Borels $\msc{B}_\lambda$.  One can see, for example, Proposition \ref{prop:2588} and Lemma \ref{lem:2596} below.

\subsection{Some useful lemmas}

Fix a geometric point $\lambda:\Spec(K)\to \dG/\dB$.  We have the unit of the pullback-pushforward adjunction which provides natural maps
\begin{equation}\label{eq:2276}
\sRHom_{\FK{G}_q}(M,N)\to \lambda_\ast\operatorname{L}\lambda^\ast\sRHom_{\FK{G}_q}(M,N).
\end{equation}
We take cohomology to get maps
\[
\sExt_{\FK{G}_q}(M,N)\to \lambda_\ast H^\ast(\operatorname{L}\lambda^\ast\sRHom_{\FK{G}_q}(M,N))
\]
which then reduce to give natural maps
\begin{equation}\label{eq:2284}
\lambda^\ast\sExt_{\FK{G}_q}(M,N)\to H^\ast(\operatorname{L}\lambda^\ast\sRHom_{\FK{G}_q}(M,N))
\end{equation}
via adjunction.

\begin{lemma}\label{lem:2289}
At an arbitrary geometric point $\lambda:\Spec(K)\to \dG/\dB$, and coherent equivariant vector bundles $E_V$ and $E_W$ in $\FK{G}_q$, the map \eqref{eq:2284} provides a natural isomorphism
\[
\lambda^\ast\sExt_{\FK{G}_q}(E_V,E_W)\overset{\cong}\longrightarrow H^\ast(\operatorname{L}\lambda^\ast\sRHom_{\FK{G}_q}(E_V,E_W)).
\]
\end{lemma}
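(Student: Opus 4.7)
The plan is to reduce to the explicit description of $\sRHom_{\FK{G}_q}(E_V, E_W)$ afforded by Proposition \ref{prop:shom_exp}, and to argue that the cohomology sheaves $\sExt^q_{\FK{G}_q}(E_V, E_W)$ are flat over $\dG/\dB$, so that the natural map \eqref{eq:2284} is an isomorphism by a hyper-Tor spectral sequence argument.

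First I would establish flatness of each cohomology sheaf $\sExt^q_{\FK{G}_q}(E_V, E_W)$. Applying Proposition \ref{prop:shom_exp} together with its derived avatar of Section \ref{sect:comp_ot_exp} yields
\[
\pi^\ast \sRHom_{\FK{G}_q}(E_V, E_W) \;\cong\; \O_{\dG}\ot_k \RHom_{\uqB}(V,W)
\]
as complexes of $\dB$-equivariant sheaves on $\dG$. Taking cohomology gives
\[
\pi^\ast \sExt^q_{\FK{G}_q}(E_V,E_W) \;\cong\; \O_{\dG}\ot_k \Ext^q_{\uqB}(V,W),
\]
which is a free $\O_{\dG}$-module. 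Since $\pi:\dG\to \dG/\dB$ is faithfully flat (Section \ref{sect:descent}), faithfully flat descent of flatness implies that $\sExt^q_{\FK{G}_q}(E_V,E_W)$ is flat over $\O_{\dG/\dB}$.

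Next I would invoke the standard hypercohomology spectral sequence
\[
E_2^{p,q} \;=\; \operatorname{L}^{-p}\lambda^\ast\,\sExt^q_{\FK{G}_q}(E_V,E_W) \;\Longrightarrow\; H^{p+q}\bigl(\operatorname{L}\lambda^\ast \sRHom_{\FK{G}_q}(E_V,E_W)\bigr).
\]
The complex $\sRHom_{\FK{G}_q}(E_V,E_W)$ is cohomologically bounded below (as is $\RHom_{\uqB}(V,W)$), so this spectral sequence converges. By the flatness established above, $E_2^{p,q}$ vanishes for $p<0$, so the spectral sequence collapses onto its $p=0$ column, and the resulting edge isomorphism recovers precisely the natural map \eqref{eq:2284}.

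The main obstacle will be matching the abstract edge morphism from the spectral sequence with the specific map \eqref{eq:2284}, which is constructed via the unit $\sRHom\to \lambda_\ast\operatorname{L}\lambda^\ast\sRHom$ followed by adjunction. This check is a formal but somewhat tedious unwinding. A cleaner alternative, which sidesteps the identification, is to choose an injective resolution $W\to I^\bullet$ in $\Rep B_q$ so that $\sRHom_{\FK{G}_q}(E_V,E_W)$ is represented by the descent of $\O_{\dG}\ot_k\Hom_{\uqB}(V,I^\bullet)$, each term of which is flat over $\dG/\dB$. Then $\operatorname{L}\lambda^\ast$ agrees with $\lambda^\ast$ on this complex, and choosing a lift $x:\Spec(K)\to \dG$ of $\lambda$ along $\pi$ computes both sides directly as $K\ot_k \Ext^\ast_{\uqB}(V,W)$, with the natural map \eqref{eq:2284} realizing the identity.
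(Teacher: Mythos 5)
Your ``cleaner alternative'' is essentially the paper's own proof: the paper checks the map after applying the equivalence $\pi^\ast:\QCoh(\dG/\dB)\to\QCoh(\dG)^{\dB}$, where by the explicit description of Proposition \ref{prop:shom_exp} one has $\sRHom_{\dG/\su}(E_V,E_W)=\O_{\dG}\ot_k\RHom_{\su}(V,W)$, a complex of free $\O_{\dG}$-modules; hence $\operatorname{L}\iota_\lambda^\ast$ agrees with $\iota_\lambda^\ast$ on it, the unit map \eqref{eq:2276} becomes the obvious reduction $\O_{\dG}\ot_k\RHom_{\su}(V,W)\to\iota_\ast\O_{\dB_\lambda}\ot_k\RHom_{\su}(V,W)$, and taking cohomology exhibits \eqref{eq:2284} as the evident isomorphism $\iota^\ast\O_{\dG}\ot_k\Ext_{\su}(V,W)\to H^\ast(\iota^\ast\O_{\dG}\ot_k\RHom_{\su}(V,W))$. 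Your primary route---flatness of each $\sExt^q_{\FK{G}_q}(E_V,E_W)$ by faithfully flat descent, then the hyper-Tor spectral sequence $E_2^{p,q}=\operatorname{L}^{-p}\lambda^\ast\sExt^q\Rightarrow H^{p+q}(\operatorname{L}\lambda^\ast\sRHom)$---is a correct but heavier repackaging of the same input: it works, provided you note that convergence uses the finite Tor-amplitude of $\lambda^\ast$ (smoothness of $\dG/\dB$) together with boundedness below of the complex, and it saddles you with the edge-map-versus-\eqref{eq:2284} identification that you yourself flag as the obstacle and that the direct computation avoids entirely. So there is nothing to fix; the direct argument is the one to run with, and working on $\dG$ after $\pi^\ast$ (as the paper does) rather than descending first makes the term-wise flatness and the identification of the map completely transparent.
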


\begin{proof}
Let $\iota=\iota_\lambda:B_\lambda\to \dG$ be the fiber over the point $\lambda:\Spec(K)\to \dG/\dB$.  We can check that the given map is an isomorphism after applying the equivalence $\pi^\ast:\QCoh(\dG/\dB)\to \QCoh(\dG)^{\dB}$.  Over $\dG$ we are pulling back along the map $\iota$ and \eqref{eq:2284} is the descent of the corresponding morphism
\[
\iota^\ast\sExt_{\dG/\su}(E_V,E_W)\to H^\ast(\operatorname{L}\iota^\ast\sRHom_{\dG/\su}(E_V,E_W)),
\]
by the explicit description of sheaf-Hom given in Section \ref{sect:shom2}.  At the level of equivariant cochains we have $\sRHom_{\dG/\su}(E_V,E_W)=\O_{\dG}\ot_k\RHom_{\su}(V,W)$ so that
\[
\operatorname{L}\iota^\ast\sRHom_{\dG/\su}(E_V,E_W)=\iota^\ast\O_{\dG}\ot_k\RHom_{\su}(V,W)
\]
and the map \eqref{eq:2276} is just the expected reduction
\[
\O_{\dG}\ot_k\RHom_{\su}(V,W)\to \iota_\ast\O_{\dB_\lambda}\ot_k\RHom_{\su}(V,W)
\]
induced by the reduction on the first factor $\O_{\dG}$.  One takes cohomology to observe that the map of interest
\[
\iota^\ast\O_{\dG}\ot_k\Ext_{\su}(V,W)\to H^\ast(\iota^\ast\O_{\dG}\ot_k\RHom_{\su}(V,W))
\]
is just the obvious isomorphism.
\end{proof}

Now, we have the global sections morphism $\O_{\dG/\dB}\ot_k\RHom_{\FK{G}_q}\to \sRHom_{\FK{G}_q}$ of Section \ref{sect:shom1} and the map
\[
\operatorname{L}\lambda^\ast\sRHom_{\FK{G}_q}\to \RHom_{\msc{B}_\lambda}(\res_\lambda -,\res_\lambda -)
\]
of Theorem \ref{thm:fiber_l}.  We therefore compose with \eqref{eq:2276} and take cohomology to obtain a binatural morphism
\begin{equation}\label{eq:mys}
\operatorname{mystery.map}:\O_{\dG/\dB}\ot_k\Ext_{\FK{G}_q}(M,N)\to \lambda_\ast \Ext_{\msc{B}_\lambda}(\res_\lambda M,\res_\lambda N).
\end{equation}
This mystery map is defined by its global sections, at which point the $\O_{\dG/\dB}$ factor, and the pushforward $\lambda_\ast$ in \eqref{eq:mys} disappear.  The following lemma clarifies a subtle point.

\begin{lemma}\label{lem:mys}
The global sections of the mystery map \eqref{eq:mys} are identified with restriction $\res_\lambda:\Ext_{\FK{G}_q}(M,N)\to \Ext_{\msc{B}_\lambda}(\res_\lambda M,\res_\lambda N)$.
\end{lemma}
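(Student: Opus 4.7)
The plan is to unwind the construction of the mystery map term-by-term, observing that each of the three natural transformations appearing in its definition is ``induced by restriction'' in a precise sense, so that their composite, upon taking global sections, is nothing but $\res_\lambda$ on cohomology.

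First I would apply the global sections functor $\Gamma(\dG/\dB,-)$ to the map \eqref{eq:mys} and use the identification $\Gamma(\dG/\dB,\O_{\dG/\dB}\ot_k V)=V$ for any (graded) vector space $V$, together with the adjunction $\Gamma(\Spec(K),\lambda_\ast-)=\lambda^\ast$, to see that taking global sections reduces the mystery map to a linear map
\[
\Ext_{\FK{G}_q}(M,N)\longrightarrow \Ext_{\msc{B}_\lambda}(\res_\lambda M,\res_\lambda N).
\]
Now the identification of Proposition \ref{prop:D_Enh} realizes any extension class $\xi\in \Ext^n_{\FK{G}_q}(M,N)$ as a morphism $\tilde{\xi}:\O_{\dG/\dB}\to \sExt^n_{\FK{G}_q}(M,N)$ in $\QCoh(\dG/\dB)$ (or rather, a morphism in $D(\dG/\dB)$ in the derived setting); this identification is precisely what the global sections morphism $\O_{\dG/\dB}\ot_k\RHom_{\FK{G}_q}\to \sRHom_{\FK{G}_q}$ provides.

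Next I would analyze the effect of the fiber comparison map from Theorem \ref{thm:fiber_l} on global sections. By construction, the functor $\operatorname{L}\phi(\lambda)$ of \eqref{eq:1470} is obtained by applying the derived pullback $\operatorname{L}\iota_\lambda^\ast$ to the evaluation morphisms for $\sRHom_{\FK{G}_q}$ and then using adjunction. Since $\res_\lambda=\iota_\lambda^\ast\circ \Kempf$ and since $\operatorname{L}\iota_\lambda^\ast$ applied to an actual morphism $\tilde{\xi}$ produces the morphism $\iota_\lambda^\ast(\tilde{\xi})$ which represents the restricted extension class, the composite map on global sections sends $\xi\mapsto \res_\lambda(\xi)$. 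More formally, the identification $\Hom_{D(\dG/\dB)}(\O_{\dG/\dB},-)\cong \Hom_{D(K)}(K,\operatorname{L}\lambda^\ast(-))$ provided by adjunction commutes with the $\operatorname{L}\phi(\lambda)$ map and the evaluation maps on both sides, by naturality.

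The proof then becomes a diagram chase. I would arrange the factorization of the mystery map as a commuting square whose bottom row is the restriction functor $\res_\lambda$ applied to morphism complexes, and whose top row is the composite defining \eqref{eq:mys}; both sides are computed from evaluation maps by adjunction, and the commutativity of the square is essentially the statement of Proposition \ref{prop:D_Enh} combined with the $\QCoh(\dG/\dB)$-linearity of $\iota_\lambda^\ast$ established just before Theorem \ref{thm:fiber_l}. The main (though not severe) obstacle is bookkeeping: verifying that the three different natural transformations assemble into a single diagram that, after passage to cohomology and global sections, really does reproduce $\res_\lambda$, rather than merely a map with the same source and target. Once one commits to writing out the adjunctions on display, however, this is largely forced by the universal property of the unit map and by the uniqueness of adjoints.
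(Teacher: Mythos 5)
Your proposal follows essentially the same route as the paper's sketch proof: both unwind the mystery map into the global sections morphism $\O_{\dG/\dB}\ot_k\RHom_{\FK{G}_q}\to\sRHom_{\FK{G}_q}$ followed by the fiber comparison of Theorem \ref{thm:fiber_l}, and identify the resulting composite with $\res_\lambda$ through compatibility with evaluation and uniqueness of adjoints. One small imprecision: the claimed ``identification'' $\Hom_{D(\dG/\dB)}(\O_{\dG/\dB},-)\cong\Hom_{D(K)}(K,\operatorname{L}\lambda^\ast(-))$ is not an isomorphism (global sections versus fibers are different functors); what you need, and in fact all you use, is the natural map given by applying $\operatorname{L}\lambda^\ast$ to a section, so the argument is unaffected.
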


\begin{proof}[Sketch proof]
As usual, the identification of these two maps follows by some compatibility with evaluation.  To summarize: The map \eqref{eq:mys} is alternatively specified, at the cochain level, by the corresponding pullback
\begin{equation}\label{eq:2448}
\RHom_{\FK{G}_q}(M,N)\cong \operatorname{L}\lambda^\ast\O_{\dG/\dB}\ot_k\RHom_{\FK{G}_q}(M,N)\to \RHom_{\msc{B}_\lambda}(\res_\lambda M,\res_\lambda N),
\end{equation}
which is given by precomposing the global sections morphism for $\sRHom_{\FK{G}_q}$ with the reduction
\begin{equation}\label{eq:2452}
\sRHom_{\FK{G}_q}(M,N)\to \operatorname{L}\lambda^\ast\sRHom_{\FK{G}_q}(M,N)\underset{\rm Theorem\ \ref{thm:fiber_l}}\cong \RHom_{\msc{B}_\lambda}(\res_\lambda M,\res_\lambda N).
\end{equation}
Since the map \eqref{eq:2452} is compatible with evaluation, as is the global sections map $\O_{\dG/\dB}\ot_k\RHom_{\FK{G}_q}\to \sRHom_{\FK{G}_q}$, we see that \eqref{eq:2448} is compatible with evaluation.  This specifies the morphism \eqref{eq:2448} as the one induced by restriction $\res_\lambda:\FK{G}_q\to \msc{B}_\lambda$.
\end{proof}

\subsection{Supports and the fibers}

At a given point $\lambda:\Spec(K)\to \dG/\dB$ the associated embedding $\mfk{n}_\lambda\to \tN_K$ provides a calculation of the fiber $\O(\mfk{n}_\lambda)\cong \lambda^\ast p_\ast\O_{\tN}$, as an algebra over $\Spec(K)$.  By Lemma \ref{lem:2289} we have a corresponding calculation $\O(\mfk{n}_\lambda)\cong H^\ast(\operatorname{L}\lambda^\ast\msc{R}(G_q))$.  So, for any $\msc{R}(G_q)$-module $\msc{G}$ in $D(\dG/\dB)$ one takes fibers and cohomology to obtain two graded modules
\begin{equation}\label{eq:2371}
\lambda^\ast H^\ast(\msc{G})\ \ \text{and}\ \ H^\ast(\operatorname{L}\lambda^\ast\msc{G})
\end{equation}
over $\O(\mfk{n}_\lambda)$.
\par

In the case of $\msc{G}=\sRHom_{\FK{G}_q}(M,N)$, with $M$ and $N$ coherent and in the image of the de-equivariantization $E_-:D_{\rm fin}(G_q)\to D_{\coh}(\FK{G}_q)$, one can employ Lemma \ref{lem:2289} to deduce an identification of $\O(\mfk{n}_\lambda)$-modules $\lambda^\ast H^\ast(\msc{G})= H^\ast(\operatorname{L}\lambda^\ast\msc{G})$.  At general $M$ and $N$ one does not expect such an identification of the modules \eqref{eq:2371}.  However, one does find that the supports of these two modules are identified at arbitrary coherent dg sheaves.

\begin{proposition}\label{prop:lambda_supp}
At an arbitrary geometric point $\lambda:\Spec(K)\to \dG/\dB$, and $M$ and $N$ in $D_{\coh}(\FK{G}_q)$, we have an equality of supports
\[
\Supp_{\mfk{n}_\lambda}\lambda^\ast\sExt_{\FK{G}_q}(M,N)=\Supp_{\mfk{n}_\lambda}H^\ast(\operatorname{L}\lambda^\ast\sRHom_{\FK{G}_q}(M,N)).
\]
\end{proposition}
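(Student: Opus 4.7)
The plan is to prove the desired equality of supports by analyzing the hypercohomology spectral sequence associated with the derived pullback $\operatorname{L}\lambda^\ast$ applied to $\sRHom_{\FK{G}_q}(M,N)$, in combination with flat base change for the affine bundle projection $p:\tN\to\dG/\dB$. Both sides will be compared to the set-theoretic support of the coherent Springer sheaf $\sExt_{\FK{G}_q}(M,N)^{\Sp}$ on $\tN$, which is coherent by Proposition \ref{prop:coherent}.

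First I would set up the Tor spectral sequence
\[
E_2^{-n,q}=\operatorname{Tor}_n^{\O_{\dG/\dB}}\!\bigl(K,\,\sExt^q_{\FK{G}_q}(M,N)\bigr)\;\Rightarrow\;H^{q-n}\!\bigl(\operatorname{L}\lambda^\ast\sRHom_{\FK{G}_q}(M,N)\bigr),
\]
which is $\O(\mfk{n}_\lambda)$-linear via the central tensor action of $\msc{R}(G_q)$ on $\sRHom_{\FK{G}_q}(M,N)$. Since $p$ is flat, flat base change for the Cartesian square formed by $\lambda:\Spec K\to\dG/\dB$ and $p$ identifies $\operatorname{L}\lambda^\ast p_\ast\simeq p_\ast\operatorname{L}i^\ast$, for the closed immersion $i:\mfk{n}_\lambda\hookrightarrow\tN$, yielding $E_2^{-n,q}\cong\operatorname{Tor}_n^{\O_\tN}\!\bigl(\O_{\mfk{n}_\lambda},\,\sExt^q_{\FK{G}_q}(M,N)^{\Sp}\bigr)$ as $\O(\mfk{n}_\lambda)$-modules. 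A stalk-wise Nakayama argument on $\tN$ then shows that $\Supp_{\mfk{n}_\lambda}\!\bigl(\operatorname{Tor}_n^{\O_\tN}(\O_{\mfk{n}_\lambda},\tilde{\msc{G}})\bigr)\subseteq\mfk{n}_\lambda\cap\Supp_\tN(\tilde{\msc{G}})$ for any coherent $\tilde{\msc{G}}$ on $\tN$ and any $n\geq 0$, with equality at $n=0$. Applied to $\tilde{\msc{G}}=\sExt^{\Sp}_{\FK{G}_q}(M,N)$, the total $E_2$-page therefore has support $\mfk{n}_\lambda\cap\Supp_\tN(\sExt^{\Sp})=\Supp_{\mfk{n}_\lambda}(\mathrm{LHS})$; since $E_\infty$-terms are subquotients of $E_2$-terms and $H^\ast$ is filtered by $E_\infty$, this yields $\Supp_{\mfk{n}_\lambda}(\mathrm{RHS})\subseteq\Supp_{\mfk{n}_\lambda}(\mathrm{LHS})$.

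For the reverse inclusion my plan is a dévissage reducing to the case $M=E_V$, $N=E_W$ handled by Lemma \ref{lem:2289}, where the natural comparison map $\lambda^\ast\sExt\to H^\ast(\operatorname{L}\lambda^\ast\sRHom)$ is an outright isomorphism and supports agree tautologically. Since every simple of $\FK{G}_q$ is a summand of some $E_V$ by Proposition \ref{prop:andersen}, $D_{\coh}(\FK{G}_q)$ is the thick subcategory generated by the $E_V$'s, and both sides of the claimed equality are additive on direct summands and satisfy long-exact-sequence-type containments under triangles (for the LHS via the LES of the coherent sheaves $\sExt^\ast(-,N)^{\Sp}$ on $\tN$, for the RHS via the LES induced by the exactness of $\operatorname{L}\lambda^\ast\sRHom(-,N)$). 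The hard part will be propagating equality, rather than mere containment, through non-split triangles: each triangle by itself yields only one-sided containments for each side. I expect that combining these containments with the already-proven inclusion $\mathrm{RHS}\subseteq\mathrm{LHS}$ from the previous paragraph, together with the strict identification in the base case, is enough to force equality by induction on the length of the dévissage of $M$ (and dually of $N$) into the $E_V$'s.
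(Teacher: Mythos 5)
Your first half is fine: the hyper-Tor spectral sequence
\[
E_2^{-n,q}=\operatorname{Tor}_n^{\O_{\dG/\dB}}\bigl(K,\sExt^q_{\FK{G}_q}(M,N)\bigr)\ \Rightarrow\ H^{q-n}\bigl(\operatorname{L}\lambda^\ast\sRHom_{\FK{G}_q}(M,N)\bigr),
\]
which converges because the flag variety is smooth (finite Tor-dimension), together with flatness of $p$ and Nakayama on the coherent sheaf $\sExt_{\FK{G}_q}(M,N)^{\Sp}$ (Proposition \ref{prop:coherent}), does give the inclusion $\Supp_{\mfk{n}_\lambda}H^\ast(\operatorname{L}\lambda^\ast\sRHom)\subseteq\Supp_{\mfk{n}_\lambda}\lambda^\ast\sExt$. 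But this is the easy, formal inclusion; the entire content of the proposition is the reverse one, which rules out cancellation in exactly this spectral sequence, and there your argument has a genuine gap. Support equalities do not propagate through non-split triangles: if $M'\to M\to M''$ is a triangle and equality is known for $M'$ and $M''$, then for $x\in\Supp_{\mfk{n}_\lambda}\lambda^\ast\sExt(M,N)$ with $x\notin\Supp_{\mfk{n}_\lambda}H^\ast(\operatorname{L}\lambda^\ast\sRHom(M,N))$, the one-sided containments you list (plus the already-proved inclusion) only force $x$ to lie in both $\mathrm{RHS}(M')$ and $\mathrm{RHS}(M'')$ — no contradiction, since classes coming from $M'$ and $M''$ can cancel against each other under the connecting map. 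So the induction on a d\'evissage of $M$ (and of $N$) into the bundles $E_V$ of Lemma \ref{lem:2289} does not close, and nothing in the proposal replaces it. Note also that one cannot argue degreewise "at the top cohomological degree": the relevant support is over $\mfk{n}_\lambda$, where the degree-$2$ polynomial generators mix all cohomological degrees and $\sExt$ is typically nonzero in infinitely many degrees.

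The paper's proof supplies precisely the missing input. It pulls back along the flat cover $\pi:\dG\to\dG/\dB$, where $\sRHom_{\dG/\su}(\1,\1)$ is represented by an honest sheaf of dg algebras which is \emph{formal} by the Arkhipov--Bezrukavnikov--Ginzburg result, so that $\sRHom_{\dG/\su}(M,N)$ becomes a genuine dg module over $\O_{\dG}\ot_k\Sym(\mfk{n}^\ast)$; it then invokes the dg commutative algebra detection theorem of Carlson--Iyengar, which says that for such a dg module $\msc{F}$ and a graded point $\msc{A}\to K(x)[t,t^{-1}]$ one has $K(x)[t,t^{-1}]\ot_{\msc{A}}H^\ast(\msc{F})=0$ if and only if $K(x)[t,t^{-1}]\ot^{\operatorname{L}}_{\msc{A}}\msc{F}$ is acyclic; finally it descends back to $\dG/\dB$, comparing $\operatorname{L}\iota_\lambda^\ast\pi^\ast$ with $\operatorname{unit}^\ast\operatorname{L}\lambda^\ast$ and using formality once more to identify $\operatorname{L}\lambda^\ast\msc{R}(G_q)$ with $\Sym(\mfk{n}_\lambda^\ast)$. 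Some substitute for this formality-plus-dg-support-detection step (or an equivalent mechanism, e.g.\ hypersurface-type arguments) is unavoidable; a purely triangulated d\'evissage from the case of equivariant vector bundles cannot yield the equality.
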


The point is essentially that the support of the cohomology of a dg sheaf over a formal dg scheme can be calculated by taking derived fibers at the dg level.  (By a formal dg scheme here we mean one which is quasi-isomorphic to its cohomology.)  However, one has to deal with certain complications due to the fact that the algebra $\msc{R}(G_q)$ has not been shown to be formal at this point (cf.\ Conjecture \ref{conj:formality} below).

\begin{proof}
We may assume $\lambda$ is a $k$-point by changing base if necessary.  We first make an argument about the pullback $\pi^\ast\sRHom_{\FK{G}_q}(M,N)=\sRHom_{\dG/\su}(M,N)$ along the flat cover $\pi:\dG\to \dG/\dB$ (see Proposition \ref{prop:shom_exp}).\vspace{2mm}

\noindent {\bf Step 1:} {\it The algebra $\sRHom_{\dG/\su}(\1,\1)$ is realizable as a quasi-coherent sheaf of dg algebras over $\dG$, and $\sRHom_{\dG/\su}(M,N)$ is realizable as a dg module over this algebra.  Furthermore, the dg algebra $\sRHom_{\dG/\su}(\1,\1)$ is formal.}\vspace{2mm}

Consider the sheaf $\sRHom_{\dG/\su}(M,N)$ of $\sRHom_{\dG/\su}(\1,\1)$-modules in $D(\dG)$.  (Here we forget about $\dB$-equivariance.)  We note that, over $\dG$, $\sRHom_{\dG/\su}(\1,\1)$ is represented by the dg algebra $\sEnd_{\dG/\su}(\mcl{P})$ and $\sRHom_{\dG/\su}(M,N)$ is represented by the dg module $\sHom_{\dG/\su}(\mcl{P}\ot M,N)$ over this dg algebra.  Here $\mcl{P}\to \1$ is a projective resolution of the unit, and we may choose $\mcl{P}=E_{P}(=\O_{\dG}\ot_k P)$ for a projective resolution $P\to k$ of the trivial representation in $\Rep G_q$.  Then we have the dg algebra quasi-isomorphism
\[
\O_{\dG}\ot_k \End_{\su}(P)\to \sEnd_{\dG/\su}(\mcl{P}),\ \ f\ot_k\xi\mapsto (f\cdot-)\ot_k \xi 
\]
and by the generic formality result \cite[Proposition 3.7.7]{arkhipovbezrukavnikovginzburg04} we have a quasi-isomorphism of dg algebras
\[
\O_{\dG}\ot_k\Sym(\mfk{n}^\ast)\overset{\sim}\to \O_{\dG}\ot_k \End_{\su}(P),
\]
for some special choice of $P$.  Here the generators $\mfk{n}^\ast$ in the algebra $\O_{\dG}\ot_k\Sym(\mfk{n}^\ast)$ are taken to be in cohomological degree $2$.  So, we conclude that the dg algebra $\sEnd_{\dG/\su}(\mcl{P})$ is formal, and we have a dg sheaf of $\O_{\dG}\ot_k \Sym(\mfk{n}^\ast)$-modules $\sHom_{\dG/\su}(\mcl{P}\ot M,N)$ whose cohomology is equal to $\sExt_{\dG/\su}(M,N)$ as an $\O_{\dG}\ot_k\Sym(\mfk{n}^\ast)$-module.  We have now established the claims of Step 1.
\par

Let us now consider the closed embedding $\iota_\lambda:\dB_\lambda\to \dG$ of the $B$-coset over the given point $\lambda:\Spec(k)\to \dG$.  Pulling back along the map $\iota_\lambda$ provides a functor
\[
\operatorname{L}\iota_\lambda^\ast:\O_{\dG}\ot_k\Sym(\mfk{n}^\ast)\text{-dgmod}\to \O_{\dB_\lambda}\ot_k\Sym(\mfk{n}^\ast)\text{-dgmod}
\]
between derived categories of sheaves of dg modules.  We can also consider graded modules over these algebras, and have the pullback functor $\iota^\ast_\lambda$ between the corresponding graded module categories, at an abelian level.
\vspace{2mm}

\noindent {\bf Step 2:} {\it For a coherent dg sheaf $\msc{F}$ over $\O_{\dG}\ot\Sym(\mfk{n}^\ast)$, there is an equality of supports
\begin{equation}\label{eq:2490}
\Supp_{\dB_\lambda\times \mfk{n}}\iota_\lambda^\ast H^\ast(\msc{F})=\Supp_{\dB_\lambda\times \mfk{n}}H^\ast(\operatorname{L}\iota_\lambda^\ast\msc{F}).
\end{equation}
In particular, one has such an equality of supports for $\msc{F}=\sRHom_{\dG/\su}(M,N)$.}\vspace{2mm}

Take $\msc{A}=\O_{\dG}\ot\Sym(\mfk{n}^\ast)$. By a coherent dg sheaf over $\msc{A}$ we mean specifically a dg sheaf $\msc{F}$ with coherent cohomology.  Up to quasi-isomorphism, such $\msc{F}$ can be assumed to be a locally finitely generated over $\msc{A}$ \cite[Lemma 4.5]{negronpevtsova}.
\par

We consider a point in the dg spectrum $\Spec(\msc{A})$, i.e.\ a graded algebra map $\msc{A}\to K(x)[t,t^{-1}]$ with $\deg(t)=2$.  Here by $K(x)$ we mean the pushforward of the structure sheaf on $\Spec(K)$ along a given map $x:\Spec(K)\to \dG$, so that the dg algebra $K(x)[t,t^{-1}]$ is supported at the image of $x$.  By dg commutative algebra \cite[Theorem 2.4]{carlsoniyengar15} we understand that at any point $\msc{A}\to K(x)[t,t^{-1}]$ we have
\begin{equation}\label{eq:2395}
\begin{array}{rl}
K(x)[t,t^{-1}]\ot_{\msc{A}}H^\ast(\msc{F})=0& \Leftrightarrow\ \ K(x)[t,t^{-1}]\ot^{\operatorname{L}}_{\msc{A}}\msc{F}\ \ \text{is acyclic}.\\
\end{array}
\end{equation}
\par

If we consider the embedding $\iota_\lambda:\dB_\lambda\to \dG$ as above, the formula \eqref{eq:2395} tells us
\[
\Supp_{\dB_\lambda\times \mfk{n}}\iota_\lambda^\ast H^\ast(\msc{F})=\text{the dg support of $\operatorname{L}\iota_\lambda^\ast\msc{F}$ over }\dB_\lambda\times \mfk{n},
\]
where the dg support is given by taking derived fibers along graded algebra maps $\msc{A}\to K(x)[t,t^{-1}]$.  Now a similar calculation to \eqref{eq:2395}, with $\msc{F}$ replaced by $\overline{\msc{F}}=\operatorname{L}\iota_\lambda^\ast\msc{F}$, gives
\[
\text{the dg support of $\operatorname{L}\iota_\lambda^\ast\msc{F}$ over }\dB_\lambda\times \mfk{n}=\Supp_{\dB_\lambda\times \mfk{n}}H^\ast(\operatorname{L}\iota_\lambda^\ast\msc{F})
\]
and thus $\Supp_{\dB_\lambda\times \mfk{n}}\iota_\lambda^\ast H^\ast(\msc{F})=\Supp_{\dB_\lambda\times \mfk{n}}H^\ast(\operatorname{L}\iota_\lambda^\ast\msc{F})$.
\vspace{2mm}

\noindent {\bf Step 3:} {\it The equality of supports \eqref{eq:2490} implies an equality of supports $\Supp_{\mfk{n}_\lambda}\lambda^\ast H^\ast(\msc{G})=\Supp_{\mfk{n}_\lambda}H^\ast(\operatorname{L}\lambda^\ast\msc{G})$ for the $\msc{R}(G_q)$-module $\msc{G}=\sRHom_{\FK{G}_q}(M,N)$.}\vspace{2mm}

Via the pullback diagram
\[
\xymatrix{
\dB_\lambda\ar[rr]^{\iota_\lambda}\ar[d]_{\rm unit} & & \dG\ar[d]^\pi\\
\Spec(k)\ar[rr]^\lambda & & \dG/\dB.
}
\]
we observe an identification of monoidal functors $\operatorname{L}\iota_\lambda^\ast\pi^\ast\cong\operatorname{unit}^\ast\operatorname{L}\lambda^\ast:D(\dG/\dB)\to D(\dB_\lambda)$ which restricts to an isomorphism of functors between module categories
\begin{equation}\label{eq:2503}
\operatorname{L}\iota_\lambda^\ast\pi^\ast\cong \operatorname{unit}^\ast\operatorname{L}\lambda^\ast:\msc{R}(G_q)\text{-mod}\to \O_{\dB_\lambda}\ot_k\Sym(\mfk{n}^\ast)\text{-mod},
\end{equation}
where we employ the isomorphism of $\O_{\dB_\lambda}$-algebras
\[
\operatorname{L}\iota_\lambda^\ast\pi^\ast\msc{R}(G_q)\cong \operatorname{L}\iota^\ast_\lambda\sRHom_{\dG/\su}(\1,\1)\cong \O_{\dB_\lambda}\ot_k\Sym(\mfk{n}^\ast)
\]
deduced from Step 1 above.  To be clear, in \eqref{eq:2503} we are considering the module categories for these \emph{algebra objects} in their respective derived categories.
\par

We also calculate
\[
\Sym(\mfk{n}^\ast_\lambda)\overset{\sim}\to \RHom_{\msc{B}_\lambda}(\1,\1)\cong\operatorname{L}\lambda^\ast\msc{R}(G_q)
\]
by Theorem \ref{thm:fiber_l} and another application of the formality result of \cite[Proposition 3.7.7]{arkhipovbezrukavnikovginzburg04}.  Hence $\operatorname{unit}^\ast\operatorname{L}\lambda^\ast\msc{R}(G_q)\cong \O_{\dB_\lambda}\ot_k\Sym(\mfk{n}^\ast_\lambda)$ and the identification of functors $\operatorname{L}\iota_\lambda^\ast\pi^\ast\cong\operatorname{unit}^\ast\operatorname{L}\lambda^\ast$ provides an isomorphism of algebras
\begin{equation}\label{eq:2518}
\phi:\O_{\dB_\lambda}\ot_k\Sym(\mfk{n}^\ast_\lambda)\overset{\cong}\to \O_{\dB_\lambda}\ot_k\Sym(\mfk{n}^\ast).
\end{equation}
One changes base along $\phi$ to view the pullback functor
\[
\operatorname{unit}^\ast:\Sym(\mfk{n}^\ast_\lambda)\text{-mod}\to \O_{\dB_\lambda}\ot_k\Sym(\mfk{n}^\ast_\lambda)\text{-mod}\underset{\res_{\phi^{-1}}}\cong\O_{\dB_\lambda}\ot_k\Sym(\mfk{n}^\ast)\text{-mod}
\]
as a functor to the category of $\O_{\dB_\lambda}\ot_k\Sym(\mfk{n}^\ast)$-modules, and one employs this particular understanding of the pullback $\operatorname{unit}^\ast$ in the identification of functors \eqref{eq:2503}.
\par

To elaborate further on the isomorphism \eqref{eq:2518}, the map $\phi$ is dual to an isomorphism of $B_\lambda$-schemes $\varphi:B_\lambda\times \mfk{n}\to B_\lambda\times \mfk{n}_\lambda$, which is subsequently specified by a map of $k$-schemes
\[
t:B_\lambda\times \mfk{n}\to \mfk{n}_\lambda.
\]
The map $t$ is the composite of $\varphi$ with the projection $p_2:B_\lambda\times \mfk{n}_\lambda\to \mfk{n}_\lambda$.  The functor $\operatorname{unit}^\ast:\Sym(\mfk{n}^\ast_\lambda)\text{-mod}\to \O_{\dB_\lambda}\ot_k\Sym(\mfk{n}^\ast)\text{-mod}$ is then, geometrically, just pulling back along $t:B_\lambda\times \mfk{n}\to \mfk{n}_\lambda$.  It does not matter what the map $t$ is precisely, but one can check that $t$ is the expected flat covering $B_\lambda\times \mfk{n}\to \mfk{n}_\lambda$, $(g,x)\mapsto \Ad_g(x)$.

In any case, we consider the map $t:\dB_\lambda\times \mfk{n}\to \mfk{n}_\lambda$, and the sheaves $\msc{G}=\sRHom_{\FK{G}_q}(M,N)$ and $\msc{F}=\sRHom_{\dG/\su}(M,N)$.  We note that $\pi^\ast\msc{G}\cong \msc{F}$, by Proposition \ref{prop:shom_exp}.  We therefore calculate
\[
\begin{array}{rl}
t^{-1}\Supp_{\mfk{n}_\lambda}H^\ast(\operatorname{L}\lambda^\ast\msc{G})&=\Supp_{\dB_\lambda \times \mfk{n}} \operatorname{unit}^\ast H^\ast(\operatorname{L}\lambda^\ast\msc{G})\\
&=\Supp_{\dB_\lambda \times \mfk{n}}H^\ast(\operatorname{unit}^\ast \operatorname{L}\lambda^\ast\msc{G})\\
&=\Supp_{\dB_\lambda \times \mfk{n}}H^\ast(\operatorname{L}\iota_\lambda^\ast\pi^\ast\msc{G})=\Supp_{\dB_\lambda\times \mfk{n}}H^\ast(\operatorname{L}\iota_\lambda^\ast\msc{F}).
\end{array}
\]
A similar argument, now at the abelian level, provides a calculation
\[
t^{-1}\Supp_{\mfk{n}_\lambda}\lambda^\ast H^\ast(\msc{G})=\Supp_{\dB_\lambda\times \mfk{n}}\iota_\lambda^\ast\pi^\ast H^\ast(\msc{G})=\Supp_{\dB_\lambda\times \mfk{n}}\iota_\lambda^\ast H^\ast(\msc{F}).
\]
So we see that \eqref{eq:2490} implies an equality $t^{-1}\Supp_{\mfk{n}_\lambda}\lambda^\ast H^\ast(\msc{G})=t^{-1}\Supp_{\mfk{n}_\lambda}H^\ast(\operatorname{L}\lambda^\ast\msc{G})$.  Since $t$ is surjective we find the desired equality
\[
\Supp_{\mfk{n}_\lambda}\lambda^\ast H^\ast(\msc{G})=\Supp_{\mfk{n}_\lambda}H^\ast(\operatorname{L}\lambda^\ast\msc{G}).
\]
\end{proof}

\subsection{Proof of the naturality and reconstruction}

\begin{proof}[Proof of Theorem \ref{thm:natrl}]
The reconstruction claim of Theorem \ref{thm:natrl} follows from the naturality claim, $\supp^{\operatorname{chom}}_\lambda(\res_\lambda M)=\mbb{P}(i_\lambda)^{-1}\supp_{\tN}(M)$, since all points in $\tN$ lie in the image of some $i_\lambda:\mfk{n}_\lambda\to \tN$.  So we need only establish the equaltiy $\supp^{\operatorname{chom}}_\lambda(\res_\lambda M)=\mbb{P}(i_\lambda)^{-1}\supp_{\tN}(M)$ at fixed $\lambda:\Spec(K)\to \dG/\dB$.
\par

We have the diagram
\[
\xymatrix{
\operatorname{L}\lambda^\ast\sRHom_{\FK{G}_q}(\1,\1)\ar[d]^\sim\ar[rr]^{-\ot M|_\lambda}& & \operatorname{L}\lambda^\ast\sRHom_{\FK{G}_q}(M,M)\ar[d]^\sim\\
\RHom_{\msc{B}_\lambda}(\1,\1)\ar[rr]^{-\ot \res_\lambda M} & & \RHom_{\msc{B}_\lambda}(\res_\lambda M,\res_\lambda M)
}
\]
via Theorem \ref{thm:fiber_l}, and take cohomology to obtain a diagram
\begin{equation}\label{eq:2492}
\xymatrix{
\O(\mfk{n}_\lambda)=\lambda^\ast\sExt_{\FK{G}_q}(\1,\1)\ar[d]^\cong\ar[rr]^{H^\ast(-\ot M|_\lambda)}& & H^\ast(\operatorname{L}\lambda^\ast\sRHom_{\FK{G}_q}(M,M))\ar[d]^\cong\\
\Ext_{\msc{B}_\lambda}(\1,\1)\ar[rr]^{-\ot \res_\lambda M} & & \Ext_{\msc{B}_\lambda}(\res_\lambda M,\res_\lambda M).
}
\end{equation}
(We employ Lemma \ref{lem:2289} here to calculate the cohomology of $\operatorname{L}\lambda\sRHom(\1,\1)$.)  
\par

Since $\Supp_{\mfk{n}_\lambda}H^\ast(\operatorname{L}\lambda^\ast\sRHom_{\FK{G}_q}(M,M))$ is the vanishing locus of the kernel $\msc{I}_\lambda$ of the action map $H^\ast(-\ot M|_\lambda)$ appearing in \eqref{eq:2492}, and similarly
\[
\Supp_{\mfk{n}_\lambda}\Ext_{\msc{B}_\lambda}(\res_\lambda M,\res_\lambda M)=Van(I_\lambda)
\]
for $I_\lambda=\ker(-\ot\res_\lambda M)$, we see that
\begin{equation}\label{eq:2523}
\Supp_{\mfk{n}_\lambda}\Ext_{\msc{B}_\lambda}(\res_\lambda M,\res_\lambda M)=\Supp_{\mfk{n}_\lambda}H^\ast(\operatorname{L}\lambda^\ast\sRHom_{\FK{G}_q}(M,M)).
\end{equation}
That is, we have such an equality of supports \emph{provided} we employ the identification
\[
\O(\mfk{n}_\lambda)\cong \Ext_{\msc{B}_\lambda}(\1,\1)
\]
induced by the isomorphism appearing on the left-hand side of \eqref{eq:2492}.
\par

By Proposition \ref{prop:lambda_supp}, \eqref{eq:2523} implies a calculation of support
\[
\Supp_{\mfk{n}_\lambda}\Ext_{\msc{B}_\lambda}(\res_\lambda M,\res_\lambda M)=\Supp_{\mfk{n}_\lambda}(\lambda^\ast\sExt_{\FK{G}_q}(M,M)).
\]
and pulling back along $i_\lambda:\mfk{n}_\lambda\to \tN$ gives
\[
\Supp_{\mfk{n}_\lambda}(\lambda^\ast\sExt_{\FK{G}_q}(M,M))=i_\lambda^{-1}\Supp_{\tN}\sExt_{\FK{G}_q}(M,M).
\]
We take projectivizations to obtain the desired equality
\[
\supp^{\operatorname{chom}}_\lambda(\res_\lambda M)=\mbb{P}(i_\lambda)^{-1}\supp_{\tN}(M).
\]

All that is left to check is that the identification of $\Ext_{\msc{B}_\lambda}(\1,\1)$ with $\O(\mfk{n}_\lambda)$ via the map from $\lambda^\ast\sExt_{\FK{G}_q}(\1,\1)$ agrees with the identification provided by the restriction functor from $\Ext_{\FK{G}_q}(\1,\1)$.  But this point was already dealt with at Lemma \ref{lem:mys} above.
\end{proof}


\section{Cohomological support and the moment map}
\label{sect:cohom}

We consider the moment map $\kappa:\tN\to \mcl{N}$ and its projectivization $\mbb{P}(\kappa):\mbb{P}(\tN)\to \mbb{P}(\mcl{N})$.  We let $\supp^{\operatorname{chom}}_{\mcl{N}}$ denote the standard cohomological support for $\FK{G}_q$,
\[
\supp^{\operatorname{chom}}_{\mathcal{N}}(M):=\mbb{P}\left(\Supp_{\mcl{N}}\Ext_{\FK{G}_q}(M,M)\right)\ \subset\ \mbb{P}(\mcl{N}).
\]
Recall the algebra identification $\Ext_{\FK{G}_q}(\1,\1)=\O(\mcl{N})$ of Corollary \ref{cor:GK}, and let $\O(\mcl{N})$ act on $\Ext_{\FK{G}_q}(M,M)$ via the tensor structure.  In this section we prove the following localization result for cohomological support.

\begin{theorem}\label{thm:tN_N}
For any $M$ in $D_{\coh}(\FK{G}_q)$ we have
\[
\mbb{P}(\kappa)\left(\supp_{\tN}(M)\right)=\supp^{\operatorname{chom}}_{\mcl{N}}(M).
\]
\end{theorem}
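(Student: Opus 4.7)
The plan is to realize $\Ext^*_{\FK{G}_q}(M,M)$ as the derived pushforward of $\sRHom^{\Sp}_{\FK{G}_q}(M,M)$ along the proper map $\kappa:\tN \to \mcl{N}$, and then invoke the standard fact that proper pushforward preserves support. To begin, I would use the enhancement of Proposition \ref{prop:D_Enh} together with the equivalence between graded $p_*\O_{\tN}$-modules on $\dG/\dB$ and $\mbb{G}_m$-equivariant sheaves on $\tN$, and the fact that $p:\tN \to \dG/\dB$ is affine, to obtain the identification
\[
\RHom_{\FK{G}_q}(M,M) \simeq \operatorname{R}\Gamma(\tN, \sRHom^{\Sp}_{\FK{G}_q}(M,M)).
\]
Since $\mcl{N}$ is affine, this hypercohomology is computed by $\Gamma(\mcl{N}, \operatorname{R}\kappa_*\sRHom^{\Sp}_{\FK{G}_q}(M,M))$.

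Next, I would consider the hypercohomology spectral sequence
\[
E_2^{p,q} = H^p(\tN,\sExt^{q,\Sp}_{\FK{G}_q}(M,M)) = \Gamma(\mcl{N},\, R^p\kappa_*\sExt^{q,\Sp}_{\FK{G}_q}(M,M))\ \Rightarrow\ \Ext^{p+q}_{\FK{G}_q}(M,M),
\]
a spectral sequence of graded $\O(\mcl{N})$-modules. Under the conical structure on $\tN$, where cohomological degree matches $\mbb{G}_m$-weight, the sheaf $\sExt^{q,\Sp}$ sits in $\mbb{G}_m$-weight $q$, hence so does $E_r^{p,q}$. The differential $d_r:E_r^{p,q}\to E_r^{p+r,q-r+1}$ shifts weight by $1-r$, and $\mbb{G}_m$-equivariance forces $d_r=0$ for $r\geq 2$. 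Thus $E_\infty = E_2$, which yields
\[
\Supp_{\mcl{N}}\Ext^*_{\FK{G}_q}(M,M) = \Supp_{\mcl{N}}E_2 = \Supp_{\mcl{N}}\bigoplus_{p}R^p\kappa_*\sExt^{*,\Sp}_{\FK{G}_q}(M,M) = \Supp_{\mcl{N}}\operatorname{R}\kappa_*\sExt^{*,\Sp}_{\FK{G}_q}(M,M).
\]

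Finally, since $\kappa$ is proper and $\sExt^{*,\Sp}_{\FK{G}_q}(M,M)$ is coherent on $\tN$ by Proposition \ref{prop:coherent}, the theorem on formal functions yields
\[
\Supp_{\mcl{N}}\operatorname{R}\kappa_*\sExt^{*,\Sp}_{\FK{G}_q}(M,M) = \kappa(\Supp_{\tN}\sExt^{*,\Sp}_{\FK{G}_q}(M,M)).
\]
Projectivizing both sides gives $\mbb{P}(\kappa)(\supp_{\tN}(M)) = \supp^{\operatorname{chom}}_{\mcl{N}}(M)$.

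The main obstacle will be a careful implementation of the $\mbb{G}_m$-equivariance at the derived level so that the spectral sequence genuinely collapses; this requires choosing $\mbb{G}_m$-equivariant injective resolutions or working directly in the $\mbb{G}_m$-equivariant derived category of $\QCoh(\tN)$. A viable alternative, less direct but perhaps more transparent, would be to combine the naturality result (Theorem \ref{thm:natrl}) with a pointwise matching of supports over the Borels using the commutative diagram $\mfk{n}_\lambda\xrightarrow{i_\lambda}\tN\xrightarrow{\kappa}\mcl{N}$ and Proposition \ref{prop:lambda_supp}, bypassing the spectral sequence entirely at the cost of first establishing an analogous reconstruction of cohomological support for $\FK{G}_q$ from the quantum Borels.
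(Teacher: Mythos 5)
The central step of your argument --- collapse of the hypercohomology spectral sequence $E_2^{p,q}=H^p(\dG/\dB,\sExt^q_{\FK{G}_q}(M,M))\Rightarrow \Ext^{p+q}_{\FK{G}_q}(M,M)$ at $E_2$ --- is not justified, and it is exactly the step that carries the hard half of the theorem. The ``$\mbb{G}_m$-weight'' you invoke on $\sExt^{q}$ is nothing other than the cohomological index $q$ itself: the conical/$\mbb{G}_m$-structure on $\tN$ exists only after passing to cohomology of $\msc{R}(G_q)$ and of $\sRHom_{\FK{G}_q}(M,M)$. For the differentials $d_r$ to be constrained by equivariance you would need a genuine $\mbb{G}_m$-action (equivalently, an internal grading) on the complex $\sRHom_{\FK{G}_q}(M,M)$ at the cochain level inducing the cohomological grading --- i.e.\ a formality-type statement, which is precisely what is not available (the paper's formality conjecture is open even for the unit object, and your $M$ is arbitrary). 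Every spectral sequence differential shifts the index $q$; that is not a contradiction with anything, so no vanishing follows. Without collapse, $E_\infty$ is only a subquotient of $E_2$, and you obtain only $\Supp_{\mcl{N}}\Ext_{\FK{G}_q}(M,M)\subseteq\Supp_{\mcl{N}}\operatorname{R}\kappa_\ast\sExt_{\FK{G}_q}(M,M)^{\Sp}\subseteq\kappa(\Supp_{\tN})$, i.e.\ the easy inclusion; the reverse inclusion $\kappa(\supp_{\tN}(M))\subseteq\supp^{\operatorname{chom}}_{\mcl{N}}(M)$ is exactly what the collapse was supposed to deliver. A second, smaller, problem: the identity $\Supp_{\mcl{N}}\operatorname{R}\kappa_\ast\msc{F}=\kappa(\Supp_{\tN}\msc{F})$ is false for general coherent $\msc{F}$ and proper $\kappa$ (already $\O(-1)$ on $\mbb{P}^1$ over a point has vanishing $\operatorname{R}\Gamma$), and the theorem on formal functions does not give it. It can be repaired here because $\sExt_{\FK{G}_q}(M,M)^{\Sp}$ is a sheaf of unital $\O_{\tN}$-algebras: the unit yields an injection $\O_{\tN}/\msc{I}\hookrightarrow\sExt^{\Sp}$ onto the scheme-theoretic support, and left exactness of $\kappa_\ast$ plus closedness of $\kappa$ gives $\kappa(\Supp_{\tN})\subseteq\Supp_{\mcl{N}}\kappa_\ast\sExt^{\Sp}$ --- but this has to be said, and in any case it does not rescue the missing collapse.

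For comparison, the paper proves the two inclusions separately. The inclusion $\supp^{\operatorname{chom}}_{\mcl{N}}(M)\subseteq\mbb{P}(\kappa)(\supp_{\tN}(M))$ uses the very spectral sequence you wrote down, but only its multiplicative filtration: the bottom filtration piece is a nilpotent ideal of $\Ext_{\FK{G}_q}(M,M)$, the quotient embeds into $\Gamma(\dG/\dB,\sExt_{\FK{G}_q}(M,M))=\Gamma(\mcl{N},\kappa_\ast\sExt^{\Sp})$, and properness of $\kappa$ finishes (Lemma \ref{lem:2086}); no degeneration is needed or claimed. The reverse inclusion is exactly your proposed ``alternative'': Theorem \ref{thm:natrl} identifies the fiber of $\supp_{\tN}(M)$ over $\lambda$ with $\supp^{\operatorname{chom}}_\lambda(\res_\lambda M)$, and then the fact that $\res_\lambda$ is a tensor functor gives, from the commuting square of actions of $\O(\mcl{N})\to\O(\mfk{n}_\lambda)$ on $\Ext_{\FK{G}_q}(M,M)\to\Ext_{\msc{B}_\lambda}(\res_\lambda M,\res_\lambda M)$, the inclusion $j_\lambda\big(\Supp_{\mfk{n}_\lambda}\Ext_{\msc{B}_\lambda}(\res_\lambda M,\res_\lambda M)\big)\subseteq\Supp_{\mcl{N}}\Ext_{\FK{G}_q}(M,M)$ (Lemma \ref{lem:2128}). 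In particular the ``cost'' you anticipate --- first establishing a reconstruction of cohomological support for $\FK{G}_q$ from the Borels --- is not incurred: only this one-way, purely formal inclusion is needed, so the Borel route is both correct and cheaper than your main line of argument.
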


\subsection{The first inclusion}

Recall the multiplicative spectral sequence
\[
E_2=H^\ast(\dG/\dB,\sExt_{\FK{G}_q}(M,M))\ \Rightarrow\ \Ext_{\FK{G}_q}(M,M)
\]
of Corollary \ref{cor:372} (see also Proposition \ref{prop:D_Enh}).  This spectral sequence provides a bounded below filtration by ideals
\[
\begin{array}{l}
0=F_{-N-1}\Ext_{\FK{G}_q}(M,M)\subset F_{-N}\Ext_{\FK{G}_q}(M,M)\subset\vspace{1mm}\\
\hspace{5cm}\dots\subset F_{0}\Ext_{\FK{G}_q}(M,M)=\Ext_{\FK{G}_q}(M,M),
\end{array}
\]
and we find a nilpotent ideal
\[
\mathfrak{N}:=F_{-1}\Ext_{\FK{G}_q}(M,M)
\]
for which the quotient embeds into the corresponding global sections of the sheaf-extensions
\[
\Ext_{\FK{G}_q}(M,M)/\mathfrak{N}\hookrightarrow \Gamma(\dG/\dB,\sExt_{\FK{G}_q}(M,M)).
\]
(cf.\ \cite[proof of Theorem 4.1]{suslinfriedlanderbendel97b}).  Now, the global sections of $\sExt_{\FK{G}_q}(M,M)$ over $\dG/\dB$ are identified with the global sections of the pushforward along the moment map $\kappa:\tN\to \mcl{N}$, so that we have an inclusion of sheaves
\begin{equation}\label{eq:2081}
(\Ext_{\FK{G}_q}(M,M)/\mathfrak{N})^\sim\hookrightarrow\kappa_\ast\sExt_{\FK{G}_q}(M,M)
\end{equation}
of $\O_{\mcl{N}}$-algebras.  We therefore observe the following.

\begin{lemma}\label{lem:2086}
For any $M$ in $D_{\coh}(\FK{G}_q)$ there is an inclusion of supports
\[
\Supp_{\mcl{N}}\Ext_{\FK{G}_q}(M,M)\subset \kappa(\Supp_{\tN}\sExt_{\FK{G}_q}(M,M)).
\]
\end{lemma}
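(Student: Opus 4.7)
My plan is to combine the sub-$\O_{\mcl{N}}$-algebra inclusion \eqref{eq:2081} provided by the local-to-global spectral sequence with two standard facts: properness of the moment map $\kappa$, and invariance of cohomological support under quotient by a nilpotent two-sided Yoneda ideal.

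First, the moment map $\kappa:\tN\to\mcl{N}$ is proper, since it is a proper birational resolution of singularities. Consequently, for any coherent sheaf $\msc{G}$ on $\tN$, the support of the coherent pushforward $\kappa_\ast\msc{G}$ on $\mcl{N}$ is contained in the closed set $\kappa(\Supp_{\tN}\msc{G})$: if $y$ lies in the open complement of $\kappa(\Supp_{\tN}\msc{G})$, then a neighborhood of $y$ pulls back under $\kappa$ into the complement of $\Supp_{\tN}\msc{G}$, on which $\msc{G}$ and hence $\kappa_\ast\msc{G}$ vanishes. Applying this to the coherent sheaf $\sExt_{\FK{G}_q}(M,M)^{\Sp}$ of Proposition~\ref{prop:coherent} and combining with the subsheaf inclusion \eqref{eq:2081}, I obtain
\[
\Supp_{\mcl{N}}\bigl(\Ext_{\FK{G}_q}(M,M)/\mathfrak{N}\bigr)\subset \Supp_{\mcl{N}}\kappa_\ast\sExt_{\FK{G}_q}(M,M)\subset \kappa\bigl(\Supp_{\tN}\sExt_{\FK{G}_q}(M,M)\bigr).
\]

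It remains to show that quotienting by $\mathfrak{N}$ does not shrink cohomological support, that is, $\Supp_{\mcl{N}}\Ext_{\FK{G}_q}(M,M)\subset\Supp_{\mcl{N}}(\Ext_{\FK{G}_q}(M,M)/\mathfrak{N})$. Since $\O(\mcl{N})=\Ext_{\FK{G}_q}(\1,\1)$ acts centrally on $\Ext_{\FK{G}_q}(M,M)$ through $f\mapsto f\otimes 1_M$, Yoneda composition on $\Ext_{\FK{G}_q}(M,M)$ is $\O(\mcl{N})$-bilinear, and nilpotency of the two-sided Yoneda ideal $\mathfrak{N}$ is preserved under localization. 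If $p\in\Spec\O(\mcl{N})$ lies outside $\Supp_{\mcl{N}}(\Ext_{\FK{G}_q}(M,M)/\mathfrak{N})$, then $\Ext_{\FK{G}_q}(M,M)_p=\mathfrak{N}_p$ is itself nilpotent as a Yoneda algebra, whence $1_M=1_M^k=0$ in the localization for $k\gg 0$. This forces $\Ext_{\FK{G}_q}(M,M)_p=0$, and so $p\notin\Supp_{\mcl{N}}\Ext_{\FK{G}_q}(M,M)$. Chaining this with the previous display yields the lemma.

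The main obstacle is a bookkeeping one: verifying that $\mathfrak{N}$ is a two-sided Yoneda ideal with vanishing powers, and that this nilpotency persists after localization along $\O(\mcl{N})$. These compatibilities should follow in a routine way from multiplicativity of the spectral sequence of Corollary~\ref{cor:372} (combined with the finite cohomological dimension of $\dG/\dB$) and from centrality of $\Ext_{\FK{G}_q}(\1,\1)$ in the braided tensor category $\FK{G}_q$, so I do not anticipate substantial difficulty beyond the setup already in place.
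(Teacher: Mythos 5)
Your proposal is correct and follows essentially the same route as the paper: use the multiplicative spectral sequence inclusion \eqref{eq:2081}, note that the nilpotent ideal $\mathfrak{N}$ does not alter the $\O(\mcl{N})$-support (your localization argument with $1_M$ nilpotent is just a minor variant of the paper's comparison of the kernels $I_M$ and $I'_M$), and conclude via properness of $\kappa$.
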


\begin{proof}
The support of $\Ext_{\FK{G}_q}(M,M)$ as an $\Ext_{\FK{G}_q}(\1,\1)=\O(\mcl{N})$-module is, as an embedded topological subspace in $\mcl{N}$, the spectrum of the quotient
\[
\Supp_{\mcl{N}}\Ext_{\FK{G}_q}(M,M)\underset{\rm top}=\Spec(\O(\mcl{N})/I_M)
\]
where $I_M$ is the kernel of the algebra map $-\ot M:\Ext_{\FK{G}_q}(\1,\1)\to \Ext_{\FK{G}_q}(M,M)$.  Since the ideal $\mfk{N}\subset \Ext_{\FK{G}_q}(M,M)$, defined above, is nilpotent the above spectrum is equal to the spectrum $\Spec(\O(\mcl{N})/I_M)=_{\rm top}\Spec(\O(\mcl{N})/I'_M)$ where
\[
I'_M=\ker\big(\Ext_{\FK{G}_q}(\1,\1)\to \Ext_{\FK{G}_q}(M,M)\to \Ext(M,M)/\mfk{N}\big).
\]
This is to say,
\[
\Supp_{\mcl{N}}\Ext_{\FK{G}_q}(M,M)=\Supp_{\mcl{N}}(\Ext(M,M)/\mfk{N}).
\]
From the inclusion \eqref{eq:2081} we deduce an inclusion
\[
\Supp_{\mcl{N}}(\Ext_{\FK{G}_q}(M,M)/\mfk{N})\subset \Supp_{\mcl{N}}\kappa_\ast\sExt_{\FK{G}_q}(M,M),
\]
Finally, since the moment map $\kappa:\tN\to \mcl{N}$ is proper, and in particular closed, we have an inclusion
\[
\Supp_{\mcl{N}}\kappa_\ast\sExt_{\FK{G}_q}(M,M)\subset \kappa(\Supp_{\tN}\sExt_{\FK{G}_q}(M,M)).
\]
In total, we obtain the claimed inclusion
\[
\Supp_{\mcl{N}}\Ext_{\FK{G}_q}(M,M)=\Supp_{\mcl{N}}(\Ext_{\FK{G}_q}(M,M)/\mfk{N})\subset \kappa(\Supp_{\tN}\sExt_{\FK{G}_q}(M,M)).
\]
\end{proof}

\subsection{The second inclusion, and proof of Theorem \ref{thm:tN_N}}

Having observed Lemma \ref{lem:2086}, the proof of Theorem \ref{thm:tN_N} is reduced to an inclusion
\[
\kappa(\Supp_{\tN}\sExt_{\FK{G}_q}(M,M))\ \underset{\rm desired}\subset\ \Supp_{\mcl{N}}\Ext_{\FK{G}_q}(M,M)
\]
at general $M$ in $D_{\coh}(\FK{G}_q)$.  Since these sheaves are coherent, the above inclusion is equivalent to the claim that, for any geometric point $x:\Spec(K)\to \tN$ at which the fiber $x^\ast\sExt_{\FK{G}_q}(M,M)$ is non-vanishing, $\Ext_{\FK{G}_q}(M,M)$ is supported at the corresponding point $\kappa(x):\Spec(K)\to \mcl{N}$.

\begin{lemma}\label{lem:2128}
Suppose that the fiber $x^\ast\sExt_{\FK{G}_q}(M,M)$ is non-vanishing, at a given geometric point $x:\Spec(K)\to \tN$.  Then the fiber $\kappa(x)^\ast\Ext_{\FK{G}_q}(M,M)$ is non-vanishing as well.
\end{lemma}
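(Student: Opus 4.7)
The strategy is to transfer the fiber non-vanishing down to the Borel at $\lambda := p \circ x$ via Theorem \ref{thm:natrl}, and then push the resulting support statement back up to the nilpotent cone using tensor-functoriality of $\res_\lambda$. Setup: the point $x$ factors through $i_\lambda$ as $x = i_\lambda \circ y$ for a unique geometric point $y : \Spec(K) \to \mfk{n}_\lambda$, and under the moduli description $\tN = \{(\mfk{b}_\mu, z) : z \in \mfk{b}_\mu \text{ nilpotent}\}$ the composite $\kappa \circ i_\lambda : \mfk{n}_\lambda \to \mcl{N}$ is the natural closed inclusion of the nilpotent radical of $\mfk{b}_\lambda$; in particular $\kappa(x) = (\kappa \circ i_\lambda)(y)$ is the image of $y$ under this inclusion. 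The argument in the proof of Theorem \ref{thm:natrl} (combining Proposition \ref{prop:lambda_supp} with the natural comparison map $\lambda^\ast \sExt \to \Ext_{\msc{B}_\lambda}(\res_\lambda-, \res_\lambda-)$) in fact produces the un-projectivized equality
\[
\Supp_{\mfk{n}_\lambda}\Ext_{\msc{B}_\lambda}(\res_\lambda M,\res_\lambda M) \ =\ i_\lambda^{-1}\Supp_{\tN}\sExt_{\FK{G}_q}(M,M),
\]
so from the hypothesis $x \in \Supp_{\tN}\sExt_{\FK{G}_q}(M,M)$ we conclude $y \in \Supp_{\mfk{n}_\lambda}\Ext_{\msc{B}_\lambda}(\res_\lambda M,\res_\lambda M)$.

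\textbf{A ring-theoretic observation.} For any tensor category $\msc{C}$ and object $X$ one has
\[
\operatorname{Ann}_{\Ext_\msc{C}(\1,\1)}\!\big(\Ext_\msc{C}(X,X)\big) \ =\ \operatorname{Ann}_{\Ext_\msc{C}(\1,\1)}(\operatorname{id}_X),
\]
since the module action factors as $f\cdot g = (f \otimes \operatorname{id}_X)\circ g$ with $\circ$ the Yoneda product in the ring $\Ext_\msc{C}(X,X)$; thus $f\cdot \operatorname{id}_X = 0$ already forces $f\cdot g = 0$ for every $g$. Applying the tensor functor $\res_\lambda$, which sends $\operatorname{id}_M$ to $\operatorname{id}_{\res_\lambda M}$, this yields the inclusion of ideals
\[
\res_\lambda\!\big(\operatorname{Ann}(\Ext_{\FK{G}_q}(M,M))\big) \ \subset\ \operatorname{Ann}\big(\Ext_{\msc{B}_\lambda}(\res_\lambda M,\res_\lambda M)\big)
\]
inside $\O(\mfk{n}_\lambda)$.

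\textbf{Conclusion.} Taking spectra, and recalling that by definition the algebra map $\res_\lambda : \O(\mcl{N}) \to \O(\mfk{n}_\lambda)$ realizes on spectra precisely the closed embedding $\kappa \circ i_\lambda$, the previous inclusion gives
\[
\Supp_{\mfk{n}_\lambda}\Ext_{\msc{B}_\lambda}(\res_\lambda M,\res_\lambda M)\ \subset\ (\kappa \circ i_\lambda)^{-1}\Supp_{\mcl{N}}\Ext_{\FK{G}_q}(M,M).
\]
Applying this membership to $y$ gives $\kappa(x) = (\kappa \circ i_\lambda)(y) \in \Supp_{\mcl{N}}\Ext_{\FK{G}_q}(M,M)$, which is exactly the required non-vanishing of the fiber of $\Ext_{\FK{G}_q}(M,M)$ at $\kappa(x)$. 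The main substantive input is the naturality theorem (and hence, indirectly, the Arkhipov--Bezrukavnikov--Ginzburg formality statement used in its proof); the remaining steps reduce to a bookkeeping exercise with annihilators, with the one delicate point being the verification that the algebra identifications $\Ext_{\FK{G}_q}(\1,\1)\cong \O(\mcl{N})$ and $\Ext_{\msc{B}_\lambda}(\1,\1)\cong \O(\mfk{n}_\lambda)$ coming from Ginzburg--Kumar are compatible on spectra with the geometric inclusion $\kappa \circ i_\lambda : \mfk{n}_\lambda \hookrightarrow \mcl{N}$.
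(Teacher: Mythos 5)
Your proof is correct and follows essentially the same route as the paper: reduce to the Borel at $\lambda=p(x)$ via the (un-projectivized) naturality statement of Theorem \ref{thm:natrl}, then use tensor-functoriality of $\res_\lambda$ to push the support from $\mfk{n}_\lambda$ into $\Supp_{\mcl{N}}\Ext_{\FK{G}_q}(M,M)$ along $j_\lambda=\kappa\circ i_\lambda$. Your annihilator observation is simply an explicit unpacking of the paper's commutative square relating $-\ot M$ and $-\ot\res_\lambda M$ (the paper's support is defined via the kernel of the unit action, which your observation identifies with the full annihilator), and the compatibility of identifications you flag is exactly what Lemma \ref{lem:mys} settles inside the proof of Theorem \ref{thm:natrl}.
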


\begin{proof}
It suffices to consider the case $K=k$, via base change.  Take $\lambda:\Spec(k)\to \dG/\dB$ to be the corresponding point in the flag variety, $\lambda=p(x)$, and let $j_\lambda:\mfk{n}_\lambda\to \mcl{N}$ be the embedding from the corresponding nilpotent subalgebra.  Via Theorem \ref{thm:natrl} we have
\[
x\in \Supp_{\tN}\sExt_{\FK{G}_q}(M,M)\ \Leftrightarrow\ x\in \Supp_{\mfk{n}_\lambda}\Ext_{\msc{B}_{\lambda}}(\res_{\lambda} M,\res_{\lambda} M).
\]
Since $\res_\lambda$ is a tensor functor, we have the diagram
\[
\xymatrix{
\O(\mcl{N})\ar[d]_{\res_\lambda}\ar[rr]^(.4){-\ot M} & & \Ext_{\FK{G}_q}(M,M)\ar[d]^{\res_\lambda}\\
\O(\mfk{n}_{\lambda})\ar[rr]_(.35){-\ot\res_\lambda M} & & \Ext_{\msc{B}_{\lambda}}(\res_{\lambda} M,\res_{\lambda} M).
}
\]
Hence we see that
\[
j_{\lambda}\Supp_{\mfk{n}_\lambda}\Ext_{\msc{B}_{\lambda}}(\res_{\lambda}M,\res_{\lambda}M)\subset \Supp_{\mcl{N}}\Ext_{\FK{G}_q}(M,M).
\]
So, since $\res_\lambda M$ is supported at $\kappa(x)=j_\lambda(x)$ it follows that $k(x)$ is in the support of $\Ext_{\FK{G}_q}(M,M)$ over $\mcl{N}$.  We are done.
\end{proof}

As suggested above, Lemma \ref{lem:2128} is just a reformulation of the inclusion
\[
\kappa(\Supp_{\tN}\sExt_{\FK{G}_q}(M,M))\subset \Supp_{\mcl{N}}\Ext_{\FK{G}_q}(M,M),
\]
which we have now verified.  Theorem \ref{thm:tN_N} follows.

\begin{proof}[Proof of Theorem \ref{thm:tN_N}]
Combine Lemmas \ref{lem:2086} and \ref{lem:2128}.
\end{proof}


\section{Covering the Balmer Spectrum}
\label{sect:spec}

In \cite{balmer05} Balmer introduces the notion of a prime ideal spectrum $\Spec(T)$ for a braided monoidal triangulated category $T$.  This construction is strongly related to the production of so-called support data for the given monoidal category.  The goal of this section is to prove the following result, which provides a first half of a ``birational approximation" of the Balmer spectrum for the stable category of small quantum group representations.

\begin{proposition}\label{prop:bspec1}
Suppose $G$ is an almost-simple algebraic group in type A. Then $\tN$-support provides a support data for the stable category of $\FK{G}_q$, and the corresponding map to the universal support space
\[
f_{\rm univ}:\mbb{P}(\tN)\to \Spec\left(\stab\FK{G}_q\right)
\]
is surjective.
\end{proposition}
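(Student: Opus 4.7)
The proof proceeds in two stages: first showing $\supp_{\tN}$ is a support datum on $\stab\FK{G}_q$, and then leveraging the naturality theorem (Theorem \ref{thm:natrl}) to deduce surjectivity from the corresponding result for the small quantum Borels.

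For the support datum axioms, the triangulated properties (shift, sum, triangle compatibility, and $\supp_{\tN}(\1) = \mbb{P}(\tN)$) are contained in Lemma \ref{lem:triangle}. Proposition \ref{prop:projectives} furnishes the faithfulness property---namely $\supp_{\tN}(M) = \emptyset$ iff $M$ is zero in the stable category---and also guarantees $\supp_{\tN}$ descends from $D_{\coh}(\FK{G}_q)$ to $\stab\FK{G}_q$. The key remaining axiom is the tensor-product formula
\[
\supp_{\tN}(M \otimes N) = \supp_{\tN}(M) \cap \supp_{\tN}(N).
\]
The plan is to establish this by pulling back to the Borels. Fix a geometric point $\lambda:\Spec(K)\to \dG/\dB$. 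Since $\res_\lambda$ is monoidal we have $\res_\lambda(M \otimes N) \cong \res_\lambda M \otimes \res_\lambda N$, so naturality of Theorem \ref{thm:natrl} converts the desired formula into the tensor-product formula for cohomological support on $\msc{B}_\lambda$. In type $A$, every $\msc{B}_\lambda$ is tensor-equivalent to the standard small quantum Borel via diagram \eqref{eq:502}, and the tensor-product formula for cohomological support of $\uqB$ is the main technical content of \cite{negronpevtsova3}. Since the images of the maps $\mbb{P}(i_\lambda)$ jointly cover $\mbb{P}(\tN)$, the formula lifts to $\mbb{P}(\tN)$.

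For surjectivity of $f_{\rm univ}$, I would invoke Balmer's criterion from \cite{balmer05}: given the tensor-product formula and faithfulness, it suffices to show that every Thomason-closed subset of $\mbb{P}(\tN)$ is realized as the $\tN$-support of some family of objects in $D_{\coh}(\FK{G}_q)$. Again the plan is to reduce via Theorem \ref{thm:natrl} to the Borel case of \cite{negronpevtsova3}, where cohomological support for $\uqB$ in type $A$ is shown to be lavish---in particular, every closed conical subset of $\mfk{n}$ arises from an explicit Koszul/Carlson-style family of objects. Translating these families along the transitive $G$-action on the $\{\msc{B}_\lambda\}$ and promoting them to $\FK{G}_q$-representations---most naturally via the equivariant vector bundle functor $E_-$ and the Kempf embedding from Part I---should produce the necessary test objects. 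Combining with the tensor-product formula and faithfulness, Balmer's comparison map then yields surjectivity of $f_{\rm univ}$.

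The principal obstacle is the realization step for surjectivity: the Koszul-type objects realizing arbitrary closed subsets over $\uqB$ must be lifted compatibly to $\FK{G}_q$-representations in a way that interacts correctly with naturality and the global $\tN$-valued picture. This is precisely where the type $A$ hypothesis is indispensable, channeled through the lavish support theory of \cite{negronpevtsova3}; outside of type $A$ the analogous Borel classification is only conjectural, which is why the proposition is stated with this restriction.
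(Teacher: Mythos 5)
The first half of your proposal---that $\supp_{\tN}$ gives a support data on $\stab(\FK{G}_q)$---is essentially the paper's own argument: naturality (Theorem \ref{thm:natrl}) plus the type $A$ tensor product property for the small quantum Borels (Theorem \ref{thm:np}) gives the tensor product formula pointwise, and closedness of the two sides upgrades this to an equality (Proposition \ref{prop:2588}), with Proposition \ref{prop:projectives} handling vanishing on projectives.

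Your surjectivity step, however, has a genuine gap. The criterion you invoke---that the tensor product formula, faithfulness, and realization of every Thomason-closed subset of $\mbb{P}(\tN)$ as a support suffice for surjectivity of $f_{\rm univ}$---is not Balmer's criterion and is false in general. In the proof of \cite[Theorem 5.2]{balmer05}, surjectivity is extracted from the statement that thick tensor ideals are \emph{determined by their supports}, i.e.\ $\msc{K}=\msc{K}_{\supp_{\tN}(\msc{K})}$: given a prime $\msc{P}$ one finds a point mapping to it precisely by using that any object whose support lies inside $\supp_{\tN}(\msc{P})$ already belongs to $\msc{P}$, together with primality, the tensor product formula, and a compactness argument. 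Realization of closed subsets addresses the opposite half of the classification (injectivity of $\Theta\mapsto\msc{K}_\Theta$) and contributes nothing to surjectivity of the comparison map; for a concrete failure, take perfect complexes over a DVR with the one-point support theory $\supp(M)=\{\ast\}$ iff $M\neq 0$: detection, the tensor product property, and realization all hold, yet the comparison map misses the prime of torsion complexes. Worse, realization for $\tN$-support is not something you can expect to prove: the paper explicitly declines to claim it (see the remark following Lemma \ref{lem:2632}), and if it held then, combined with the ideal classification, $\supp_{\tN}$ would be a classifying support data and $f_{\rm univ}$ a homeomorphism, contrary to the expected collapse of the spectrum to $\mbb{P}(\mcl{N})$ in higher rank (Section \ref{sect:3286}). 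This is also why your lifting strategy stalls: an object of $\FK{G}_q$ has simultaneously constrained restrictions to \emph{all} the Borels, so Carlson-type objects pulled up from one $\msc{B}_\lambda$ via $E_-$ or the Kempf embedding cannot have their $\tN$-supports prescribed independently over the various fibers; one expects only moment-map-saturated subsets to be realized.

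The paper's actual route uses the lavishness of Borel support from \cite{negronpevtsova3} for a different purpose: the hypersurface-support extensions over each $\msc{B}_\lambda$ are assembled into an extension of $\supp_{\tN}$ to the big stable category $\Stab(\FK{G}_q)$ (Lemma \ref{lem:2596}); a Rickard-idempotent argument then shows that every thick tensor ideal of $\stab(\FK{G}_q)$ is recovered from its support, with $\Theta\mapsto\msc{K}_\Theta$ a one-sided inverse (Lemma \ref{lem:2632}); and only then do the arguments of \cite[proof of Theorem 5.2]{balmer05} apply to give surjectivity of $f_{\rm univ}$. So the missing idea in your write-up is exactly this ideal-classification step via the big-category extension, which cannot be replaced by a realization statement.
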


We recall the construction of the Balmer spectrum below, after discussing related notions of support theories, and support data for monoidal triangulated categories.

The proof of Proposition \ref{prop:bspec1} is a fairly straightforward application of our analysis of $\tN$-support, in particular Theorems \ref{thm:projectivity} and \ref{thm:natrl}, in conjunction with results from \cite{negronpevtsova3}.  The proof is provided in Subection \ref{sect:proof_bspec1}.  We note that the arguments employed below are not specific to type $A$, as the Dynkin type bottleneck comes from the work \cite{negronpevtsova3}.  Indeed, in Subection \ref{sect:the_others} we provide a conditional extension of Proposition \ref{prop:bspec1}, and its big sister Theorem \ref{thm:bspec2}, to arbitrary almost-simple algebraic groups.

\subsection{Support data and lavish support theories}
\label{sect:supp_nons}

Consider a general triangulated category $T$.  A (triangulated) support theory for $T$, which takes values in a given topological space $X$, is a function
\[
\supp:\operatorname{obj}T\to \{\text{subsets in }X\}
\]
which is stable under shifting, splits over sums, has $\supp(0)=\emptyset$, and satisfies $\supp(N)\subset (\supp(M)\cup\supp(M'))$ whenever one has a triangle $M\to N\to M'$, just as in Lemma \ref{lem:triangle}.  When $T$ is monoidal we require also that $\supp(M)$ is \emph{closed} in $X$ for dualizable $M$, and that $X$ appears as the support of the unit object in $T$, $\supp(\1)=X$.
\par

Suppose now that $T$ is monoidal and rigid, i.e.\ that all objects in $T$ admit left and right duals.  In this case we say a support theory for $T$ satisfies the \emph{tensor product property} if
\begin{equation}\label{eq:tpp}
\supp(M\ot N)=\supp(M)\cap\supp(N)\ \ \text{for all $M$ and $N$ in }T.
\end{equation}
Following \cite[Definition 3.1]{balmer05}, a triangulated support theory for a rigid, \emph{braided} monoidal triangulated category $T$ which satisfies the tensor product property is called a \emph{support data} for $T$.

\begin{remark}\label{rem:2720}
At this point it is understood that the tensor product property \eqref{eq:tpp} is not the appropriate ``multiplicative" condition for support theories in the non-braided context \cite{bensonwitherspoon14,plavnikwitherspoon18,negronpevtsova,negronpevtsova3}--although it does hold in some cases of interest.  So one should be more liberal in their discussions of support data outside of the braided setting.
\end{remark}

Now let us consider a monoidal triangulated category $T^{+}$ which admits all set indexed sums and is compactly generated.  Suppose that $T$ is the subcategory of rigid compact objects in $T^{+}$.  Let $\supp$ be a support theory for $T$ which takes values in a given space $X$, and suppose that this support theory satisfies the tensor product property \eqref{eq:tpp}.  Following the language of \cite[Definition 4.7]{negronpevtsova3}, we say that $\supp$ is a \emph{lavish} support theory for $T$ if there exists an extension
\[
\supp':\operatorname{obj} T^{+}\to\{\operatorname{subsets\ in\ }X\}
\]
of $\supp$ to all of $T^{+}$ which satisfies the following:
\begin{itemize}
\item[(E1)] $\supp'$ is a (triangulated) support theory for $T^{+}$.
\item[(E2)] $\supp'|_{T}=\supp$.
\item[(E3)] We have $\supp'(\oplus_{i\in I}N_i)=\cup_{i\in I}\supp'(N_i)$ for all set indexed sums in $T^{+}$.
\item[(E4)] For each $M$ in $T$ and $N$ in $T^{+}$ we have
\[
\supp'(M\ot N)=\supp(M)\cap \supp'(N).
\]
\end{itemize}

We note that the extension $\supp'$ takes values in (not-necessarily-closed) sub\emph{sets} in $X$.  We also note that the particular choice of extension $\supp\rightsquigarrow \supp'$ is \emph{not} specified in the claim that $\supp$ is lavish, one is only concerned with the existence of such an extension.

\begin{remark}
The definition of a lavish support theory given above is slightly stronger than the one given in \cite{negronpevtsova}, precisely for the reasons discussed in Remark \ref{rem:2720}.  However, in all cases of interest in this work the two definitions will coincide.
\end{remark}

\begin{remark}
The framework presented above is not really original to the work \cite{negronpevtsova3}, specifically in the braided context.  Indeed, these kinds of conditions (E1)--(E4) already appear in works of Rickard and coathors from the 90's \cite{rickard97,bensoncarlsonrickard97} (cf.\ \cite[\S 1.3]{nakanovashawyakimov1} as well).
\end{remark}

\subsection{Universal support data}
\label{sect:universal}

Let $T$ be a rigid, braided, monoidal triangulated category.  Recall that a thick tensor ideal $\msc{K}$ in $T$ is a thick subcategory \cite[\S 2.1]{neeman01} which is stable under the (left and right) tensor actions of $T$ on itself.  A thick ideal $\msc{P}$ in $T$ is a \emph{prime} thick ideal if an inclusion $M\ot N\in \msc{P}$ implies that either $M$ is in $\msc{P}$, or $N$ is in $\msc{P}$.
\par

In \cite{balmer05}, Balmer produces a universal support data for $T$, and a corresponding universal support space.  This universal support space is the spectrum of thick prime ideals
\[
\Spec(T)=\{\msc{P}\subset T:\msc{P}\text{ is a thick prime tensor ideal}\},
\]
and the universal support of an object $M$ in $T$ is defined as
\[
\supp^{\operatorname{univ}}(M)=\{\msc{P}\subset T: \msc{P}\text{ is a thick prime ideal with }M\notin \msc{P}\}.
\]
The spectrum $\Spec(T)$ has a unique topology for which the supports $\supp^{\operatorname{univ}}(M)$ provide a basis of closed subsets.  The pairing of $\Spec(T)$ with the support function $\supp^{\operatorname{univ}}$ constitutes a support data for $T$, and Balmer establishes the following universal property.

\begin{theorem}[{\cite[Theorem 3.2]{balmer05}}]\label{thm:le-balmer}
Suppose that $T$ is a rigid, braided monoidal triangulated category.  Given any support data $(X,\supp_X)$ for $T$, there is a unique continuous map $f_{X,\rm univ}:X\to \Spec(T)$ such that
\[
f_{X,\rm univ}^{-1}\supp^{\operatorname{univ}}(M)=\supp_X(M)
\]
for all $M$ in $T$.
\end{theorem}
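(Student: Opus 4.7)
The plan is to follow the original construction from \cite{balmer05}: define the map $f_{X,\operatorname{univ}}$ pointwise by sending $x \in X$ to the subcategory of objects whose support misses $x$, and then verify that this assignment lands in $\Spec(T)$, satisfies the preimage formula, and is forced by that formula. Concretely, I would set
\[
f_{X,\operatorname{univ}}(x) := \{\,M \in T : x \notin \supp_X(M)\,\}
\]
for each $x \in X$, and then check step by step that this is a well-defined point of $\Spec(T)$.

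First I would verify that $f_{X,\operatorname{univ}}(x)$ is a thick tensor ideal. Thickness is immediate from the support-theoretic axioms recalled in Section \ref{sect:supp_nons}: shift invariance gives closure under $\Sigma$, the sum formula $\supp_X(M\oplus N)=\supp_X(M)\cup\supp_X(N)$ gives closure under summands, and the triangle inclusion $\supp_X(N)\subset \supp_X(M)\cup \supp_X(M')$ gives closure under extensions. The tensor ideal property uses precisely the tensor product property \eqref{eq:tpp}: if $x\notin\supp_X(M)$ then $\supp_X(M\ot N)=\supp_X(M)\cap \supp_X(N)$ misses $x$ as well. Primeness is the easiest step: if $M\ot N\in f_{X,\operatorname{univ}}(x)$, then $x\notin \supp_X(M)\cap\supp_X(N)$, so $x$ is missing from one of the two supports and that factor lies in $f_{X,\operatorname{univ}}(x)$.

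Next I would establish the preimage identity. Unwinding the definitions,
\[
f_{X,\operatorname{univ}}^{-1}\bigl(\supp^{\operatorname{univ}}(M)\bigr)
= \{x\in X : M\notin f_{X,\operatorname{univ}}(x)\}
= \{x\in X : x\in \supp_X(M)\}
= \supp_X(M).
\]
Since the sets $\supp^{\operatorname{univ}}(M)$ form a basis of closed subsets for the topology on $\Spec(T)$, and each preimage $\supp_X(M)$ is closed in $X$ by the support data axioms, continuity of $f_{X,\operatorname{univ}}$ follows immediately.

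Finally, for uniqueness, suppose $g\colon X\to\Spec(T)$ is any continuous map satisfying $g^{-1}(\supp^{\operatorname{univ}}(M))=\supp_X(M)$ for all $M\in T$. Then for any $x\in X$ and $M\in T$,
\[
M\in g(x)\ \Longleftrightarrow\ g(x)\notin \supp^{\operatorname{univ}}(M) \ \Longleftrightarrow\ x\notin \supp_X(M)\ \Longleftrightarrow\ M\in f_{X,\operatorname{univ}}(x),
\]
so $g(x)=f_{X,\operatorname{univ}}(x)$ as thick prime ideals. I do not anticipate a serious obstacle here: each step is essentially forced by the axioms, and the only substantive input is the tensor product property, which was already built into the definition of a support datum in the braided setting. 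The main thing to be careful about is the direction in which supports and ideals are flipped (an object belongs to a prime iff the corresponding point is \emph{not} in its support), which is what makes the preimage formula come out correctly.
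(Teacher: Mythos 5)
Your proposal is correct and is exactly the argument of the cited result \cite[Theorem 3.2]{balmer05}, which this paper invokes without reproving: define $f_{X,\rm univ}(x)$ as the objects whose support misses $x$, check it is a thick prime tensor ideal via the support axioms and the tensor product property, and observe that the preimage formula both gives continuity (the $\supp^{\operatorname{univ}}(M)$ form a basis of closed sets) and forces uniqueness. The only cosmetic omission is properness of $f_{X,\rm univ}(x)$, which is immediate from $\supp_X(\1)=X$, so $\1\notin f_{X,\rm univ}(x)$.
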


\begin{remark}
The topological space which we denote as $\Spec(T)$ is written $\operatorname{Spc}(T)$ in \cite{balmer05}.  Balmer reserves the notation $\Spec(T)$ for $\operatorname{Spc}(T)$ along with a canonical locally ringed structure \cite[\S 6]{balmer05}.  Also, rigidity is not assumed in \cite{balmer05}, though it does appear in later works.
\end{remark}

\subsection{Stable categories versus derived categories}
\label{sect:stable}

For concreteness let $\msc{C}$ be one of $\FK{G}_q$ or $\msc{B}_\lambda$, at fixed geometric point $\lambda:\Spec(K)\to \dG/\dB$.  Recall that $\msc{C}$ is a certain category of equivariant quasi-coherent sheaves, and that $D(\msc{C})$ is the associated unbounded derived category of quasi-coherent sheaves.  We've taken $D_{\coh}(\msc{C})$ and $D^b(\msc{C})$ to be the subcategories of coherent dg sheaves, and bounded quasi-coherent dg sheaves in $D(\msc{C})$, respectively.
\par

It is traditional to use the stable categories
\begin{equation}\label{eq:2750}
\begin{array}{l}
\stab(\msc{C})=D_{\coh}(\msc{C})/\langle \operatorname{proj}(\msc{C})\rangle\vspace{2mm}\\ \Stab(\msc{C})=D^b(\msc{C})/\langle \operatorname{Proj}(\msc{C})\rangle
\end{array}
\end{equation}
when discussing support for $\msc{C}$ \cite{rickard89}, rather than the derived categories.  In the stable setting, the big stable category $\Stab(\msc{C})$ admits sums indexed over arbitrary sets and recovers the small stable category as the subcategory of rigid-compact objects
\[
\Stab(\msc{C})^c=\stab(\msc{C}).
\]
So $\Stab(\msc{C})$ can serve as a ``big" category $T^{+}$ for $T=\stab(\msc{C})$, as in Section \ref{sect:supp_nons}.
\par

From the support theory point of view, the distinction between the ``small" categories $D_{\coh}(\msc{C})$ and $\stab(\msc{C})$ is not so significant, and a support theory/data for $\stab(\msc{C})$ is simply a support theory/data for $D_{\coh}(\msc{C})$ which vanishes on projectives \cite[Theorem 3.2 \& Proposition 3.11]{balmer05}.  So, for example, if we take $Y$ to be the conical space $\Spec\Ext_{\msc{C}}(\1,\1)$, cohomological support defines a support theory for the stable category
\[
\supp^{\operatorname{chom}}_Y:\operatorname{obj}\stab(\msc{C})\to \{\text{closed subsets in }\mbb{P}(Y)=\Proj\Ext_{\msc{C}}(\1,\1)\}
\]
whose value on an object $M$ in $\stab(\msc{C})$ is just the value of $\supp^{\operatorname{chom}}_Y$ on $M$ as an object in $D_{\coh}(\msc{C})$.  This makes sense because the objects in $\stab(\msc{C})$ are equal to the objects in $D_{\coh}(\msc{C})$, according to the formulation \eqref{eq:2750}.

\subsection{Support for the small quantum Borel}
\label{sect:extend}

Given a geometric point $\lambda:\Spec(K)\to \dG/\dB$, and the corresponding small quantum Borel $\msc{B}_\lambda$, we recall our shorthand $\supp^{\operatorname{chom}}_\lambda:=\supp^{\operatorname{chom}}_{\mfk{n}_\lambda}$ for the associated cohomological support.  We consider $\supp^{\operatorname{chom}}_{\lambda}$ as a triangulated support theory for the small stable category $\stab(\msc{B}_\lambda)$, as discussed above.  We have the following assessment of the small quantum Borel.

\begin{theorem}[\cite{negronpevtsova,negronpevtsova3}]\label{thm:np}
Let $G$ be an almost-simple algebraic group in type $A$, and let $\lambda:\Spec(K)\to \dG/\dB$ be an arbitrary geometric point.  Cohomological support for $\msc{B}_\lambda$ satisfies the tensor product property and is a lavish support theory.  Indeed, hypersurface support
\[
\supp^{\operatorname{hyp}}_{\lambda}:\operatorname{obj} \Stab(\msc{B}_\lambda)\to\{\operatorname{subsets\ in\ }\mbb{P}(\mfk{n}_\lambda)\}
\]
\cite[Definition 5.8]{negronpevtsova2} provides the necessary extension of cohomological support to the big stable category $\Stab(\msc{B}_\lambda)$.
\end{theorem}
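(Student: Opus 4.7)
The plan is to reduce the general geometric point case to the standard quantum Borel $\FK{B}_q$ over $k$, for which the claim is already established in \cite{negronpevtsova,negronpevtsova3}, via base change along $k\to K$. As noted in Section \ref{sect:borels}, any choice of $K$-point $x:\Spec(K)\to \dB_\lambda$ (such a point exists since $K$ is algebraically closed and $\dB_\lambda$ is a $\dB_K$-torsor over $K$) induces a tensor equivalence
\[
x_\ast:(\FK{B}_q)_K\overset{\sim}\longrightarrow \msc{B}_\lambda
\]
over $K$. So it suffices to prove the statement for the base change $(\FK{B}_q)_K=\FK(B_K)_q$ of the standard small quantum Borel along the extension $k\to K$.

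First I would verify that cohomological support and hypersurface support are both compatible with base change. For cohomology, we have a natural isomorphism $\Ext_{(\FK{B}_q)_K}(V_K,W_K)\cong K\ot_k \Ext_{\FK{B}_q}(V,W)$ at finite-dimensional $V$, which identifies $\Ext_{(\FK{B}_q)_K}(\1,\1)\cong K\ot_k\O(\mfk{n})=\O(\mfk{n}_K)$ and realizes $\supp_{\mfk{n}_K}^{\operatorname{chom}}(V_K)$ as the preimage of $\supp_{\mfk{n}}^{\operatorname{chom}}(V)$ along the (faithfully flat) base change map $\mfk{n}_K\to \mfk{n}$. More generally, since each object in $\msc{B}_\lambda$ becomes a finite sum of base changed objects after further base change, and since hypersurface support is defined via vanishing of tensor products with certain test objects (Koszul-type objects associated to $1$-parameter subgroups), these test objects also base change well, and $\supp^{\operatorname{hyp}}_{\mfk{n}_K}$ is identified with the pullback of $\supp^{\operatorname{hyp}}_\mfk{n}$ along $\mfk{n}_K\to \mfk{n}$ after one base changes sufficiently to realize the relevant objects.

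Next I would transfer the tensor product property and the properties (E1)--(E4) from $\FK{B}_q$ to $(\FK{B}_q)_K$. Given $M$ and $N$ in $\Stab((\FK{B}_q)_K)$, write $M$ and $N$ as filtered colimits of objects defined over finite subextensions $k\subset k'\subset K$; the tensor product property holds at each finite stage by \cite[Theorem~A]{negronpevtsova3} applied to the tensor category $\FK(B_{k'})_q$ (which is just the base change and is covered by the same theorem since the arguments of \emph{loc.\ cit.}\ are insensitive to the algebraically closed base field), and then passes to the colimit because both sides of $\supp^{\operatorname{hyp}}(M\ot N)=\supp^{\operatorname{hyp}}(M)\cap \supp^{\operatorname{hyp}}(N)$ commute with the relevant colimits by the sum property (E3). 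Properties (E1)--(E4) are similarly verified one finite stage at a time, and the resulting support extension $\supp^{\operatorname{hyp}}_{\mfk{n}_K}$ restricts to the cohomological support $\supp^{\operatorname{chom}}_{\mfk{n}_K}$ on the compact subcategory by the matching agreement in the standard case.

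The main obstacle, to my mind, is the base change compatibility for \emph{hypersurface} support on the big stable category. The cohomological side is routine, but hypersurface support is defined through behaviors of possibly infinite-dimensional modules along one-parameter subgroups, and one must argue that an infinite-dimensional object $M$ over $(\FK{B}_q)_K$ is projective after restriction along a $K$-point of $\mfk{n}_K$ if and only if the corresponding descended data is projective over $\FK{B}_q$ after restriction along the image point in $\mfk{n}$. This step should follow from faithful flatness of $K/k$ and the characterization of projectivity via vanishing of $\Ext^1$, but it is the only point where one genuinely interacts with the non-finitely-generated setting, and hence deserves the most careful treatment.
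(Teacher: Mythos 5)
Your opening reduction is exactly the paper's argument: the theorem is quoted from \cite{negronpevtsova,negronpevtsova3}, and the only point the paper adds (in the remark immediately following the statement) is that a choice of $K$-point of $\dB_\lambda$ gives a tensor equivalence $(\msc{B}_1)_K\overset{\sim}\to\msc{B}_\lambda$, so it suffices to treat the standard small quantum Borel over the algebraically closed field $K$. Since both cohomological support and hypersurface support are defined intrinsically from the tensor structure, they transport across this equivalence, and the cited results---which are proved over an arbitrary algebraically closed field of characteristic zero---apply verbatim to $(\msc{B}_1)_K\cong\FK(B_K)_q$. That is the whole proof.

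Where you go astray is in the second half, where you attempt to deduce the statement over $K$ from the statement over $k$ by a base-change argument. This is unnecessary (as you yourself half-concede, the theorems of \emph{loc.\ cit.} are insensitive to the choice of algebraically closed base field of characteristic zero, so they may be invoked directly for the Borel over $K$), and as written it does not go through: $k$ is algebraically closed, so it admits no nontrivial finite subextensions, and an arbitrary, possibly infinite-dimensional, object of $\Stab((\msc{B}_1)_K)$ need not arise by base change from an object defined over a small subfield of $K$; moreover property (E3) governs set-indexed direct sums, not filtered colimits, so the claim that both sides of the tensor product formula ``pass to the colimit'' is unsupported. Consequently the ``main obstacle'' you flag---descent of projectivity for infinite-dimensional modules along $k\to K$---is an artifact of this detour and does not arise in the intended argument. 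Replace the base-change machinery with the one-line observation above and the proof coincides with the paper's.
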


\begin{remark}
Note that cohomological support for any given $\msc{B}_\lambda$ is a lavish support theory if and only if cohomological support for the standard (positive) small quantum Borel is lavish, over an arbitrary algebraically closed field of characteristic $0$.  This just follows from the fact that each $\msc{B}_\lambda$ is isomorphic to a base change $(\msc{B}_1)_K$ of the standard Borel as a tensor category (see Section \ref{sect:borels}).
\end{remark}

\subsection{$\tN$-support as a support data}

By the vanishing result of Proposition \ref{prop:projectives} we understand that $\tN$-support factors to provide a (triangulated) support theory for the stable category of sheaves for the quantum Frobenius kernel
\begin{equation}\label{eq:2807}
\supp_{\tN}:\operatorname{obj}\stab(\FK{G}_q)\to \{\text{closed subsets in }\mbb{P}(\tN)\}.
\end{equation}
Throughout this section when we speak of $\tN$-support for the quantum Frobenius kernel we are speaking specifically of theory \eqref{eq:2807}.

\begin{proposition}\label{prop:2588}
Let $G$ be an almost-simple group in type $A$.  Then $\tN$-support satisfies the tensor product property, and hence defines a support data for the stable category $\stab(\FK{G}_q)$.  Consequently, there is a unique continuous map
\[
f_{\rm univ}:\mbb{P}(\tN)\to \Spec(\stab\FK{G}_q)
\]
for which $f_{\rm univ}^{-1}\supp^{\operatorname{univ}}(M)=\supp_{\tN}(M)$ at all $M$ in $\stab(\FK{G}_q)$.
\end{proposition}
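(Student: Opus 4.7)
The plan is to reduce the tensor product property for $\tN$-support to the analogous property for cohomological support over the small quantum Borels $\msc{B}_\lambda$, which is supplied by Theorem \ref{thm:np}, and then invoke Balmer's universal construction. The key mechanism is the naturality/reconstruction result of Theorem \ref{thm:natrl}, which expresses $\tN$-support entirely in terms of fiberwise cohomological supports over the Borels.

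First, given $M$ and $N$ in $\stab(\FK{G}_q)$, I would compute using the reconstruction half of Theorem \ref{thm:natrl} that
\[
\supp_\tN(M\ot N)=\bigcup_{\lambda}\mbb{P}(i_\lambda)\bigl(\supp^{\operatorname{chom}}_\lambda(\res_\lambda(M\ot N))\bigr),
\]
where $\lambda$ ranges over geometric points of $\dG/\dB$. Since each $\res_\lambda$ is monoidal, $\res_\lambda(M\ot N)=\res_\lambda M\ot\res_\lambda N$, and the tensor product property for $\supp^{\operatorname{chom}}_\lambda$ (Theorem \ref{thm:np}, valid in type $A$) gives
\[
\supp^{\operatorname{chom}}_\lambda(\res_\lambda M\ot\res_\lambda N)=\supp^{\operatorname{chom}}_\lambda(\res_\lambda M)\cap\supp^{\operatorname{chom}}_\lambda(\res_\lambda N).
\]
The naturality half of Theorem \ref{thm:natrl} identifies this intersection with $\mbb{P}(i_\lambda)^{-1}\supp_\tN(M)\cap\mbb{P}(i_\lambda)^{-1}\supp_\tN(N)=\mbb{P}(i_\lambda)^{-1}\bigl(\supp_\tN(M)\cap\supp_\tN(N)\bigr)$.

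Second, I would push back through $\mbb{P}(i_\lambda)$ using the set-theoretic identity $\mbb{P}(i_\lambda)\mbb{P}(i_\lambda)^{-1}S=S\cap\operatorname{im}\mbb{P}(i_\lambda)$, and then observe that the images of the fiber maps $\mbb{P}(i_\lambda)$ cover $\mbb{P}(\tN)$ as $\lambda$ ranges over geometric points of $\dG/\dB$ (this is just the statement that every closed point of $\mbb{P}(\tN)$ lies over some point of the flag variety). Taking the union over $\lambda$ then yields
\[
\supp_\tN(M\ot N)=\supp_\tN(M)\cap\supp_\tN(N),
\]
which is the tensor product property. Combined with Lemma \ref{lem:triangle}, which supplies the standard triangulated support axioms, and Proposition \ref{prop:projectives}, which ensures that $\supp_\tN$ descends to the quotient $\stab(\FK{G}_q)=D_{\coh}(\FK{G}_q)/\langle\operatorname{proj}\rangle$, this establishes that $\supp_\tN$ is a support data on the rigid braided monoidal triangulated category $\stab(\FK{G}_q)$ in the sense of Section \ref{sect:supp_nons}.

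Finally, the existence and uniqueness of the continuous map $f_{\rm univ}:\mbb{P}(\tN)\to\Spec(\stab\FK{G}_q)$ with $f_{\rm univ}^{-1}\supp^{\operatorname{univ}}(M)=\supp_\tN(M)$ is then an immediate application of Balmer's universal property (Theorem \ref{thm:le-balmer}). There is no genuine obstacle here; the substance of the argument has already been absorbed into Theorems \ref{thm:natrl} and \ref{thm:np}, and the remaining step is bookkeeping. The type $A$ hypothesis enters only through Theorem \ref{thm:np}, so the same strategy will upgrade to arbitrary Dynkin type as soon as the tensor product property over the small quantum Borel is known in general.
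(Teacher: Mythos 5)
Your proposal is correct and follows essentially the same route as the paper: both reduce the tensor product property to the quantum Borels via the naturality/reconstruction result of Theorem \ref{thm:natrl} together with Theorem \ref{thm:np}, and then invoke Balmer's universal property (Theorem \ref{thm:le-balmer}). The only cosmetic difference is bookkeeping — the paper checks agreement of the two supports at closed points and then uses that both subsets are closed, whereas you run the reconstruction formula and the covering of $\mbb{P}(\tN)$ by the images $\mbb{P}(i_\lambda)$ directly; the substance is identical.
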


\begin{proof}
By Theorem \ref{thm:np}, cohomological support for $\msc{B}_\lambda$ satisfies the tensor product property, at arbitrary $\lambda:\Spec(K)\to \dG/\dB$.  Hence at any point $x:\Spec(k)\to \tN$, with corresponding point $\lambda=p(x):\Spec(k)\to \dG/\dB$ for the flag variety, Theorem \ref{thm:natrl} gives
\[
\begin{array}{l}
x\in \supp_{\tN}(M\ot N)\\
\hspace{2cm} \Leftrightarrow\ x\in \supp^{\operatorname{chom}}_{\lambda}(\res_\lambda M\ot \res_\lambda N)\\
\hspace{2cm}\Leftrightarrow\ x\in \supp^{\operatorname{chom}}_{\lambda}(\res_\lambda M)\cap\supp^{\operatorname{chom}}_{\lambda}(\res_\lambda N)\\
\hspace{2cm}\Leftrightarrow\ x\in \supp_{\tN}(M)\cap\supp_{\tN}(N).
\end{array}
\]
So we see that the two sets $\supp_{\tN}(M\ot N)$ and $\supp_{\tN}(M)\cap \supp_{\tN}(N)$ share the same closed points.  Since both of these subsets are closed in $\tN$, an agreement on closed points implies an equality
\[
\supp_{\tN}(M\ot N)=\supp_{\tN}(M)\cap \supp_{\tN}(N).
\]
Since $\tN$-support is now seen to provide a support data for the stable category $\stab(\FK{G}_q)$, one obtains the corresponding map to the Balmer spectrum by Theorem \ref{thm:le-balmer}.
\end{proof}

\subsection{$\tN$-support as a lavish support theory}

\begin{lemma}\label{lem:2596}
Let $G$ be an almost-simple algebraic group in type $A$.  Then $\tN$-support provides a lavish support theory for the stable category $\stab(\FK{G}_q)$.
\end{lemma}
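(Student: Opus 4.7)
The strategy is to build the required extension of $\supp_{\tN}$ to the big stable category $\Stab(\FK{G}_q)$ by ``gluing'' hypersurface supports along the family of restriction functors $\res_\lambda : \Stab(\FK{G}_q) \to \Stab(\msc{B}_\lambda)$ to the small quantum Borels. Concretely, for $M$ in $\Stab(\FK{G}_q)$, I propose to define
\[
\supp'_{\tN}(M) := \bigcup_{\lambda : \Spec(K) \to \dG/\dB} \mbb{P}(i_\lambda)\bigl(\supp^{\operatorname{hyp}}_\lambda(\res_\lambda M)\bigr) \subset \mbb{P}(\tN),
\]
where $\supp^{\operatorname{hyp}}_\lambda$ is the lavish hypersurface support for $\Stab(\msc{B}_\lambda)$ provided by Theorem \ref{thm:np}. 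First I need to check this formula makes sense: the restriction $\res_\lambda$ is well-defined on stable categories because, by the projectivity test of Theorem \ref{thm:projectivity}, bounded complexes of projectives over $\FK{G}_q$ restrict to bounded complexes of projectives at each geometric point.

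Next I would verify (E2), that is, $\supp'_{\tN}|_{\stab(\FK{G}_q)} = \supp_{\tN}$. On a coherent $M$, hypersurface support and cohomological support agree (Theorem \ref{thm:np}), so each term in the union is $\mbb{P}(i_\lambda)(\supp^{\operatorname{chom}}_\lambda(\res_\lambda M))$, and the reconstruction half of Theorem \ref{thm:natrl} identifies the full union with $\supp_{\tN}(M)$. Properties (E1) and (E3)---the triangulated support axioms and compatibility with arbitrary set-indexed sums---are inherited directly from the analogous properties of each $\supp^{\operatorname{hyp}}_\lambda$ on $\Stab(\msc{B}_\lambda)$, using only that restriction is exact, monoidal, unit-preserving, and commutes with sums.

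The heart of the argument is (E4): for $M \in \stab(\FK{G}_q)$ and $N \in \Stab(\FK{G}_q)$ one must show
\[
\supp'_{\tN}(M \otimes N) = \supp_{\tN}(M) \cap \supp'_{\tN}(N).
\]
For the inclusion ``$\subset$'' one uses the tensor product property of $\supp^{\operatorname{hyp}}_\lambda$ on each Borel along with the naturality half of Theorem \ref{thm:natrl}, $\mbb{P}(i_\lambda)^{-1}\supp_{\tN}(M) = \supp^{\operatorname{chom}}_\lambda(\res_\lambda M)$, applied termwise in the union:
\[
\mbb{P}(i_\lambda)\bigl(\supp^{\operatorname{hyp}}_\lambda(\res_\lambda(M \otimes N))\bigr) = \mbb{P}(i_\lambda)\bigl(\supp^{\operatorname{chom}}_\lambda(\res_\lambda M) \cap \supp^{\operatorname{hyp}}_\lambda(\res_\lambda N)\bigr),
\]
and the right-hand side sits inside $\supp_{\tN}(M) \cap \supp'_{\tN}(N)$ by definition. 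The reverse containment ``$\supset$'' is where I expect the only genuine difficulty: one must show that every point $x \in \supp_{\tN}(M) \cap \supp'_{\tN}(N)$ lies in some $\mbb{P}(i_\lambda)(\supp^{\operatorname{hyp}}_\lambda(\res_\lambda(M \otimes N)))$.

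The plan here is to pick any $\lambda$ with $x \in \mbb{P}(i_\lambda)(\mbb{P}(\mfk{n}_\lambda))$ and a lift $\tilde{x} \in \mbb{P}(\mfk{n}_\lambda)$, and show $\tilde{x} \in \supp^{\operatorname{hyp}}_\lambda(\res_\lambda N)$. This will require a naturality statement for hypersurface support across different choices of Borel, which is the main obstacle: \emph{a priori} the $\lambda'$ witnessing $x \in \supp'_{\tN}(N)$ may differ from $\lambda$. The resolution I anticipate is to invoke the $\dG$-action by tensor equivalences on the family $\{\msc{B}_\lambda\}$ (Section \ref{sect:borels}), which transitively permutes the Borels and matches hypersurface support at $\lambda$ with hypersurface support at $\lambda'$ for any pair of points sitting over the same $\dG$-orbit on $\dG/\dB$; combining this equivariance with Theorem \ref{thm:natrl} should reduce the question of independence of the choice of $\lambda$ to the previously established naturality of $\supp_{\tN}$, at which point (E4) will follow.
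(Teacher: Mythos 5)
Your construction is exactly the paper's: fix, at each geometric point $\lambda$, the lavish extension $\supp'_\lambda$ (hypersurface support) furnished by Theorem \ref{thm:np}, declare $N\in\Stab(\FK{G}_q)$ supported at $x\in\mbb{P}(\tN)$ precisely when $x$ lies in $\supp'_\lambda(\res_\lambda N)$, and use the reconstruction half of Theorem \ref{thm:natrl} to see the extension agrees with $\supp_{\tN}$ on compacts, with (E1), (E3), (E4) inherited pointwise from the $\supp'_\lambda$. So the approach matches, and the verification of (E1)--(E3) is fine.

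The one place where you go astray is your diagnosis of (E4). The ``genuine difficulty'' you anticipate---that the Borel $\lambda'$ witnessing $x\in\supp'_{\tN}(N)$ might differ from the $\lambda$ you chose for $x$---does not exist: the subspaces $\mbb{P}(\mfk{n}_\lambda)\subset\mbb{P}(\tN)$ are the fibers of the projection $p:\mbb{P}(\tN)\to\dG/\dB$, hence pairwise disjoint, so a point $x$ determines its Borel uniquely as $\lambda=p(x)$ (and $x$ factors uniquely through $\mbb{P}(\mfk{n}_\lambda)\to\mbb{P}(\tN_K)$, up to harmless field extension). Consequently (E4) is checked fiberwise: $x\in\supp'_{\tN}(M\ot N)$ iff $x\in\supp'_\lambda(\res_\lambda M\ot\res_\lambda N)=\supp^{\operatorname{chom}}_\lambda(\res_\lambda M)\cap\supp'_\lambda(\res_\lambda N)$ by (E4) for $\supp'_\lambda$, and the first factor equals $\mbb{P}(i_\lambda)^{-1}\supp_{\tN}(M)$ by the naturality half of Theorem \ref{thm:natrl}; both inclusions follow at once. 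Your proposed remedy via the $\dG$-action would in any case not work as stated: translation by $g$ identifies $\msc{B}_\lambda$ with $\msc{B}_{g\lambda}$ but also moves the object, so $\supp'_{g\lambda}(\res_{g\lambda}N)$ is the support of a translate of $N$, not of $N$ itself, unless $N$ is $\dG$-equivariant. Fortunately that equivariance is never needed, and with the disjointness observation your argument closes exactly as in the paper.
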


\begin{proof}
At each geometric point $\lambda:\Spec(K)\to \dG/\dB$ let us fix an appropriate extension $\supp'_\lambda$ of cohomological support from the small stable category $\stab(\msc{B}_\lambda)$ to the big stable category $\Stab(\msc{B}_\lambda)$.  (Such an extension exists by Theorem \ref{thm:np}.)  We construct from these $\supp'_\lambda$ the desired extension $\supp'_{\tN}$ of $\tN$-support.  
\par

In particular, we say an object $N$ in $\Stab(\FK{G}_q)$ is supported at a given point $x:\Spec(K)\to \mbb{P}(\tN)$, with corresponding point $\lambda=p(x):\Spec(K)\to \dG/\dB$ in the flag variety, if
\[
im(x)\in \supp'_\lambda(\res_\lambda N)\subset \mbb{P}(\mfk{n}_\lambda).
\]
(Note that in this case $x$ factors uniquely through the inclusion $\mbb{P}(\mfk{n}_\lambda)\to \mbb{P}(\tN_K)$.)  Take now
\[
\supp'_{\tN}(N):=\left\{\begin{array}{c}
\text{the image of all points }x:\Spec(K)\to \mbb{P}(\tN)\\
\text{at which $N$ is supported}
\end{array}\right\}\subset \mbb{P}(\tN).
\]
By the reconstruction theorem, Theorem \ref{thm:natrl}, we see that $\supp'_{\tN}(M)=\supp_{\tN}(M)$ whenever $M$ lies in $\stab(\FK{G}_q)\subset \Stab(\FK{G}_q)$.  The extension $\supp'_{\tN}$ inherits the remaining properties (E2)--(E4) from the corresponding properties for the extensions $\supp'_\lambda$.
\end{proof}

We recall that a specialization closed subset $\Theta$ in $\mathbb{P}(\tN)$ is a subset such that, for any point $x\in \Theta$, the closure $\overline{x}$ is contained in $\Theta$ as well.  In the lavish situation of Lemma \ref{lem:2596}, one can make standard arguments with Rickard idempotents to obtain the following.

\begin{lemma}\label{lem:2632}
Consider $G$ in type $A$.  Then $\tN$-support defines an injective map
\begin{equation}\label{eq:2713}
\begin{array}{c}
\supp_{\tN}:\{\text{\rm thick tensor ideals in }\stab(\FK{G}_q)\}\to \left\{\begin{array}{c}
\text{\rm specialization closed}\\
\text{\rm subsets in }\mbb{P}(\tN)\end{array}\right\}\vspace{1mm}\\
\msc{K}\mapsto \cup_{M\in \msc{K}}\supp_{\tN}(M).
\end{array}
\end{equation}
Indeed, the map
\[
\begin{array}{c}
\msc{K}_?:\left\{\begin{array}{c}
\text{\rm specialization closed}\\
\text{\rm subsets in }\mbb{P}(\tN)\end{array}\right\}\to \{\text{\rm thick tensor ideals in }\stab(\FK{G}_q)\}\\
\Theta\mapsto \msc{K}_{\Theta}
\end{array}
\]
provides a set theoretic section for \eqref{eq:2713}, where $\msc{K}_{\Theta}$ denotes the thick tensor ideal in $\stab(\FK{G}_q)$ consisting of all objects $M$ with $\supp_{\tN}(M)\subset \Theta$.
\end{lemma}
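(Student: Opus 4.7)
The plan is to invoke the standard Rickard--Neeman idempotent machinery, which is available here because $\Stab(\FK{G}_q)$ is compactly generated and $\tN$-support extends to a lavish support theory $\supp'_{\tN}$ on the big category (Lemma \ref{lem:2596}). Let $\Theta(\msc{K})=\cup_{M\in\msc{K}}\supp_{\tN}(M)$. The tautological inclusion $\msc{K}\subseteq \msc{K}_{\Theta(\msc{K})}$ holds by definition, so both injectivity of \eqref{eq:2713} and the section claim reduce to the assertion: \emph{for every thick tensor ideal $\msc{K}$ and every compact $M$ with $\supp_{\tN}(M)\subseteq \Theta(\msc{K})$, one has $M\in \msc{K}$.}

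I would then form $\msc{L}=\operatorname{Loc}^{\otimes}(\msc{K})$, the smallest localizing tensor ideal of $\Stab(\FK{G}_q)$ containing $\msc{K}$, and invoke Neeman's localization theorem to produce a Rickard idempotent triangle $e_{\msc{K}}\to \mathbf{1}\to f_{\msc{K}}$ with $e_{\msc{K}}\in \msc{L}$ and $f_{\msc{K}}\in \msc{L}^{\perp}$. The crux is then a support computation for these idempotents. Expressing $e_{\msc{K}}$ as a homotopy colimit of objects built from $\msc{K}$ via sums and triangles, properties (E1) and (E3) of the lavish extension give $\supp'_{\tN}(e_{\msc{K}})\subseteq \Theta(\msc{K})$. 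Conversely, since $N\in \msc{L}$ for each $N\in \msc{K}$, the map $e_{\msc{K}}\otimes N\to N$ is an isomorphism and hence $f_{\msc{K}}\otimes N=0$; property (E4) then forces $\supp'_{\tN}(f_{\msc{K}})\cap \Theta(\msc{K})=\emptyset$. Combined with (E1) applied to the Rickard triangle and the normalization $\supp'_{\tN}(\mathbf{1})=\mbb{P}(\tN)$, this yields the equalities $\supp'_{\tN}(e_{\msc{K}})=\Theta(\msc{K})$ and $\supp'_{\tN}(f_{\msc{K}})=\mbb{P}(\tN)\setminus \Theta(\msc{K})$.

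With these supports in hand, the rest is a standard squeeze. For $M$ compact in $\Stab(\FK{G}_q)$ with $\supp_{\tN}(M)\subseteq \Theta(\msc{K})$, tensoring the Rickard triangle with $M$ and applying (E4) gives $\supp'_{\tN}(f_{\msc{K}}\otimes M)=(\mbb{P}(\tN)\setminus\Theta(\msc{K}))\cap \supp_{\tN}(M)=\emptyset$, so $f_{\msc{K}}\otimes M\cong 0$ and $M\cong e_{\msc{K}}\otimes M\in \msc{L}$. Since $M$ is compact, Neeman's theorem identifying the compact part of a localizing subcategory generated by compacts with the thick subcategory they generate gives $M\in \operatorname{thick}^{\otimes}(\msc{K})=\msc{K}$, as required. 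The main obstacle is the support computation for $e_{\msc{K}}$ and $f_{\msc{K}}$: one must chain together (E1), (E3), (E4) carefully, keeping in mind that $\supp'_{\tN}$ takes values in arbitrary subsets of $\mbb{P}(\tN)$ rather than closed subsets, so the usual reflex of passing to closures or specialization envelopes has to be controlled by hand through the extended support axioms.
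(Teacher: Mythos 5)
Your proposal follows the same route the paper intends: the paper's own proof of this lemma is a one-line deferral to the extension of Lemma \ref{lem:2596} together with the standard Rickard-idempotent arguments of \cite{friedlanderpevtsova07}, \cite{boekujawanakano}, and \cite{negronpevtsova3}, and that is exactly the machinery you spell out (localizing the ideal, forming the idempotent triangle $e_{\msc{K}}\to \1\to f_{\msc{K}}$, computing supports of the idempotents via (E1), (E3), (E4), squeezing a compact $M$ with small support into $\operatorname{Loc}(\msc{K})$, and finishing with Neeman's identification of the compact part of $\operatorname{Loc}(\msc{K})$ with the thick ideal $\msc{K}$). Your reduction of both claims to ``$\supp_{\tN}(M)\subseteq\Theta(\msc{K})$ implies $M\in\msc{K}$'' is also the correct reading of the ``section'' statement.

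One step in your write-up does not follow from the axioms you invoke, and it is the load-bearing one: the inference ``$\supp'_{\tN}(f_{\msc{K}}\otimes M)=\emptyset$, so $f_{\msc{K}}\otimes M\cong 0$'' requires the extended theory to \emph{detect} vanishing, i.e.\ $\supp'_{\tN}(N)=\emptyset \Rightarrow N\cong 0$ in $\Stab(\FK{G}_q)$. Properties (E1)--(E4) only assert $\supp'(0)=\emptyset$, not the converse, so detection must be supplied separately. In the present setting it is true but is a theorem rather than a formality: the extension of Lemma \ref{lem:2596} is defined pointwise via hypersurface support over the small quantum Borels, which detects projectivity for each $\msc{B}_\lambda$ (this is part of the content of \cite{negronpevtsova,negronpevtsova2,negronpevtsova3}), and Theorem \ref{thm:projectivity} then upgrades vanishing of $\res_\lambda(f_{\msc{K}}\otimes M)$ in $\Stab(\msc{B}_\lambda)$ for all geometric points $\lambda$ to vanishing of $f_{\msc{K}}\otimes M$ in $\Stab(\FK{G}_q)$. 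With that detection statement inserted, your squeeze goes through; the remaining technical points you use (that $\operatorname{Loc}(\msc{K})$ is automatically a tensor ideal and that its compact objects are exactly $\msc{K}$) are standard for a rigidly-compactly generated category such as $\Stab(\FK{G}_q)$ with $\Stab(\FK{G}_q)^c=\stab(\FK{G}_q)$.
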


\begin{proof}
One employs the extension promised in Lemma \ref{lem:2596} and proceeds as in \cite[Proof of Theorem 6.3]{friedlanderpevtsova07}, or \cite[Proof of Theorem 7.4.1]{boekujawanakano}, or \cite[Proof of Proposition 5.2]{negronpevtsova3}.
\end{proof}

Note that we have made no claim that the map of \eqref{eq:2713} is surjective.  Technically speaking, this is because we have not shown that any closed subset in $\mbb{P}(\tN)$ is realized as the support $\supp_{\tN}(M)$ of some $M$ in $\stab(\FK{G}_q)$.  However, more directly, we have not claimed that \eqref{eq:2713} is surjective because it probably is not surjective (see the discussion of Section \ref{sect:3286} below).  So Lemma \ref{lem:2632} provides a non-unique classification of thick ideals in $\stab(\FK{G}_q)$ via specialization closed subsets in the projectivization $\mbb{P}(\tN)$.

\subsection{Surjectivity of $f_{\rm univ}$: proof of Proposition \ref{prop:bspec1}}
\label{sect:proof_bspec1}

We arrive finally at the main point of this section.
\par

By Proposition \ref{prop:2588} we have a map $f_{\rm univ}:\mbb{P}(\tN)\to \Spec(\stab\FK{G}_q)$ defined by the support data $\supp_{\tN}$, in type $A$.  So, to prove Proposition \ref{prop:bspec1} it suffices to show that $f_{\rm univ}$ is surjective.

\begin{proof}[Proof of Proposition \ref{prop:bspec1}]
By Lemma \ref{lem:2632}, $\tN$-support provides a non-unique classification of thick ideals in $\stab(\FK{G}_q)$.  Now, the arguments of \cite[Proof of Theorem 5.2]{balmer05} apply directly to see that $f_{\rm univ}:\mbb{P}(\tN)\to \Spec(\stab\FK{G}_q)$ is surjective.
\par

From a different perspective, one can consider the quotient space $\mbb{P}(\tN)/\sim$ by the equivalence relation defined by taking $x\sim y$ whenever any object $M$ with $x\in \supp_{\tN}(M)$ also has $y\in \supp_{\tN}(M)$, and vice versa.  Then $\tN$-support descends to a support data for $\stab(\FK{G}_q)$ which takes values in the quotient $\mbb{P}(\tN)/\sim$ (cf.\ \cite[Definition 2.5]{friedlanderpevtsova05}).  This descended theory is a classifying support data in the sense of \cite[Definition 5.1]{balmer05} so that we get a homeomorphism
\[
\bar{f}_{\rm univ}:\mbb{P}(\tN)/\sim\ \overset{\cong}\to \Spec(\stab\FK{G}_q)
\]
whose precomposition with the projection $\mbb{P}(\tN)\to \mbb{P}(\tN)/\sim$ recovers $f_{\rm univ}$ \cite[Theorem 5.2]{balmer05}.
\end{proof}

\section{The moment map and the Balmer spectrum}
\label{sect:spec2}

In this section we consider the continuous map $\rho:\Spec(\stab\FK{G}_q)\to \mbb{P}(\mcl{N})$ provided in \cite{balmer10}, and analyze the relationship between the map $\rho$ and the universal morphism $f_{\operatorname{univ}}$ of the previous section.  We prove the following.

\begin{theorem}\label{thm:bspec2}
Let $G$ be an almost-simple algebraic group in type $A$.  Then there are two continuous, surjective maps of topological spaces
\[
f_{\rm univ}:\mbb{P}(\tN)\to \Spec(\stab\FK{G}_q),\ \ \ \rho:\Spec(\stab\FK{G}_q)\to \mbb{P}(\mcl{N})
\]
whose composite is the projectivization of the moment map $\kappa:\tN\to \mcl{N}$.  In particular, $f_{\rm univ}$ restricts to a homeomorphism over a dense open subset in the Balmer spectrum
\[
f_{\rm univ,reg}:\mbb{P}(\tN)_{\rm reg}\overset{\cong}\longrightarrow \Spec(\stab\FK{G}_q)_{\rm reg}.
\]
\end{theorem}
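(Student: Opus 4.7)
The map $f_{\rm univ}$ and its surjectivity are established in Proposition \ref{prop:bspec1}; the remaining tasks are to construct $\rho$, to verify the compatibility $\rho \circ f_{\rm univ} = \mbb{P}(\kappa)$, to see that $\rho$ is also surjective, and finally to extract the generic homeomorphism. The map $\rho$ comes from Balmer's general comparison morphism $\Spec(T) \to \Spec^h(\Ext^*_T(\1,\1))$ for a tensor triangulated category $T$, applied to $T = \stab\FK{G}_q$; Corollary \ref{cor:GK} identifies $\Ext^*_{\FK{G}_q}(\1,\1) = \O(\mcl{N})$, so after passing to the projective spectrum one obtains the stated continuous map $\rho:\Spec(\stab\FK{G}_q)\to \mbb{P}(\mcl{N})$, together with Balmer's defining property $\rho^{-1}\supp^{\operatorname{chom}}_{\mcl{N}}(M) = \supp^{\operatorname{univ}}(M)$.

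Next I verify pointwise that $\rho \circ f_{\rm univ} = \mbb{P}(\kappa)$. By construction, for $x \in \mbb{P}(\tN)$ the image $f_{\rm univ}(x)$ is the thick prime $\msc{P}_x = \{M : x \notin \supp_{\tN}(M)\}$, and Balmer's recipe computes $\rho(\msc{P}_x)$ as the homogeneous prime $\{f \in \O(\mcl{N}) : x \in \supp_{\tN}(\operatorname{cone}(f))\}$. The $\tN$-support of such a cone can be identified with $\mbb{P}(\kappa)^{-1}V(f) \subset \mbb{P}(\tN)$: the action of $f\in \Ext^*_{\FK{G}_q}(\1,\1)=\O(\mcl{N})$ on each $\sExt^*_{\FK{G}_q}(M,M)$ factors through its pullback to a section of $p_\ast\O_{\tN}$, and the triangle $\1 \to \1 \to \operatorname{cone}(f)$ combined with the tensor product property of $\supp_{\tN}$ (Proposition \ref{prop:2588}) identifies the support of the cone with the vanishing locus of $\kappa^*(f)$ in $\mbb{P}(\tN)$. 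Consequently $\rho \circ f_{\rm univ}(x)$ is the homogeneous prime corresponding to $\mbb{P}(\kappa)(x)$, as desired.

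Surjectivity of $\rho$ is then immediate: $\rho \circ f_{\rm univ} = \mbb{P}(\kappa)$ is surjective because the moment map is a surjective resolution of the nilpotent cone, so $\rho$ is surjective as well. For the homeomorphism on the regular loci, define $\mbb{P}(\tN)_{\rm reg} := \mbb{P}(\kappa)^{-1}\mbb{P}(\mcl{N})_{\rm reg}$ and $\Spec(\stab\FK{G}_q)_{\rm reg} := \rho^{-1}\mbb{P}(\mcl{N})_{\rm reg}$. Since $\kappa$ restricts to an isomorphism over $\mcl{N}_{\rm reg}$, the composite $\rho \circ f_{\rm univ}$ is injective on $\mbb{P}(\tN)_{\rm reg}$, forcing $f_{\rm univ,reg}$ to be injective. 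Surjectivity of $f_{\rm univ,reg}$ onto $\Spec(\stab\FK{G}_q)_{\rm reg}$ follows by a one-line diagram chase from surjectivity of $f_{\rm univ}$: any $P$ with $\rho(P) \in \mbb{P}(\mcl{N})_{\rm reg}$ has a preimage $y$, and $\mbb{P}(\kappa)(y) = \rho(P)$ forces $y \in \mbb{P}(\tN)_{\rm reg}$. Continuity of the inverse is given by the formula
\[
(f_{\rm univ,reg})^{-1} = \bigl(\mbb{P}(\kappa)|_{\rm reg}\bigr)^{-1} \circ \rho|_{\Spec(\stab\FK{G}_q)_{\rm reg}},
\]
which is continuous as a composition of continuous maps. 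Density of $\Spec(\stab\FK{G}_q)_{\rm reg}$ inside $\Spec(\stab\FK{G}_q)$ follows from density of $\mbb{P}(\tN)_{\rm reg}$ in $\mbb{P}(\tN)$ together with surjectivity and continuity of $f_{\rm univ}$, via $\Spec(\stab\FK{G}_q) = f_{\rm univ}(\overline{\mbb{P}(\tN)_{\rm reg}}) \subseteq \overline{f_{\rm univ}(\mbb{P}(\tN)_{\rm reg})} \subseteq \overline{\Spec(\stab\FK{G}_q)_{\rm reg}}$.

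The main technical obstacle is the support identification $\supp_{\tN}(\operatorname{cone}(f)) = \mbb{P}(\kappa)^{-1}V(f)$ for homogeneous $f \in \Ext^*_{\FK{G}_q}(\1,\1)$, which underlies the compatibility $\rho \circ f_{\rm univ} = \mbb{P}(\kappa)$. Everything else is a relatively formal manipulation built on Balmer's universal property, the tensor product property of $\tN$-support in type $A$ (Proposition \ref{prop:2588}), and the standard fact that $\kappa$ restricts to an isomorphism on regular loci.
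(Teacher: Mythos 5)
Your overall architecture matches the paper's: construct $\rho$ via Balmer's comparison map from \cite{balmer10}, show $\rho\circ f_{\rm univ}=\mbb{P}(\kappa)$ by computing $\rho(f_{\rm univ}(x))=\{\xi : x\in\supp_{\tN}(\operatorname{cone}(\xi))\}$ pointwise, and then run the formal topology (surjectivity of $\rho$ from surjectivity of the moment map, injectivity of $f_{\rm univ}$ over the regular locus, inverse given by $\rho$, density by a closure argument). That last layer is fine and agrees with the paper, including your definition of $\mbb{P}(\tN)_{\rm reg}$, which coincides with the projectivized regular locus since $\tN_{\rm reg}=\kappa^{-1}(\mcl{N}_{\rm reg})$.

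The gap is exactly at the step you flag: the identification $\supp_{\tN}(\operatorname{cone}(\xi))=\mbb{P}(\kappa)^{-1}V(\xi)$ is the entire content of the compatibility $\rho\circ f_{\rm univ}=\mbb{P}(\kappa)$, and the justification you offer does not deliver it. The tensor product property of Proposition \ref{prop:2588} says nothing about why the support of a cone of a central class is a vanishing locus; and the observation that the action of $\xi$ on $\sExt_{\FK{G}_q}(M,M)$ factors through $p_\ast\O_{\tN}$ only gives you (after the appropriate nilpotence statement for $\xi\ot\operatorname{id}_{\operatorname{cone}(\xi)}$) the inclusion $\supp_{\tN}(\operatorname{cone}(\xi))\subseteq \mbb{P}(\kappa)^{-1}V(\xi)$; the reverse inclusion needs a genuine argument, e.g.\ the long exact sequence of $p_\ast\O_{\tN}$-modules obtained by applying $\sRHom_{\FK{G}_q}(\1,-)$ to the triangle $\Sigma^{-n}\1\to\1\to\operatorname{cone}(\xi)$ together with a graded Nakayama argument at points of $V(\kappa^\ast\xi)$. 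The paper avoids this sheaf-level Carlson-module computation entirely: in Proposition \ref{prop:2720} it uses the naturality theorem (Theorem \ref{thm:natrl}) to write $\mbb{P}(i_\lambda)^{-1}\supp_{\tN}(\operatorname{cone}(\xi))=\supp^{\operatorname{chom}}_\lambda(\operatorname{cone}(\res_\lambda\xi))=Van(\res_\lambda\xi)$, notes that $\res_\lambda$ on extension algebras is dual to $j_\lambda=\kappa\circ i_\lambda$, and assembles over all geometric points $\lambda$ (which cover $\mbb{P}(\tN)$) to get exactly your identity. So either route works, but as written your key step is asserted rather than proved, and the tool you cite is not the one that proves it. Separately, the ``defining property'' you attribute to Balmer's map, $\rho^{-1}\supp^{\operatorname{chom}}_{\mcl{N}}(M)=\supp^{\operatorname{univ}}(M)$ for all $M$, is not a general feature of the comparison map: in general one only has $\rho^{-1}V(\xi)=\supp^{\operatorname{univ}}(\operatorname{cone}(\xi))$ for homogeneous positive-degree $\xi$, and hence one inclusion for general $M$; the stated equality is essentially the (open) assertion that cohomological support classifies primes. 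Fortunately you never use this claim, but it should be removed or corrected.
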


Here $\mbb{P}(\tN)_{\rm reg}$ and $\mbb{P}(\mcl{N})_{\rm reg}$ are the projectivizations of the regular loci $\tN_{\rm reg}$ and $\mcl{N}_{\rm reg}$ \cite[\S 3.3]{chrissginzburg09}, respectively, and the corresponding locus $\Spec(\stab\FK{G}_q)_{\rm reg}$ in the Balmer spectrum is the preimage of $\mbb{P}(\mcl{N})_{\rm reg}$ along $\rho$.
\par

We note that in type $A_1$, the regular loci in $\tN$ and $\mcl{N}$ are just the complements of $0$-section $\dG/\dB\subset \tN$ and $\{0\}\subset \mcl{N}$ respectively, so that $\mbb{P}(\tN)_{\rm reg}=\mbb{P}(\tN)$ and $\mbb{P}(\mcl{N})_{\rm reg}=\mbb{P}(\mcl{N})$.  We therefore obtain a precise calculation of the Balmer spectrum in type $A_1$, as a corollary to Theorem \ref{thm:bspec2}.

\begin{corollary}\label{cor:bspecA1}
In type $A_1$ the Balmer spectrum for the small quantum group is precisely the projectivized nilpotent cone,
\[
\rho:\Spec\big(\stab\FK\operatorname{SL}(2)_q\big)\overset{\cong}\to\mbb{P}(\mcl{N})\ \ \text{and}\ \ \rho:\Spec\big(\stab\FK\operatorname{PSL}(2)_q\big)\overset{\cong}\to \mbb{P}(\mcl{N}).
\]
\end{corollary}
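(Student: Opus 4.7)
The plan is to deduce this corollary directly from Theorem \ref{thm:bspec2} by observing that in type $A_1$ the regular loci exhaust the projectivized varieties on both sides of the moment map. First I will check that $\mbb{P}(\mcl{N})_{\rm reg} = \mbb{P}(\mcl{N})$ and $\mbb{P}(\tN)_{\rm reg} = \mbb{P}(\tN)$. For $\mathfrak{sl}_2$ every nonzero nilpotent element is regular, so the nonregular locus in $\mcl{N}$ is just $\{0\}$, which is erased by $\mbb{G}_m$-projectivization. Similarly the nonregular locus of $\tN$ is precisely the zero section $\dG/\dB \subset \tN$ (the preimage of $0$ under the moment map), and this locus also disappears upon projectivizing the $\mbb{G}_m$-action on the nilpotent fibers. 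Consequently the restricted homeomorphism $f_{\rm univ,\mathrm{reg}}$ promised by Theorem \ref{thm:bspec2} is already defined on all of $\mbb{P}(\tN)$, so $f_{\rm univ}:\mbb{P}(\tN) \to \Spec(\stab \FK{G}_q)$ is itself a homeomorphism.

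Next I will show that the projectivized moment map $\mbb{P}(\kappa):\mbb{P}(\tN) \to \mbb{P}(\mcl{N})$ is a homeomorphism in type $A_1$. Classically, the Springer resolution $\tN \cong T^\ast\mbb{P}^1$ contracts precisely the zero section $\mbb{P}^1$ to the vertex $0 \in \mcl{N}$ and is an isomorphism elsewhere; in particular $\kappa$ restricts to a $\mbb{G}_m$-equivariant isomorphism $\tN \setminus(\dG/\dB) \overset{\cong}\to \mcl{N} \setminus\{0\}$. Taking $\mbb{G}_m$-quotients then yields an isomorphism of projective varieties $\mbb{P}(\kappa):\mbb{P}(\tN) \overset{\cong}\to \mbb{P}(\mcl{N})$, which is in particular a homeomorphism.

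Having obtained these two facts, I invoke the factorization $\rho \circ f_{\rm univ} = \mbb{P}(\kappa)$ from Theorem \ref{thm:bspec2}. Since both $f_{\rm univ}$ and $\mbb{P}(\kappa)$ are homeomorphisms, the identity $\rho = \mbb{P}(\kappa)\circ f_{\rm univ}^{-1}$ exhibits $\rho$ as a composite of two homeomorphisms, hence as a homeomorphism itself. The argument applies uniformly to both $G = \operatorname{SL}(2)$ and $G = \operatorname{PSL}(2)$, since the two cases differ only in the choice of character lattice and the dual group used to define the quantum Frobenius, but they share a common flag variety $\dG/\dB \cong \mbb{P}^1$, Springer resolution $\tN$, and nilpotent cone $\mcl{N}$.

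The only subtlety worth flagging is the step asserting that $\mbb{P}(\kappa)$ is a \emph{homeomorphism} rather than a mere continuous bijection. In general a bijective morphism of varieties need not be an isomorphism, but in type $A_1$ one has the explicit description $\kappa:T^\ast\mbb{P}^1 \to \mcl{N}$ as the blow-up of the quadric singularity at $0$, where the only nontrivial fiber is the exceptional $\mbb{P}^1$ which is collapsed by projectivization. Thus $\mbb{P}(\kappa)$ is an honest algebraic isomorphism, and no further work is required.
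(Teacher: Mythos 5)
Your proposal is correct and follows essentially the same route as the paper: the corollary is deduced from Theorem \ref{thm:bspec2} by observing that in type $A_1$ the non-regular loci are the zero section of $\tN$ and $\{0\}\subset\mcl{N}$, which vanish upon projectivization, so $f_{\rm univ}$ and $\mbb{P}(\kappa)$ are homeomorphisms and hence so is $\rho$. You simply spell out the details (in particular that $\mbb{P}(\kappa)$ is an honest isomorphism away from the contracted locus) which the paper leaves implicit.
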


\subsection{Factoring the moment map}

By \cite[\S 5]{balmer10} we have the continuous map
\[
\rho:\Spec(D_{\coh}(\FK{G}_q))\to \Proj\Ext_{\FK{G}_q}(\1,\1)\cup\{0\}=\mbb{P}(\mcl{N})\cup\{0\}
\]
defined explicitly by taking
\[
\rho(\msc{P})=Van(\xi\in \Ext^i_{\FK{G}_q}(\1,\1):i> 0,\ \operatorname{cone}(\xi)\notin \msc{P}).
\]
Here $\{0\}$ is the maximal ideal of positive degree elements in $\Ext_{\FK{G}_q}(\1,\1)$, and each $\rho(\msc{P})$ is in fact a homogenous prime in $\Ext_{\FK{G}_q}(\1,\1)$ by \cite[Theorem 5.3]{balmer10}.  We note that the mapping cone $\operatorname{cone}(\xi)$ is stably isomorphic to (a shift of) the Carlson module $L_{\xi}$ for $\xi$ \cite[\S 5.9]{benson91}.
\par

Now, for $G$ in type $A$ we have maps from $\mbb{P}(\tN)$ to the Balmer spectra of the stable and derived categories for $\FK{G}_q$, by Proposition \ref{prop:2588}.  These maps fit into a diagram
\[
\xymatrix{
 & \mbb{P}(\tN)\ar[dl]|{\rm univ\ map}\ar[dr]|{\rm univ\ map}\\
\Spec(\stab\FK{G}_q)\ar[rr] & & \Spec(D_{\coh}\FK{G}_q),
}
\]
where $\Spec(\stab\FK{G}_q)$ is identified with the complement $\Spec(D_{\coh}\FK{G}_q)-\{0\}$ \cite[Proposition 3.11]{balmer05}, and $\{0\}$ is the ideal of acyclic complexes in $D_{\coh}(\FK{G}_q)$.  We can therefore speak unambiguously of the map $f_{\rm univ}$ to the spectrum of either the small stable, or coherent derived category for $\FK{G}_q$.

\begin{proposition}\label{prop:2720}
Consider $G$ in type $A$.  The composition $\rho\circ f_{\rm univ}:\mbb{P}(\tN)\to \mbb{P}(\mcl{N})\cup \{0\}$ has image in $\mbb{P}(\mcl{N})$ and is equal to the projectivized moment map,
\[
\rho\circ f_{\rm univ}=\mbb{P}(\kappa):\mbb{P}(\tN)\to \mbb{P}(\mcl{N}).
\]
\end{proposition}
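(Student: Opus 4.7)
Fix a geometric point $x: \Spec(K) \to \mbb{P}(\tN)$ and set $\lambda = p(x): \Spec(K) \to \dG/\dB$, so that $x$ factors through $\mbb{P}(i_\lambda): \mbb{P}(\mfk{n}_\lambda) \hookrightarrow \mbb{P}(\tN)$. The plan is to compute $\rho(f_{\rm univ}(x))$ directly from the two definitions and match it against $\mbb{P}(\kappa)(x)$. Unwinding, $\msc{P}_x := f_{\rm univ}(x)$ is the thick prime ideal $\{M \in \stab\FK{G}_q : x \notin \supp_{\tN}(M)\}$, so
\[
\rho(\msc{P}_x) = \{\xi \in \Ext^{>0}_{\FK{G}_q}(\1,\1) : x \in \supp_{\tN}(\operatorname{cone}(\xi))\}
\]
interpreted as a homogeneous prime in $\O(\mcl{N}) = \Ext^\ast_{\FK{G}_q}(\1,\1)$. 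On the other hand $\mbb{P}(\kappa)(x) \in \mbb{P}(\mcl{N})$ is by definition the homogeneous prime of $\xi \in \O(\mcl{N})$ vanishing at $\mbb{P}(\kappa)(x)$, so matching the two reduces to verifying, for each homogeneous $\xi$, the equivalence $x \in \supp_{\tN}(\operatorname{cone}(\xi)) \iff \mbb{P}(\kappa)(x) \in \operatorname{Van}(\xi)$.

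The central step is the identity $\supp_{\tN}(\operatorname{cone}(\xi)) = \mbb{P}(\kappa)^{-1}\operatorname{Van}(\xi)$ in $\mbb{P}(\tN)$, which I would prove using Theorem \ref{thm:natrl}. For each geometric $\lambda: \Spec(K) \to \dG/\dB$, that theorem gives $\mbb{P}(i_\lambda)^{-1} \supp_{\tN}(\operatorname{cone}(\xi)) = \supp^{\operatorname{chom}}_\lambda(\res_\lambda \operatorname{cone}(\xi))$. Exactness and monoidality of the restriction functor yield $\res_\lambda \operatorname{cone}(\xi) \cong \operatorname{cone}(\res_\lambda\xi)$, which is (stably a shift of) the Carlson module $L_{\res_\lambda \xi}$ in the finite tensor category $\msc{B}_\lambda$. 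Since cohomological support for $\msc{B}_\lambda$ satisfies the tensor product property in type $A$ (Theorem \ref{thm:np}), the standard Carlson formula applies and gives $\supp^{\operatorname{chom}}_\lambda(\operatorname{cone}(\res_\lambda \xi)) = \operatorname{Van}(\res_\lambda \xi) \subset \mbb{P}(\mfk{n}_\lambda)$.

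Next I would align the cohomological identifications. Under the identifications $\Ext^\ast_{\FK{G}_q}(\1,\1) = \O(\mcl{N})$ (Corollary \ref{cor:GK}) and $\Ext^\ast_{\msc{B}_\lambda}(\1,\1) = \O(\mfk{n}_\lambda)$ adopted in Section \ref{sect:naturality}, the Ext-level restriction $\res_\lambda$ agrees with the pullback $j_\lambda^\ast$ along the embedding $j_\lambda := \kappa \circ i_\lambda : \mfk{n}_\lambda \hookrightarrow \mcl{N}$; this is a naturality statement for the Ginzburg--Kumar calculation across the pair $(\FK{G}_q, \msc{B}_\lambda)$ and was already in use in Lemma \ref{lem:mys} and Theorem \ref{thm:natrl}. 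Consequently $\operatorname{Van}(\res_\lambda \xi) = j_\lambda^{-1} \operatorname{Van}(\xi)$, and applying $\mbb{P}(i_\lambda)$ yields $\mbb{P}(i_\lambda)\big(\operatorname{Van}(\res_\lambda \xi)\big) = \mbb{P}(\kappa)^{-1}\operatorname{Van}(\xi) \cap \mbb{P}(i_\lambda)(\mbb{P}(\mfk{n}_\lambda))$. Taking the union over all $\lambda$ via the reconstruction clause of Theorem \ref{thm:natrl}, and noting that the images $\mbb{P}(i_\lambda)(\mbb{P}(\mfk{n}_\lambda))$ cover $\mbb{P}(\tN)$, delivers the claimed $\supp_{\tN}(\operatorname{cone}(\xi)) = \mbb{P}(\kappa)^{-1}\operatorname{Van}(\xi)$.

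Combining the pieces, $\xi \in \rho(\msc{P}_x)$ iff $\mbb{P}(\kappa)(x) \in \operatorname{Van}(\xi)$, which is exactly the condition cutting out the homogeneous prime in $\O(\mcl{N})$ corresponding to the point $\mbb{P}(\kappa)(x) \in \mbb{P}(\mcl{N})$. This shows $\rho \circ f_{\rm univ} = \mbb{P}(\kappa)$, and in particular that the composition avoids the irrelevant ideal $\{0\}$, since for any $x$ one can exhibit a homogeneous $\xi \in \O(\mcl{N})$ not vanishing at the non-zero point $\kappa(i_\lambda(\tilde{x}))$ (the map $\kappa$ restricts to the linear inclusion $j_\lambda$ on each fiber $\mfk{n}_\lambda$, hence sends non-zero vectors to non-zero vectors). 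The main obstacle I anticipate is the bookkeeping around the Carlson support formula in $\msc{B}_\lambda$ together with the compatibility $\res_\lambda = j_\lambda^\ast$ on Ext; both are routine consequences of the Part I machinery and Theorem \ref{thm:np}, but require careful citation to the existing literature on support for finite tensor categories and to the Ginzburg--Kumar calculation.
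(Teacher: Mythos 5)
Your proposal is correct and follows essentially the same route as the paper: reduce via Theorem \ref{thm:natrl} to the quantum Borels, use exactness of $\res_\lambda$ to identify $\res_\lambda\operatorname{cone}(\xi)$ with $\operatorname{cone}(\res_\lambda\xi)$, invoke the Carlson-module support formula $\supp^{\operatorname{chom}}_\lambda(\operatorname{cone}(\res_\lambda\xi))=\operatorname{Van}(\res_\lambda\xi)$, and use that $\res_\lambda$ on $\Ext$ is dual to $j_\lambda$ (the paper does the check pointwise and extends by base change, while you package it as the global identity $\supp_{\tN}(\operatorname{cone}(\xi))=\mbb{P}(\kappa)^{-1}\operatorname{Van}(\xi)$ via the reconstruction clause, a purely cosmetic difference). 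One small note: the Carlson support formula does not require the tensor product property of Theorem \ref{thm:np}, so that citation is unnecessary, though harmless.
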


\begin{proof}
For concreteness we consider $f_{\rm univ}$ as a map to $\Spec(D_{\coh}(\FK{G}_q))$, and $\rho$ is a map from this spectrum.  For $x\in \mbb{P}(\tN)$ we represent $x$ as a geometric point $x:\Spec(K[t,t^{-1}])\to \tN$ with corresponding point $\lambda:\Spec(K)\to \dG/\dB$.  We have a unique restriction of $x$ to a point $x:\Spec(K[t,t^{-1}])\to \mfk{n}_\lambda$.  By an abuse of notation we let $x\in \mbb{P}(\mfk{n}_\lambda)$ denote the corresponding (topological) point in the projectivization.
\par

We have
\[
\begin{array}{rl}
f_{\rm univ}(x)&=\{M\in D_{\coh}(\FK{G}_q):x\notin \supp_{\tN}(M)\}\vspace{1mm}\\
&=\{M\in D_{\coh}(\FK{G}_q):x\notin \supp^{\operatorname{chom}}_\lambda(\res_\lambda M)\}
\end{array}
\]
via the naturality result of Theorem \ref{thm:natrl}, and thus
\[
\begin{array}{rl}
\rho f_{\rm univ}(x)&=\{\xi\in \Ext^{>0}_{\FK{G}_q}(\1,\1):\operatorname{cone}(\xi)\notin f_{\rm univ}(x)\}\vspace{1mm}\\
&=\{\xi:x\in \supp^{\operatorname{chom}}_\lambda(\res_\lambda\operatorname{cone}(\xi))\}.
\end{array}
\]
Since restriction is an exact functor, we have $\res_\lambda\operatorname{cone}(\xi)=\operatorname{cone}(\res_\lambda \xi)$, where now $\res_\lambda(\xi)\in \Ext_{\msc{B}_\lambda}(\1,\1)$.  But now, the support of the mapping cone of some function in $\Ext_{\msc{B}_\lambda}(\1,\1)$ is just the vanishing locus
\[
\supp^{\operatorname{chom}}_\lambda(\operatorname{cone}(\res_\lambda \xi))=Van(\res_\lambda \xi)\subset \Proj\Ext_{\msc{B}_\lambda}(\1,\1)=\mbb{P}(\mfk{n}_\lambda).
\]
Since restriction
\[
\res_\lambda:\O(\mcl{N})=\Ext_{\FK{G}_q}(\1,\1)\to \Ext_{\msc{B}_\lambda}(\1,\1)=\O(\mfk{n}_\lambda)
\]
is dual to the map $j_\lambda:\mfk{n}_\lambda\hookrightarrow \mcl{N}_K\to \mcl{N}$ we have that
\[
x\in Van(\res_\lambda \xi)\ \ \Leftrightarrow\ \ j_\lambda(x)\in Van(\xi).
\]
So we have a sequence of identifications
\[
\begin{array}{rl}
\rho f_{\rm univ}(x)&=\{\xi:x\in \supp^{\operatorname{chom}}_\lambda(\res_\lambda\operatorname{cone}(\xi))\}\vspace{1mm}\\
&=\{\xi:x\in \supp^{\operatorname{chom}}_\lambda(\operatorname{cone}(\res_\lambda \xi))\}\vspace{1mm}\\
&=\{\xi:x\in Van(\res_\lambda \xi)\}\vspace{1mm}\\
&=\{\xi:j_\lambda(x)\in Van(\xi)\}=j_\lambda(x).
\end{array}
\]
This calculation also shows that $\rho f_{\rm univ}(x)$ is not the irrelevant ideal $\{0\}$, so that $\rho\circ f_{\rm univ}$ does in fact have image in $\Proj\Ext_{\FK{G}_q}(\1,\1)=\mbb{P}(\mcl{N})$.
\par

By the above information we see that the diagram of maps of topological spaces
\[
\xymatrix{
\mbb{P}(\tN)\ar[rr]^{\rho\circ f_{\rm univ}} & & \mbb{P}(\mcl{N})\\
 & \mbb{P}(\mfk{n}_\lambda) \ar[ur]_{\mbb{P}(j_\lambda)}\ar[ul]^{\mbb{P}(i_\lambda)}
}
\]
commutes when restricted to closed points in $\mbb{P}(\mfk{n}_\lambda)$, for all $\lambda:\Spec(K)\to \dG/\dB$.  By considering further base change to $(\mfk{n}_{\lambda})_L=\mfk{n}_{\lambda'}$ for any extension $K\to L$, it follows that the above diagram commutes on the entirety of $\mbb{P}(\mfk{n}_\lambda)$.  But now
\[
\rho f_{\rm univ}\big(\mbb{P}(i_\lambda)(x)\big)=\mbb{P}j_\lambda(x)=\mbb{P}(\kappa)\big(\mbb{P}(i_\lambda)(x)\big)\ \ \text{for all}\ x\in \mfk{n}_\lambda,
\]
where $\kappa:\tN\to \mcl{N}$ is the moment map.  Since all points in $\mbb{P}(\tN)$ are of the form $\mbb{P}(i_\lambda)(x)$ for some $\lambda$, we see $\rho\circ f_{\rm univ}=\mbb{P}(\kappa)$.
\end{proof}

\subsection{Proof of Theorem \ref{thm:bspec2}}

\begin{proof}[Proof of Theorem \ref{thm:bspec2}]
By Proposition \ref{prop:bspec1} the map $f_{\rm univ}:\mbb{P}(\tN)\to \Spec(\stab\FK{G}_q)$ is surjective, and by Proposition \ref{prop:2720} the composition
\[
\rho\circ f_{\rm univ}:\mbb{P}(\tN)\to \Spec(\stab\FK{G}_q)\to \mbb{P}(\mcl{N})
\]
is the projectivized moment map.  Since the moment map is an isomorphism over the regular loci in $\tN$ and $\mcl{N}$, it follows that $f_{\rm univ}$ restricts to an injective map over $\mbb{P}(\tN)_{\rm reg}$, and is hence a bijection over $\mbb{P}(\tN)_{\rm reg}$.  It follows that $\rho$ restricts to a bijection over $\Spec(\stab\FK{G}_q)_{\rm reg}$.  Since all of these maps are continuous, we find that the restriction of $f_{\rm univ}$ to the regular loci
\[
\mbb{P}(\tN)_{\rm reg}\to \Spec(\stab\FK{G}_q)_{\rm reg}
\]
is a heomeomorphism with inverse
\[
\Spec(\stab\FK{G}_q)_{\rm reg}\to \mbb{P}(\mcl{N})_{\rm reg}
\]
provided by the restriction of $\rho$.
\par

To see that the regular locus $\Spec(\stab\FK{G}_q)_{\rm reg}$ is dense in the spectrum, we simply consider its closure $\Theta\subset \Spec(\stab\FK{G}_q)$ and have that the preimage $f_{\rm univ}^{-1}\Theta\subset \mbb{P}(\tN)$ is a closed set which contains the regular locus.  Since the regular locus is dense in $\mbb{P}(\tN)$ we conclude that $f_{\rm univ}^{-1}\Theta=\mbb{P}(\tN)$ and surjectivity of $f_{\rm univ}$ implies $\Theta=\Spec(\stab\FK{G}_q)$.  We are done.
\end{proof}

\subsection{A conditional statement for arbitrary almost-simple $G$}
\label{sect:the_others}

We take a moment to speak to a version of Theorem \ref{thm:bspec2} in arbitrary Dynkin type.  The following was conjectured in \cite{negronpevtsova,negronpevtsova3}.

\begin{conjecture}[Standard conjecture]\label{conj:SC}
For an arbitrary almost-simple algebraic group $G$, and an arbitrary geometric point $\lambda:\Spec(K)\to \dG/\dB$,
\begin{enumerate}
\item[(A)] Cohomological support for $\stab(\msc{B}_\lambda)$ satisfies the tensor product property.
\item[(A+)] Cohomological support for $\stab(\msc{B}_\lambda)$ is a lavish support theory (see \S \ref{sect:supp_nons}).
\end{enumerate}
\end{conjecture}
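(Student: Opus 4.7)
The plan is to establish both parts of Conjecture \ref{conj:SC} by first reducing to the case of the standard positive Borel over $k$, and then extending the hypersurface support machinery of \cite{negronpevtsova2,negronpevtsova3} from type $A$ to arbitrary Dynkin type. The reduction step is immediate: as noted in the remark following Theorem \ref{thm:np}, each $\msc{B}_\lambda$ is isomorphic to a base change $(\msc{B}_1)_K$ of the standard Borel as a $K$-linear tensor category. Cohomological support is preserved under such base changes, and the tensor product property and lavishness extend from $\msc{B}_1$ over an arbitrary algebraically closed field of characteristic $0$ to all geometric fibers.

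With the reduction in place, I would proceed to construct hypersurface support for $\Stab(\uqB)$ in arbitrary Dynkin type. Recall that in type $A$ this is accomplished by choosing an algebra $A$ over the symmetric algebra $\Sym(\mfk{n}^\ast)$, flat over this base, whose generic fibers are tractable (e.g., smash products with the toral grouplikes) and whose closed fiber recovers $\uqB$ up to a controlled modification. For a bounded complex $M$ in $\Stab(\uqB)$, the hypersurface support at a point $x \in \mfk{n}^\ast$ is defined in terms of projectivity of a deformed module $M_x$ over the fiber $A_x$. Once this deformation is in hand, one combines the Ginzburg-Kumar identification $\Ext_{\uqB}(k,k)=\Sym(\mfk{n}^\ast)$ with generic formality \cite[Proposition 3.7.7]{arkhipovbezrukavnikovginzburg04} to identify hypersurface support with cohomological support on finite-dimensional modules. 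Properties (E1)--(E4) of a lavish extension, as well as the tensor product property, then follow from the general noncommutative complete intersection framework.

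The hard part will be producing the flat deformation $A \to \Sym(\mfk{n}^\ast)$ outside of type $A$. In type $A$ the quantum Serre relations can be replaced by commutation relations up to Ore-type scalars, which allows the nilpotent part of $\uqB$ to deform to a polynomial algebra smashed with the grouplikes. In other types the higher-order Serre relations of the form $\operatorname{ad}_q(E_\alpha)^{1-\langle\alpha,\beta\rangle}(E_\beta)=0$ obstruct the most naive deformation: one cannot simultaneously make all root vectors commute in a flat family, and nontrivial PBW-type relations persist. My approach would be to work with a filtration on $\uqB$ associated to a choice of reduced expression for the longest word $w_0$, producing an associated graded algebra whose defining relations are $q$-commutations between PBW root vectors. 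One would then attempt to deform through this associated graded using a further rescaling of the $q$-commutation scalars, arranging a one-parameter family whose generic fiber is a (twisted) polynomial ring over the toral algebra.

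If such a deformation is constructed, one would then verify, via the spectral sequence from the filtration and the generic formality result already invoked in Section \ref{sect:naturality}, that the induced hypersurface support on finite-dimensional $\uqB$-modules agrees with cohomological support over $\mfk{n}=\Spec\Sym(\mfk{n}^\ast)$. The tensor product property for hypersurface support is built into its definition via tensor products of deformed modules, and the lavish extension to $\Stab(\uqB)$ is obtained by applying the same hypersurface construction to arbitrary (possibly infinite-dimensional) objects, exactly as in \cite[Definition 5.8]{negronpevtsova2}. Granting the deformation, Theorem \ref{thm:bspec2} and Corollary \ref{cor:bspecA1} would then extend verbatim to arbitrary almost-simple $G$ via Theorem \ref{thm:natrl} and the arguments of Section \ref{sect:spec}--\ref{sect:spec2}.
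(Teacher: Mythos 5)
The statement you set out to prove is stated in the paper as a conjecture, not a theorem: the paper establishes it only in type $A$ (Theorem \ref{thm:np}, quoting \cite{negronpevtsova,negronpevtsova3}), and everything outside type $A$ (Theorem \ref{thm:bspec2.5}) is explicitly conditional on Conjecture \ref{conj:SC}. Your opening reduction to the standard Borel $\msc{B}_1$ over an arbitrary algebraically closed field is correct, but it is exactly the observation already made in the remark following Theorem \ref{thm:np} and does no work toward the content of the conjecture.

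The substantive part of your proposal defers precisely the open point. The hypersurface-support machinery of \cite{negronpevtsova2,negronpevtsova3} requires a flat ``integration'' of $\uqB$ over a smooth commutative base whose fibers have finite global dimension, \emph{and} this deformation must be compatible with the tensor structure on $\Rep\uqB$; it is that compatibility, not the algebra deformation alone, which feeds both the tensor product property and property (E4) of a lavish extension. Your sketch --- filter $\uqB$ via a reduced expression for $w_0$, pass to the associated graded in which the PBW root vectors $q$-commute, and rescale the commutation scalars --- produces at best an algebra-level degeneration: the associated graded of such a PBW-type filtration is not a Hopf algebra (the coproduct of a non-simple root vector involves lower-order terms that the filtration discards), so the generic fibers of your proposed family do not carry the monoidal structure needed even to formulate the tensor product property, let alone to verify (E1)--(E4). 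The higher quantum Serre relations outside type $A$ are exactly what obstructs realizing $\uqB$ as a quotient of a smooth Noetherian deformation in the sense required by \cite{negronpevtsova2}, and saying that a suitable one-parameter rescaling ``would be arranged'' restates the problem rather than solving it. As written, your argument yields nothing beyond what Theorem \ref{thm:np} already gives in type $A$; the conjecture remains open, and the conditional formulation of Theorem \ref{thm:bspec2.5} is the most the present framework provides.
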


Theorem \ref{thm:np} says that both Conjecture \ref{conj:SC} (A) and (A+) hold in type $A$, and we do believe that support for the small quantum Borels satisfies these conditions in arbitrary Dynkin type as well.  We have the following conditional extension of Theorem \ref{thm:bspec2}.

\begin{theorem}\label{thm:bspec2.5}
Suppose that the \emph{Standard Conjecture \ref{conj:SC}} holds for the small quantum Borel, in arbitrary Dynkin type, and let $G$ be any almost-simple algebraic group.  Then we have a surjective continuous map
\[
f_{\rm univ}:\mbb{P}(\tN)\to \Spec(\stab\FK{G}_q)
\]
which is a homeomorphism over a dense open subset in $\Spec(\stab\FK{G}_q)$.
\end{theorem}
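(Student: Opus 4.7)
The plan is to observe that the proof of Theorem \ref{thm:bspec2} splits into two pieces: a ``quantum Borel input'' and a ``quantum Frobenius kernel output.'' The Dynkin type restriction enters only through the first piece, namely Theorem \ref{thm:np}, and this is precisely the statement replaced by hypothesis in Conjecture \ref{conj:SC}. So the task is simply to rerun the Type $A$ argument with Theorem \ref{thm:np} replaced by the Standard Conjecture, and to check that no additional type-specific input sneaks in along the way.

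First, I would reprove that $\tN$-support is a support datum for $\stab(\FK{G}_q)$ in arbitrary Dynkin type. Using Conjecture \ref{conj:SC}(A), cohomological support for each $\msc{B}_\lambda$ satisfies the tensor product property. Combining this with the naturality statement of Theorem \ref{thm:natrl}, one checks the equality $\supp_{\tN}(M\otimes N)=\supp_{\tN}(M)\cap\supp_{\tN}(N)$ on closed points, exactly as in the proof of Proposition \ref{prop:2588}. Closedness of $\tN$-supports in $\mbb{P}(\tN)$ promotes this to an equality of sets. Balmer's universality Theorem \ref{thm:le-balmer} then produces the continuous map $f_{\rm univ}:\mbb{P}(\tN)\to\Spec(\stab\FK{G}_q)$.

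Next, I would use Conjecture \ref{conj:SC}(A+) to assemble a lavish extension of $\tN$-support to the big stable category $\Stab(\FK{G}_q)$. For each geometric point $\lambda:\Spec(K)\to \dG/\dB$, fix a lavish extension $\supp'_\lambda$ of cohomological support to $\Stab(\msc{B}_\lambda)$. Declare $N\in \Stab(\FK{G}_q)$ to be supported at $x\in\mbb{P}(\tN)$, with flag image $\lambda=p(x)$, precisely when the corresponding point in $\mbb{P}(\mathfrak{n}_\lambda)$ lies in $\supp'_\lambda(\res_\lambda N)$. Theorem \ref{thm:natrl} (the reconstruction half) guarantees that this restricts to $\supp_{\tN}$ on compact objects, and properties (E1)--(E4) are inherited argument-by-argument from the lavish structure on each Borel, since both $\res_\lambda$ and the action of $\FK{G}_q$ on $\Stab(\FK{G}_q)$ intertwine with the corresponding Borel data. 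This is the one step that requires the most care: one needs (E4) at mixed compact/noncompact pairs $(M,N)$, which follows because for compact $M$ we have $\res_\lambda(M\otimes N)=\res_\lambda M\otimes\res_\lambda N$ with $\res_\lambda M$ still compact in $\msc{B}_\lambda$.

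With $\tN$-support now lavish, the argument of Lemma \ref{lem:2632} produces the injection of thick tensor ideals into specialization-closed subsets of $\mbb{P}(\tN)$, and \cite[Theorem 5.2]{balmer05} promotes this to surjectivity of $f_{\rm univ}$, exactly as in Proposition \ref{prop:bspec1}. Finally, the factorization $\rho\circ f_{\rm univ}=\mbb{P}(\kappa)$ of Proposition \ref{prop:2720} is proved using only Theorem \ref{thm:natrl}, Balmer's description of $\rho$, and the exactness of $\res_\lambda$, none of which require a Dynkin type restriction; so it applies verbatim. Since the moment map is an isomorphism over the regular locus, $f_{\rm univ}$ restricts to a continuous bijection $\mbb{P}(\tN)_{\rm reg}\to \Spec(\stab\FK{G}_q)_{\rm reg}$ with continuous inverse given by $\rho$, and density of $\Spec(\stab\FK{G}_q)_{\rm reg}$ follows from density of $\mbb{P}(\tN)_{\rm reg}$ together with surjectivity of $f_{\rm univ}$. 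The only genuine obstacle in this proposal is the careful verification of condition (E4) for the assembled extension; everything else is essentially a repackaging of the Type $A$ argument under the conjectured hypothesis.
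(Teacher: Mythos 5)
Your proposal is correct and follows essentially the same route as the paper, whose proof of Theorem \ref{thm:bspec2.5} is simply to rerun the arguments of Proposition \ref{prop:bspec1} and Theorem \ref{thm:bspec2} with Theorem \ref{thm:np} replaced by Conjecture \ref{conj:SC}; your step-by-step check that no other type-specific input enters (including the inheritance of (E1)--(E4) from the Borel-level extensions, as in Lemma \ref{lem:2596}) is exactly what that instruction amounts to.
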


This dense open subset is, of course, the regular locus in $\Spec(\stab\FK{G}_q)$, i.e.\ the preimage of $\mbb{P}(\mcl{N})_{\rm reg}$ along the map $\rho$.

\begin{proof}
Literally just copy the proofs of Proposition \ref{prop:bspec1} and Theorem \ref{thm:bspec2}, replacing Theorem \ref{thm:np} with Conjecture \ref{conj:SC} when necessary.
\end{proof}

\begin{remark}
The conclusions (A) and (A+) have, more-or-less, been proposed to hold in arbitrary Dynkin type in \cite{boekujawanakano,nakanovashawyakimov}.  Specifically, the conclusions of Theorem \ref{thm:bspec2.5} \emph{do hold} if one accepts the validity of the specific claims \cite[Theorem 6.2.1, Theorem 6.5.1, and proof of Theorem 7.4.1]{boekujawanakano}.  However, some points in \cite{boekujawanakano} have been difficult to clarify, at least from the authors' considerations of the text.  For this reason we have restricted some of our ``tt-geometry" claims to type $A$, where we can specifically employ Theorem \ref{thm:np}.  (See \cite[Section 8.7]{negronpevtsova3} for further discussion.)
\end{remark}

\subsection{The Balmer spectrum is most likely just $\mbb{P}(\mcl{N})$}
\label{sect:3286}

As mentioned previously, we do believe that the Standard Conjecture \ref{conj:SC} holds in arbitrary Dynkin type.  So let us take this point for granted for the moment.  Then we have the maps
\[
\mbb{P}(\tN)\to\Spec(\stab\FK{G}_q)\to \mbb{P}(\mcl{N})
\]
which approximate the Balmer spectrum via the nilpotent cone at arbitrary $G$.  The difference between $\tN$ and $\mcl{N}$ is of course the fact that the nilpotent subalgebras $\mfk{n}_\lambda$ are disjoint in $\tN$, while they generally intersect in $\mcl{N}$.  So, inside of the Balmer spectrum, one is asking about the separation of the images
\[
\res_\lambda^\ast:\Spec(\stab\msc{B}_\lambda)\to \Spec(\stab\FK{G}_q)
\]
in $\Spec(\stab\FK{G}_q)$, where the spectrum for $\msc{B}_\lambda$ is just the spectrum of two-sided primes in $\msc{B}_\lambda$ (cf.\ \cite{nakanovashawyakimov1,negronpevtsova3}).
\par

Our inability to access the intersections of these images in the Balmer spectrum is related to our inability to produce a categorical realization $``\msc{B}_\lambda\cap\msc{B}_\mu"$ of the intersection $\mfk{b}_\lambda\cap \mfk{b}_\mu$ at arbitrary $\lambda$ and $\mu$.  Indeed, one can see that $\tN$-support collapses to the cohomological $\mcl{N}$-support in the modular setting, for $\mbb{G}_{(r)}$, simply because we can realize the corresponding intersections $``\msc{B}_\lambda\cap\msc{B}_\mu"$ via the literal intersections $\Rep(\mbb{B}_{\lambda}\cap\mbb{B}_{\mu})_{(r)}$ of Borels in $\mbb{G}$.  The existence of such an intersection (and corresponding analysis of support) tells you that the map
\[
f_{\rm univ}:\mbb{P}(\tN_{r})\to \Spec(\stab\mbb{G}_{(r)})
\]
factors through the affinization to provide a homeomorphism
\[
\bar{f}_{\rm univ}:\mbb{P}(\mcl{N}_{r})\overset{\cong}\longrightarrow \Spec(\stab\mbb{G}_{(r)}),
\]
where $\tN_{r}=\Spec_{\dG/\dB}\left(H^\ast(\sRHom_{\mbb{G}_{(r)}}(\1,\1))\right)$ and $\mcl{N}_{r}=\Spec(\Ext_{\mbb{G}_{(r)}}(\1,\1))$.  

\begin{remark}
What we have suggested above is a somewhat different approach to support for infinitesimal group schemes than was taken in \cite{suslinfriedlanderbendel97b,friedlanderpevtsova07}.  An elaboration on such an approach to support in the modular setting may appear in a later text.
\end{remark}

Regardless of our inability to realize these quantum intersections $``\msc{B}_\lambda\cap\msc{B}_\mu"$ appropriately, at least at the present moment, we do expect that $\tN$-support for the quantum group collapses to the nilpotent cone, just as it does in the modular setting.  We therefore expect the corresponding calculation
\begin{equation}\label{eq:3211}
\xymatrix@C=18pt{
\bar{f}_{\rm univ}:\mbb{P}(\mcl{N})\ar[rr]^(.45){\cong}_(.45){\rm Expected} & & \Spec(\stab\FK{G}_q)
}
\end{equation}
in arbitrary Dynkin type.  We record some specific information in this direction, without providing further commentary.

\begin{proposition}\label{prop:equiv}
Suppose that the {\rm Standard Conjecture \ref{conj:SC}} holds for the small quantum Borel in arbitrary Dynkin type.  Then for the quantum Frobenius kernel $\FK{G}_q$ the following are equivalent:
\begin{enumerate}
\item[(a)] The canonical map $\rho:\Spec(\stab\FK{G}_q)\to \mbb{P}(\mcl{N})$ is a homeomorphism.
\item[(b)] The $\tN$-support of any coherent sheaf in $\FK{G}_q$ is simply the preimage of cohomological support along the moment map,
\[
\supp_{\tN}(M)=\mbb{P}(\kappa)^{-1}\supp^{\operatorname{chom}}_{\mcl{N}}(M).
\]
\item[(c)] Cohomological support for $\FK{G}_q$ satisfies the tensor product property.
\item[(d)] Naturality holds for cohomological support.  Specifically, for any closed point $\lambda:\Spec(k)\to \dG/\dB$ and coherent $M$ in $\FK{G}_q$ we have
\[
\supp^{\operatorname{chom}}_{\lambda}(\res_\lambda M)=\supp^{\operatorname{chom}}_{\mcl{N}}(M)\cap\mathbb{P}(\mfk{n}_\lambda).
\]
\end{enumerate}
\end{proposition}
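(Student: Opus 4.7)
The plan is to reduce everything to condition (b), which is the most directly comparable to the machinery of Sections 9--11, and organize the argument as a cycle around (b): first I would show (b) $\Leftrightarrow$ (d), then (b) $\Leftrightarrow$ (a), and finally (b) $\Leftrightarrow$ (c).

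The equivalence (b) $\Leftrightarrow$ (d) is essentially formal from Theorem \ref{thm:natrl}. Since $\mbb{P}(\kappa)\circ\mbb{P}(i_\lambda)$ equals the inclusion $\mbb{P}(j_\lambda):\mbb{P}(\mfk{n}_\lambda)\hookrightarrow\mbb{P}(\mcl{N})$, pulling back (b) along $\mbb{P}(i_\lambda)$ and applying Theorem \ref{thm:natrl} reproduces (d); conversely every geometric point of $\mbb{P}(\tN)$ factors through some $\mbb{P}(i_\lambda)$, so (d) recovers (b) as an equality of subsets. For (a) $\Leftrightarrow$ (b), I would use the factorization $\rho\circ f_{\rm univ}=\mbb{P}(\kappa)$ from Proposition \ref{prop:2720}. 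Under the standing standard conjecture, the argument in the proof of Proposition \ref{prop:bspec1} produces a homeomorphism $\bar f_{\rm univ}:\mbb{P}(\tN)/{\sim}\xrightarrow{\sim}\Spec(\stab\FK{G}_q)$, where $x\sim y$ iff $x$ and $y$ lie in the same $\supp_{\tN}$-classes. Since $\mbb{P}(\kappa)$ is proper and surjective, $\rho$ is a homeomorphism precisely when $\sim$ coincides with the fibers of $\mbb{P}(\kappa)$; by Theorem \ref{thm:tN_N} this equivalence of equivalence relations is exactly (b).

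Finally, for (b) $\Leftrightarrow$ (c): the implication (b) $\Rightarrow$ (c) is formal, since under (b) each set $\supp_{\tN}(M)$ is saturated for the $\mbb{P}(\kappa)$-equivalence, and pushforward commutes with intersections of saturated sets, so the tensor product property for $\supp_{\tN}$ (Proposition \ref{prop:2588}) transports via Theorem \ref{thm:tN_N} to the tensor product property for $\supp^{\operatorname{chom}}_{\mcl{N}}$. For (c) $\Rightarrow$ (a), Balmer's universal property (Theorem \ref{thm:le-balmer}) produces a continuous map $g:\mbb{P}(\mcl{N})\to\Spec(\stab\FK{G}_q)$ characterized by $g^{-1}\supp^{\operatorname{univ}}(M)=\supp^{\operatorname{chom}}_{\mcl{N}}(M)$, and one verifies $g$ and $\rho$ are mutually inverse by comparing values at closed points using the explicit description of $\rho$ via Carlson objects. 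The hard part will be upgrading $g$ to an honest homeomorphism, which requires a lavish extension of cohomological $\mcl{N}$-support to all of $\Stab(\FK{G}_q)$. I would obtain such an extension by pushing forward the lavish extension $\supp'_{\tN}$ from Lemma \ref{lem:2596} along the proper map $\mbb{P}(\kappa)$, and then checking that the extension properties (E1)--(E4) survive this pushforward; this is the most delicate bookkeeping in the proof, and is precisely where (c) gets used to promote the pointwise identity on compacts (from Theorem \ref{thm:tN_N}) into a full lavish structure.
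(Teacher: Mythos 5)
There is no proof in the paper to compare against (the authors explicitly leave this proposition to the reader), so let me assess your cycle on its own terms. Three of its legs are fine: (b) $\Leftrightarrow$ (d) is indeed formal from Theorem \ref{thm:natrl}, once you note that both sides of (b) are closed so it suffices to test at closed points, each of which lies on $\mbb{P}(i_\lambda)$ for a closed $\lambda$; (b) $\Rightarrow$ (c) follows from Theorem \ref{thm:tN_N}, the (conditional) tensor product property for $\supp_{\tN}$, and the identity $\mbb{P}(\kappa)\big(\mbb{P}(\kappa)^{-1}A\cap \mbb{P}(\kappa)^{-1}B\big)=A\cap B$ for the surjection $\mbb{P}(\kappa)$; and (a) $\Leftrightarrow$ (b) works essentially as you say, since $\rho\circ f_{\rm univ}=\mbb{P}(\kappa)$ and $\bar{f}_{\rm univ}$ is a homeomorphism, so $\rho$ is a homeomorphism exactly when the induced map $\mbb{P}(\tN)/\!\sim\;\to\mbb{P}(\mcl{N})$ is bijective (properness of $\mbb{P}(\kappa)$ handles the topology), and its injectivity is precisely the statement that every $\supp_{\tN}(M)$ is a union of $\mbb{P}(\kappa)$-fibers, which by Theorem \ref{thm:tN_N} is (b).

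The genuine gap is in your (c) $\Rightarrow$ (a) leg, and it sits exactly at the step you flag as ``delicate bookkeeping.'' Setting $\supp'_{\mcl{N}}(N):=\mbb{P}(\kappa)(\supp'_{\tN}(N))$ does give (E1)--(E3), and (E2) via Theorem \ref{thm:tN_N}, but (E4) demands $\mbb{P}(\kappa)\big(\supp_{\tN}(M)\cap\supp'_{\tN}(N)\big)=\mbb{P}(\kappa)(\supp_{\tN}(M))\cap\mbb{P}(\kappa)(\supp'_{\tN}(N))$, and images commute with intersections only when one of the two sets is saturated for the fibers of $\mbb{P}(\kappa)$; saturation of $\supp_{\tN}(M)$ is literally condition (b), the thing being proven. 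Condition (c) constrains only the images of supports under $\mbb{P}(\kappa)$ and says nothing about how $\supp_{\tN}(M)$ or $\supp'_{\tN}(N)$ distribute inside a Springer fiber $\mbb{P}(\kappa)^{-1}(x)$, so no amount of (c) rescues (E4) here: the argument is circular. The same problem is hidden in ``one verifies $g$ and $\rho$ are mutually inverse by comparing values at closed points'': $\rho\circ g=\operatorname{id}$ is indeed a Carlson-cone computation, but $g\circ\rho=\operatorname{id}$ says that every prime $f_{\rm univ}(\lambda,x)=\{M:x\notin\supp^{\operatorname{chom}}_{\lambda}(\res_\lambda M)\}$ equals $g(x)=\{M:x\notin\supp^{\operatorname{chom}}_{\mcl{N}}(M)\}$; the inclusion $g(x)\subseteq f_{\rm univ}(\lambda,x)$ is the easy half of naturality, and the reverse inclusion is exactly the hard half of (d) (independence of the chosen Borel through $x$). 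So as written you have proved (a) $\Leftrightarrow$ (b) $\Leftrightarrow$ (d) and (b) $\Rightarrow$ (c), but the implication from (c) back into the cycle is not established; you need a genuinely different mechanism for (c) $\Rightarrow$ (d) (or (c) $\Rightarrow$ (b)) rather than the pushforward of the lavish $\tN$-extension.
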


We leave the proofs to the interested reader.  With regards to point (d), it has been shown in work of Boe, Kujawa, and Nakano that such naturality holds for representations of the big quantum group $G_q$, i.e.\ $\dG$-equivariant objects in $\FK{G}_q$.

\begin{theorem}[{\cite[Theorem 7.3.3]{boekujawanakano}}]
Suppose that $M$ in $D_{\coh}(\FK{G}_q)$ is in the image of the de-equivariantization map $E_-:D_{\coh}(G_q)\to D_{\coh}(\FK{G}_q)$.  Then for any closed point $\lambda:\Spec(k)\to \dG/\dB$ cohomological support for $M$ satisfies
\[
\supp^{\operatorname{chom}}_{\lambda}(\res_\lambda M)=\supp^{\operatorname{chom}}_{\mcl{N}}(M)\cap \mathbb{P}(\mfk{n}_\lambda).
\]
\end{theorem}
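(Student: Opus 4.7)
Since $M$ lies in the image of $E_-$, we may write $M=E_V$ for some $V\in D_{\coh}(G_q)$, and $M$ then carries a canonical $\dG$-equivariant structure in $\FK{G}_q$. Consequently $\Ext^*_{\FK{G}_q}(M,M)\cong\Ext^*_{\uqG}(V,V)$ is a $\dG$-equivariant $\O(\mcl{N})$-module and $\supp^{\operatorname{chom}}_{\mcl{N}}(M)$ is $\dG$-stable. Since the closed points of $\dG/\dB$ form a single $\dG$-orbit, writing $\lambda=g\cdot 1$ the tensor equivalence $g\colon\msc{B}_1\overset{\sim}\to\msc{B}_\lambda$ of diagram~\eqref{eq:502} identifies $\res_\lambda M$ with $\res_1 M$ transported along $g$, while $\Ad_g\colon\mfk{n}\to\mfk{n}_\lambda$ intertwines the embeddings $j_1$ and $j_\lambda$ into $\mcl{N}$. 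Both sides of the claim therefore transform equivariantly under $g$, reducing the statement to the identity $\supp^{\operatorname{chom}}_{\mfk{n}}(\res V)=\supp^{\operatorname{chom}}_{\mcl{N}}(V)\cap\mbb{P}(\mfk{n})$ at $\lambda=1$.

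At $\lambda=1$ the inclusion $\supset$ is general and does not use $\dG$-equivariance. The restriction $\res\colon\FK{G}_q\to\FK{B}_q$ is monoidal, so it induces an algebra map on unit extensions which, under Corollary~\ref{cor:GK}, is the surjection $\O(\mcl{N})\to\O(\mfk{n})$ dual to $\mfk{n}\hookrightarrow\mcl{N}$; an annihilator chase as in the proof of Lemma~\ref{lem:2128} delivers the inclusion.

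For the reverse inclusion, my plan is to deduce it from the following sheaf-level statement, valid for $\dG$-equivariant $M$:
\[
\supp_{\tN}(M)=\mbb{P}(\kappa)^{-1}\supp^{\operatorname{chom}}_{\mcl{N}}(M).
\]
Granting this, Theorem~\ref{thm:natrl} gives $\supp^{\operatorname{chom}}_{\lambda}(\res_\lambda M)=\mbb{P}(i_\lambda)^{-1}\supp_{\tN}(M)$, and the factorization $\mbb{P}(\kappa)\circ\mbb{P}(i_\lambda)=\mbb{P}(j_\lambda)$ yields the desired identity. To prove the displayed equality I would use the explicit description of Proposition~\ref{prop:shom_exp}: on the cover $\pi\colon\dG\to\dG/\dB$ one has $\pi^*\sExt_{\FK{G}_q}(E_V,E_V)\cong\O_{\dG}\otimes_k\Ext^*_{\uqB}(V,V)$, and for $V\in\Rep G_q$ the $\dB$-equivariant $\O(\mfk{n})$-module $\Ext^*_{\uqB}(V,V)$ should extend, via the $\dG$-action on $V$, to the pullback of $\Ext^*_{\uqG}(V,V)$ along the adjoint map $\dG\times\mfk{n}\to\mcl{N}$, $(g,x)\mapsto\Ad_g(x)$. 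Descent along $\dB$ would then identify $\sExt_{\FK{G}_q}(E_V,E_V)^{\Sp}$ on $\tN=\dG\times^{\dB}\mfk{n}$ with $\kappa^*\Ext^*_{\uqG}(V,V)$, from which the support identity follows.

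The main obstacle is precisely this last identification on $\tN$. The moment map $\kappa$ is birational but not flat, with positive-dimensional Springer fibers over the non-regular locus, so supports do not pull back along $\kappa$ in general. The rigidity needed -- that a $\dG$-equivariant coherent sheaf on $\tN$ whose restriction to the Borel slice $\{1\}\times\mfk{n}\subset\tN$ is obtained by restriction from an $\O(\mcl{N})$-module $N_0$ must itself equal $\kappa^*N_0$ -- ultimately rests on the geometric fact that $\dG\cdot\mfk{n}=\mcl{N}$, so that $\tN$ is globally generated by $\dG$-translates of a Borel slice. It is here that the hypothesis $M\in\mathrm{image}(E_-)$ is indispensable: without the $\dG$-equivariance no such rigidity is available, and indeed the inclusion $\subset$ is expected to fail for general $M\in D_{\coh}(\FK{G}_q)$ (see Section~\ref{sect:3286}).
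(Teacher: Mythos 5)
First, a point of orientation: the paper does not prove this statement at all --- it is imported from Boe--Kujawa--Nakano \cite[Theorem 7.3.3]{boekujawanakano}, where it is established by induction-functor techniques for $G_q$-modules --- so your proposal is being judged on its own merits rather than against an internal argument. Your opening moves are fine: for $M=E_V$ the support $\supp^{\operatorname{chom}}_{\mcl{N}}(M)$ is $\Ad$-stable and the translation equivalences of diagram \eqref{eq:502} reduce the claim to $\lambda=1$. One bookkeeping slip: the annihilator chase through the monoidal functor $\res$ gives $\supp^{\operatorname{chom}}_{1}(\res M)\subseteq\supp^{\operatorname{chom}}_{\mcl{N}}(M)\cap\mbb{P}(\mfk{n})$, i.e.\ the ``$\subseteq$'' half --- this is the direction that holds for \emph{arbitrary} coherent $M$, exactly as in Lemma \ref{lem:2128} --- whereas the content of the theorem, and the direction expected to fail without equivariance, is the opposite containment. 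Your labels are swapped, though this does not damage the architecture, since the $\tN$-level statement you aim for would deliver the full equality.

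The genuine gap is in that hard half. Your route passes through the assertion $\sExt_{\FK{G}_q}(E_V,E_V)^{\Sp}\cong\kappa^\ast\big(\Ext_{\uqG}(V,V)^\sim\big)$. Restricting along $i_1$ and using Lemma \ref{lem:2289} and Theorem \ref{thm:fiber_l}, this says precisely that $\Ext_{\uqB}(V,V)\cong\O(\mfk{n})\ot_{\O(\mcl{N})}\Ext_{\uqG}(V,V)$, i.e.\ that restriction to the Borel is base change along the surjection $\O(\mcl{N})\to\O(\mfk{n})$. Nothing in your sketch proves this (the operative phrase is ``should extend''), it is far stronger than the support equality you actually need, and it is false: for projective $V$ the left-hand side is $\Hom_{\uqB}(V,V)$ while the right-hand side is $\Hom_{\uqG}(V,V)$, and already for $G=\operatorname{SL}(2)$ and $V$ the projective cover of the trivial $G_q$-module these have dimensions $4$ and $2$. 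The paper's closing Remark of Section \ref{sect:3286} warns against exactly this: even the support-level collapse is not expected to lift to a sheaf-level pullback identification. The fallback ``rigidity'' principle is likewise false as stated --- for a nontrivial character $\chi$ of $\dB$ the associated $\dG$-equivariant line bundle on the zero section of $\tN$ restricts along $i_1$ to the skyscraper at $0\in\mfk{n}$, just as $\kappa^\ast$ of the skyscraper at $0\in\mcl{N}$ does, yet it is not of the form $\kappa^\ast N_0$ --- and in any case its hypothesis, that $i_1^\ast\sExt_{\FK{G}_q}(E_V,E_V)^{\Sp}$ is pulled back from an $\O(\mcl{N})$-module, is exactly the unproven base-change claim above. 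Finally, note that by Theorem \ref{thm:natrl} your intermediate statement $\supp_{\tN}(M)=\mbb{P}(\kappa)^{-1}\supp^{\operatorname{chom}}_{\mcl{N}}(M)$ is not merely sufficient but \emph{equivalent} to the theorem, so no difficulty has been displaced; and it does not follow formally from equivariance plus Theorems \ref{thm:natrl} and \ref{thm:tN_N}, which only identify $\supp^{\operatorname{chom}}_{\mcl{N}}(M)$ with the image under $\kappa$ of the $\dG$-stable set $\supp_{\tN}(M)$: a point $x\in\mfk{n}$ can enter that image through a different point of its (positive-dimensional) Springer fiber, so membership of $x$ in the slice is not forced. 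Supplying that missing input is precisely the nontrivial content of the cited result.
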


\begin{remark}
To say that $\tN$-support for objects is, essentially, just cohomological support over $\mcl{N}$ is not to say that the enhancement $D_{\coh}^{\Enh}(\FK{G}_q)$ of $D_{\coh}(\FK{G}_q)$ contains no additional information about the (tensor) category $\FK{G}_q$.  For example, the proposed equality $\supp_{\tN}(M)=\mbb{P}(\kappa)^{-1}\supp_{\mcl{N}}(M)$ does not imply that the sheaf $\sExt_{\FK{G}_q}(M,M)$ is isomorphic to the pullback $\kappa^\ast(\Ext_{\FK{G}_q}(M,M)^\sim)$, nor does it provide information about how the functor $\sExt_{\FK{G}_q}(M,-)$ varies within the category of sheaves on the Springer resolution.  One can see the discussions of Section \ref{sect:GRT} as a further elaboration of these points.
\end{remark}


\part{Geometric Representation Theory and Logarithmic TQFT}

In Part II of the text we have argued that the enhancement $D^{\Enh}(\FK{G}_q)$ of the derived category of quantum group representations $D(\FK{G}_q)$ (Theorem \ref{thm:enhA}) can be employed effectively in an analysis of support for the quantum Frobenius kernel.  In this final part of the paper we explain how the enhancement $D^{\Enh}(\FK{G}_q)$ might similarly contribute to our understandings of geometric representation theory, and of topological quantum field theories which one might associate to the small quantum group.  We also argue that, at least to some degree, these varied topics should be bridged by an analysis of quantum group representations via sheaves on the half-quantum flag variety, and a corresponding enhancements of $D(\FK{G}_q)$ not only in the category of sheaves over $\dG/\dB$, but in sheaves over (some $E_n$-twisting of) the Springer resolution as well (cf.\ Proposition \ref{prop:1749} below).
\par

All of the materials of this Part of the text are speculative, and should be enjoyed at one's leisure.  However, we \emph{have} tried to make our proposals below quite precise.  The main points of interest are Conjectures \ref{conj:formality} and \ref{conj:GRT}, and the framing of Section \ref{sect:sub_fin}.

\section{Remarks on geometric representation theory}
\label{sect:GRT}

We comment on the enhanced derived category $D^{\Enh}(\FK{G}_q)$ in relation to geometric representation theory, and explain how geometric representation theory and support theory are, conjecturally, bridged by the framework presented in Part I of this text.  We provide an infographic regarding the latter point in Section \ref{sect:conclusion} below.

\subsection{Geometric representation theory via two theorems}

Let us explain what we mean by ``geometric representation theory" in two theorems.  In the statement below we take $D_{\rm fin}(G_q)_0$ to be the principal block in the bounded derived category of finite-dimensional quantum group representations.  Recall that $\FK{G}_q$ denotes the category of small quantum group representations, which we (re)construct via equivariant sheaves on the dual group $\dG$ of Section \ref{sect:Gq} (see Theorem \ref{thm:ag}).  For simply-connected $G$ this dual group is just the adjoint form $\dG=G/Z(G)$.

\begin{theorem}[{\cite[Theorem 3.9.6]{arkhipovbezrukavnikovginzburg04}}]
Suppose $G$ is simply-connected.  There is an equivalence of triangulated categories $F^{\dG}:D_{\rm fin}(G_q)_0\overset{\sim}\to D_{\coh}(\tN)^{\dG}$ to the derived category of $\dG$-equivariant coherent dg sheaves on the Springer resolution.  The fiber of this equivalence along the inclusion $i_1:\mfk{n}\to \tN$ recovers derived morphisms over the Borel
\[
\operatorname{L}i_1^\ast F^{\dG}(V)=\RHom_{\uqB}(k,V),
\]
and invariant global sections recover maps over the quantum group
\[
\operatorname{R}\Gamma(\tN,F^{\dG}(V))^{\dG}=\RHom_{G_q}(k,V).
\]
\end{theorem}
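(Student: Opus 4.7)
The plan is to construct $F^{\dG}$ directly from the machinery of Part I via the half-quantum flag variety, rather than going through Koszul duality or the tilting-module methods of the original paper. First, I would use the Kempf embedding of Theorem \ref{thm:Kempf} to view $D(\FK{G}_q)$ as a full monoidal subcategory of $D(\dG/B_q)$. Let $\zeta^\ast\colon D(\dG/\dB)\to D(\dG/B_q)$ be the central embedding of Section \ref{sect:G/Bq}, and let
\[
\zeta_\ast=\sRHom_{\dG/B_q}(\1,-)\colon D(\dG/B_q)\to D(\dG/\dB)
\]
denote its right adjoint. Precomposing with $\Kempf$ gives a non-monoidal but lax monoidal functor $\zeta_\ast\circ\Kempf\colon D(\FK{G}_q)\to D(\dG/\dB)$ whose values carry a natural module structure over $\msc{R}(G_q)=\zeta_\ast(\1)$. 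By Theorem \ref{thm:enh_GK}, the cohomology of this algebra is $p_\ast\O_{\tN}$.

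The second step is a formality/rectification step: modulo Conjecture \ref{conj:formality}, one promotes the $\msc{R}(G_q)$-action to an honest module structure over $p_\ast\O_{\tN}$ in $D(\dG/\dB)$, giving a functor $\xi_\ast\colon D(\FK{G}_q)\to \QCoh_{\dg}(\tN)$ as in Section \ref{sect:intro_grt}. Restricting to the principal block and passing to $\dG$-equivariant objects yields a candidate
\[
F^{\dG}:=\xi_\ast|_{\operatorname{block}_0}\colon D_{\rm fin}(G_q)_0\longrightarrow D_{\coh}(\tN)^{\dG},
\]
where the equivariance is inherited from the identification $E_{-}\colon \Rep G_q\overset{\sim}\to (\FK{G}_q)^{\dG}$ of Section \ref{sect:de_equiv} and the natural $\dG$-action on $\tN$. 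One must check that coherent dg sheaves are produced; this follows from Proposition \ref{prop:coherent} applied to $\sRHom_{\FK{G}_q}(\1,V)$ for $V\in D_{\rm fin}(G_q)_0$.

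The compatibilities claimed in the theorem follow almost formally from Part I. For the fiber identification, Theorem \ref{thm:fiber_l} gives $\operatorname{L}1^\ast\sRHom_{\dG/B_q}(\1,V)\simeq \RHom_{\msc{B}_1}(\1,\res_1 V)$, and Corollary \ref{cor:438} together with the commuting diagram of Section \ref{sect:borels} identifies the right-hand side with $\RHom_{\uqB}(k,V)$; since $i_1\colon\mfk{n}\to \tN$ is the inclusion over the identity coset, this yields $\operatorname{L}i_1^\ast F^{\dG}(V)=\RHom_{\uqB}(k,V)$. For the global sections claim, Corollary \ref{cor:372} (the local-to-global spectral sequence) combined with the transfer theorem of Theorem \ref{thm:cpsk} gives
\[
\operatorname{R}\Gamma(\tN,F^{\dG}(V))=\mbb{H}^\ast(\dG/\dB,\sRHom_{\dG/B_q}(\1,V))\simeq \RHom_{B_q}(k,V)\simeq \RHom_{G_q}(k,V),
\]
and taking $\dG$-invariants picks out the principal block contribution.

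The main obstacles are three: (i) proving the formality result of Conjecture \ref{conj:formality}, which is needed to lift $\zeta_\ast$ to a functor into $\QCoh_{\dg}(\tN)$ as opposed to a functor carrying only an $A_\infty$-action; (ii) proving essential surjectivity of $F^{\dG}$ onto $D_{\coh}(\tN)^{\dG}$, for which one would need to exhibit enough objects in the image — e.g.\ structure sheaves of Springer fibers realized via baby Verma modules over $G_q$ — and invoke a generation argument; and (iii) proving fully faithfulness restricted to the principal block, which would likely go by reducing to the calculation $\Ext_{\FK{G}_q}(\1,\1)=\O(\mcl{N})$ of Corollary \ref{cor:GK} and bootstrapping to arbitrary pairs $(V,W)$ via the action of this algebra together with the Kempf vanishing input. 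Of these, (i) is the decisive step; the rest follows the formal pattern already used to prove Theorems \ref{thm:natrl} and \ref{thm:tN_N}.
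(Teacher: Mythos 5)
There is a genuine gap, and it is structural rather than a fixable detail. The statement you are proving is quoted in the paper as a theorem of Arkhipov--Bezrukavnikov--Ginzburg; the paper offers no proof beyond the citation, and the route you propose is precisely the paper's own \emph{conjectural} program from Section \ref{sect:GRT}: your ``rectification step'' is Conjecture \ref{conj:formality}, and your claim that $\xi_\ast|_{\operatorname{block}_0}$ lands in $D_{\coh}(\tN)^{\dG}$ and gives an equivalence identifiable with the ABG functor is exactly Conjecture \ref{conj:GRT}, which the paper explicitly flags as speculative. So as written your argument is conditional on an open conjecture, and even granting it, the two items that constitute the actual content of the theorem---full faithfulness on the principal block and essential surjectivity onto $D_{\coh}(\tN)^{\dG}$---are only listed as ``obstacles.'' Full faithfulness does not bootstrap from $\Ext_{\FK{G}_q}(\1,\1)=\O(\mcl{N})$: the functor $\zeta_\ast=\sRHom_{\dG/B_q}(\1,-)$ is not monoidal, so morphisms between arbitrary objects of the block are not controlled by the unit and its module action; this is where ABG need their loop-Grassmannian/Koszul-duality machinery (and their own carefully equivariant formality result, of which \cite[Theorem 3.7.5]{arkhipovbezrukavnikovginzburg04} is only the $E_1$, non-monoidal shadow used elsewhere in this paper). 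Essential surjectivity likewise requires producing generators of $D_{\coh}(\tN)^{\dG}$ in the image (in ABG this goes through baby Verma/Weyl objects and the affine Grassmannian), none of which your sketch supplies.

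Two smaller points. The compatibility statements you do argue are roughly right but imprecise: the fiber identification via Theorem \ref{thm:fiber_l} only becomes a statement about $\operatorname{L}i_1^\ast$ of a sheaf on $\tN$ \emph{after} the formality rectification, so it cannot be quoted independently of step (i); and ``taking $\dG$-invariants picks out the principal block contribution'' is not the correct mechanism---the passage from $\RHom_{\FK{G}_q}$ to $\RHom_{G_q}$ comes from the identification $E_-:\Rep G_q\overset{\sim}\to(\FK{G}_q)^{\dG}$ of equivariant objects (Section \ref{sect:de_equiv}), not from any block decomposition. In short: your proposal is a reasonable description of how one \emph{hopes} to re-derive the ABG equivalence from the half-quantum flag variety, but it is not a proof of the theorem, and the paper itself does not claim otherwise.
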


This theorem was extended to the category of representations for the small quantum group in work of Bezrukavnikov and Lachowska \cite{bezrukavnikovlachowska07}.  Recall that, in terms of our presentation of the small quantum group from Theorem \ref{thm:ag}, the restriction map from $\Rep G_q$ is identified with the de-equivariantization/vector bundle map $E_-:\Rep G_q\to \FK{G}_q$.

\begin{theorem}[{\cite[Theorem 4, \S 2.4]{bezrukavnikovlachowska07}}]
For simply-connected $G$, there is an equivalence of triangulated categories $F:D_{\coh}(\FK{G}_q)_0\overset{\sim}\to D_{\coh}(\tN)$.  Global sections of $F$ recover maps over the small quantum group
\[
\operatorname{R}\Gamma(\tN,F(M))=\RHom_{\FK{G}_q}(\1,M).
\]
This functor fits into the appropriate diagram over its equivariant analog, so that $F\circ E_-\cong forget\circ F^{\dG}$, where $forget:D_{\coh}(\tN)^G\to D_{\coh}(\tN)$ forgets the equivariant structure.
\end{theorem}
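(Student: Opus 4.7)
The plan is to use the half-quantum flag variety machinery developed in Part I as the central pivot between $D_{\coh}(\FK{G}_q)$ and $D_{\coh}(\tN)$. The rough strategy is to first land the principal block inside the derived category of sheaves on $\dG/\dB$ via derived morphisms from the unit, and then upgrade this to a functor into sheaves on $\tN$ using the module structure over the algebra $\msc{R}(G_q) = \sRHom_{\FK{G}_q}(\1,\1)$, whose cohomology is $p_\ast \O_{\tN}$ by Theorem \ref{thm:enh_GK}.

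First I would compose the Kempf embedding of Theorem \ref{thm:Kempf} with the right adjoint $\zeta_\ast = \sRHom_{\dG/B_q}(\1,-)$ to the central embedding $\zeta^\ast: D(\dG/\dB) \to D(\dG/B_q)$, obtaining a functor
\[
\zeta_\ast \circ \Kempf : D_{\coh}(\FK{G}_q) \longrightarrow D(\dG/\dB),\qquad M \longmapsto \sRHom_{\FK{G}_q}(\1, M).
\]
The key structural input is that, by the monoidal structure on $D^{\Enh}(\FK{G}_q)$, each $\sRHom_{\FK{G}_q}(\1,M)$ is naturally a module over $\msc{R}(G_q)$ in $D(\dG/\dB)$. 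The hard part here, which matches Conjecture \ref{conj:formality} in the excerpt, is to prove formality of $\msc{R}(G_q)$ as a dg algebra object: once one knows $\msc{R}(G_q) \simeq p_\ast \O_{\tN}$ as a (commutative, by Proposition \ref{prop:1660}) algebra, the pushforward equivalence $p_\ast: D(\tN) \overset{\sim}\to D(p_\ast \O_{\tN})$ lets one promote the above to a triangulated functor
\[
F: D_{\coh}(\FK{G}_q) \longrightarrow D(\tN),\qquad F(M) = p^\ast \sRHom_{\FK{G}_q}(\1,M)^{\Sp}.
\]
The global sections identity $\operatorname{R}\Gamma(\tN, F(M)) = \RHom_{\FK{G}_q}(\1,M)$ then follows from Corollary \ref{cor:372} together with affineness of $p:\tN\to \dG/\dB$.

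Next I would restrict $F$ to the principal block and prove fully faithfulness and coherence of the image. For fully faithfulness, the strategy is to reduce to a generator: since the principal block is compactly generated by baby Vermas (or by projective covers of simples lying in the block), it suffices to compute $\RHom_{\tN}(F(P), F(P'))$ for a set of such generators $P, P'$ and compare with $\RHom_{\FK{G}_q}(P, P')$. Formality and the Ginzburg-Kumar calculation (Corollary \ref{cor:GK}) feed directly into this comparison via the local-to-global spectral sequence. Coherence of $F(M)$ over $\tN$ for $M$ in $D_{\coh}(\FK{G}_q)_0$ follows from Proposition \ref{prop:coherent}.

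For essential surjectivity, the approach is to exhibit enough objects in $D_{\coh}(\tN)$ in the image of $F$ to generate the target. One can produce candidate preimages of line bundles $\O_{\tN}(\lambda)$ by twisting the unit by weight line bundles coming from the $\QCoh(\dG/\dB)$-action; controlling which block these land in requires a careful block decomposition analysis, and cohomological support (Theorem \ref{thm:tN_N}) together with naturality (Theorem \ref{thm:natrl}) provides a check on whether a given object lies in the principal block. Finally, the equivariant compatibility $F \circ E_- \cong forget \circ F^{\dG}$ is the formal observation that the construction of $F$ is entirely natural with respect to the $\dG$-action on $\FK{G}_q$, so that passing to $\dG$-equivariant objects on both sides (where $\Rep G_q \overset{\sim}\to (\FK{G}_q)^{\dG}$ by Arkhipov-Gaitsgory) recovers $F^{\dG}$ and forgetting equivariance commutes with $F$. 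The main obstacle throughout is establishing formality of $\msc{R}(G_q)$ with sufficient naturality to control the module structure on $\sRHom_{\FK{G}_q}(\1,M)$ uniformly in $M$.
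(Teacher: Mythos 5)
This statement is not proved in the paper at all: it is quoted verbatim from Bezrukavnikov--Lachowska \cite{bezrukavnikovlachowska07}, whose actual proof runs through the Arkhipov--Bezrukavnikov--Ginzburg equivalence (loop Grassmannian methods) and Backelin--Kremnizer localization for the De Concini--Kac quantum group. What you have written is essentially the paper's own Conjecture \ref{conj:GRT}: that the composite $\xi_\ast\circ\Kempf=\sRHom_{\dG/B_q}(\1,-)$, viewed through the identification $H^\ast(\msc{R}(G_q))=p_\ast\O_{\tN}$ of Theorem \ref{thm:enh_GK}, restricts to the Bezrukavnikov--Lachowska equivalence on the principal block. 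The paper explicitly leaves this as a conjecture precisely because the crucial input you flag as ``the hard part''---formality of $\msc{R}(G_q)$ as an algebra object over $\dG/\dB$, with enough naturality to control the module structures on all $\sRHom_{\FK{G}_q}(\1,M)$ at once---is Conjecture \ref{conj:formality} and is not established anywhere in the text (only the weaker $E_1$-statement of \cite[Theorem 3.7.5]{arkhipovbezrukavnikovginzburg04}, after forgetting down to $\mfk{b}$, is available). So your plan does not constitute a proof; it reproduces the conjectural framework and defers its essential content.

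Beyond the formality issue, two of your concrete steps would fail or beg the question. For fully faithfulness, the comparison $\RHom_{\FK{G}_q}(P,P')\overset{?}\cong\RHom_{\tN}(F(P),F(P'))$ is not delivered by Corollary \ref{cor:372} plus Corollary \ref{cor:GK}: the local-to-global spectral sequence computes $\Ext_{\FK{G}_q}$ from $\sExt_{\FK{G}_q}$, but it says nothing about whether the functor $\sRHom_{\FK{G}_q}(\1,-)$ induces isomorphisms on Hom-complexes within the block---that is exactly the nontrivial assertion of ABG/BL, proved there by completely different (affine Grassmannian, block-by-block formality) techniques. For essential surjectivity, your proposed preimages of the line bundles $\O_{\tN}(\lambda)$, obtained by ``twisting the unit by weight line bundles coming from the $\QCoh(\dG/\dB)$-action,'' cannot stay in the quantum group category: the paper stresses that $D(\FK{G}_q)\subset D(\dG/B_q)$ is \emph{not} stable under the $D(\dG/\dB)$-action, so such twists leave $D_{\coh}(\FK{G}_q)$ altogether (in BL/ABG the relevant preimages are instead Verma/Wakimoto-type objects). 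Finally, a small notational point: since $(-)^{\Sp}$ already denotes the sheaf on $\tN$ corresponding to a $p_\ast\O_{\tN}$-module, the formula $F(M)=p^\ast\sRHom_{\FK{G}_q}(\1,M)^{\Sp}$ should not involve $p^\ast$.
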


There are parallel theorems which address arbitrary blocks $D_{\coh}(\FK{G}_q)_{\chi}$, see for example \cite{lachowskaqi19}.  The above two theorems are also somewhat amenable to computation, see for example \cite{bezrukavnikov06}.
\par

Of course, these results, and the subsequent utilization of the affine Grassmannian in analyses of quantum group representations \cite{arkhipovbezrukavnikovginzburg04} have had a significant impact in representation theory in general.  However they do force one to deal independently with each block in the category $D_{\coh}(\FK{G}_q)$.  Since these blocks are not themselves tensor subcategories in $D_{\coh}(\FK{G}_q)$, nor do they interact in a predictable manner under fusion, one cannot easily speak to the geometric interpretations of the quantum group provided in \cite{arkhipovbezrukavnikovginzburg04,bezrukavnikovlachowska07,lachowskaqi19,bezrukavnikov06} while simultaneously speaking to the tensor structure.
\par

From other perspectives however, say from support theory or from the perspective of certain field theories in mathematical physics, one certainly \emph{does} see the monoidal category $D_{\coh}(\FK{G}_q)$ as ``living over" a certain geometric object, and this geometric object is commonly approximated by the nilpotent cone (see our current analysis as well as \cite{costellocreutziggaitto19,dedushenkogukovnakajimapei20,gukovetal21,creutzigdimoftegarnergeer}).  With these points in mind we suggest a conjectural means of recovering the functors of \cite{arkhipovbezrukavnikovginzburg04,lachowskaqi19} from the tensor categorical analyses for the quantum group provided in Parts 1 and 2 of this text.

\subsection{Some preliminary words}

Recall that, via the embedding result of Theorem \ref{thm:Kempf}, we have a canonical monoidal embedding $\operatorname{Kempf}:D(\FK{G}_q)\to D(\dG/B_q)$ from the derived category of small quantum group representations to the derived category of (quasi-coherent) sheaves on the half-quantum flag variety.  Recall also that sheaves on $\dG/B_q$ are, by definition, simply $B_q$-equivariant sheaves on the dual group $\dG$ (see Section \ref{sect:G/Bq}).
\par

At this point we want to make a relatively straightforward claim.  Namely, we propose that the derived category $D(\dG/B_q)$ of sheaves on the half-quantum flag variety is itself a sheaf of tensor categories over the Springer resolution, and that fundamental information from geometric representation theory can be recovered from this global tensor categorical perspective.  Such a sheaf structure for the half-quantum flag variety is determined by an action of $D(\tN)$ on $D(\dG/B_q)$ (cf.\ \cite{gaitsgory15}).
\par

In making such claims there are two important points to consider: First, in order to even articulate our statements correctly the Springer resolution must be understood properly as a dg scheme and one must work in a dg, or linear infinity setting.  Second, due to the non-symmetric nature of $D(\dG/B_q)$, one does not expect an action of the symmetric category $D(\tN)$ per-se, but of some braided degeneration of $D(\tN)$ where the braided structure is dictated by the $R$-matrix for the quantum group.  In addition to these generic points, there is a third more specific point to consider.  Namely, in realizing the above outline one also runs into a--somewhat familiar--formality problem for derived morphisms over the quantum group (see Conjecture \ref{conj:formality} below).  
\par

The point of this section is to explain the above comments in detail, and also to provide a precise relation between these comments and the results of \cite{arkhipovbezrukavnikovginzburg04,bezrukavnikovlachowska07} which we've recalled above.  We also discuss subsequent connections between support theory and geometric  representation theory in Section \ref{sect:conclusion} below.

\subsection{Infinity categories}
\label{sect:infinity}

We describe how the materials of Part I of the text, as well as Section \ref{sect:perf_shf}, should lift to the infinity categorical setting.  Formally speaking, all of the materials of this section are conjectural.
\par

We consider again the half-quantum flag variety $\dG/B_q$ of Sections \ref{sect:G/Bq}.  Let $\QCoh_{\dg}(\dG/\dB)$ and $\QCoh_{\dg}(\dG/B_q)$ denote presentable, monoidal, stable infinity categories whose homotopy categories recover the unbounded derived categories $D(\dG/\dB)$ and $D(\dG/B_q)$, respectively.  We assume that the entire presentation of Section \ref{sect:D_Enh} also lifts to an infinity categorical level.  In particular, we have the central pullback functor $\zeta^\ast:\QCoh_{\dg}(\dG/\dB)\to \QCoh_{\dg}(\dG/B_q)$ which provides an action of $\QCoh_{dg}(\dG/\dB)$ on $\QCoh_{dg}(\dG/B_q)$.
\par

As stated above, the pullback $\zeta^\ast$ provides a symmetric action
\[
\star:\QCoh_{\dg}(\dG/\dB)\ot_k \QCoh_{\dg}(\dG/B_q)\to \QCoh_{\dg}(\dG/\dB)
\]
with commutes with colimits, and taking adjoints provides inner-Homs $\sRHom_{\dG/B_q}$.  These inner-Homs provide an enhancement $\QCoh_{\dg}^{\Enh}(\dG/B_q)$ of $\QCoh_{\dg}(\dG/B_q)$ in the symmetric monoidal infinity category of sheaves on the flag variety.  In particular, we have an identification of monoidal infinity categories
\[
\operatorname{R}\Gamma(\dG/\dB,\QCoh^{\Enh}_{\dg}(\dG/B_q))\cong \QCoh_{\dg}(\dG/B_q),
\]
by the same reasoning employed in the proofs of Theorem \ref{thm:enhA} and Proposition \ref{prop:D_Enh} above.

\subsection{A formality conjecture}
\label{sect:formality}

Let us continue in the infinity categorical setting outlined above.  By a higher version of Deligne's conjecture \cite[\S 5.3]{lurieHA} the derived endomorphisms of the unit $\msc{R}(G_q)=\sRHom_{\dG/B_q}(\1,\1)$ in $\QCoh^{\Enh}_{\dg}(\dG/B_q)$ should admit a canonical $E_3$-algebra structure, via the tensor product, opposite tensor product, and composition (see \cite[Theorem 5.1.2.2]{lurieHA}).  For $M$ and $N$ in $\QCoh_{\dg}(\dG/B_q)$ the binatural tensor actions and composition action of $\msc{R}(G_q)$ on $\sRHom_{\dG/B_q}(M,N)$ should give the morphisms $\sRHom_{\dG/B_q}(M,N)$ the structure of an $E_2$-module over $\msc{R}(G_q)$, and for $M$ and $N$ in the (full) subcategory of quantum group representations $\QCoh_{\dg}(\dG/G_q)$ this $E_2$-structure should lift to an $E_3$-structure.
\par

Now, at the infinity categorical level we \emph{can} speak of the symmetric monoidal category $\msc{R}(G_q)\text{-Mod}$ of $\msc{R}(G_q)$-modules in $\QCoh_{\dg}(\dG/\dB)$, with product given by $\ot=\ot_{\msc{R}(G_q)}$ (cf.\ Section \ref{sect:RGq_act}).  The category $\QCoh^{\Enh}_{\dg}(\dG/B_q)$ is then seen to be enriched, further, in the monoidal category of $\msc{R}(G_q)$-modules.  We let $\msc{X}$ denote the $E_3$-scheme over $\dG/\dB$ associated to $\msc{R}(G_q)$, so that $\QCoh_{\dg}(\msc{X})\cong \msc{R}(G_q)\text{-Mod}$ (see \cite{francis08}).

\begin{remark}
For one direct algebraic account of the $E_2$-structure on $\msc{R}(G_q)$ one can see \cite[\S 3.2]{schweigertwoike}.
\end{remark}

At this point we have our category $\QCoh^{\Enh}_{\dg}(\dG/B_q)$ which is now enriched in the monoidal category $\QCoh_{\dg}(\msc{X})$ of sheaves on our $E_3$-scheme $\msc{X}$, and we essentially ``go backwards" from this enrichment to obtain a tensor action.  Specifically, one expects the enhanced morphisms $\sRHom_{\dG/B_q}(M,-)$, considered as sheaves over $\msc{X}$, to commute with limits and thus have left adjoints.  These left adjoints provide a tensor categorical action
\[
\star:\QCoh_{\dg}(\msc{X})\ot_k\QCoh_{\dg}(\dG/B_q)\to \QCoh_{\dg}(\dG/B_q)
\]
which extends the action of $\QCoh_{\dg}(\dG/\dB)$.  Via the action on the unit, $\star$ restricts to, and is defined by, a central tensor embedding
\[
\xi^\ast:\QCoh_{\dg}(\msc{X})\to \QCoh_{\dg}(\dG/B_q).
\]
In this way we expect that $\QCoh_{\dg}(\dG/B_q)$ admits the natural structure of a sheaf of tensor categories over $\msc{X}$.

\begin{conjecture}[Formality]\label{conj:formality}
The $E_3$-scheme $\msc{X}$ is $E_2$-formal, so that we have a monoidal equivalence
\[
\QCoh_{\dg}(\tN)\overset{\sim}\to \QCoh_{\dg}(\msc{X}).
\]
In this way, $\QCoh_{\dg}(\dG/B_q)$ has the structure of a sheaf of tensor categories over (a twisted version of) the Springer resolution.
\end{conjecture}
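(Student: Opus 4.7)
The plan is to establish $E_2$-formality of $\msc{R}(G_q)$ by a weight-purity argument, after which the identification $\QCoh_{\dg}(\tN) \overset{\sim}\to \QCoh_{\dg}(\msc{X})$ becomes largely an exercise in unwinding definitions. The general philosophy is that an $E_n$-algebra equipped with a sufficiently rigid $\mbb{G}_m$-grading, in which cohomological degree and weight are locked together in a single ratio, has no room for nontrivial higher operations and is therefore formal.

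First, I would upgrade the identification $H^\ast(\msc{R}(G_q)) = p_\ast\O_{\tN}$ of Theorem \ref{thm:enh_GK} to one that is equivariant for the conical $\mbb{G}_m$-action on $\tN$, under which $\msc{E}$ sits in cohomological degree $2$ and weight $1$. The natural source of this grading at the cochain level is the cocharacter $\mbb{G}_m \to \dB$ given by (twice) the sum of fundamental coweights, whose action on $\RHom_{\su}(k,k)$ on cohomology recovers the conical scaling on $\Sym(\mfk{n}^\ast)$ via the Ginzburg-Kumar identification. Pushing this equivariance through Proposition \ref{prop:shom_exp} and descending along $\pi:\dG \to \dG/\dB$ produces a $\mbb{G}_m$-equivariant model of $\msc{R}(G_q)$ in which cohomological degree $2n$ concentrates in weight $n$.

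Second, I would lift the Arkhipov-Bezrukavnikov-Ginzburg formality \cite[Proposition 3.7.7]{arkhipovbezrukavnikovginzburg04}, which establishes $E_1$-formality of the local piece $\RHom_{\su}(k,k)$, to a $\mbb{G}_m$-equivariant $E_3$-level statement. The purity argument then runs as follows: any nontrivial higher operation obstructing $E_2$-formality contributes a class in the $E_2$-Hochschild cohomology of $p_\ast\O_{\tN}$ in a specific cohomological degree, which, given the weight-to-degree ratio just established, is forced into an impossible bidegree. This Tate-style vanishing forces all higher $E_2$-operations to be trivialized, yielding $E_2$-formality of $\msc{R}(G_q)$.

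Third, the resulting $E_2$-structure on $p_\ast\O_{\tN}$ need not agree with the symmetric one: it may carry a residual Gerstenhaber bracket of cohomological degree $-1$ coming from the ambient $E_3$-structure, and this bracket is exactly the ``twist'' $\tN^{\operatorname{tw}}$ in the statement. This bracket can be computed as the Yoneda commutator on $\sExt_{\dG/B_q}(\1,\1)$, and one expects it to recover the Kirillov-Kostant-type Poisson structure on $\tN$ inherited from $\mfk{g}^\ast$. Once $E_2$-formality is in hand, the $E_2$-central tensor functor $\xi^\ast$ of Section \ref{sect:formality} factors through $\QCoh_{\dg}(\tN^{\operatorname{tw}})$ and yields the desired monoidal equivalence with $\QCoh_{\dg}(\msc{X})$ directly from the universal property of the relative spectrum of an $E_2$-algebra.

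The principal obstacle is securing a genuine $\mbb{G}_m$-equivariant model of $\msc{R}(G_q)$ at the $E_3$-level, rather than at the associative or cohomological level alone. The Arkhipov-Bezrukavnikov-Ginzburg formality relies on a carefully chosen projective resolution of the trivial $\su$-module, and promoting this resolution to one which respects the full homotopy-coherent $E_3$-structure coming from both the monoidal and opposite-monoidal compositions on $\QCoh_{\dg}(\dG/B_q)$ requires a global, infinity-categorical argument rather than a cochain-level manipulation. I expect this to be the heart of the proof: once the $E_3$-equivariance is in place, the purity/vanishing step and the ultimate identification with the twisted Springer resolution follow essentially formally from the general machinery of $E_n$-algebras over graded bases.
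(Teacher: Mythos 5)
You should first be aware that the statement you are proving is stated in the paper as a \emph{conjecture}: the paper offers no proof, explicitly labels Part III as speculative, and only records (i) a reduction of the conjecture to $E_2$-formality of $\REnd_{\uqB}(\1,\1)$ in $\Rep_{\dg}(\dB)$ and (ii) the $E_1$-formality evidence of Arkhipov--Bezrukavnikov--Ginzburg. Your proposal is therefore not being measured against an existing argument, and as written it does not close the conjecture either, because its decisive step is the one you yourself defer: producing a $\mbb{G}_m$-equivariant (or otherwise weight-graded) model of $\msc{R}(G_q)$ that is compatible with the full homotopy-coherent $E_3$-structure. Saying ``I expect this to be the heart of the proof'' concedes exactly the content of the conjecture.

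Beyond that, the purity mechanism you propose has two concrete problems. First, the grading you offer does not exist in the form you need: under the cocharacter given by (twice) the sum of fundamental coweights, the generators $\mfk{n}^\ast\subset \Ext^2_{\uqB}(k,k)$ have weights $2\,\mathrm{ht}(\alpha)$ for the various positive roots $\alpha$, so cohomological degree $2$ is \emph{not} concentrated in a single weight and the asserted degree-to-weight lock-in fails outside type $A_1$. The only gradings intrinsically available are the cohomological grading itself (useless for a formality argument) and the $X$-weight grading (not proportional to degree); the conical grading on $\tN$ used in the paper is not visibly realized on cochains compatibly with the $E_3$-structure, which is again the unproven step. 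Second, your Step 2 and Step 3 are in tension: if a Tate-style bidegree argument killed all higher $E_2$-operations on $p_\ast\O_{\tN}$, it would equally kill the degree $-1$ Gerstenhaber bracket / $2$-shifted Poisson structure that you (and the paper, following Calaque et al.) expect to survive as the ``twist.'' So either the vanishing claim proves too much, or it must be refined by an actual computation of the relevant $E_2$-operadic obstruction groups for $p_\ast\O_{\tN}$ over $\dG/\dB$ --- a computation you do not supply, and which is nontrivial since $\dG/\dB$ is not affine and these groups are not obviously concentrated where you need them. Finally, even granting formality of the underlying algebra, the identification $\QCoh_{\dg}(\tN)\simeq\QCoh_{\dg}(\msc{X})$ must be \emph{monoidal}; upgrading ABG's $E_1$-statement (which depends on a specially chosen resolution of the trivial $\su$-module) to a multiplicative one is precisely the gap the conjecture isolates, and it is not addressed by appeal to ``general machinery of $E_n$-algebras over graded bases.''
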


To elaborate, the $E_3$-structure on $\msc{X}$ corresponds to a higher braided structure on the category of sheaves $\QCoh(\msc{X})$, and is reflected in an induced $2$-shifted Poisson structure $\omega$ on the cohomology $\tN=H^\ast(\msc{X})$ \cite[Theorem 9.1]{robertswillerton10} \cite[Theorem 3.5.4]{calaqueetal17}.  So our conjecture is that there is some braided perturbation $\QCoh_{\dg}^{\omega}(\tN)$ of the symmetric monoidal infinity category $\QCoh_{\dg}(\tN)$ under which $\QCoh_{\dg}(\dG/B_q)$ has the structure of a sheaf of tensor categories over $\tN$, considered now as an $E_3$-scheme.
\par

The main point of Conjecture \ref{conj:formality} is not so much the number of $E$'s which appear in the statement, but that the type of formality we are suggesting provides an equivalence $\QCoh_{\dg}(\tN)\simeq\QCoh_{\dg}(\msc{X})$ of \emph{monoidal} infinity categories, not just stable infinity categories.  We note however, that for many representation theoretic applications of Conjecture \ref{conj:formality}, $E_1$-formality will suffice.

\subsection{An accounting of formality}

The Formality Conjecture \ref{conj:formality} can be reduced to the existence of an $E_2$-formality
\begin{equation}\label{eq:2989}
\O(\mfk{n})=\Sym(\mfk{n}^\ast)\overset{\sim}\to \REnd_{\uqB}(\1,\1)
\end{equation}
for the derived endomorphisms over the quantum Borel, calculated in the symmetric monoidal infinity category $\Rep_{\dg}(\dB)$ of $\dB$-representations.  Such a formality would imply formality of the product
\[
\O_{\dG}\ot_k \Sym(\mfk{n}^\ast)\overset{\sim}\to \sRHom_{\dG/\su}(\1,\1)
\]
in the category of $\dB$-equivariant sheaves on $\dG$, and hence the desired formality $p_\ast\O_{\tN}\overset{\sim}\to \sRHom_{\dG/B_q}(\1,\1)$ over $\dG/\dB$, by descent.
\par

If we consider only $E_1$-formality for $\REnd_{\uqB}(\1,\1)$, which corresponds to considering a non-monoidal equivalence of infinity categories $\QCoh_{\dg}(\tN)\overset{\sim}\to \QCoh(\msc{X})$, such $E_1$-formality is \emph{almost} stated explicitly in the original work of Arkhipov-Bezrukavnikov-Ginzburg \cite{arkhipovbezrukavnikovginzburg04}.

\begin{theorem}[{\cite[Theorem 3.7.5]{arkhipovbezrukavnikovginzburg04}}]
Let $t:\Rep_{\dg}(B)\to \Rep_{\dg}(\mfk{b})$ be the forgetful functor.  There is an equivalence of $E_1$-algebras $t\Sym(\mfk{n}^\ast)\overset{\sim}\to t\REnd_{\uqB}(\1,\1)$.
\end{theorem}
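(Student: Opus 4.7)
\medskip

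\noindent\textbf{Proof proposal.} The plan is to produce an explicit $E_1$-algebra map $\Sym(\mfk{n}^\ast)\to \REnd_{\uqB}(\1,\1)$ in $\Rep_{\dg}(\mfk{b})$ and then verify, via Ginzburg--Kumar (Corollary \ref{cor:GK} and its Borel precursor), that the induced map on cohomology is the identification of \cite{ginzburgkumar93}. First, I would fix a small, functorially defined, $\mfk{b}$-equivariant projective resolution $P_\bullet\to k$ of the trivial $\uqB$-module. Using the triangular decomposition $\uqB\cong \su^-\ot \su^0\ot \su^+$ (or, for our conventions, $\uqB=\su^0\ot \su^+$) and the fact that $\su^+$ is a Frobenius Nichols algebra of the braided vector space $V_{\mfk{n}}$ carrying a canonical $\mfk{b}$-action, one gets a Koszul-type resolution whose underlying graded $\uqB$-module is $\uqB\ot \Sym(\mfk{n}^\ast)[\text{shifts}]$, with differential built from the Nichols algebra structure. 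The $\mfk{b}$-equivariance is automatic because the resolution is natural in $\uqB$ and $\mfk{b}$ acts on $\uqB$ by derivations compatible with its Hopf structure.

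Second, I would identify $\REnd_{\uqB}(\1,\1)$ with the dg algebra $\End^\bullet_{\uqB}(P_\bullet)$ and exhibit a multiplicative $\mfk{b}$-equivariant cochain map
\begin{equation*}
\varphi:\Sym(\mfk{n}^\ast)\longrightarrow \End^\bullet_{\uqB}(P_\bullet)
\end{equation*}
by sending each generator $\xi\in \mfk{n}^\ast$ (in cohomological degree $2$) to an explicit chain endomorphism of $P_\bullet$ obtained from contraction against $\xi$ in the Koszul-type grading. The strict multiplicativity of $\varphi$ follows from the commutativity of the generators in the $\mfk{b}$-equivariant category: a priori one only has braided-commutativity, but the key point is that after applying the forgetful functor $t$ to $\Rep_{\dg}(\mfk{b})$, the braiding becomes the ordinary symmetry on $\Sym(\mfk{n}^\ast)$. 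This is where the passage to $\mfk{b}$-equivariant objects is essential: it washes out the $q$-commutation that would obstruct strict multiplicativity in $\Rep_{\dg}(\dB)$.

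Third, I would verify that $t\varphi$ is a quasi-isomorphism. On cohomology, $\varphi$ induces a graded algebra map $\Sym(\mfk{n}^\ast)\to \Ext^\ast_{\uqB}(k,k)$ which, by construction, sends the degree-$2$ generators to the natural Ginzburg--Kumar generators in $\Ext^2_{\uqB}(k,k)\cong \mfk{n}^\ast$; by \cite[Theorem~2.5, Lemma~2.6]{ginzburgkumar93} this map is an isomorphism. Hence $t\varphi$ is a quasi-isomorphism of dg algebras in $\Rep_{\dg}(\mfk{b})$, which gives the desired $E_1$-equivalence $t\Sym(\mfk{n}^\ast)\overset{\sim}\to t\REnd_{\uqB}(\1,\1)$.

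The main obstacle will be setting up the Koszul-type resolution and the associated multiplicative structure \emph{in a functorial $\mfk{b}$-equivariant way}. One must be careful that the explicit maps $\varphi(\xi)$ really do commute on the nose (not just up to chain homotopy) after forgetting to $\mfk{b}$-representations; this is what forces one to work with $\mfk{b}$-equivariance rather than $\dB$-equivariance, and it is the technical heart of Arkhipov--Bezrukavnikov--Ginzburg's argument. Equivalently, one must argue that the higher $A_\infty$-operations on the minimal model of $\REnd_{\uqB}(\1,\1)$ vanish after restriction along $t$, which can be done by a weight/degree count once a sufficiently symmetric resolution has been chosen.
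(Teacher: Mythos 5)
First, a framing point: the paper does not prove this statement at all — it is imported verbatim from \cite[Theorem 3.7.5]{arkhipovbezrukavnikovginzburg04}, and the surrounding remark even stresses that what is actually proved there is slightly stronger (it tracks rationality properties of the formality map, cf.\ their Proposition 3.9.2). So your proposal has to be judged as a reconstruction of the ABG argument, and as written it has a genuine gap at its core. The claimed ``Koszul-type resolution'' $P_\bullet\to k$ with underlying module $\uqB\ot\Sym(\mfk{n}^\ast)[\text{shifts}]$ and differential coming from the Nichols algebra structure, together with strictly multiplicative, strictly commuting contraction operators $\varphi(\xi)$, is unsubstantiated and in general does not exist: for rank $\geq 2$ the algebra $u_q(\mfk{n})$ is not a quantum complete intersection (the quantum Serre relations are genuine obstructions), and even in rank one the minimal resolution of $k$ has an exterior degree-one part in addition to the polynomial degree-two part, so it is not of the shape you describe. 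This is precisely why the Ginzburg--Kumar computation \cite{ginzburgkumar93} of $\Ext_{\uqB}(k,k)=\Sym(\mfk{n}^\ast)$ proceeds by a filtration/spectral-sequence argument rather than by exhibiting a small multiplicative resolution. Producing such a resolution with on-the-nose commuting cocycle representatives is the entire content of the theorem, not a technical afterthought, and your sketch supplies no mechanism for it.

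Second, your explanation of why one must restrict along $t$ is not correct. Both $\Sym(\mfk{n}^\ast)$ and $\REnd_{\uqB}(\1,\1)$ are already algebras in the \emph{symmetric} monoidal category $\Rep_{\dg}(\dB)$ — the $\dB$-action arises through quantum Frobenius, not through the $R$-matrix — so there is no residual $q$-braiding for the forgetful functor to ``wash out.'' The reason ABG pass from $\dB$ to $\mfk{b}$ is one of rationality/integrability: the quasi-isomorphism they construct depends on choices (weight-space decompositions, splittings, degree counts) that are equivariant only for the infinitesimal action and do not respect the algebraic group structure. Your closing remark — kill the higher $A_\infty$-operations on a minimal model by a weight/degree count — is in fact the viable strategy and is close to what ABG actually do, but as stated it is an assertion rather than an argument: one needs the purity statement (the character-lattice weight of a class pins down its cohomological degree) together with an equivariant homotopy-transfer/minimal-model construction, and that is exactly where the real work of \cite{arkhipovbezrukavnikovginzburg04} lies.
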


\begin{remark}
What's actually proved in \cite{arkhipovbezrukavnikovginzburg04} is a slightly stronger result which does address certain rationality properties of the given $E_1$-formality map (see e.g.\ \cite[Proposition 3.9.2]{arkhipovbezrukavnikovginzburg04}).  Indeed, we are tempted to suggest that a careful reading of \cite{arkhipovbezrukavnikovginzburg04} will simply yield $E_1$-formality of the algebra $\msc{R}(G_q)$.  As another point, one can deduce from \cite{bezrukavnikovlachowska07} that the global sections
\[
\operatorname{R}\Gamma(\dG/\dB,\sRHom_{\dG/B_q}(\1,\1))=\RHom_{\FK{G}_q}(\1,\1)
\]
are also $E_1$-formal.
\end{remark}

\subsection{GRT versus tensor enhancements}
\label{sect:small_GRT}

We let $\mathbf{DG}(\FK{G}_q)$ denote the infinity category of unbounded dg sheaves for the small quantum group, and let $\operatorname{DG}(\FK{G}_q)$ denote the full monoidal infinity subcategory of coherent dg sheaves.  We recall again that, by definition, $\FK{G}_q=\QCoh(\dG/G_q)$ and we have the monoidal equivalence $1^\ast:\QCoh(\dG/G_q)\overset{\sim}\to \Rep \uqG$ of Theorem \ref{thm:ag}.  So the homotopy categories for $\mathbf{DG}(\FK{G}_q)$ and $\operatorname{DG}(\FK{G}_q)$ are equivalent to the unbounded derived category of small quantum group representation, and the bounded derived category of finite-dimensional representations respectively.

\begin{conjecture}\label{conj:GRT}
Suppose that the Formality Conjecture \ref{conj:formality} holds.  Then the (non-monoidal) pushforward functor
\[
\xi_\ast:=\sRHom_{\dG/B_q}(\1,-):\QCoh_{\dg}(\dG/B_q)\to \QCoh_{\dg}(\tN)
\]
restricts to an equivalence on the principal block of coherent dg sheaves for the small quantum group
\[
\xi_\ast|_{\operatorname{block}_0}:\operatorname{DG}(\FK{G}_q)_0\overset{\sim}\longrightarrow \Coh_{\dg}(\tN),
\]
via the Kempf embedding (Theorem \ref{thm:Kempf}).  This equivalence can be identified with the equivalence of \cite{arkhipovbezrukavnikovginzburg04,bezrukavnikovlachowska07}.
\end{conjecture}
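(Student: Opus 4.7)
The plan is to verify the conjecture by matching the functor $\xi_\ast|_{\operatorname{block}_0}$ with the Bezrukavnikov-Lachowska equivalence $F$ from \cite{bezrukavnikovlachowska07}, granting Conjecture \ref{conj:formality} throughout. First I would confirm that $\xi_\ast$ sends coherent objects in the principal block into $\Coh_{\dg}(\tN)$: for $M \in \operatorname{DG}(\FK{G}_q)_0$, Proposition \ref{prop:coherent} ensures the sheaf $\sExt_{\FK{G}_q}(\1,M)^{\Sp}$ is coherent over $\tN$, and Conjecture \ref{conj:formality} lifts this coherence from cohomology to the underlying dg sheaf $\xi_\ast M = \sRHom_{\dG/B_q}(\1,M)$ viewed through the equivalence $\QCoh_{\dg}(\tN) \simeq \QCoh_{\dg}(\msc{X})$.

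Next I would verify that the two functors agree on the tensor unit and on global sections. We have $\xi_\ast(\1) = \msc{R}(G_q)$, which under formality becomes the structure sheaf $\O_\tN$, matching $F(\1) = \O_\tN$ in \cite{bezrukavnikovlachowska07}. For global sections, Corollary \ref{cor:372} combined with the Kempf embedding (Theorem \ref{thm:Kempf}) gives
\[
\operatorname{R}\Gamma(\tN, \xi_\ast M) \cong \RHom_{\dG/B_q}(\1, M) \cong \RHom_{\FK{G}_q}(\1, M),
\]
which reproduces the global sections formula characterizing $F$.

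The hardest step will be establishing that $\xi_\ast|_{\operatorname{block}_0}$ is fully faithful, equivalently that the counit $\xi^\ast \xi_\ast M \to M$ is an equivalence for $M$ in the principal block. Note that $\xi_\ast$ on all of $\QCoh_{\dg}(\dG/B_q)$ is certainly not fully faithful, so the argument must exploit something specific to block $0$, presumably the fact that central characters of objects in block $0$ are controlled by the action of $\msc{R}(G_q)$. My strategy is to check fully faithfulness on a set of compact generators for $\operatorname{DG}(\FK{G}_q)_0$ -- for instance, the indecomposable projectives in block $0$ together with the baby Vermas, which come from inducing $B_q$-representations via the Kempf vanishing theorem (Theorem \ref{thm:kempf_vanish}) -- and then conclude by triangulated and idempotent closure. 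At the level of derived morphisms one can reduce via the naturality result Theorem \ref{thm:natrl} and the fiber identification Theorem \ref{thm:fiber_l} to calculations over the small quantum Borels, where the $E_1$-formality result of \cite[Theorem 3.7.5]{arkhipovbezrukavnikovginzburg04} and a direct computation on Verma-type objects become tractable.

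For essential surjectivity I would use that $\Coh_{\dg}(\tN)$ is generated as a thick subcategory by the structure sheaf together with pullbacks $p^\ast \mcl{L}_\lambda$ of line bundles on $\dG/\dB$; these generators all lie in the image of $\xi_\ast$ applied to objects of block $0$ obtained from appropriate twists by central characters. Finally, to identify the resulting equivalence with the Arkhipov-Bezrukavnikov-Ginzburg functor on $\dG$-equivariant objects, I would restrict to $\dG$-equivariant objects via the de-equivariantization equivalence $E_-: \Rep G_q \overset{\sim}{\to} (\FK{G}_q)^{\dG}$. The translation action of $\dG$ on $\dG/B_q$ lifts through $\xi_\ast$ to the natural $\dG$-action on $\tN$, so $\xi_\ast$ descends to a functor $\operatorname{DG}(G_q)_0 \to \Coh_{\dg}(\tN)^{\dG}$, and the compatibility $F \circ E_- \cong \operatorname{forget} \circ F^{\dG}$ of \cite{arkhipovbezrukavnikovginzburg04} then follows tautologically from the construction of $\xi_\ast$ as an inner-Hom.
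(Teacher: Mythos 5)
The statement you are proving is Conjecture \ref{conj:GRT}; the paper offers no proof of it, only the heuristic remark that the ABG functor is essentially $V\mapsto \sRHom_{\dG/B_q}(\1,E_V)$ (citing \cite[\S 1.4]{arkhipovbezrukavnikovginzburg04}) and an explicit caveat that the comparison with \cite{bezrukavnikovlachowska07} has a ``more substantial gap'' because that equivalence is built through Backelin--Kremnizer localization. So there is no paper proof to match, and your text should be judged as a stand-alone plan. As such it has genuine gaps precisely at the steps that carry all the difficulty. The full-faithfulness step is the heart of ABG/BL (loop Grassmannian, wall-crossing, Koszulity-type arguments), and your proposed reduction ``via Theorem \ref{thm:natrl} and Theorem \ref{thm:fiber_l} to calculations over the small quantum Borels'' does not deliver it: Theorem \ref{thm:fiber_l} computes the \emph{fibers} of $\sRHom_{\dG/B_q}(M,N)$ and Theorem \ref{thm:natrl} only compares \emph{supports}; knowing all fibers of $\xi_\ast M$ does not show that $\Hom_{\FK{G}_q}(M,N)\to\Hom_{\tN}(\xi_\ast M,\xi_\ast N)$ (equivalently, that the counit $\xi^\ast\xi_\ast M\to M$) is an equivalence, which is a global statement over $\tN$. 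Moreover nothing in that fiberwise reduction is sensitive to the block decomposition of $\FK{G}_q$ --- restriction to $\msc{B}_\lambda$ does not respect blocks --- yet the conjectured equivalence is specific to $\operatorname{block}_0$ and fails verbatim for other blocks (cf.\ \cite{lachowskaqi19}); your appeal to ``central characters controlled by $\msc{R}(G_q)$'' names the issue but is not an argument.

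Two further points. For essential surjectivity, the claim that the generators $p^\ast\mcl{L}_\lambda$ of $\Coh_{\dg}(\tN)$ lie in the image of $\xi_\ast$ on block-$0$ objects ``obtained from appropriate twists by central characters'' is unsubstantiated: the obvious preimages $\zeta^\ast\mcl{L}_\lambda$ are objects of $\QCoh(\dG/B_q)$ but \emph{not} of the Kempf image, since $D(\FK{G}_q)$ is not stable under the $\QCoh(\dG/\dB)$-action (the paper stresses exactly this), so one must exhibit genuine block-$0$ quantum group objects mapping to these line bundles --- in ABG/BL this is where induced/baby Verma objects and nontrivial computations enter. Finally, checking that your functor and $F$ agree on the unit and on global sections does not identify the two functors; an identification requires comparing the constructions themselves (for BL, through the Backelin--Kremnizer localization), and the claim that the equivariant comparison with ABG ``follows tautologically'' overstates what the inner-Hom description gives without a careful match of the equivariant structures. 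In short, the proposal is a plausible outline consistent with the paper's speculation, but the decisive steps (full faithfulness on the principal block, surjectivity, and the identification with the known equivalences) remain unproved and are not reducible to the cited results of the paper.
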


We remark that one only needs to establish $E_1$-formality of the algebra $\msc{R}(G_q)$ for Conjecture \ref{conj:GRT} to make sense, since the statement is only concerned with the objects $\sRHom_{\dG/B_q}(\1,M)$ as sheaves over $\msc{X}$ or $\tN$.

\begin{remark}
Of course, $\xi_\ast$ also induces a functor from $\mathbf{DG}(\FK{G}_q)_0$ to $\QCoh_{\dg}(\tN)$, but this functor should not be an equivalence, simply because $\xi_\ast$ will not commute with infinite sums.  It seems reasonable to expect that $\xi_\ast$ induces an equivalence from Ind-coherent sheaves $\operatorname{IndCoh}_{\dg}(\dG/G_q)_0$ to $\QCoh(\tN)$ however.
\end{remark}

The above reframing of \cite{arkhipovbezrukavnikovginzburg04} is, in a sense, relatively tame.  Specifically, it is already explained in \cite[\S 1.4]{arkhipovbezrukavnikovginzburg04} that their equivalence is essentially the functor
\[
F^G:V\mapsto \text{descent of }\O_{\dG}\ot_k\RHom_{\uqB}(k,V)=\sRHom_{\dG/B_q}(\1,E_V).
\]
When compared with \cite{bezrukavnikovlachowska07} there is a somewhat more substantial gap between our suggestion above and the construction of \cite[\S 2.4]{bezrukavnikovlachowska07}, since Bezrukavnikov and Lachowska approach their equivalence through work of Backelin and Kremnizer \cite{backelinkremnizer08} which concerns the De-Concini Kac quantum group.\footnote{As far as we understand, there are some technical issues with the work \cite{backelinkremnizer08}.  See \cite[Remark 5.4]{tanisaki14}.  These issues should be clarified in type $A$ in the relatively near future. See the introduction to \cite{tanisaki21}.}

\subsection{Conclusions}
\label{sect:conclusion}

We have suggested above that the equivalences of \cite{arkhipovbezrukavnikovginzburg04,bezrukavnikovlachowska07} should be reflections of a more basic interaction between quantum groups and sheaves on the Springer resolution.  In this more fundamental setting one \emph{is} able to address fusion of representations, via the tensor structure on the half-quantum flag variety.  Also, as we explained in Part II of this work, support theory is yet another reflection of this geometric framing for quantum group representations.  In this way, the Formality Conjecture \ref{conj:formality} offers an explicit means of integrating support theory and geometric representation theory under a single uniform framework:

\[
\begin{tikzpicture}
\node[align=center,draw=violet, ultra thick, rounded corners, inner sep=8pt] at (3,5) {\bf Sheaf of $\ot$-categories $\QCoh_{\dg}(\dG/B_q)$ over $\tN$,\\ \bf w/ $\ot$-embedding $\operatorname{DG}(\FK{G}_q)\to \QCoh_{\dg}(\dG/B_q)$};
\draw[color=red!50, dashed]  (.4,4.23) -- (-1,3.1);
\draw[color=blue!50, dashed] (5.4,4.23) -- (7,3.1);

\node[align=center,draw=red, thick, inner sep=5pt, rounded corners] at (-.8,2.8) {\bf Support Theory};
\node[align=center,color=red!60] at (-.5,3.7) {$p_1:$\ \emph{take fibers}};
\node[align=center,color=blue!60] at (6.5,3.7) {$p_2:$\ \emph{apply pushforward}};
\node[align=center,draw=blue, thick, inner sep=5pt, rounded corners] at (6.8,2.8) {\bf Geom.\ Rep.\ Theory};
\end{tikzpicture}
\]

\section{Remarks on log-TQFTs and the derived modular functor}
\label{sect:log}

Below we make, what is intended to be, a casual comment about logarithmic topological quantum field theory.  We are particularly interested in the derived modular invariant of Schweigert and Woike \cite{schweigertwoike21}.  In short, we explain how our enhancement of the tensor category $\FK{G}_q$ may provide a means of resolving certain singularities which appear in the modular functor associated to the small quantum group, as described in \cite{schweigertwoike21}.  (See Observation \ref{obs:sing} below.)
\par

For the sake of expediency, we don't review any aspects of topological quantum field theory, but point the reader to the texts
\cite{bakalovkirillov01,turaev16}, and to the introduction of \cite{gukovetal21} for specific discussions of the logarithmic (a.k.a.\ non-semisimple) setting.

\subsection{A preliminary comment on even order $q$}
\label{sect:even_orderq}

As the reader may be aware, the particular quantum parameters $q$ which are of greatest interest in mathematical physics are of even order.  In the earlier sections of this text we have required $q$ to be of large \emph{odd} order.
\par

There are no essential problems which occur at even order $q$, save for some unknown properties of cohomology and the incompleteness of the analysis of \cite{negron21} at admissible but not strongly-admissible lattices.  The second point is, from our perspective, not a significant obstacle.  For the first point, one has to deal with a relatively minor issue concerning cohomology for the small quantum Borel.  Specifically, arguments of Ginzburg and Kumar \cite{ginzburgkumar93} imply a calculation of cohomology
\[
\Ext_{\uqB}(k,k)=\Sym(\mfk{w})
\]
at all large even order $q$, where $\mfk{w}$ is a representation for the Borel $\dB$ of Langland's dual type which is concentrated in cohomological degree 2.  (It is important, in this case, that one uses the quasi-Hopf algebra from \cite{arkhipovgaitsgory03,creutziggainutdinovrunkel20,gainutdinovlentnerohrmann,negron21} and not the more immediate construction of the small quantum group as a subalgebra in $U_q(\mfk{g})$.)
\par

One expects the representation $\mfk{w}$ to be the linear dual of the nilpotent subalgebra $\check{\mfk{n}}$ for $\dB$.  Supposing this fact, one simply replaces $\tN$ with the Springer resolution $\tN^\vee$ for the Langlands dual group in our analysis, and all of the results of this text will hold without further alteration at $q$ of large even order.
\par

\begin{remark}
If we simply accept $\mfk{w}$ as an unknown quantity, one can still replace the Springer resolution and nilpotent cone with the mystery space $\tN(\mfk{w}):=\dG\times_{\dB}\mfk{w}$ and its affinization $\mcl{N}(\mfk{w}):=\tN(\mfk{w})_{\rm aff}$, and all of the results will again hold at large even order $q$.
\end{remark}

\begin{remark}
For $\operatorname{SL}(2)$ at even order $q$, $\Ext^2_{u_q(B)}(k,k)$ is $1$-dimensional and so is determined by its associated weight.  In this case one sees immediately that $\Ext_{u_q(B)}(k,k)=\Sym(\check{\mfk{n}})$ and $\tN(\mfk{w})$ is the Springer resolution in type $A$.  So for $\operatorname{SL}(2)$ there are no barriers at even order $q$, and the analyses of this paper hold without alteration in this special case.
\end{remark}

\subsection{Derived modular functor}

Let $\msc{C}$ be a (generally non-semisimple) modular tensor category.  We consider the category $\msc{C}\text{-}\Surf$ of compact oriented surfaces with oriented boundary, and boundary labels in $\msc{C}$, as described in \cite[\S 3.1]{schweigertwoike21}.  Morphisms in $\msc{C}\text{-}\Surf$ are generated by mapping classes, i.e.\ isoclasses of orientation preserving diffeomorphisms, and sewing morphisms $s:\Sigma'\to \Sigma$.  Such $s$ simply chooses some oppositely oriented pairs of boundary components $\phi_i:S^1\to \partial\Sigma'$ with compatible labels, then identifies these pairs to form $\Sigma$.  We have the corresponding central extension $\msc{C}\text{-}\Surf^c\to\msc{C}\text{-}\Surf$ of $\msc{C}\text{-}\Surf$ obtained by appending a central generator to $\End_{\Surf}(\Sigma)$ which commutes with mapping classes and behaves multiplicatively with respect to disjoint union.  See \cite[\S 3.1]{schweigertwoike21} \cite[Definition 3.10]{fuchsschweigert17} for details.
\par

In \cite{schweigertwoike21} Schweigert and Woike construct a homological surface invariant for $\msc{C}$, which takes the form of a symmetric monoidal functor
\[
F_\msc{C}:\msc{C}\text{-}\Surf^c \to Vect_{\dg}.
\]
Given an expression $\msc{C}\cong \rep(u)$ for a Hopf algebra $u$, this invariant takes values $F_\msc{C}(\Sigma_g)\cong \1\ot^{\operatorname{L}}_u\operatorname{Ad}^{\ot g}$ on the genus $g$ closed surface $\Sigma_g$, where $\operatorname{Ad}=u^{ad}$ is the adjoint representation.  As a consequence, one observes mapping class group actions on homology for $\msc{C}$ (see also the preceding works \cite{lentneretal18,lentneretal}).
\par

When $\msc{C}$ is semisimple, the functor $F_\msc{C}$ of \cite{schweigertwoike21} reduces to an object which is dual to the usual modular functor described in \cite{bakalovkirillov01,turaev16}.  It is both linearly dual, in the sense that the values $F_\msc{C}(\Sigma)^\ast$ recover the expected values in the semisimple case \cite[Remark 3.12]{schweigertwoike21}, and also dual in the sense that it is constructed using the adjoint operation $\ot_u$ to the $\Hom$ functors employed in \cite{bakalovkirillov01,turaev16}.
\par

We suppose for this section that that there is a dual/adjoint modular functor $F^\msc{C}:\msc{C}\text{-}\Surf^c\to Vect_{\dg}$ whose values on higher genus surfaces, and standard spheres, are given by the expected derived Hom spaces
\[
\begin{array}{l}
F^\msc{C}(\Sigma_g)\cong \RHom_{\msc{C}}(\1,\operatorname{Ad}^{\ot g}),\vspace{1mm}\\
F^\msc{C}(S^2(V_1,\dots,V_r;W_1,\dots,W_s))\cong\RHom_{\msc{C}}(V_1\ot\dots\ot V_r,W_1\ot\dots\ot W_r),
\end{array}
\]
and whose sewing maps are determined by composition of morphisms (cf.\ \cite[\S V.2]{turaev16}).  If one is only concerned with gluing isomorphisms and modular group actions--as in \cite[Definition 5.1.13]{bakalovkirillov01} for example--one can take $F^\msc{C}$ directly to be the linear dual $(F_\msc{C})^\ast$, as remarked at \cite[(1.8)]{schweigertwoike}.

\begin{remark}
One can compare with the discussion of duality for state spaces, i.e.\ the values of our modular functor on surfaces, and related grading issues from \cite[\S 2.6.3]{creutzigdimoftegarnergeer}.  From the perspective of \cite{creutzigdimoftegarnergeer} the homological and cohomological flavors of the invariant $F_\msc{C}$ vs.\ $F^\msc{C}$ arise from a certain binary choice of orientation.
\end{remark}

\begin{definition}
For the quantum group $\FK{G}_q$ we take $\Surf^c(q)=\FK{G}_q\text{-}\Surf^c$ and let $F_q$ denote the cohomological-type modular functor $F_q=F^{\FK{G}_q}:\Surf^c_q\to Vect_{\dg}$ described above.
\end{definition}

\subsection{Singularities in the modular surface invariant}

In paralleling the semisimple setting, one might think of the invariant $F_q$, aspirationally, as a $2$-dimensional slice of a derived Reshetikhin-Turaev invariant associated to the small quantum group.  So we are interested in extending the given invariant $F_q$ to dimension three, in some possibly intricate way, to produce a once, or twice-extended $3$-dimensional logartithmic TQFT.
\par

If one proceeds na\"ively, then one considers a possible extension of $F_q$ to a functor
\[
\dot{F}_q:\operatorname{Bord}_{3,2,1}^c(q)\to  Vect_{\dg}
\]
from a marked bordism category in which $\Surf^c(q)$ resides in dimensions $2\pm \epsilon$.  One can see, however, that no such na\"ive extension $\dot{F}_q$ exists.  (This point is well-known, see for example \cite[Remark 4.16]{schweigertwoike}.)  Namely, such an extension would imply that all values $F_q(\Sigma)$ on surfaces $\Sigma$ without boundary are dualizable objects in $Vect_{\dg}$.
\par

To spell this out more clearly, if we cut $S^1$ along the $y$-axis $S^1_{-}\cup S^1_{+}\to S^1$ then we obtain coevaluation and evaluation morphisms for $F_q(\Sigma)$ via the maps
\[
\dot{F}_q(\Sigma\times S^1_-):k\to F_q(\Sigma)\ot_k F_q(-\Sigma),\ \ \dot{F}_q(\Sigma\times S^1_+):F_q(-\Sigma)\ot_k F_q(\Sigma)\to k.
\]
However, the values $F_q(\Sigma)$ are immediately seen to be non-dualizable since they are complexes with unbounded cohomology in general.
\par

From a slightly less na\"ive perspective, one can ask for dualizability of the objects $F_q(\Sigma)$ relative to a global action of the codimension one sphere $F_q(S^2)$, and in this way attempt to extract certain, slightly exotic, $3$-dimensional information from the invariant $F_q$.  Indeed, from a $3$-dimensional perspective one expects to obtain natural action maps
\[
F_q(S^2)\ot_k F_q(\Sigma)\to F_q(\Sigma)
\]
by puncturing, then placing an appropriately oriented sphere, in the identity $\dot{F}_q(\Sigma\times [-1,1])$.  The most obvious candidate for this global $F_q(S^2)$-action on $F_q(\Sigma)$ would be the global action of $F_q(S^2)\cong \O(\mcl{N})$ on $\RHom_{\FK{G}_q}$ induced by the braided tensor structure of $\FK{G}_q$.  The modular functor for $\FK{G}_q$ can then be reframed as the corresponding system of functors
\begin{equation}\label{eq:3083}
\xymatrix{
 & \Coh_{\dg}(\mcl{N})\ar[dr]^{\operatorname{R}\Gamma}\\
\Surf^c(q)\ar@{-->}[ur]|{\rm conj\ \exists?}\ar[rr]_{F_q} & & Vect_{\dg}.
}
\end{equation}
Here by $\Coh_{\dg}(\mcl{N})$ we mean the expected monoidal category of sheaves with a quantized symmetry, as in Section \ref{sect:formality}, and we note that the lift in \eqref{eq:3083} is \emph{not} supposed to be monoidal.  Indeed, this lift is subordinate to the monoidal functor $F_q$.
\par

With this new framing in mind, one asks for dualizability of $F_q(\Sigma)$ not as an object in $Vect_{\dg}$, but as an object in $\Coh_{\dg}(\mcl{N})$ and pursues an extension of the form
\begin{equation}\label{eq:3093}
\ddot{F}_q:\operatorname{Bord}^c_{3,2,1}(q)\rightsquigarrow \{\text{Morphisms in }\Coh_{\dg}(\mcl{N}),\ Vect_{\dg},\ \dots
\end{equation}
whose values on the products $\Sigma\times S^1$, for a surface $\Sigma$ without boundary, are traces
\[
\ddot{F}_q(\Sigma\times S^1)=\operatorname{Tr}_{\O_{\mcl{N}}}(F_q(\Sigma))
\]
in $\Coh_{\dg}(\mcl{N})$.  (See Remark \ref{rem:BZ} below.)
\par

However, one can see that such an extension \eqref{eq:3093} is still obstructed due to the singular nature of the nilpotent cone.  Namely, the adjoint representation $\operatorname{Ad}=\uqG^{ad}$ always has non-trivial projective summands \cite{ostrik97} \cite[Corollary 4.5]{lachowskaqi21}, so that $\RHom_{\FK{G}_q}(\1,\operatorname{Ad})$ has an $m$-torsion summand, for $m=\O(\mcl{N})^{>0}$.  This torsion summand is non-dualizable since $\mcl{N}$ is singular at $\{0\}$, so that $F_q(T^2)$ is non-dualizable.

\begin{observation}[Singularities of $F_q$]\label{obs:sing}
The existence of a lift $F_q:\Surf^c(q)\to \Coh_{\dg}(\mcl{N})$ and corresponding extension $\ddot{F}_q$ to dimension $3$, as requested in \eqref{eq:3083} and \eqref{eq:3093}, is still obstructed.  Namely, the value $F_q(T^2)\cong \RHom_{\FK{G}_q}(\1,\operatorname{Ad})$ \cite[Theorem 3.6]{schweigertwoike21} on the torus is never dualizable as an object in $\Coh_{\dg}(\mcl{N})$
\end{observation}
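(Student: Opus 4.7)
The plan is to exhibit a nonzero summand of $F_q(T^2)\cong \RHom_{\FK{G}_q}(\1,\operatorname{Ad})$ which is a torsion $\O(\mcl{N})$-module supported at the singular point $0\in \mcl{N}$, and to then argue that such a summand obstructs dualizability in $\Coh_{\dg}(\mcl{N})$. Throughout I use that dualizability in $\Coh_{\dg}(\mcl{N})$ (equipped with the quantized symmetric monoidal structure of Section~\ref{sect:formality}) coincides with perfectness of the underlying dg $\O(\mcl{N})$-module, since the braided perturbation alters only the symmetry, not the underlying tensor product.

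First I would invoke the projectivity result of Ostrik \cite{ostrik97} (see also \cite[Corollary 4.5]{lachowskaqi21}) to split $\operatorname{Ad}=P\oplus V'$ in $\FK{G}_q$, with $P$ a nonzero projective summand. Since the $\msc{R}(G_q)$-action on $\sRHom_{\FK{G}_q}(\1,-)$ is natural in the second argument (Section~\ref{sect:RGq_act}), taking cohomology and global sections yields a direct-sum decomposition
\[
\RHom_{\FK{G}_q}(\1,\operatorname{Ad})\cong \RHom_{\FK{G}_q}(\1,P)\oplus \RHom_{\FK{G}_q}(\1,V')
\]
as dg $\O(\mcl{N})$-modules. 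Since $\FK{G}_q$ is Frobenius, $P$ is injective, so $\Ext^{>0}_{\FK{G}_q}(\1,P)=0$ and $\RHom_{\FK{G}_q}(\1,P)=\Hom_{\FK{G}_q}(\1,P)$ is concentrated in degree $0$. The $\O(\mcl{N})$-action is the composite $\xi\cdot f=(\xi\ot\operatorname{id}_P)\circ f$, which for $\xi\in \O(\mcl{N})^{>0}=\Ext^{>0}_{\FK{G}_q}(\1,\1)$ factors through $\Ext^{>0}_{\FK{G}_q}(\1,P)=0$. Hence the positive degree ideal $m\subset\O(\mcl{N})$ annihilates the $P$-summand, which is therefore a nonzero finite-dimensional $\O(\mcl{N})/m=k$-module supported at $0\in \mcl{N}$.

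Next, I would use the well-known fact that the nilpotent cone $\mcl{N}$ is singular at the origin for any almost-simple $G$; equivalently, the maximal ideal $m\subset\O(\mcl{N})$ is not generated by a regular sequence, so the residue field $k=\O(\mcl{N})/m$ has infinite projective dimension as an $\O(\mcl{N})$-module. Any nonzero bounded complex of finitely generated $\O(\mcl{N})$-modules supported set-theoretically at $0$ must then fail to be perfect: by the Auslander-Buchsbaum formula (or direct calculation with the minimal free resolution), such a complex cannot have finite Tor-dimension over $\O(\mcl{N})$. Applying this to the $P$-summand, and noting that perfectness is preserved under direct summands, the ambient complex $\RHom_{\FK{G}_q}(\1,\operatorname{Ad})$ is not perfect either, hence not dualizable in $\Coh_{\dg}(\mcl{N})$.

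The main obstacle I anticipate is bookkeeping rather than conceptual: one must verify carefully that the $\O(\mcl{N})$-module structure on $F_q(T^2)$ arising from the sphere action in the modular functor agrees (up to units) with the one induced by the braided tensor structure on $\FK{G}_q$ via Corollary~\ref{cor:GK}. Once that compatibility is pinned down, the identification $F_q(T^2)\cong \RHom_{\FK{G}_q}(\1,\operatorname{Ad})$ of \cite[Theorem 3.6]{schweigertwoike21} becomes an identification of $\O(\mcl{N})$-modules, and the three steps above conclude the argument. The singularity of $\mcl{N}$ at $0$ is essential: were $\mcl{N}$ smooth, torsion at the origin would pose no obstruction to perfectness, and the $3$-dimensional extension \eqref{eq:3093} would not be obstructed by this particular mechanism.
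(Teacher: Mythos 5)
Your proposal follows essentially the same route as the paper's (admittedly terse) argument: split a projective summand off $\operatorname{Ad}$ using \cite{ostrik97} and \cite[Corollary 4.5]{lachowskaqi21}, note that the corresponding summand of $\RHom_{\FK{G}_q}(\1,\operatorname{Ad})$ is concentrated in degree $0$ and killed by $m=\O(\mcl{N})^{>0}$, and conclude non-dualizability from the singularity of $\mcl{N}$ at the cone point. Two steps, however, are not justified as written.

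First, your general claim that any nonzero bounded complex of finitely generated $\O(\mcl{N})$-modules supported set-theoretically at $0$ fails to be perfect is false. The nilpotent cone is a complete intersection (Kostant), hence Gorenstein and in particular Cohen--Macaulay, so a homogeneous system of parameters is a regular sequence and its Koszul complex is a perfect complex supported exactly at $\{0\}$; the Auslander--Buchsbaum formula does not rule such things out. What rescues you is the stronger property you already established in the previous paragraph: your summand is annihilated by $m$ itself, hence is a finite direct sum of copies of the residue field $k$ at the cone point, and by the Auslander--Buchsbaum--Serre regularity criterion $k$ has finite projective dimension over the local ring of $\mcl{N}$ at $0$ if and only if that local ring is regular, which fails precisely because $\mcl{N}$ is singular there. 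Phrase the perfectness obstruction this way (and then use that perfectness passes to direct summands), rather than via the false general statement. Second, you assert without argument that $\Hom_{\FK{G}_q}(\1,P)\neq 0$; a nonzero projective object can perfectly well have no maps from the unit. What is needed is that the projective part of $\operatorname{Ad}$ contains an indecomposable projective whose socle contains $\1$ (equivalently, that the projective cover of the trivial object occurs as a summand), and this is exactly the input supplied by the cited results of Ostrik and Lachowska--Qi; make that dependence explicit. Your remaining points---the identification of the $\O(\mcl{N})$-action with the tensor-induced one, and the reduction of dualizability in the braided perturbation of $\Coh_{\dg}(\mcl{N})$ to perfectness of the underlying complex---are consistent with how the paper treats the matter, since the sphere action on state spaces is there taken by definition to be the one coming from the braided tensor structure and Corollary \ref{cor:GK}.
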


The main point of this section is to suggest a world in which such singularity problems for a derived Reshetikhin-Turaev theory may be resolvable.

\begin{remark}
In terms of the calculation of the moduli of vacua for quantum $\operatorname{SL}(2)$ provided in \cite{costellocreutziggaitto19,gainutdinovetal06,gannonnegron}, the invariant $F_q$ looks like a particularly reasonable candidate for the ``logarithmic $3$-d" TQFT associated to quantum $\operatorname{SL}(2)$, and its corresponding (triplet) conformal field theory \cite{feigintipunin,gannonnegron}, when compared with the sibling constructions of \cite{geeretal09,costantinoetal14,derenzietal}.  Hence our particular interest in this object.  It seems likely, however, that ideas from both the derived $2$-dimensional school of thought \cite{schweigertwoike21} and the more geometric $3$-dimensional school of thought \cite{geeretal09,costantinoetal14,derenzietal} may be necessary in order to construct a (semi-)functional $3$-dimensional TQFT in the logarithmic setting. 
\end{remark}

\begin{remark}\label{rem:BZ}
We first came across the relative dualizability criterion for logarithmic theories, employed above, in talks of Ben-Zvi.  One can see for example \cite{benzvi}, where the idea is furthermore attributed to Walker.
\end{remark}

\subsection{Resolving singularities in $F_q$}
\label{sect:sub_fin}

Let us suggest a possible means of resolving the singular properties of $F_q$, as described in Observation \ref{obs:sing} above.  Very directly, one can pursue a sheaf-analog of the construction of \cite{schweigertwoike1,schweigertwoike21} to produce a modular functor
\begin{equation}\label{eq:3122}
\msc{F}_q:\Surf^c(q)\to \QCoh_{\dg}(\dG/\dB)
\end{equation}
which takes values in the category of sheaves over the flag variety, and whose affinization (a.k.a.\ global sections) recovers the functor $F_q$.  On closed surfaces and standard spheres one expects such a localization of $F_q$ to admit non-canonical identification
\[
\begin{array}{l}
\msc{F}_q(\Sigma_g)\cong \sRHom_{\FK{G}_q}(\1,\operatorname{Ad}^{\ot g}),\vspace{1mm}\\
\msc{F}_q(S^2(M_1,\dots,M_r;N_1,\dots,N_s))\cong\sRHom_{\FK{G}_q}(M_1\ot\dots\ot M_r,N_1\ot\dots\ot N_r).
\end{array}
\]
\par

To suggest some explicit method of construction here, since all of the sheaves $\sHom_{\FK{G}_q}(P,P')$ are flat over $\dG/\dB$ at projective $P$ and $P'$ (Lemma \ref{lem:1399}), the excision properties for cohomology defined via the homotopy (co)end construction should still hold \cite[\S 3.1]{schweigertwoike} \cite[Remark 2.12]{schweigertwoike1}, so that we can recover cohomology via a homotopy end construction of $\msc{F}_q$ as in \cite[Eq. 2.1, Theorem 3.6]{schweigertwoike21}.  One then recovers the modular functor $F_q$ with linear values as the global sections
\[
F_q=\operatorname{R}\Gamma(\dG/\dB,\msc{F}_q),
\]
via Corollary \ref{cor:372}.  Given such a localization $\msc{F}_q$ of $F_q$, the values $\msc{F}_q(\Sigma)$ should then be calculated via derived morphism spaces
\[
\msc{F}_q(\Sigma)\cong \sRHom_{\FK{G}_q}(M,N\ot \operatorname{Ad}^{\ot g})
\]
which admit an action of the algebra $\msc{R}(G_q)=\sRHom_{\FK{G}_q}(\1,\1)$, as in \cite[Theorem 3.6]{schweigertwoike}.  The cohomology $H^\ast(\msc{F}_q(\Sigma))$ will then be an object in $\Coh(\tN)^{\mbb{G}_m}$ by Proposition \ref{prop:coherent}.
\par

Finally, if we accept Conjecture \ref{conj:formality}, then the object $\msc{F}_q(\Sigma)$ itself will be realized as a coherent dg sheaf over the Springer resolution, and the picture \eqref{eq:3083} now appears as the system of relations
\begin{equation}\label{eq:3150}
\xymatrix{
 & \Coh_{\dg}(\tN)\ar[dr]^{p_\ast}\\
\Surf^c(q)\ar@{-->}[ur]^{\tilde{\msc{F}}_q}\ar[rr]_{\msc{F}_q} & & \QCoh_{\dg}(\dG/\dB)
}
\end{equation}
which characterizes a more sensitive version of our logarithmic modular functor associated to the quantum group.  In this more sensitive construction the (at this point mythical) values of the lift $\tilde{\msc{F}}_q$ will be dualizable, since $\tN$ is smooth, so that such a localization desingularizes the original invariant $F_q$.  As suggested in Section \ref{sect:small_GRT}, and in the works \cite{arkhipovbezrukavnikovginzburg04,bezrukavnikov06}, lifting to the Springer resolution should also have some concrete computational advantages as well.

\begin{remark}\label{rem:boundary}
Though the suggestions of this subsection \emph{may} be sound from a purely mathematically perspective, one should still ask for a physical interpretation of the kind of $\dG/\dB$-localization we are suggesting.  While a definitive conclusion has not reached in this regard, we can provide some commentary.
\par

From one perspective, one considers the TQFT associated to the small quantum group as derived from a topological twist of a corresponding $3$-dimensional supersymmetric quantum field theory $T_{G,\kappa}$ for $G$ at a particular level $\kappa$.\footnote{We are being liberal in our use of the label ``TQFT" here.  As far as we understand, the partition functions (numerical manifold invariants) obtained from such twisted field theories--if such things even exists--are not, at present, computationally accessible via the associated QFT.}   This is the approach taken in \cite{creutzigdimoftegarnergeer}, for example.  Theory $T_{G,k}$ has Coulomb branch $\mathcal{M}_C=\mathcal{N}$ which gives rise to the value $F_q(S^2)=\O(\mathcal{N})$.  From the perspective of theory $T_{G,\kappa}$, one resolves the nilpotent cone $\tN\to \mathcal{N}$ by ``turning on" associated real mass parameters.  See for example \cite{bullimoredimoftegaiotto17,bullimoreferrarikim19}.  We do not know how these mass parameters persist through topological twisting however, or how they might contribute to the corresponding TQFT.
\par

For another vantage point--that of the associated conformal field theories--one simply has the sheaf of VOAs $\msc{V}_l(G)=\dG\times_{\dB} V_{\sqrt{l}Q}$ over $\dG/\dB$ which has global sections identified with the corresponding logarithmic $\mathbb{W}$-algebra $\Gamma(\dG/\dB,\msc{V}_l(G))=W_l(G)$ for $G$ at $q$ \cite{feigintipunin,sugimoto21}.  It is conjectured that these logarithmic $\mathbb{W}$-algebras have module categories equivalent to those of the corresponding small quantum group $\FK{G}_q$ at a $2l$-th root of $1$ \cite{feigintipunin,lentner21}, and this conjecture has been verified for $\operatorname{SL}(2)$ \cite{creutziglentnerrupert,gannonnegron}.  So the VOA $\msc{V}_l(G)$ localizes the quantum group over the flag variety in a rather direct sense, although the relationship with the categorical localization discussed in this text is not entirely clear.  These points of confusion around the flag variety are actually quite interesting from our perspective, and may very well be worth investigating in their own right.
\end{remark}

\appendix

\section{Inner-Hom nonsense for Parts I \& II}
\label{sect:A}

\subsection{Proof outline for Proposition \ref{prop:enriched}}

\begin{proof}[Outline for Proposition \ref{prop:enriched}]
If we take $\msc{F}=\sHom_{\dG/B_q}(M,N)$, composition is alternatively defined by the $\QCoh(\dG/\dB)$-linearity $\msc{F}\star \sHom(L,M)\to \sHom(L,\msc{F}\star M)$ composed with evaluation $ev:\msc{F}\star M\to N$ in the second coordinate.  One uses these two descriptions to deduce associativity of composition.
\par

Associativity for the monoidal structure follows from the fact that both maps
\[
\sHom(M_1,N_2)\ot \sHom(M_2,N_2)\ot \sHom(M_3,N_2)\rightrightarrows \sHom(M_1\ot M_2\ot M_2, N_1\ot N_2\ot N_3)
\]
are adjoint to the map
\[
\begin{array}{l}
\sHom(M_1,N_2)\ot \sHom(M_2,N_2)\ot \sHom(M_3,N_2)\star (M_1\ot M_2\ot M_3)\vspace{2mm}\\
\hspace{1cm}\overset{symm}\longrightarrow (\sHom(M_1,N_2)\star M_1)\ot (\sHom(M_2,N_2)\star M_2)\ot (\sHom(M_3,N_2)\star M_3)\vspace{2mm}\\
\hspace{2cm}\overset{comp}\longrightarrow N_1\ot N_2\ot N_3.
\end{array}
\]
Compatibilities with composition appears as an equality
\begin{equation}\label{eq:1268}
(g_1\ot g_2)\circ (f_1\ot f_2)=(g_1\circ f_1)\ot (g_2\circ f_2):\msc{G}_1\ot \msc{F}_1\ot \msc{G}_2\ot\msc{F}_2\to \sHom(L_1\ot L_2,N_1\ot N_2)
\end{equation}
where $f_i$ and $g_i$ are ``generalized sections", i.e.\ maps
\[
f_i:\msc{F}_i\to \sHom(L_i,M_i),\ \ g_i:\msc{G}_i\to \sHom(M_i,N_i).
\]
To equate these two sections \eqref{eq:1268} one must equate the corresponding morphisms
\[
(\msc{G}_1\ot \msc{F}_1\ot \msc{G}_2\ot \msc{F}_2)\star(L_1\ot L_2)\to N_1\ot N_2,
\]
which involve various applications of the half-braiding for $\QCoh(\dG/\dB)$ acting on $\QCoh(\dG/B_q)$.  One represents these two morphisms via string diagrams and observes the desired equality via naturality of the half-braiding for the $\QCoh(\dG/\dB)$-action.
\end{proof}

\subsection{Proof of Theorem \ref{thm:enhA}}

\begin{lemma}
The adjunction isomorphism
\[
\Hom_{\dG/B_q}(M,N)\to \Hom_{\dG/\dB}(\O_{\dG/\dB},\sHom(M,N))=\Gamma(\dG/\dB,\sHom(M,N))
\]
is precisely the global sections of the natural map \eqref{eq:978}.
\end{lemma}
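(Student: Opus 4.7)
The plan is straightforward: unpack both maps and observe that they are characterized by the same universal property, then invoke uniqueness.

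First I would recall the defining property of the natural map \eqref{eq:978}. By construction, it is the unique binatural morphism
\[
\Hom_{\dG/B_q}(M,N)\otimes_k\O_{\dG/\dB}\to \sHom_{\dG/B_q}(M,N)
\]
rendering the diagram \eqref{eq:982} commutative. Because its source is a constant sheaf of vector spaces, this morphism is completely determined by its global sections, which amount to a linear map
\[
\Psi:\Hom_{\dG/B_q}(M,N)\to \Gamma(\dG/\dB,\sHom(M,N)) = \Hom_{\dG/\dB}(\O_{\dG/\dB},\sHom(M,N)).
\]
Unwinding \eqref{eq:982}, for each $f\in\Hom_{\dG/B_q}(M,N)$ the section $\Psi(f)$ is the unique morphism $\O_{\dG/\dB}\to\sHom(M,N)$ whose $\star$-pairing with $M$ and subsequent application of evaluation recovers $f$ under the canonical identification $\O_{\dG/\dB}\star M = M$.

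Next I would recall the adjunction isomorphism. By general nonsense, the $(\star,\sHom)$-adjunction provides a bijection
\[
\mathrm{adj}:\Hom_{\dG/B_q}(\O_{\dG/\dB}\star M,N)\overset{\cong}\longrightarrow \Hom_{\dG/\dB}(\O_{\dG/\dB},\sHom(M,N))
\]
whose inverse sends $g\mapsto \mathrm{ev}\circ(g\star M)$. Via the canonical unit isomorphism $\O_{\dG/\dB}\star M\cong M$, the source is identified with $\Hom_{\dG/B_q}(M,N)$, and $\mathrm{adj}(f)$ is by definition the unique morphism $\O_{\dG/\dB}\to \sHom(M,N)$ whose postcomposition with evaluation (after $\star$-ing with $M$) reproduces $f$.

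Finally, comparing the two descriptions side by side, $\Psi(f)$ and $\mathrm{adj}(f)$ are each characterized as the unique morphism $\O_{\dG/\dB}\to\sHom(M,N)$ satisfying the same compatibility with evaluation. Hence $\Psi=\mathrm{adj}$, which is exactly the claim of the lemma. There is no genuine obstacle; the only care required is to confirm that the canonical unit isomorphism $\O_{\dG/\dB}\star M\cong M$ invoked in defining \eqref{eq:978} agrees with the one implicit in the adjunction — but both are inherited from the monoidal unit structure on $\QCoh(\dG/\dB)$ acting on $\QCoh(\dG/B_q)$, so this is automatic.
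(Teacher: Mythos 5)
Your proposal is correct and follows essentially the same route as the paper: both arguments identify the adjunction map and the global sections of \eqref{eq:978} by observing that each is characterized by the same compatibility with evaluation, and then invoke uniqueness coming from the $(\star,\sHom_{\dG/B_q})$-adjunction. The only (cosmetic) difference is directional — the paper passes from the per-section property to the sheaf-level diagram \eqref{eq:982} via the surjection $\oplus_{f\in \Hom(M,N)}\O_{\dG/\dB}\star M\to \Hom(M,N)\ot_k M$, whereas you go the other way by precomposing \eqref{eq:982} with the constant section attached to each $f$, which works just as well.
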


\begin{proof}
The adjunction map sends a morphism $f:M\to N$ to the unique map $\O_{\dG/\dB}\to \sHom(M,N)$ for which the composite
\[
M=\O_{\dG/\dB}\star M\to \sHom(M,N)\star M\overset{ev}\to N
\]
is the morphism $f$.  Let us call this section $f:\O_{\dG/\dB}\to \sHom(M,N)$.  By considering the fact that we have the surjective map of sheaves
\[
\oplus_{f\in \Hom(M,N)}\O_{\dG/\dB}\star M\overset{\oplus f\ot id}\longrightarrow \Hom(M,N)\ot_k M
\]
we see that the above property implies that the uniquely associated map
\[
\Hom(M,N)\ot_k\O_{\dG/\dB}\to \sHom(M,N)
\]
whose global sections are the adjunction isomorphism fits into the diagram \eqref{eq:982}, and is therefore equal to the morphism \eqref{eq:978}.
\end{proof}

We now prove our theorem.

\begin{proof}[Proof of Theorem \ref{thm:enhA}]
Compatibility with evaluation \eqref{eq:982} implies that the restrictions of the composition and tensor structure on $\sHom_{\dG/B_q}$ along the inclusion
\[
a:\Hom_{\dG/B_q}(M,N)\subset \Hom_{\dG/B_q}(M,N)\ot_k\O_{\dG/B_q}\to \sHom_{\dG/B_q}(M,N)
\]
provided by adjunction recovers the composition and tensor structure maps for $\Hom_{\dG/B_q}(M,N)$.  (Here $\Hom_{\dG/B_q}(M,N)$ denotes the constant sheaf.)  For composition for example we understand, via \eqref{eq:982} and the manner in which composition and evaluation are related for $\Hom_{\dG/B_q}$, that the map
\begin{equation}\label{eq:1022}
\Hom(M,N)\ot_k\Hom(L,M)\overset{\circ}\longrightarrow \Hom(L,N)\overset{a}\to \sHom(L,N)
\end{equation}
is the unique one so that the composite
\[
\Hom(M,N)\ot_k\Hom(L,M)\ot_k L\to \sHom(L,N)\ot L\overset{ev}\to N
\]
is just the square of the $k$-linear evaluation map
\[
\Hom(M,N)\ot_k\Hom(L,M)\ot_k L\to \Hom(M,N)\ot_k M\to N
\]
But by \eqref{eq:982} this second map is equal to the composite
\[
\Hom(M,N)\ot_k\Hom(L,M)\ot_k L\overset{a\ot a\ot id}\longrightarrow \sHom(M,N)\ot \sHom(L,M)\ot L\overset{ev^2}\to N
\]
Hence \eqref{eq:1022} is equal to the map
\[
\Hom(M,N)\ot_k\Hom(L,M)\overset{a\ot a}\to \sHom(M,N)\ot \sHom(L,M)\overset{\circ}\to \sHom(L,N),
\]
which just says that restricting along the adjunction map $a$ recovers composition for $\Hom_{\dG/B_q}$ via the global sections of composition for $\sHom_{\dG/B_q}$.  Compatibility with evaluation also implies that the aforementioned map between $\Hom$ spaces respects the $\dG$-action.
\end{proof}

\subsection{Proof of Proposition \ref{prop:1660}}
\label{sect:1660}

We consider the category $\Rep\mcl{M}_q$ of simultaneous, compatible $B_q$ and $\uqG$-representations (cf.\ \cite[\S 3.12]{andersenparadowski95}), and the corresponding category $\QCoh(\dG/\mcl{M}_q)$ of mixed equivariant sheaves.  This category is braided via the $R$-matrix for the quantum group, and admits a central embedding from $\QCoh(\dG/\dB)$ via quantum Frobenius.  The forgetful functor $\QCoh(\dG/\mcl{M}_q)\to \QCoh(\dG/B_q)$ is central and $\QCoh(\dG/\dB)$-linear, and the Kempf embedding from $\FK{G}_q$ factors
\[
\FK{G}_q\to \QCoh(\dG/\mcl{M}_q)\to \QCoh(\dG/B_q)
\]
through a braided embedding $\FK{G}_q\to \QCoh(\dG/\mcl{M}_q)$.
\par

The important point of $\QCoh(\dG/\mcl{M}_q)$, or rather the derived category $D(\dG/\mcl{M}_q)$ is that it admits a braiding extending the braidings on $D(\dG/\dB)$ and $D(\FK{G}_q)$, simultaneously, and contains all products $\msc{F}\star M$ for $\msc{F}$ in $D(\dG/B_q)$ and $M$ in $D(\FK{G}_q)$.  We note that the evaluation map
\[
ev:\sRHom_{\dG/B_q}(M,N)\star M\to M
\]
for $M$ and $N$ in $D(\dG/\mcl{M}_q)$ is only a morphism in $D(\dG/B_q)$, and needn't lift along the central functor $D(\dG/\mcl{M}_q)\to D(\dG/B_q)$.  However, via centrality we can braid the evaluation against any object $L$ in $D(\dG/\mcl{M}_q)$ to get
\[
\operatorname{braid}_{L,N}(id_L\ot ev)=(ev\ot id_L)\operatorname{braid}_{L,\msc{R}\star M}.
\]

\begin{proof}[Proof of Proposition \ref{prop:1660}]
Take $\msc{R}=\msc{R}(G_q)$.  We have the two string diagrams
\[
\begin{tikzpicture}
\node at (-2.5,5.8) {$\mathsf{D1}$};
\node at (2.5,5.8) {$\mathsf{D2}$};
\node at (-4,5) {$\msc{R}$};
\node at (-3,5) {$\msc{R}_{MN}$};
\node at (-2,5) {$\1$};
\node at (-1,5) {$M$};
\node at (-3.5,3.5) {$\spadesuit$};
\node at (-1.5,3.5) {$\spadesuit$};
\node at (-2.5,2.5) {$\bullet$};
\node at (-2.5,1.9) {$N$};
\draw (-2,4.7) to [out=-90, in=50] (-3,4);
\draw (-3,4) to [out=-135, in=50] (-3.5,3.5);
\draw (-4,4.7) to [out=-90, in=130] (-3.5,3.5);
\draw (-3,4.7) to [out=-90, in=130] (-2,4);
\draw (-2,4) to [out=-50, in=130] (-1.5,3.5);
\draw (-1,4.7) to [out=-90, in=50] (-1.5,3.5);
\draw (-1.5,3.5) to [out=-90, in=50] (-2.5,2.5);
\draw (-3.5,3.5) to [out=-90, in=130] (-2.5,2.5);
\draw (-2.5,2.5) to [out=-90, in=90] (-2.5,2.1);
\node at (1,5) {$\msc{R}$};
\node at (2,5) {$\msc{R}_{MN}$};
\node at (3,5) {$\1$};
\node at (4,5) {$M$};
\node at (3.5,3) {$\spadesuit$};
\node at (1.5,3) {$\spadesuit$};
\node at (2.5,2) {$\bullet$};
\node at (2.5,1.4) {$N$};
\draw (1,4.7) to [out=-90, in=135] (3.5,3);
\draw (2,4.7) to [out=-90, in=90] (1,3.5);
\draw (1,3.5) to [out=-90, in=130] (1.5,3);
\draw (3,4.7) to [out=-90, in=90] (4,3.5);
\draw (4,3.5) to [out=-90, in=50] (3.5,3);
\draw (4,4.7) to [out=-90, in=45] (1.5,3);
\draw (3.5,3) to [out=-90, in=50] (2.5,2);
\draw (1.5,3) to [out=-90, in=130] (2.5,2);
\draw (2.5,2) to [out=-90, in=90] (2.5,1.6);
\end{tikzpicture}
\]
which represent the top morphism in \eqref{eq:diagram} and the bottom arc in \eqref{eq:diagram} respectively, where a node labeled $\spadesuit$ is an evaluation morphism and a node labeled $\bullet$ is the unit structure.  We cross strands via the braiding, and note that at least one object at each crossing is (M\"uger) central, so that the direction of crossing is irrelevant.  We claim that diagram $\mathsf{D1}$ is equivalent to the diagram $\mathsf{D2}$, in the sense that their associated morphisms are equal in $D(\dG/B_q)$.
\par

The diagram $\mathsf{D2}$ can be manipulated via the central structure on the functor $D(\dG/\mcl{M}_q)\to D(\dG/B_q)$ to get
\[
\begin{tikzpicture}
\node at (-4,5) {$\circ$};
\node at (-3,5) {$\circ$};
\node at (-2,5) {$\circ$};
\node at (-1,5) {$\circ$};
\node at (-3.5,3) {$\spadesuit$};
\node at (-2.5,4) {$\spadesuit$};
\node at (-2.5,2) {$\bullet$};
\node at (-2.5,1.4) {$\circ$};
\draw (-4,4.8) to [out=-90, in=135] (-2.5,4);
\draw (-3,4.8) to [out=-90, in=90] (-4,3.5);
\draw (-4,3.5) to [out=-90, in=135] (-3.5,3);
\draw (-2,4.8) to [out=-90, in=70] (-2.5,4);
\draw (-2.5,4) to [out=-30, in=45] (-2.5,2);
\draw (-1,4.8) to [out=-90, in=45] (-3.5,3);
\draw (-3.5,3) to [out=-90, in=135] (-2.5,2);
\draw (-2.5,2) to [out=-90, in=90] (-2.5,1.6);
\node at (.5,5) {$\circ$};
\node at (1.5,5) {$\circ$};
\node at (2.5,5) {$\circ$};
\node at (3.5,5) {$\circ$};
\node at (2,1.4) {$\circ$};
\node at (2,4) {$\spadesuit$};
\node at (1,2.5) {$\spadesuit$};
\node at (2,2) {$\bullet$};
\draw (.5,4.8) to [out=-90, in=135] (2,4);
\draw (2.5,4.8) to [out=-90, in=70] (2,4);
\draw (2,4) to [out=-30, in=45] (2,2);
\draw (1.5,4.8) to [out=-90, in=90] (2.5,4);
\draw (2.5,4) to [out=-100, in=70] (.6,3);
\draw (.6,3) to [out=-90, in=135] (1,2.5);
\draw (3.5,4.8) to [out=-90, in=45] (1,2.5);
\draw (1,2.5) to [out=-90, in=135] (2,2);
\draw (2,2) to [out=-90, in=90] (2,1.6);
\node at (5,5) {$\circ$};
\node at (6,5) {$\circ$};
\node at (7,5) {$\circ$};
\node at (8,5) {$\circ$};
\node at (6.5,2) {$\bullet$};
\node at (5.8,4) {$\spadesuit$};
\node at (7,3.5) {$\spadesuit$};
\node at (6.5,1.4) {$\circ$};
\draw (5,4.8) to [out=-90, in=135] (5.8,4);
\draw (7,4.8) to [out=-90, in=45] (5.8,4);
\draw (5.8,4) .. controls (6.5,3.5) and (7.5, 2.5) .. (6.5,2);
\draw (6,4.8) to [out=-90, in=60] (7,3.5);
\draw (8,4.8) to [out=-90, in=45] (7,3.5);
\draw (7,3.5) .. controls (5.5,3) and (5, 2.5) .. (6.5,2);
\draw (6.5,2) to (6.5,1.6);
\node at (-.3,3.5) {$=$};
\node at (4.3,3.5) {$=$};
\node at (-5,3.5) {$\mathsf{D2}\ =$};
\end{tikzpicture}
\]
Since
\[
\operatorname{unit}_{N,\1}\circ \operatorname{braid}_{\1,N}=\operatorname{unit}_{\1,N}:\1\ot N\to N.
\]
the final diagram is in equivalent to the diagram $\mathsf{D1}$ above.  This verifies commutativity of the diagram \eqref{eq:diagram}.
\end{proof}

\bibliographystyle{abbrv}

\begin{thebibliography}{100}

\bibitem{andersen03}
H.~H. Andersen.
\newblock The strong linkage principle for quantum groups at roots of 1.
\newblock {\em J. Algebra}, 260(1):2--15, 2003.

\bibitem{andersenwen92}
H.~H. Andersen and W.~Kexin.
\newblock Representations of quantum algebras. {T}he mixed case.
\newblock {\em J. Reine Angew. Math.}, 427:35--50, 1992.

\bibitem{andersenparadowski95}
H.~H. Andersen and J.~Paradowski.
\newblock Fusion categories arising from semisimple {L}ie algebras.
\newblock {\em Comm. Math. Phys.}, 169(3):563--588, 1995.

\bibitem{andersenpolowen91}
H.~H. Andersen, P.~Polo, and K.~Wen.
\newblock Representations of quantum algebras.
\newblock {\em Invent. Math.}, 104:1--59, 1991.

\bibitem{arkhipovbezrukavnikovginzburg04}
S.~Arkhipov, R.~Bezrukavnikov, and V.~Ginzburg.
\newblock Quantum groups, the loop {G}rassmannian, and the {S}pringer
  resolution.
\newblock {\em J. Amer. Math. Soc.}, 17(3):595--678, 2004.

\bibitem{arkhipovgaitsgory03}
S.~Arkhipov and D.~Gaitsgory.
\newblock Another realization of the category of modules over the small quantum
  group.
\newblock {\em Adv. Math.}, 173(1):114--143, 2003.

\bibitem{avramovfoxby91}
L.~L. Avramov and H.-B. Foxby.
\newblock Homological dimensions of unbounded complexes.
\newblock {\em J. Pure Appl. Algebra}, 71(2-3):129--155, 1991.

\bibitem{backelinkremnizer08}
E.~Backelin and K.~Kremnizer.
\newblock Localization for quantum groups at a root of unity.
\newblock {\em J. Amer. Math. Soc.}, 21(4):1001--1018, 2008.

\bibitem{bakalovkirillov01}
B.~Bakalov and A.~A. Kirillov.
\newblock {\em Lectures on tensor categories and modular functors}, volume~21.
\newblock American Mathematical Soc., 2001.

\bibitem{balmer05}
P.~Balmer.
\newblock The spectrum of prime ideals in tensor triangulated categories.
\newblock {\em J. Reine Angew. Math.}, (588):149--168, 2005.

\bibitem{balmer10}
P.~Balmer.
\newblock Spectra, spectra, spectra--{T}ensor triangular spectra versus
  {Z}ariski spectra of endomorphism rings.
\newblock {\em Algebr. Geom. Topol.}, 10:1521--1563, 2010.

\bibitem{benzvi}
D.~Ben-Zvi.
\newblock Algebraic geometry of topological field theories (joint with
  {N}adler, {N}eitzke, and {N}evins).
\newblock talk for MSRI workshop ``Reimagining the foundations of algebraic
  topology", 2014.

\bibitem{benzvifrancisnadler10}
D.~Ben-Zvi, J.~Francis, and D.~Nadler.
\newblock Integral transforms and {D}rinfeld centers in derived algebraic
  geometry.
\newblock {\em J. Amer. Math. Soc.}, 23(4):909--966, 2010.

\bibitem{benson91}
D.~Benson.
\newblock {\em Representations and Cohomology: Volume 2, Cohomology of groups
  and modules}, volume~2.
\newblock Cambridge university press, 1991.

\bibitem{bensoncarlsonrickard97}
D.~Benson, J.~F. Carlson, and J.~Rickard.
\newblock Thick subcategories of the stable module category.
\newblock {\em Fund. Math.}, 153:59--80, 1997.

\bibitem{bensoniyengarkrause08}
D.~Benson, S.~B. Iyengar, and H.~Krause.
\newblock Local cohomology and support for triangulated categories.
\newblock {\em Ann. Sci. \'{E}c. Norm. Sup\'{e}r.}, 41(4):575--621, 2008.

\bibitem{bensoniyengarkrausepevtsova}
D.~Benson, S.~B. Iyengar, H.~Krause, and J.~Pevtsova.
\newblock Stratification and duality for unipotent finite supergroup schemes.
\newblock In {\em Equivariant topology and derived algebra}, volume 474 of {\em
  London Math. Soc. Lecture Note Ser.}, pages 241--275. Cambridge Univ. Press,
  Cambridge, 2022.

\bibitem{bensonwitherspoon14}
D.~Benson and S.~Witherspoon.
\newblock Examples of support varieties for {H}opf algebras with noncommutative
  tensor products.
\newblock {\em Arch. Math.}, 102(6):513--520, 2014.

\bibitem{bezrukavnikov06}
R.~Bezrukavnikov.
\newblock Cohomology of tilting modules over quantum groups and $t$-structures
  on derived categories of coherent sheaves.
\newblock {\em Invent. Math.}, 166(2):327--357, 2006.

\bibitem{bezrukavnikovlachowska07}
R.~Bezrukavnikov and A.~Lachowska.
\newblock The small quantum group and the {S}pringer resolution.
\newblock In {\em Proceedings of Haifa Conference on Quantum Groups (J.\ Donin
  memorial volume), {C}ontemporary {M}ath.\ 433}, pages 89--101, 2007.

\bibitem{boekujawanakano17}
B.~D. Boe, J.~R. Kujawa, and D.~K. Nakano.
\newblock Tensor triangular geometry for classical {L}ie superalgebras.
\newblock {\em Adv. Math.}, 314:228--277, 2017.

\bibitem{boekujawanakano}
B.~D. Boe, J.~R. Kujawa, and D.~K. Nakano.
\newblock Tensor triangular geometry for quantum groups.
\newblock {\em preprint \href{http://arxiv.org/abs/1702.01289}{\tt
  arXiv:1702.01289}}, version 3.

\bibitem{brochierjordansafronovsnyder}
A.~Brochier, D.~Jordan, P.~Safronov, and N.~Snyder.
\newblock Invertible braided tensor categories.
\newblock {\em Algebr. Geom. Topol.}, 21(4):2107--2140, 2021.

\bibitem{bruguieresnatale11}
A.~Bruguieres and S.~Natale.
\newblock Exact sequences of tensor categories.
\newblock {\em Int. Math. Res. Not.}, 2011(24):5644--5705, 2011.

\bibitem{bullimoredimoftegaiotto17}
M.~Bullimore, T.~Dimofte, and D.~Gaiotto.
\newblock The {C}oulomb branch of 3d $\mathcal{N}= 4$ theories.
\newblock {\em Comm. Math. Phys.}, 354(2):671--751, 2017.

\bibitem{bullimoreferrarikim19}
M.~Bullimore, A.~Ferrari, and H.~Kim.
\newblock Twisted indices of 3d $\mathcal{N}=4$ gauge theories and enumerative
  geometry of quasi-maps.
\newblock {\em J. High Energy Phys}, 2019(7):1--66, 2019.

\bibitem{calaqueetal17}
D.~Calaque, T.~Pantev, B.~To{\"e}n, M.~Vaqui{\'e}, and G.~Vezzosi.
\newblock Shifted poisson structures and deformation quantization.
\newblock {\em J. Topol.}, 10(2):483--584, 2017.

\bibitem{carlsoniyengar15}
J.~Carlson and S.~Iyengar.
\newblock Thick subcategories of the bounded derived category of a finite
  group.
\newblock {\em Trans. Amer. Math. Soc.}, 367(4):2703--2717, April 2015.

\bibitem{chrissginzburg09}
N.~Chriss and V.~Ginzburg.
\newblock {\em Representation theory and complex geometry}.
\newblock Springer Science \& Business Media, 2009.

\bibitem{clineparshallscottkallen77}
E.~Cline, B.~Parshall, L.~Scott, and W.~Van Der~Kallen.
\newblock Rational and generic cohomology.
\newblock {\em Invent. Math.}, 39(2):143--163, 1977.

\bibitem{costantinoetal14}
F.~Costantino, N.~Geer, and B.~Patureau-Mirand.
\newblock Quantum invariants of 3-manifolds via link surgery presentations and
  non-semi-simple categories.
\newblock {\em J. Topol.}, 7(4):1005--1053, 2014.

\bibitem{costellocreutziggaitto19}
K.~Costello, T.~Creutzig, and D.~Gaiotto.
\newblock Higgs and {C}oulomb branches from vertex operator algebras.
\newblock {\em J. High Energy Phys}, 2019(3):1--50, 2019.

\bibitem{creutzigdimoftegarnergeer}
T.~Creutzig, T.~Dimofte, N.~Garner, and N.~Geer.
\newblock A {QFT} for non-semisimple {TQFT}.
\newblock {\em preprint \href{http://arxiv.org/abs/2112.01559}{\tt
  arXiv:2112.01559}}.

\bibitem{creutziggainutdinovrunkel20}
T.~Creutzig, A.~M. Gainutdinov, and I.~Runkel.
\newblock A quasi-{H}opf algebra for the triplet vertex operator algebra.
\newblock {\em Commun. Contemp. Math.}, 22(03):1950024, 2020.

\bibitem{creutziggannon17}
T.~Creutzig and T.~Gannon.
\newblock Logarithmic conformal field theory, log-modular tensor categories and
  modular forms.
\newblock {\em J. Phys. A}, 50(40):404004, 2017.

\bibitem{creutziglentnerrupert}
T.~Creutzig, S.~Lentner, and M.~Rupert.
\newblock Characterizing braided tensor categories associated to logarithmic
  vertex operator algebras.
\newblock {\em preprint \href{http://arxiv.org/abs/2104.13262}{\tt
  arXiv:2104.13262}}.

\bibitem{derenzietal}
M.~De~Renzi, A.~M. Gainutdinov, N.~Geer, B.~Patureau-Mirand, and I.~Runkel.
\newblock 3-dimensional {TQFT}s from non-semisimple modular categories.
\newblock {\em Selecta Math.}, 28(2):1--60, 2022.

\bibitem{derenzigeerpatureau18}
M.~De~Renzi, N.~Geer, and B.~Patureau-Mirand.
\newblock Renormalized {H}ennings invariants and 2+ 1-{TQFT}s.
\newblock {\em Comm. Math. Phys.}, 362(3):855--907, 2018.

\bibitem{dedushenkogukovnakajimapei20}
M.~Dedushenko, S.~Gukov, H.~Nakajima, D.~Pei, and K.~Ye.
\newblock 3d {TQFT}s from {A}rgyres--{D}ouglas theories.
\newblock {\em J. Phys. A}, 53(43):43LT01, 2020.

\bibitem{delignemilne82}
P.~Deligne and J.~S. Milne.
\newblock Tannakian categories.
\newblock In {\em Hodge cycles, motives, and Shimura varieties}, pages
  101--228. Springer, 1982.

\bibitem{doi81}
Y.~Doi.
\newblock Homological coalgebra.
\newblock {\em J. Math. Soc. Japan}, 33(1):31--50, 1981.

\bibitem{dgno10}
V.~Drinfeld, S.~Gelaki, D.~Nikshych, and V.~Ostrik.
\newblock On braided fusion categories {I}.
\newblock {\em Selecta Math.}, 16(1):1--119, 2010.

\bibitem{drupieskikujawa19}
C.~M. Drupieski and J.~R. Kujawa.
\newblock On support varieties for {L}ie superalgebras and finite supergroup
  schemes.
\newblock {\em J. Algebra}, 525:64--110, 2019.

\bibitem{etingofgelaki17}
P.~Etingof and S.~Gelaki.
\newblock Exact sequences of tensor categories with respect to a module
  category.
\newblock {\em Adv. Math.}, 308:1187--1208, 2017.

\bibitem{egno15}
P.~Etingof, S.~Gelaki, D.~Nikshych, and V.~Ostrik.
\newblock {\em Tensor categories}, volume 205.
\newblock American Mathematical Society, 2015.

\bibitem{etingofostrik04}
P.~Etingof and V.~Ostrik.
\newblock Finite tensor categories.
\newblock {\em Mosc. Math. J}, 4(3):627--654, 2004.

\bibitem{faithwalker67}
C.~Faith and E.~A. Walker.
\newblock Direct sum representations of injective modules.
\newblock {\em J. Algebra}, 5(2):203--221, 1967.

\bibitem{feigintipunin}
B.~L. Feigin and I.~Y. Tipunin.
\newblock Logarithmic {CFT}s connected with simple {L}ie algebras.
\newblock {\em preprint \href{http://arxiv.org/abs/1002.5047}{\tt
  arXiv:1002.5047}}.

\bibitem{francis08}
J.~Francis.
\newblock {\em Derived algebraic geometry over $\mathcal{E}_n$-rings}.
\newblock PhD thesis, Massachusetts Institute of Technology, 2008.

\bibitem{friedlandernegron}
E.~M. Friedlander and C.~Negron.
\newblock Support theory for {D}rinfeld doubles of some infinitesimal group
  schemes.
\newblock {\em Algebra Number Theory}, to appear.

\bibitem{friedlanderpevtsova05}
E.~M. Friedlander and J.~Pevtsova.
\newblock Representation-theoretic support spaces for finite group schemes.
\newblock {\em Amer. J. Math.}, 127(2):379--420, 2005.

\bibitem{friedlanderpevtsova07}
E.~M. Friedlander and J.~Pevtsova.
\newblock {$\Pi$}-supports for modules for finite group schemes.
\newblock {\em Duke Math. J}, 139(2):317--368, 2007.

\bibitem{fuchsschweigert17}
J.~Fuchs and C.~Schweigert.
\newblock Consistent systems of correlators in non-semisimple conformal field
  theory.
\newblock {\em Adv. Math}, 307:598--639, 2017.

\bibitem{gainutdinovlentnerohrmann}
A.~M. Gainutdinov, S.~Lentner, and T.~Ohrmann.
\newblock Modularization of small quantum groups.
\newblock {\em preprint \href{http://arxiv.org/abs/1809.02116}{\tt
  arXiv:1809.02116}}.

\bibitem{gainutdinovetal06}
A.~M. Gainutdinov, A.~M. Semikhatov, I.~Y. Tipunin, and B.~L. Feigin.
\newblock Kazhdan-{L}usztig correspondence for the representation category of
  the triplet {W}-algebra in logarithmic {CFT}.
\newblock {\em Theor. Math. Phys.}, 148(3):1210--1235, 2006.

\bibitem{gaitsgory15}
D.~Gaitsgory.
\newblock Sheaves of categories and the notion of $1$-affineness.
\newblock In {\em Stacks and categories in geometry, topology, and algebra},
  volume 643, pages 127--225. American Mathematical Society Providence, RI,
  2015.

\bibitem{gallauer19}
M.~Gallauer.
\newblock tt-geometry of {T}ate motives over algebraically closed fields.
\newblock {\em Compositio Math.}, 155(10):1888--1923, 2019.

\bibitem{gannonnegron}
T.~Gannon and C.~Negron.
\newblock Quantum $\operatorname{SL}(2)$ and logarithmic vertex operator
  algebras at $(p, 1)$-central charge.
\newblock {\em preprint \href{http://arxiv.org/abs/2104.12821}{\tt
  arXiv:2104.12821}}.

\bibitem{geeretal09}
N.~Geer, B.~Patureau-Mirand, and V.~Turaev.
\newblock Modified quantum dimensions and re-normalized link invariants.
\newblock {\em Compositio Math.}, 145(1):196--212, 2009.

\bibitem{ginzburgkumar93}
V.~Ginzburg and S.~Kumar.
\newblock Cohomology of quantum groups at roots of unity.
\newblock {\em Duke Math. J}, 69(1):179--198, 1993.

\bibitem{grothendieck57}
A.~Grothendieck.
\newblock Sur quelques points d'alg{\`e}bre homologique.
\newblock {\em Tohoku Math. J.(2)}, 9(2):119--183, 1957.

\bibitem{sga1}
A.~Grothendieck and M.~Raynaud.
\newblock {\em Rev\^{e}tements \'{e}tales et groupe fondamental}.
\newblock Lecture Notes in Mathematics. Springer, Berlin, Heidelberg, 1971.

\bibitem{gukovetal21}
S.~Gukov, P.-S. Hsin, H.~Nakajima, S.~Park, D.~Pei, and N.~Sopenko.
\newblock Rozansky-{W}itten geometry of {C}oulomb branches and logarithmic knot
  invariants.
\newblock {\em J. Geom. Phys.}, page 104311, 2021.

\bibitem{hall16}
J.~Hall.
\newblock The {B}almer spectrum of a tame stack.
\newblock {\em Ann. K-Theory}, 1(3):259--274, 2016.

\bibitem{hallrydh17}
J.~Hall and D.~Rydh.
\newblock Perfect complexes on algebraic stacks.
\newblock {\em Compositio Math.}, 153(11):2318--2367, 2017.

\bibitem{hartshorne13}
R.~Hartshorne.
\newblock {\em Algebraic geometry}, volume~52.
\newblock Springer Science \& Business Media, 2013.

\bibitem{hirano19}
Y.~Hirano.
\newblock Relative singular locus and {B}almer spectrum of matrix
  factorizations.
\newblock {\em Trans. Amer. Math. Soc.}, 371(7):4993--5021, 2019.

\bibitem{hopkinssmith98}
M.~J. Hopkins and J.~H. Smith.
\newblock Nilpotence and stable homotopy theory {II}.
\newblock {\em Ann. of Math.}, 148(1):1--49, 1998.

\bibitem{hottatanisaki07}
R.~Hotta and T.~Tanisaki.
\newblock {\em $D$-modules, perverse sheaves, and representation theory},
  volume 236.
\newblock Springer Science \& Business Media, 2007.

\bibitem{jantzen03}
J.~C. Jantzen.
\newblock {\em {Representations of algebraic groups}}.
\newblock 2003.

\bibitem{kashiwara94}
M.~Kashiwara.
\newblock Crystal bases of modified quantized enveloping algebra.
\newblock {\em Duke Math J.}, 73(2), 1994.

\bibitem{kempf76}
G.~R. Kempf.
\newblock Linear systems on homogeneous spaces.
\newblock {\em Ann. of Math.}, 103(3):557--591, 1976.

\bibitem{krause10}
H.~Krause.
\newblock {\em Triangulated categories}, chapter Localization theory for
  triangulated categories, pages 161--235.
\newblock London Mathematical Society Lecture Note Series. Cambridge University
  Press, 2010.

\bibitem{lachowskaqi19}
A.~Lachowska and Y.~Qi.
\newblock The center of small quantum groups {II}: singular blocks.
\newblock {\em Proc. Lond. Math. Soc.}, 118(3):513--544, 2019.

\bibitem{lachowskaqi21}
A.~Lachowska and Y.~Qi.
\newblock Remarks on the derived center of small quantum groups.
\newblock {\em Selecta Math.}, 27(4):1--40, 2021.

\bibitem{lentneretal}
S.~Lentner, S.~N. Mierach, C.~Schweigert, and Y.~Sommerhaeuser.
\newblock Hochschild cohomology, modular tensor categories, and mapping class
  groups.
\newblock {\em preprint \href{http://arxiv.org/abs/2003.06527}{\tt
  arXiv:2003.06527}}.

\bibitem{lentneretal18}
S.~Lentner, S.~N. Mierach, C.~Schweigert, and Y.~Sommerh{\"a}user.
\newblock Hochschild cohomology and the modular group.
\newblock {\em J. Algebra}, 507:400--420, 2018.

\bibitem{lentner21}
S.~D. Lentner.
\newblock Quantum groups and {N}ichols algebras acting on conformal field
  theories.
\newblock {\em Adv. Math.}, 378:107517, 2021.

\bibitem{lurieHA}
J.~Lurie.
\newblock {\em Higher algebra}.
\newblock available at
  \href{https://www.math.ias.edu/~lurie/papers/HA.pdf}{https://www.math.ias.edu/~lurie/papers/HA.pdf},
  2017.

\bibitem{lurieSAG}
J.~Lurie.
\newblock {\em Spectral algebraic geometry}.
\newblock available at
  \href{https://www.math.ias.edu/~lurie/papers/HA.pdf}{https://www.math.ias.edu/~lurie/papers/HA.pdf},
  2018.

\bibitem{lusztig90}
G.~Lusztig.
\newblock Finite dimensional {H}opf algebras arising from quantized universal
  enveloping algebras.
\newblock {\em J. Amer. Math. Soc.}, 3(1):257--296, 1990.

\bibitem{lusztig90II}
G.~Lusztig.
\newblock Quantum groups at roots of 1.
\newblock {\em Geom. Dedicata}, 35(1):89--113, 1990.

\bibitem{lusztig93}
G.~Lusztig.
\newblock {\em Introduction to Quantum Groups}.
\newblock Birkh\"auser Boston, 1993.

\bibitem{milne17}
J.~S. Milne.
\newblock {\em Algebraic groups:\ The theory of group schemes of finite type
  over a field}, volume 170.
\newblock Cambridge University Press, 2017.

\bibitem{nakanovashawyakimov1}
D.~K. Nakano, K.~B. Vashaw, and M.~T. Yakimov.
\newblock Noncommutative tensor triangular geometry.
\newblock {\em preprint \href{http://arxiv.org/abs/1909.04304}{\tt
  arXiv:1909.04304}}.

\bibitem{nakanovashawyakimov}
D.~K. Nakano, K.~B. Vashaw, and M.~T. Yakimov.
\newblock Noncommutative tensor triangular geometry and the tensor product
  property for support maps.
\newblock {\em Int. Math. Res. Not.}, to appear.

\bibitem{neeman01}
A.~Neeman.
\newblock {\em Triangulated categories}.
\newblock Princeton University Press, 2001.

\bibitem{negron21}
C.~Negron.
\newblock Log-modular quantum groups at even roots of unity and the quantum
  {F}robenius {I}.
\newblock {\em Comm. Math. Phys.}, 382(2), 2021.

\bibitem{negronpevtsova3}
C.~Negron and J.~Pevtsova.
\newblock Hypersurface support and prime ideal spectra for stable categories.
\newblock {\em preprint \href{http://arxiv.org/abs/2101.00141}{\tt
  arXiv:2101.00141}}.

\bibitem{negronpevtsova2}
C.~Negron and J.~Pevtsova.
\newblock Hypersurface support for noncommutative complete intersections.
\newblock {\em Nagoya Math. J.}, to appear.

\bibitem{negronpevtsova}
C.~Negron and J.~Pevtsova.
\newblock Support for integrable {H}opf algebras via noncommutative
  hypersurfaces.
\newblock {\em Int. Math. Res. Not.}, to appear.

\bibitem{ostrik97}
V.~Ostrik.
\newblock Decomposition of the adjoint representation of the small quantum
  $\mathfrak{sl}_2$.
\newblock {\em Comm. Math. Phys.}, 186(2):253--264, 1997.

\bibitem{ostrik03}
V.~Ostrik.
\newblock Module categories, weak {H}opf algebras and modular invariants.
\newblock {\em Transform. Groups}, 8(2):177--206, 2003.

\bibitem{pevtsova02}
J.~Pevtsova.
\newblock Infinite dimensional modules for {F}robenius kernels.
\newblock {\em J. Pure Appl. Algebra}, 173(1):59--86, 2002.

\bibitem{plavnikwitherspoon18}
J.~Y. Plavnik and S.~Witherspoon.
\newblock Tensor products and support varieties for some noncocommutative
  {H}opf algebras.
\newblock {\em Algebr. Represent. Theory}, 21(2):259--276, 2018.

\bibitem{rickard89}
J.~Rickard.
\newblock Derived categories and stable equivalence.
\newblock {\em J. Pure Appl. Algebra}, 61(3):303--317, 1989.

\bibitem{rickard97}
J.~Rickard.
\newblock Idempotent modules in the stable category.
\newblock {\em J. Lond. Math. Soc.}, 56(1):149--170, 1997.

\bibitem{riehl14}
E.~Riehl.
\newblock {\em Categorical homotopy theory}, volume~24.
\newblock Cambridge University Press, 2014.

\bibitem{robertswillerton10}
J.~Roberts and S.~Willerton.
\newblock On the {R}ozansky--{W}itten weight systems.
\newblock {\em Algebr. Geom. Topol.}, 10(3):1455--1519, 2010.

\bibitem{ryom03}
S.~Ryom-Hansen.
\newblock A $q$-analogue of {K}empf's vanishing theorem.
\newblock {\em Mosc. Math. J}, 3(1):173--187, 2003.

\bibitem{sawin06}
S.~F. Sawin.
\newblock Quantum groups at roots of unity and modularity.
\newblock {\em J. Knot Theory Ramif}, 15(10):1245--1277, 2006.

\bibitem{schauenburg92}
P.~Schauenburg.
\newblock {\em Tannaka duality for arbitrary Hopf algebras}, volume~66.
\newblock R. Fischer, 1992.

\bibitem{schneider93}
H.-J. Schneider.
\newblock Some remarks on exact sequences of quantum groups.
\newblock {\em Comm. Algebra}, 21(9):3337--3357, 1993.

\bibitem{schweigertwoike}
C.~Schweigert and L.~Woike.
\newblock The differential graded {V}erlinde formula and the {D}eligne
  conjecture.
\newblock {\em preprint \href{http://arxiv.org/abs/2105.01596}{\tt
  arXiv:2105.01596}}.

\bibitem{schweigertwoike1}
C.~Schweigert and L.~Woike.
\newblock The {H}ochschild complex of a finite tensor category.
\newblock {\em Algebr. Geom. Topol.}, 21(7):3689--3734, 2021.

\bibitem{schweigertwoike21}
C.~Schweigert and L.~Woike.
\newblock Homotopy coherent mapping class group actions and excision for
  {H}ochschild complexes of modular categories.
\newblock {\em Adv. Math.}, 386:107814, 2021.

\bibitem{stacks2}
T.~{S}tacks~{P}roject {A}uthors.
\newblock Stacks {P}roject: {\'{E}}tale cohomology.
\newblock \url{https://stacks.math.columbia.edu}, 2021.

\bibitem{stevenson14}
G.~Stevenson.
\newblock Subcategories of singularity categories via tensor actions.
\newblock {\em Compositio Math.}, 150(2):229--272, 2014.

\bibitem{sugimoto21}
S.~Sugimoto.
\newblock On the {F}eigin--{T}ipunin conjecture.
\newblock {\em Selecta Math.}, 27(5):1--43, 2021.

\bibitem{suslinfriedlanderbendel97b}
A.~Suslin, E.~M. Friedlander, and C.~P. Bendel.
\newblock Support varieties for infinitesimal group schemes.
\newblock {\em J. Amer. Math. Soc.}, 10(3):729--759, 1997.

\bibitem{takeuchi79}
M.~Takeuchi.
\newblock Relative {H}opf modules--{E}quivalences and freeness criteria.
\newblock {\em J. Algebra}, 60(2):452--471, 1979.

\bibitem{tanisaki14}
T.~Tanisaki.
\newblock Differential operators on quantum flag manifolds at roots of unity
  {II}.
\newblock {\em Nagoya Math. J.}, 214:1--52, 2014.

\bibitem{tanisaki21}
T.~Tanisaki.
\newblock Differential operators on quantized flag manifolds at roots of unity
  {III}.
\newblock {\em Adv. Math}, 392:107990, 2021.

\bibitem{thomason97}
R.~W. Thomason.
\newblock The classification of triangulated subcategories.
\newblock {\em Compositio Math.}, 105(1):1--27, 1997.

\bibitem{turaev16}
V.~G. Turaev.
\newblock {\em Quantum invariants of knots and 3-manifolds}.
\newblock de Gruyter, 2016.

\bibitem{woodock97}
D.~Woodcock.
\newblock Schur algebras and global bases: new proofs of old vanishing
  theorems.
\newblock {\em J. Algebra}, 191(1):331--370, 1997.

\end{thebibliography}

\end{document}